\def\ocirc#1{\ifmmode\setbox0=\hbox{$#1$}\dimen0=\ht0
    \advance\dimen0 by1pt\rlap{\hbox to\wd0{\hss\raise\dimen0
    \hbox{\hskip.2em$\scriptscriptstyle\circ$}\hss}}#1\else
    {\accent"17 #1}\fi}
\numberwithin{equation}{section}
\theoremstyle{plain}
\newtheorem{theorem}{Theorem}[section]
\newtheorem{corollary}[theorem]{Corollary}
\newtheorem{lemma}[theorem]{Lemma}
\theoremstyle{remark}
\newtheorem{example}{Example}[section]
\newtheorem{definition}{Definition}[section]
\newtheorem{remark}{Remark}[section]
\newcommand{\coloneq}{\mathrel{\vcenter{\baselineskip0.5ex \lineskiplimit0pt
                     \hbox{\scriptsize.}\hbox{\scriptsize.}}}=}
\newcommand{\eqcolon}{=\mathrel{\vcenter{\baselineskip0.5ex \lineskiplimit0pt
                     \hbox{\scriptsize.}\hbox{\scriptsize.}}}}
\newcommand{\BV}[1]{\mathrm{BV}(#1)}
\newcommand{\Lip}[1]{\mathrm{Lip}_0(#1)}
\newcommand{\cI}{\mathcal{I}}
\newcommand{\set}[2]{\left\{ #1 \, ; \, #2\right\}}
\newcommand{\1}[1]{{\ensuremath{\mathbf 1_{#1}}}}
\newcommand{\cO}[1]{{\mathcal{O}}\!\left(#1\right)}
\newcommand{\tO}[1]{{\Theta}\!\left(#1\right)}
\newcommand{\abs}[1]{{\left| #1 \right|}}
\newcommand{\norm}[1]{{\left\| #1 \right\|}}
\newcommand{\prt}[1]{\left( #1 \right)}
\newcommand{\ceil}[1]{\left\lceil #1 \right\rceil}
\newcommand{\floor}[1]{\left\lfloor #1 \right\rfloor}
\newcommand{\seq}[1]{\left\{#1\right\}}
\DeclareMathOperator{\orm}{orm}
\newcommand{\Leb}[1]{\mathrm{Leb}\left(#1\right)}
\newcommand{\supp}[1]{\mathrm{supp}\left(#1\right)}
\DeclareMathOperator*{\sgn}{sign}
\newcommand{\sign}[1]{\sgn\left(#1\right)}
\newcommand{\bN}{\mathbb{N}}
\newcommand{\bP}{\mathbb{P}}
\newcommand{\bR}{\mathbb{R}}
\newcommand{\bT}{\mathbb{T}}
\newcommand{\bZ}{\mathbb{Z}}
\newcommand{\bbN}{\mathbb{N}}
\newcommand{\bbR}{\mathbb{R}}
\newcommand{\cD}{\ensuremath{\mathcal{D}}}
\newcommand{\cF}{\mathcal{F}}
\newcommand{\Prob}[1]{\mathbb{P}\mspace{-2mu}\left( #1 \right)}
\newcommand{\E}[1]{{\ensuremath{\mathbb{E}}\mspace{-2mu}\left[#1\right]}}
\newcommand{\cS}{\ensuremath{\mathcal{S}}}
\newcommand{\R}{\mathbb{R}}
\newcommand{\dt}{\,\mathrm{d}t}
\newcommand{\dx}{\,\mathrm{d}x}
\newcommand{\dy}{\,\mathrm{d}y}
\title[Conservation laws with rough flux] {Numerical methods for conservation laws \\ with rough flux}
\author[H.~Hoel]{H. Hoel}
\address[H{\aa}kon Hoel]
{\newline Mathematics Institute of Computational Science and Engineering
\newline \'Ecole polytechnique f\'ed\'erale de Lausanne
\newline  EPFL / SB / MATH-CSQI, MA C1 644 (B{\^a}timent MA), Station 8
\newline CH-1015, Lausanne 
\newline Switzerland}
\email[]{hakon.hoel@epfl.ch, hhakon@chalmers.se}
\author[K. H. Karlsen]{K. H. Karlsen}
\address[Kenneth Hvistendahl Karlsen]
{\newline Department of mathematics
\newline University of Oslo
\newline P.O. Box 1053,  Blindern
\newline N--0316 Oslo, Norway} 
\email[]{kennethk@math.uio.no}
\author[N. H. Risebro]{N. H. Risebro}
\address[Nils Henrik Risebro]
{\newline Department of mathematics
\newline University of Oslo
\newline P.O. Box 1053,  Blindern
\newline N--0316 Oslo, Norway} 
\email[]{nilsr@math.uio.no}
\author[E. B. Storr\o{}sten]{E. B. Storr\o{}sten}
\address[Erlend Briseid Storr\o{}sten]
{\newline Department of mathematics
\newline University of Oslo
\newline P.O. Box 1053, Blindern
\newline N--0316 Oslo, Norway} 
\email[]{erlenbs@math.uio.no}
\date{\today}
\subjclass[2010]{Primary: 35L65, 65M06; Secondary: 60H15, 65C30}
\keywords{Stochastic conservation law, rough time-dependent flux, pathwise 
entropy solution, finite difference method, convergence, stochastic numerics}
\thanks{This work received supported by the Research Council of Norway 
through the project Stochastic Conservation Laws (250674/F20) and by 
the KAUST CRG4 Award Ref:2584.}
\begin{document}

\begin{abstract}
Finite volume methods are proposed for computing 
approximate pathwise entropy/kinetic solutions 
to conservation laws with a rough path dependent flux function. 
For a convex flux, it is demonstrated that rough 
path oscillations may lead to ``cancellations'' 
in the solution. Making use of this property, we show that for 
$\alpha$-H{\"o}lder continuous rough paths the 
convergence rate of the numerical methods can improve 
from $\mathcal{O}(\text{COST}^{-\gamma})$,
for some $\gamma \in \left[\alpha/(12-8\alpha), \alpha/(10-6\alpha)\right]$, with 
$\alpha\in (0, 1)$, to $\mathcal{O}(\text{COST}^{-\min(1/4,\alpha/2)})$. 
Numerical examples support the theoretical results.
\end{abstract}

\maketitle 

\section{Introduction}\label{sec:Intro}
The inclusion of random effects is important for the development
of realistic models of physical phenomena.  Frequently  such models lead to
nonlinear stochastic partial differential equations (SPDEs), 
whose solutions may possess singularities, reflecting 
the appearance of shock waves, turbulence, or other physical features. 
Recently many researchers have targeted a wide range of 
questions relating to mathematical analysis and numerical methods 
for stochastic conservation laws and related SPDEs.  
Among the numerous questions addressed, we mention selection 
principles for singling out ``correct'' generalized solutions, 
theories of well-posedness (existence, uniqueness, 
stability of solutions), regularity and compactness properties (sometimes 
improved by the inclusion of noise), existence of 
invariant measures, and construction of convergent numerical methods.

Randomness can enter models in different ways, such as stochastic
forcing or uncertain system parameters as well as random initial and
boundary data. For example, a number of mathematical works
\cite{Bauzet:2012kx,Biswas:2014gd,Chen:2011fk,Debussche:2010fk,Debussche:2015aa,Debussche:2016aa,E:2000lq,Hofmanova:2013aa,Kim2003,Karlsen:2015ab,Feng:2008ul,Vallet:2009uq,Vallet:2000ys}
have studied the effect of It\^{o} stochastic forcing on conservation
laws,
\begin{equation}\label{eq:source}
	du +  \nabla\cdot f(u)  \dt=\sigma(u) \, dW(t),
\end{equation}
where $f,\sigma$ are nonlinear functions and $W(t)$ is a (finite or
infinite dimensional) Wiener process. Numerical methods, based on
operator splitting~\cite{Bauzet:2015aa,Holden:1997fk,Karlsen:2016aa}
or finite volume discretizations
\cite{Bauzet:2016ab,Bauzet:2016aa,Dotti:2016aa,Dotti:2016ab,Kroker:2012fk},
have been proposed and successfully analyzed for \eqref{eq:source} and
similar equations.

In another direction, several works 
\cite{Attanasio:2011fj,Flandoli:2010yq,Mohammed:2015aa,Neves:2015aa} 
have explored linear transport equations with low-regularity velocity 
coefficient $b(x)$ and ``transportation noise'',
\begin{equation}\label{eq:transport}
	du +  b(x)\cdot \nabla u  \dt + \nabla u \circ dW(t) =0,
\end{equation}
where $\circ$ refers to the Stratonovich differential (integral).

In this work we are interested in constructing numerical methods for a nonlinear variant 
of \eqref{eq:transport}, namely 
\begin{equation}\label{eq:stratonovich}
	du + \nabla\cdot f(u) \circ dW(t) =0.
\end{equation}
Nonlinear SPDEs like this were suggested and analyzed recently by 
Lions, Perthame, and Souganidis in a series of 
papers \cite{Lions:2013aa,Lions:2012fk,Lions:2014aa}, where  
a pathwise well-posedness theory was developed based on 
entropy/kinetic solutions. Informally, their notion of 
solution is based on writing the kinetic formulation 
of \eqref{eq:stratonovich}:
\[
d \chi + f'(\xi) \cdot \nabla \chi \circ dW(t)= \frac{\partial}{\partial \xi} m,
\]
for a bounded measure $m(t,x,\xi)\ge 0$ and a function $u(t,x)$ (entropy solution) such 
that $\chi=\chi(t,x,\xi):=\chi(\xi, u(t,x))$, where 
$$
\chi(\xi,u)=
\begin{cases}
+1 & \text{if $0\le \xi \le u$},\\
-1 & \text{if $u<\xi <0$},\\
0 & \text{otherwise}.
\end{cases}
$$
The next step is to use a ``transformation'' to remove the noise term. This 
can be achieved by the ``method of characteristics'' since the 
previous equation is linear in $\chi$. The result is that the function
$$
v=v(t,x,\xi):=\chi\left(t,x+f'(\xi)W(t),\xi\right)
$$
satisfies the following kinetic equation without drift term:
\begin{equation}\label{eq:transformed}
	\begin{split}
		\partial_t v  & = \left(\frac{\partial}{\partial \xi } m\right) \left(t,x+f'(\xi) W(t),\xi \right)
		\\ &=\frac{\partial}{\partial \xi} \Bigl(m(t,x+f'(\xi) W(t),\xi)\Bigr) 
		\\ & \qquad - f''(\xi)W(t)\, 
		\left(\frac{\partial}{\partial x} m\right)\left(t,x+f'(\xi) W(t),\xi\right),
	\end{split}
\end{equation}
where the right-hand side is ``nonstandard''. Informally, a weak 
solution to \eqref{eq:transformed} is taken as the definition 
of a pathwise (entropy/kinetic) solution to \eqref{eq:stratonovich}, 
since \eqref{eq:transformed} depends on the noise signal $W$ in a nice way ($dW/dt$ 
is not entering the equation). Various results concerning existence, uniqueness, and 
``continuous dependence on the data'' are found in the works 
\cite{Lions:2013aa,Lions:2014aa} by Lions, Perthame, and Souganidis (more on this below). 
The theory of pathwise solutions has been further developed by 
Gess and Souganidis in \cite{Gess:2014aa,Gess:2015aa}, see
also \cite{Hofmanova:2016aa} and \cite{Bailleul:2015aa,Deya:2016aa} 
for a framework of intrinsic weak solutions of 
PDEs driven by rough signals, without relying on ``transformation formulas'' to 
remove the rough terms.

As alluded to above, we are interested in numerical solutions to conservation laws 
with rough time-dependent flux \eqref{eq:stratonovich}. To the best of our knowledge, 
\cite{Gess:2016aa} is the only work addressing numerical 
aspects of \eqref{eq:stratonovich}. In that work, Gess, 
Perthame, and Souganidis prove convergence of a semi-discrete method based 
on Brenier's transport-collapse algorithm and ``rough path''
characteristics. 

The primary goal of our work is to develop and analyze 
fully discrete and thus computable finite volume 
methods for solving the problem
\begin{equation}\label{eq:sscl}
  \begin{split}
    du + \partial_xf(u) \circ dz = 0 \quad \text{in}\; (0,T] \times \bR, 
    \qquad u(0,\cdot)&=u_0,
  \end{split}
\end{equation}
where $0<T< \infty$ is some fixed final time, $z : [0,T] \to \bR$ is
an $\alpha$-H\"older continuous rough path with $\alpha \in [0,1]$,
$f \in C^2(\bR)$, and $u_0\in (L^1\cap BV)(\R)$.  
The basic numerical methods that we develop for \eqref{eq:sscl}
consist of the following two steps: (1) Approximate the rough path $z$
by a piecewise linear interpolant $z^m$ on a mesh over $[0,T]$
with $m-1$ degrees of freedom. (2) Solve \eqref{eq:sscl} with driving signal $z^m$
using a traditional finite volume method for computing Kru{\v{z}}kov 
entropy solutions, e.g., the Lax-Friedrichs, Godunov, or 
Engquist-Osher scheme \cite{Kroner:1997lq}. 
The second step is justified by the observation that since $z$ is 
uniformly continuous, $z^m$ will be Lipschitz continuous for
any fixed $m \in \bN$, and for any Lipschitz path, 
classical and pathwise entropy solutions 
coincide, cf.~Lemma~\ref{lem:entropyPes} below. 
Several numerical examples are presented 
to illustrate the finite volume methods.

A continuous dependence estimate (cf.~Theorem~\ref{thm:wellPosednessPes} below ) 
can be used to derive a convergence rate for the numerical methods.  
The result is a surprisingly slow rate of convergence: for any
$\alpha \in (0,1]$ with $\|\dot{z}^m\|_\infty > c m^{1-\alpha}$ 
for some $c>0$, the final time numerical error (measured in 
the $L^1$-norm) is bounded by
\begin{equation}\label{eq:costPlain}
\cO{\text{COST}^{-\gamma}}, \quad 
\text{for some } 
\gamma \in \left[\frac{\alpha}{12-8\alpha}, \frac{\alpha}{10-6\alpha} \right].
\end{equation}
Here $\text{COST}$ denotes the computational cost of
solving~\eqref{eq:sscl} with temporal and spatial resolution 
parameters $\Delta t$ and $\Delta x$ (that again are linked by 
the regularity of the rough path through the CFL condition); in other words, if the 
problem is solved numerically over 
the domain $[0,T]\times [a,b]$, then
\[
\text{COST}(\Delta t, \Delta x) = \cO{\frac{T}{\Delta t}\times \frac{b-a}{\Delta x}}.
\]

A conceptually helpful way of seeing why the convergence rate
deteriorates so quickly as $\alpha$ decreases, justified by the CFL
condition applied to the flux $\dot{z}^m f$, is to think of
\eqref{eq:sscl} as being integrated along the path $z^m$ rather than
along time $t$. By that viewpoint the numerical error accumulates
along the full path length of $z^m$ and leads to the replacement of
the factor $T$ by $\abs{z^m}_{BV([0,T])}$ in the standard error estimates
for numerical methods for conservation laws
\cite{HoldenRisebro2015,Kroner:1997lq} (see also Section
\ref{sec:prelim}).

For strictly convex flux functions, the theory of
generalized characteristics and Oleinik estimates can be used to
derive a cancellation property due to rough path
oscillations. We show that for any pathwise entropy 
solution $u(T)$ with piecewise linear
path $z$, there exists a pathwise entropy solution $\tilde u(T)$ 
with a constructively defined ``less oscillatory'' path $\tilde z$ 
which is equivalent to $z$ in the sense that $u(T) = \tilde u(T)$, provided 
$u(0) = \tilde u(0)=u_0$.

The total variation of a rough path enters as a factor in 
the error estimate for the numerical methods (for details, see Section \ref{sec:prelim}). 
In an effort to improve efficiency, we develop a 
variant of the numerical methods which 
solves \eqref{eq:sscl} with $z$ replaced by the
equivalent smoother path $\tilde z$.  The theoretical efficiency gain
by doing so can be significant.  For instance, if the rough path is a 
realization of a standard Wiener process, then we 
show that the final time approximation error is bounded by 
\begin{equation}\label{eq:costOrm}
\cO{\prt{\frac{\log(\mathrm{COST})}{\text{COST}}}^{1/4}}.
\end{equation}
As sample paths of a standard Wiener process
almost surely are $\alpha$-H\"older continuous for any $\alpha<1/2$,
the improvement from~\eqref{eq:costPlain} 
with $\alpha \approx 1/2$ to~\eqref{eq:costOrm} 
is near-optimal in the sense that for conservation laws with 
$z(t)=t$, the optimal ``cost versus accuracy'' rate 
for finite volume methods is $-1/4$.

The cancellation property along with some of its theoretical
consequences are further investigated in the companion work
\cite{Hoel:2017}. Although this article studies
problem~\eqref{eq:sscl} from a numerical perspective and the companion
work~\cite{Hoel:2017} is more focused on theoretical aspects, there
are, in terms of results, some overlaps. Let us therefore point out a
few characteristic features of the approach taken in the companion
work \cite{Hoel:2017} in relation to the one taken herein. Let $u$ be
the pathwise entropy solution to \eqref{eq:sscl}. The article
\cite{Hoel:2017} has the equivalence relation induced by the map $z
\mapsto u(T)$ as its main object of study, and also as a fundamental
tool. Proofs via the mentioned equivalence relation makes continuous
paths the natural objects of ``manipulation''. In this work, a
somewhat different approach is taken. In the case of a piecewise
linear map, the solution map $u_0 \mapsto u(T)$ is factorized as a
product of solution operators, each associated to a straight line
segment of the path, cf.~\eqref{eq:uMSol} and \eqref{eq:solRep2}. What
amounts to ``manipulation'' of paths via equivalence relations in
\cite{Hoel:2017} is replaced by "manipulations" on the product of
solution operators. The equivalent ``less oscillatory'' path $\tilde
z$ is herein associated to an ``irreducible factorization" of solution
operators.

Although it is not a venue we will explore in this work, let us
mention that the equation~\eqref{eq:sscl} may be extended to
stochastic versions which are amenable to various forms of
uncertainty quantification studies. To exemplify,
let $(\Omega, \mathcal{F}, (\mathcal{F})_{t \in [0,T]}, \bP)$
denote a filtered probability space on which the standard Wiener process
is defined, and consider~\eqref{eq:sscl} with
the sampled rough path $z = W$. Then it follows from Theorem~\ref{thm:wellPosednessPes}
that $u \in C([0,T]; L^1(\bR))$, almost surely. For a given
functional $Q: L^1(\bR) \to \bR$, one may for instance seek to approximate
the quantity of interest (QoI)
\[
\E{ Q(u(T,\cdot))} = \int_{\Omega} Q(u(T, \cdot)) \bP(d\omega). 
\]
The numerical methods developed in this paper are directly applicable
to non-intrusive UQ methods for approximating QoIs of this kind, e.g.,
Monte Carlo and Multilevel Monte Carlo methods. We refer to~\cite{giles2015multilevel, haji2016multi,
abgrall2017uncertainty, mishra2012sparse, mishra2012multi, bayer2016rough,
risebro2016multilevel, babuvska2007stochastic,xiu2005high, cliffe2011multilevel}
for recent developments on
numerical methods for uncertainty quantification, and note that
the contributions of this work share similarities with pathwise adaptive
methods for conservation laws and stochastic differential
equations, cf.~\cite{johnson1995adaptive,
  hartmann2003adaptive, szepessy2001adaptive, hall2016computable,
  gaines1997variable, hoel2014implementation, hoel2016construction, giles2016non,
kelly2016adaptive, yaroslavtseva2017non}.

The remaining part of this paper is organized as follows: 
In Section~\ref{sec:prelim}, we collect some preliminary 
material, including a precise definition of pathwise solutions 
as well as relevant existence, uniqueness, and stability results. 
In Section \ref{sec:met1}, we present finite volume 
methods for solving \eqref{eq:sscl} with a general flux function $f$. 
In Section~\ref{sec:orm} we study properties of 
oscillatory cancellations (for convex fluxes $f$) which we use to
develop more efficient numerical methods. 
Section \ref{sec:conclude} wraps the paper up with 
some concluding remarks.

\section{Preliminary material}\label{sec:prelim}

If we assume that the path $z$ is Lipschitz continuous ($\alpha = 1$), then 
\eqref{eq:sscl} reduces to a standard conservation law of the form
\begin{equation}\label{eq:consClass}
    \partial_t u +   \partial_xf(u) \dot{z}= 0
    \quad \text{in} \; (0,T] \times \bR, 
    \qquad u(0,\cdot) = u_0,
\end{equation}
and, assuming for example that $u_0 \in (L^1\cap BV)(\bR)$, 
well-posedness within the framework of Kru\v{z}kov entropy solutions 
is a well-known result \cite{Dafermos:2010fk}. 
Furthermore, entropy solutions are equivalent 
to kinetic solutions \cite{Perthame:2002qy}.

However, if $z$ is merely $\alpha$-H\"older continuous,
for some $\alpha <1$, the well-posedness of entropy/kinetic solutions does not follow 
from standard arguments. This very fact motivates the following notion of solution 
\cite{Gess:2015aa,Lions:2013aa,Lions:2014aa}, which can be 
viewed as a suitable weak formulation of \eqref{eq:transformed}.

  \begin{definition}\label{def:pes}
  Assume $z \in C([0,T])$, $f \in C^2(\bR)$. Then $u\in
  (L^1\cap L^\infty)([0,T] \times \bR)$ is a pathwise entropy solution to
  equation~\eqref{eq:sscl} provided there exists a non-negative,
  bounded measure $m$ on $\bR\times \bR\times [0,T]$ such that for all
  $\rho_0 \in C^\infty_0(\bR^2)$ and $\rho$ given by
  \[
  \rho(x,y,\xi,\eta,t) := \rho_0(y-x +f'(\xi)z(t), \xi-\eta),
  \]
  and all $\phi \in C_0^\infty([0,T])$,
    \begin{multline}
      \int_0^T \partial_t \phi(r) (\rho*\chi)(y,\eta,r)dr
  +\phi(0)(\rho*\chi)(y,\eta,0)- \phi(T)(\rho*\chi)(y,\eta,T)\\ = \int_0^T
  \phi(r) \partial_\xi\rho(x,y,\chi,\eta,r)m(x,\xi,r) dx d\xi dr,
  \end{multline}
  where the ``convolution along characteristics" term 
  $\rho*\chi$ is defined
  by
  \[
  \rho*\chi(y,\eta,r) := 
  \int \rho(x,y,\xi,\eta,r)\chi(x,\xi,r)dx d\xi.
  \]
  \end{definition}
  We note that for a continuous, piecewise Lipschitz path $z(t)$,
  the notions of entropy and pathwise entropy solutions coincide.  
  We recall the following existence, uniqueness, and stability results for
  pathwise entropy solutions~\cite[Theorem 3.2]{Lions:2013aa}.

\begin{theorem}\label{thm:wellPosednessPes}
  Let $u_0\in (L^1\cap L^\infty)(\bR)$ and assume $z \in C([0,T])$ 
  and $f \in C^2(\bR)$.  Then there exists a unique pathwise entropy solution $u
  \in C([0,T]; L^1(\bR))$ which satisfies the following inequality
  for all $p \in [1,\infty]$:
  $$
  \sup_{t\in [0,T]} \|u(t)\|_{L^p(\bR)} \leq \|u_0\|_{L^p(\bR)}.
  $$
  Furthermore, if $u$ and $\bar u$ represent the pathwise entropy
  solution with respective paths $z$ and $\bar z$, then there exists a
  uniform constant $C>0$ such that for all $t\in[0,T]$,
  \begin{equation}\label{eq:stabilityPes}
    \begin{split}
      \|u(t) - \bar u(t)\|_1 &\leq \|u_0 - \bar u_0\|_{1} 
      +C\Big[ \|f'\|(\|u_0\|_{BV} + \|\bar u_0\|_{BV})|(z-\bar z)(t)|\\
      &+\sqrt{\sup_{s\in(0,t)}|(z -
        \bar z)(s)|\|f''\|(\|u_0\|_2+\|\bar u_0\|_2)}\Big].
    \end{split}
  \end{equation}
\end{theorem}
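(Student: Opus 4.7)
My plan is to prove this primarily as a stability statement and then harvest existence and uniqueness from it. The overall strategy has three steps. First, establish \eqref{eq:stabilityPes} when $z$ and $\bar z$ are Lipschitz, where pathwise entropy solutions coincide with Kru\v{z}kov entropy/kinetic solutions (as noted after Definition \ref{def:pes}). Second, use the estimate with smooth approximations $z^n \to z$ (uniformly on $[0,T]$) to show that the classical solutions $u^n$ form a Cauchy sequence in $C([0,T]; L^1(\bR))$, and verify that the limit satisfies Definition \ref{def:pes} by passing to the limit in the weak formulation — using that $\rho$ depends continuously on $z$ and that $\chi^n \to \chi$ pointwise a.e. Third, recover uniqueness by applying the stability bound with identical paths and initial data, and inherit the $L^p$ bound from the Kru\v{z}kov $L^p$ contraction. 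The real work sits in the first step.

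For smooth $z, \bar z$, I would run a doubling-of-variables argument on the kinetic equations for $\chi(\xi,u)$ and $\bar\chi$, paired via the convolution-along-characteristics test function $\rho$ introduced in Definition \ref{def:pes}, mollified at a scale $\varepsilon>0$. This function is engineered to annihilate the drift of the $u$-equation along the characteristics $x + f'(\xi) z(t)$. Pairing with the $\bar u$-equation leaves a transport residual that expands to first order in $(z - \bar z)$ as two contributions: one proportional to $(z-\bar z)\,f'(\xi)$ tested against a $y$-derivative of $\chi$, and one proportional to $(z-\bar z)\,f''(\xi)$ coming from the mismatch of propagation directions in $\xi$. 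The first is controlled by the total variation of $u$, which is non-increasing in time, producing the linear $\|u_0\|_{BV}$ term in \eqref{eq:stabilityPes}.

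The $f''$ term is the principal obstacle and is the source of the $\sqrt{\cdot}$ exponent. A direct estimate of this contribution costs one factor of $\varepsilon^{-1}$ from a derivative of the mollifier and yields a bound of size $\varepsilon^{-1}\|f''\|\sup_s|z-\bar z|(s)$; on the other hand, the mollification error incurred on the sign quadratic form $\int \chi\bar\chi\,d\xi$ is bounded by $C\varepsilon(\|u_0\|_2^2+\|\bar u_0\|_2^2)$ via Cauchy--Schwarz, which is where the $L^2$ norms enter. Balancing the two contributions at $\varepsilon \sim \sqrt{\sup_s|z-\bar z|(s)}$ produces the square-root factor, and a Gronwall argument on the resulting $L^1$ difference closes the estimate. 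With this stability bound in place, existence of the limit in step two, the $L^p$ bounds, and uniqueness all follow from the scheme outlined above.
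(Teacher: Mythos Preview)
The paper does not actually prove this theorem: it is quoted as \cite[Theorem 3.2]{Lions:2013aa}, and the only discussion is the remark following the statement, which describes the same two-step strategy you propose (first establish the estimate for smooth paths, then extend to $z\in C([0,T])$ by approximation and continuous dependence). Your sketch of the smooth-path step---kinetic doubling using the convolution-along-characteristics test function, splitting the residual into a $f'$ contribution controlled by $BV$ and a $f''$ contribution balanced against an $\varepsilon$-mollification error to produce the square root---is the argument of \cite{Lions:2013aa}, so your proposal is correct and aligned with the cited proof.
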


\begin{remark}
Theorem~\ref{thm:wellPosednessPes} is proved in two steps. First, the
result is verified for smooth paths $z \in C^1([0,T])$. Thereafter, 
the result is extended to $z\in C([0,T])$ by utilizing an approximation sequence
$\{z^m\}_m \subset C^1([0,T])$ such that $z^m \to z$ in $C([0,T])$,
and using that solutions of~\eqref{eq:consClass} depend continuously on the driving path. 
\end{remark}

\begin{remark}
According to Theorem~\ref{thm:wellPosednessPes}, 
the pathwise entropy solution of \eqref{eq:consClass} 
depends continuously on the rough 
path $z(t)$ in the supremum norm. It is also possible to prove a 
variant of \eqref{eq:stabilityPes} that includes continuous 
dependence with respect to the flux $f$. Such estimates 
are relevant for some numerical methods \cite{HoldenRisebro2015,Lucier:1986px}. 
Suppose $\bar u$ is the pathwise entropy solution of \eqref{eq:consClass} 
with the ``data" $(u_0, z,f)$ replaced by $(\bar u_0,\bar z,\bar f)$. 
Then the ``continuous dependence" 
estimate \eqref{eq:stabilityPes} is replaced by
\begin{equation}\label{eq:cd1}
	\begin{split}
		\norm{u(t,\cdot)-\bar u(t,\cdot)}_{L^1(\R)}
		& \le \norm{u_0-\bar u_0}_{L^1(\R)}
		\\ & \qquad
		+C \Bigl(\,  
		t\norm{f' z-\bar f' \bar z}_{L^\infty}
		+\sqrt{\norm{f'' z-\bar f'' \bar z}_{L^\infty}}\,
		\Bigr),
	\end{split}
\end{equation}
for some constant $C$ depending on $\norm{(u_0,\bar u_0)}_{L^2\cap BV}$. 
We omit the (lengthy) proof since the arguments are very 
similar to those found in \cite{Lions:2013aa}. Earlier ``deterministic" 
continuous dependence estimates can be found, 
e.g., in \cite{Chen:2005wf,Karlsen:2003za,Lucier:1986px}.

The numerical methods presented later are based on 
replacing the rough path $z(t)$ by a piecewise linear, Lipschitz continuous 
approximation $\bar z(t)$. Suppose for the moment that 
both paths $z(t),\bar z(t)$ are Lipschitz continuous. 
Then, adapting the arguments in 
\cite{Chen:2005wf,Karlsen:2003za,Lucier:1986px}, 
one can prove the following stability estimate:
\begin{equation}\label{eq:cd2}
	\begin{split}
		\norm{u(t,\cdot)-\bar u(t,\cdot)}_{L^1(\R)}
		& \le \norm{u_0-\bar u_0}_{L^1(\R)}
		\\ & \qquad
		+C \Bigl(\,  
		\norm{f'-\bar f'}_{L^\infty}
		+\norm{\dot z- \dot{\bar{z}}}_{L^1((0,t))}\,
		\Bigr),
	\end{split}
\end{equation}
where the constant $C$ depends on the data as follows:
$$
C=C\left(
\norm{\left(u_0,\bar u_0\right)}_{BV}, 
\norm{\left(f',\bar f'\right)}_{L^\infty},
\norm{\left(\dot z, \dot{\bar{z}} \right)}_{L^1((0,t))}
\right).
$$
At variance with \eqref{eq:cd1}, note that the estimate 
\eqref{eq:cd2} does not depend on the second derivative of the flux, 
but it does depend on the derivative of the path (actually the 
total variation of the path). Consequently, there is a trade-off between 
the regularity of the nonlinear flux function and the regularity of the path.
\end{remark}

\section{The first numerical method}\label{sec:met1}

In this section we describe numerical methods for \eqref{eq:sscl}. 
Convergence rates are derived and a few numerical examples are presented 
to illustrate the qualitative behavior of solutions.

Since solutions to \eqref{eq:sscl} depend on the differential of the
driving path $z$, but not on its initial value $z(0)$, we may
without loss of generality restrict ourselves to
driving paths in the function space
\[
C_0([0,T]) \coloneq \{g \in C([0,T]) \mid g(0) = 0\}.
\]
Denote the set of $\alpha$-H\"older continuous functions on $[0,T]$
that are zero-valued at $t=0$ by
\[
C^{0,\alpha}_0([0,T]) \coloneq \left\{z \in C_0([0,T]) \, \Bigg\vert\, \sup_{s\neq t
    \in [0,T]} \frac{|z(s)-z(t)|}{|t-s|^\alpha} < \infty  \right\}, \qquad
\alpha \in (0,1].
\]
The set of Lipschitz continuous functions on $[0,T]$
that are zero-valued at $t=0$ are denoted by 
\[
\Lip{[0,T]} \coloneq C^{0,1}_0([0,T]).
\]
Given a mesh 
\[
0= \tau_0 < \tau_1 < \cdots < \tau_m = T, \qquad m \ge 2,
\]
we introduce the set of functions which are Lipschitz continuous over 
$[0,T]$ and linear over each interval $[\tau_k ,\tau_{k+1}]$, i.e.,
\[
\begin{split}
I_0([0,T]; \{\tau_j\}_{j=0}^m) \! \coloneq \! \!\Big\{g \in \Lip{[0,T]} \, 
\Big\vert\, g|_{[\tau_k, \tau_{k+1}]}(t)\! =&
g(\tau_k) \!+ \! \frac{t-\tau_k}{\Delta \tau} (g(\tau_{k+1}) - g(\tau_k)) \\
& \quad \text{ for all } k=0,1,\ldots,m-1\Big\}.
\end{split}
\]
We also introduce the operator 
$\cI[\cdot](\cdot; \{\tau_j\}_{j=0}^m):C_0([0,T]) \to I_0([0,T]; \{\tau_j\}_{j=0}^m)$
defined by
\begin{equation}\label{eq:interpolationDef}
\begin{split}
\cI[g](t; \{\tau_j\}_{j=0}^m) & \coloneq \1{[\tau_0, \tau_{1}]}(t)
\left(g(\tau_0) +  \frac{t-\tau_0}{\Delta \tau} (g(\tau_{1}) -
  g(\tau_0)) \right) \\
&\quad + \sum_{k=1}^{m-1}  \1{(\tau_k, \tau_{k+1}]}(t)
\left(g(\tau_k) +  \frac{t-\tau_k}{\Delta \tau} (g(\tau_{k+1}) -
  g(\tau_k)) \right),
\end{split}
\end{equation}
for $g \in C_0([0,T])$.
On some occasions we use the shorthand notations
$I^m_0([0,T]) = I_0([0,T]; \{\tau_j\}_{j=0}^m)$ and 
$\cI^m[\cdot] = \cI[\cdot](\cdot; \{\tau_j\}_{j=0}^m)$.

\subsection{Framework for numerical solvers}\label{sec:framework}

We propose the following numerical method for solving~\eqref{eq:sscl}:

\begin{enumerate}

\item[(i)] For an appropriately chosen 
  mesh $\{\tau_j\}_{j=0}^m$, approximate the rough path $z \in C_0([0,T])$ 
  by the piecewise linear interpolant $z^m \coloneq \cI^m[z]$.

\item[(ii)] Solve~\eqref{eq:consClass}
  with the rough path $z$ replaced by the Lipschitz path $z^m$, using a 
  consistent, conservative and monotone finite volume method (for entropy solutions).

\end{enumerate}

With the purpose of studying properties of the entropy solution
of~\eqref{eq:sscl} with path $z^m$, we introduce 
the solution operator $\cS^{\cdot}(\cdot) \cdot$ mapping 
$\bR \times [0,\infty) \times (L^1 \cap BV)(\bR)$ 
into $(L^1 \cap BV)(\bR)$. For $\kappa \in \bR$, $s \ge 0$ and $v \in  (L^1
\cap BV)(\bR)$, $\cS^{\kappa}(s) v$ denotes the solution at time $t=s$ of
\begin{equation}\label{eq:map1}
  \partial_t u +  \kappa \partial_x f(u) = 0
  \quad \text{in}\; (0,\infty)\times\bR,
  \qquad u(0,\cdot)  = v.
\end{equation}
Using the convention that for any $k=0,1,\ldots,m-1$, 
$\dot{z}^{m}_k \coloneq \lim_{t \downarrow \tau_k} \dot{z}^m(t)$
and denoting $\Delta \tau_k = \tau_{k+1} - \tau_k$,
we define 
\begin{equation}\label{eq:uMSol}
  u^m(t, \cdot) = 
\begin{cases}
u_0 & \text{if } t = 0,\\
\cS^{\dot z_0^m} (t) u_0 & \text{if } t \in
(\tau_0, \tau_{1}], \\
\cS^{\dot z_1^m} (t-\tau_1) \cS^{\dot z_0^m} (\Delta \tau_0) u_0 & \text{if } t \in
(\tau_1, \tau_{2}], \\
\vdots \\
\cS^{\dot{z}_{m-1}^m}( t - \tau_{m-1}) \cS^{\dot z_{m-2}^m} (\Delta \tau_{m-2})
\ldots \cS^{\dot z_0^m} (\Delta \tau_0) u_0 & \text{if } t \in
(\tau_{m-1}, T].
\end{cases}
\end{equation}

To justify step (ii) of the above algorithm, let us verify that $u^m$ is
a Kru{\v{z}}kov entropy solution as well as a pathwise entropy solution. 
\begin{lemma}\label{lem:entropyPes}
  Assume that $u_0 \in (L^1 \cap BV)(\bR)$, $f \in C^2(\bR)$,
  $z\in C_0([0,T])$ and $z^m \in \cI^m[z]$. Then the function $u^m$
  defined by equation~\eqref{eq:uMSol} is a Kru\v{z}kov entropy
  solution of~\eqref{eq:consClass} with driving path $z^m$, such that
\begin{equation}\label{eq:um-prop}
  u^m \in C([0,T];L^1(\bR)) \text{ and }
  |u^m(t)|_{\BV{\bbR}} \le  |u_0|_{\BV{\bbR}} \; \forall t \in [0,T]. 
\end{equation}
  Moreover, $u^m$ is also a
  pathwise entropy solution of~\eqref{eq:sscl} with driving path $z^m$.
\end{lemma}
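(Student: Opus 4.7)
The plan is to verify the three assertions of the lemma in turn: that $u^m$ is a Kru\v{z}kov entropy solution of~\eqref{eq:consClass} with driving path $z^m$, that it satisfies the regularity claimed in~\eqref{eq:um-prop}, and finally that it is also a pathwise entropy solution.

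For the first assertion, the key observation is that on each subinterval $(\tau_k,\tau_{k+1})$ the path $z^m$ is affine with slope $\dot z_k^m$. By construction, $u^m$ restricted to this slab is the image of $u^m(\tau_k,\cdot)\in (L^1\cap BV)(\bR)$ under the Kru\v{z}kov entropy semigroup $\cS^{\dot z_k^m}(\cdot)$, hence is the unique Kru\v{z}kov entropy solution of the standard conservation law $\partial_t u+\dot z_k^m\partial_x f(u)=0$ on $(\tau_k,\tau_{k+1})\times\bR$. For any $c\in\bR$ and any non-negative $\phi\in C_c^\infty((0,T)\times\bR)$ the local entropy inequality, in its time-integrated form, reads
\begin{equation*}
\int_{\tau_k}^{\tau_{k+1}}\!\!\int_\bR \Bigl(\abs{u^m-c}\partial_t\phi + \sign{u^m-c}(f(u^m)-f(c))\dot z^m\partial_x\phi\Bigr)\dx\dt \ge \bigl[B_k(t)\bigr]_{t=\tau_k^+}^{t=\tau_{k+1}^-},
\end{equation*}
where $B_k(t)\coloneq \int_\bR\abs{u^m(t,x)-c}\phi(t,x)\dx$. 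Summing over $k=0,\ldots,m-1$, the outer terms at $t=0$ and $t=T$ vanish because $\phi$ is compactly supported in $(0,T)\times\bR$, while at each interior node $\tau_k$ the matching identity $u^m(\tau_k^-,\cdot)=u^m(\tau_k^+,\cdot)$ in $L^1(\bR)$, inherited from the $L^1$-time continuity of the Kru\v{z}kov semigroup, makes the boundary contributions cancel pairwise. What remains is the global Kru\v{z}kov entropy inequality for $u^m$ on $(0,T)\times\bR$ with flux $\dot z^m f$; the initial condition $u^m(0,\cdot)=u_0$ is built into the definition~\eqref{eq:uMSol}.

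The regularity~\eqref{eq:um-prop} is inherited from the same semigroup properties: each factor $\cS^{\dot z_k^m}(\cdot)$ is $L^1$-time continuous and total-variation diminishing on $(L^1\cap BV)(\bR)$, so the bound $|u^m(t)|_{\BV{\bR}}\le|u_0|_{\BV{\bR}}$ propagates through the composition, and the matching values at each $\tau_k$ splice the individual continuous pieces into a single trajectory in $C([0,T];L^1(\bR))$.

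For the final assertion, the remark immediately following Definition~\ref{def:pes} records that for continuous, piecewise Lipschitz paths the notions of Kru\v{z}kov entropy and pathwise entropy solutions coincide; since $z^m$ is of this type, the pathwise entropy property follows at once. The only delicate point of the argument is the telescoping step, where the local entropy inequalities on the individual slabs must be upgraded to a global distributional inequality across the kink points of $z^m$; this is precisely where the $L^1$-in-time continuity of the Kru\v{z}kov semigroup plays the decisive role.
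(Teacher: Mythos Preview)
Your proof is correct and follows essentially the same route as the paper's: establish the local Kru\v{z}kov entropy inequality on each slab $[\tau_k,\tau_{k+1}]$, use the $L^1$-time continuity of the entropy semigroup to telescope the boundary terms at the interior nodes, invoke the TVD property for~\eqref{eq:um-prop}, and finally appeal to the equivalence of entropy and pathwise entropy solutions for piecewise Lipschitz paths. The only cosmetic difference is that the paper writes the local inequality with explicit boundary terms at $\tau_k$ and $\tau_{k+1}$ (rather than your $B_k$ notation) and cites \cite{Lions:2013aa} and \cite{HoldenRisebro2015} directly for the last two steps.
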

\begin{proof}
It is enough to remark that 
$u^m$ is a Kru{\v{z}}kov entropy solution of \eqref{eq:consClass}.
By \cite{Lions:2013aa} it is then also 
a pathwise entropy solution of \eqref{eq:sscl}, as 
the total variation of $z^m$ is finite. 
Indeed, by construction, on each 
time interval $\left[\tau_k,\tau_{k+1}\right]$, $u^m$ satisfies 
the following local Kru\v{z}kov entropy condition:
\begin{align*}
    & \int_{\tau_k}^{\tau_{k+1}} \int_{\bR} \abs{u^m -c} \phi_t +
    \sign{u-c}\dot{z}^m(f(u^m)-f(c))\phi_x\,  \dx\dt 
    \\
    & + \int_{\bR} \abs{u^m(\tau_{k}+,x) -c } \phi(\tau_{k}, x) -
    \abs{u^m(\tau_{k+1}-,x) -c} \phi(\tau_{k+1}, x) \dx \ge 0,
\end{align*}
for all $c \in \bR$ and test functions $\phi \ge 0$.  
Since $u^m \in C_0([0,T];L^1(\bR))$,
we have that $u(\tau_k+)=u(\tau_k-)$ 
in $L^1(\bbR)$-sense, and summing over $k=0,\ldots,m-1$ gives 
that $u^m$ is a Kru\v{z}kov entropy solution on $[0,T]$. 
See~\cite{HoldenRisebro2015} for verification of \eqref{eq:um-prop}.
\end{proof}

\subsection{Numerical schemes}
Let $U$  denote a finite volume method approximation
of $u^m$ with uniform spatial and temporal mesh parameters $\Delta x$ and 
$\Delta t$ such that
\[
U^n_j \approx \frac{1}{\Delta x} \int_{x_j-\Delta x/2}^{x_j+\Delta x/2} u^m(t_n,y) \dy,
\qquad t_n = n\Delta t, \, x_j = j\Delta x.
\]
Although theoretical results will be stated in more generality, 
we have in the numerical implementations 
restricted ourselves to two numerical methods:
the Lax-Friedrichs scheme
\[
U^{n+1}_j = \frac{U^n_{j+1} + U^n_{j-1} }{2} - \frac{\dot{z}^m(t_n)
  \Delta t}{2\Delta x} \Big(f(U^n_{j+1})
- f(U^n_{j-1})\Big),
\]
and the Engquist--Osher scheme
\[
\begin{split}
U^{n+1}_j &= U^n_j - \frac{ \dot{z}^m(t_n) \Delta t}{2\Delta x}
\Bigg( f(U^n_{j+1}) - f(U^n_{j-1}) \\
& \qquad \qquad \qquad \qquad \qquad 
- \sign{\dot{z}^m(t_n)} \Big(  \int^{U^n_{j+1}}_{U^n_j} |f'(s)| ds - \int^{U^n_{j}}_{U^n_{j-1}}
 |f'(s)| ds \Big) \Bigg).
\end{split}
\]
If volume averages of $u_0$ are computable, both schemes 
are initialized by setting
\begin{equation}\label{eq:u0Approx}
U_j^0:= \frac{1}{\Delta x}\int_{x_j-\Delta x/2}^{x_j+\Delta x/2} u_0(y)dy,
\end{equation}
otherwise each volume average of $u_0$ is approximated using a finite number
of quadrature points evaluating the c\`{a}dl\`{a}g modification of $u_0$ over each volume.

For a consistent treatment of $\dot{z}^m$ at
(possible) discontinuity points $\{\tau_j\}$, we will
always assume that $\{\tau_j\} \subset \{t_n\}$, i.e., the 
interpolation points of $z^m$ constitute a subset of 
the temporal mesh points used in the finite volume scheme. 

We refer to \cite{HoldenRisebro2015,Kroner:1997lq} for 
background material on numerical methods for conservation laws.

\subsection{Resolution balancing and convergence rates}

Assuming that the mesh $\{\tau_j\} \subset [0,T]$ consists of uniformly spaced
points, the numerical solution $U^n_j$ 
defined above has three ``resolution parameters": the rough path 
interpolation step size $\Delta \tau$, and the temporal/spatial mesh sizes 
$\Delta t$ and $\Delta x$ of the finite volume method.
To construct an efficient and stable (convergent) numerical method, 
these parameters must be appropriately balanced. 
In this section, we derive a convergence rate expressed
in terms of the resolution parameters, and determine the optimal
balance for minimizing the error in terms of computational cost.

The next lemma contains our first convergence rate result.
\begin{lemma}
  Let $u\in C([0,T]; L^1(\bR))$ denote the unique pathwise entropy solution of \eqref{eq:sscl}
  for given $u_0 \in (L^1 \cap BV)(\bR)$, $f \in C^2(\bR)$, and $z \in C^{0,\alpha}_0([0,T])$
with $\alpha \in (0,1]$. Assume that $\{\tau_j\}_{j=0}^m\subset [0,T]$ are uniformly
spaced points with step size $\Delta \tau = T/m$, and that for
the numerical solution $U$ defined in Section~\ref{sec:framework}
the following global CFL condition is fulfilled: 
\begin{equation}\label{eq:cfl}
\frac{\Delta t  \|\dot{z}^m\|_\infty \|f'\|_\infty}{\Delta x} \leq C_{\mathrm{CFL}}, 
\end{equation}
where the constant $C_{\mathrm{CFL}}>0$ depends on the scheme used. 
Then
\begin{equation}\label{eq:rate1Met1}
\|u(T) - U(T)\|_1 \leq \|u_0 - U(0) \|_1 + C\prt{\Delta \tau^{\alpha/2} + \Delta \tau^{\alpha-1}
(\sqrt{\Delta t} + \sqrt{\Delta x})},
\end{equation}
where $C>0$ is independent of the resolution parameters.
\end{lemma}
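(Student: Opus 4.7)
The natural strategy is to insert the intermediate object $u^m$ defined by \eqref{eq:uMSol} and split via the triangle inequality
\[
\|u(T) - U(T)\|_1 \leq \underbrace{\|u(T) - u^m(T)\|_1}_{(\mathrm{I})} + \underbrace{\|u^m(T) - U(T)\|_1}_{(\mathrm{II})}.
\]
By Lemma~\ref{lem:entropyPes}, $u^m$ is simultaneously the pathwise entropy solution of \eqref{eq:sscl} with the Lipschitz path $z^m$ (which makes $(\mathrm{I})$ accessible via Theorem~\ref{thm:wellPosednessPes}) and a Kru\v{z}kov entropy solution for the piecewise-constant coefficient $\dot z^m$ (which makes $(\mathrm{II})$ accessible via classical Kuznetsov theory).

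For $(\mathrm{I})$, I would apply the continuous-dependence estimate \eqref{eq:stabilityPes} with $\bar u_0 = u_0$ and $\bar z = z^m$. The initial-data contribution vanishes, leaving only the two path-difference terms, both controlled by $\|z - z^m\|_\infty$. A standard interpolation estimate for $\alpha$-H\"older functions on a uniform mesh of width $\Delta \tau$ yields $\|z - z^m\|_\infty \leq C [z]_{0,\alpha}\Delta\tau^\alpha$. Substituting, the linear term is $O(\Delta\tau^\alpha)$ and the square-root term is $O(\Delta\tau^{\alpha/2})$, so (since $\alpha \leq 1$) one obtains $(\mathrm{I}) \leq C\Delta\tau^{\alpha/2}$.

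For $(\mathrm{II})$, I would exploit that $\dot z^m$ is constant on each $(\tau_k, \tau_{k+1}]$, so $u^m$ restricted there solves an autonomous conservation law $\partial_t v + \dot z_k^m \partial_x f(v) = 0$. Since $\{\tau_j\} \subset \{t_n\}$, the finite-volume update on this subinterval is exactly a consistent, conservative, monotone scheme for that law. After the rescaling $t \mapsto \dot z_k^m (t - \tau_k)$ (absorbing the sign of $\dot z_k^m$ into the direction of $f$), the classical Kuznetsov estimate recalled in \cite{HoldenRisebro2015,Kroner:1997lq} yields
\[
\|u^m(\tau_{k+1}) - U(\tau_{k+1})\|_1 \leq \|u^m(\tau_k) - U(\tau_k)\|_1 + C\,|z^m(\tau_{k+1}) - z^m(\tau_k)|\,(\sqrt{\Delta t} + \sqrt{\Delta x}),
\]
with $C$ depending only on $\|f'\|_\infty$ and $|u_0|_{\BV{\bbR}}$, the latter propagated by \eqref{eq:um-prop}. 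Telescoping over $k = 0, \ldots, m-1$ and using $\sum_k |z^m(\tau_{k+1}) - z^m(\tau_k)| = |z^m|_{\BV{[0,T]}} \leq m C [z]_{0,\alpha}\Delta\tau^\alpha = C T \Delta\tau^{\alpha - 1}$ gives $(\mathrm{II}) \leq \|u_0 - U(0)\|_1 + C T \Delta\tau^{\alpha - 1}(\sqrt{\Delta t} + \sqrt{\Delta x})$.

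The main obstacle is the per-piece Kuznetsov estimate: the sign of $\dot z_k^m$ may flip from piece to piece, so the analysis must be stable under sign reversal, and the local constant must not accumulate faster than proportionally to $|z^m(\tau_{k+1}) - z^m(\tau_k)|$, otherwise summing over the $m = T/\Delta \tau$ pieces would be catastrophic. The time-rescaling trick handles this because on each piece it reduces the problem to the canonical setting where the Kuznetsov constant depends only on $\|f'\|_\infty$ and the (spatially) preserved BV bound on $u^m$; monotonicity and $L^1$-contractivity of Lax-Friedrichs and Engquist-Osher ensure that sign flips of $\dot z_k^m$ cost nothing extra. Combining the bounds for $(\mathrm{I})$ and $(\mathrm{II})$ then produces \eqref{eq:rate1Met1}.
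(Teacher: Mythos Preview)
Your proof is correct and follows the same overall strategy as the paper: split via $u^m$, bound $(\mathrm{I})$ by the stability estimate \eqref{eq:stabilityPes} together with $\|z-z^m\|_\infty = \cO{\Delta\tau^\alpha}$, and bound $(\mathrm{II})$ by Kuznetsov. The only difference is in the treatment of $(\mathrm{II})$: the paper applies Kuznetsov \emph{once globally} on $[0,T]$ with the time-dependent flux $\dot z^m f$, which produces the factor $\|\dot z^m\|_\infty = \cO{\Delta\tau^{\alpha-1}}$ directly; you instead apply Kuznetsov \emph{piece by piece} on each $[\tau_k,\tau_{k+1}]$ and telescope, obtaining the factor $\abs{z^m}_{\BV{[0,T]}}$, which you then bound by $CT\Delta\tau^{\alpha-1}$. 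Your route is marginally sharper in intermediate form (it yields $\abs{z^m}_{\BV{[0,T]}}$ rather than $T\|\dot z^m\|_\infty$) and is precisely the argument the paper deploys for the adaptive-timestep refinement in Theorem~\ref{thm:met1Compl2}; for the present lemma both routes collapse to the same stated bound \eqref{eq:rate1Met1}.
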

\begin{proof}
  Recall that for the given initial data $u_0$ and flux $f$,
  $u^m$ denotes the pathwise entropy solution
  with driving signal $z^m$, and $U$ denotes the corresponding
  numerical solution with path $z^m$. By the triangle inequality 
and \eqref{eq:stabilityPes},
\begin{equation}
\begin{aligned}
	\|u(T) - U(T)\|_1 &\leq \|u(T)-u^m(T)\|_1 + \|u^m(T) -U(T)\|_1 \\
	& \le C \sqrt{\|z-z^m\|_\infty } +\|u^m(T) - U(T)\|_1.
\end{aligned}\label{eq:LSstar}
\end{equation}
The error can thus be bounded by the sum of the path
approximation error and the finite volume discretization error. 
Since $z \in C^{0,\alpha}_0([0,T])$ and $z^m$ uses $m+1$ uniformly spaced
interpolation points with step size $\Delta \tau$,
$$
\|z-z^m\|_\infty = \cO{\Delta \tau^{\alpha}} \quad \text{and} \quad
\|\dot{z}^m\|_\infty = \cO{\Delta \tau^{\alpha-1}}.
$$
To bound the second term, we repeat the proof of Kuznetsov's
lemma (see~e.g.~\cite{HoldenRisebro2015}, with $f$ 
replaced by $\dot{z}^m f$) to derive that for some constant $C$, depending
on $\|f'\|_\infty$ and $|u_0|_{BV(\bR)}$, the following error estimate holds 
for any consistent, conservative and monotone finite volume approximation:
\begin{equation}\label{eq:globErr1}
	\begin{split}
		\|u^m(T) - U(T)\|_1 &\leq \|u^m_0 - U(0)\|_1 
		+ C\|\dot{z}^m\|_\infty (\sqrt{\Delta t} + \sqrt{\Delta x})
		\\ & \le \|u^m_0 - U(0)\|_1 
		+ C\Delta \tau^{\alpha-1}(\sqrt{\Delta t} + \sqrt{\Delta x}).
\end{split}
\end{equation}
\end{proof}

Having obtained a convergence rate expressed in
terms of the resolution parameters, we next seek to optimally balance
these parameters for the purpose of minimizing computational cost
versus accuracy. Let us first dicuss briefly how the spatial support
of the numerical solutions grows in time.

For any $y \in \bR$, let $\lceil y \rceil$ denote the smallest $n \in
\bZ$ such that $n \ge y$. For two functions $g_1$ and $g_2$ we use the
notation $g_1(m) = \Theta(g_2(m))$ to signify that there exists two
positive constants $C_1$ and $C_2$ such that $C_1 g_1(m)\le g_2(m) \le
C_2 g_1(m)$ for all $m$, in particular $g_1(m)=\Theta(g_2(m))$ implies that
$g_1(m) = \cO{g_2(m)}$ and $g_2(m) = \cO{g_1(m)}$. Let $N= \lceil
T/\Delta t \rceil$ denote the number of timesteps used in the finite
volume method (expressing that number by $N$ rather than $\lceil
T/\Delta t \rceil$ simplifies the transition to non-uniform timesteps
$\Delta t_n$ later on).

Suppose that at some time $t_n \in [0,T-\Delta t]$, we have 
\[
-\infty < a =\inf\{x \in \bR \mid U(t_n,x) \neq 0\}
\quad
\text{and} \quad
\sup\{x \in \bR \mid U(t_n,x) \neq 0\} = b < \infty.
\]
Computing $U(t_{n+1})$ from $U(t_{n})$
by a $k$-stencil numerical scheme yields 
\[
-\infty < a - k\Delta x \le \inf\{x \in \bR \mid U(t_{n+1},x) \neq 0\}
\]
and
\[
\sup\{x \in \bR \mid U(t_{n+1},x) \neq 0\} \le  b + k\Delta x < \infty.
\]
Let $\text{Leb}(\cdot)$ denote the Lebesgue measure on $(\bR,\mathcal{B})$
and for any $g \in L^1(\bR)$ let $\supp{g}$ denote the essential
support of $g$. 
Based on the above observations, we will in the sequel assume that
for any $u_0 \in (L^1 \cap BV)(\bR)$ with $\Leb{ \supp{u_0}} >0$,
$z^m = \cI^m[z]$ and $f \in C^2(\bR)$,
there exists constants $c_1, c_2>0$ such that 
\begin{equation}\label{eq:finiteSupp}
  c_1 \le \text{Leb}\prt{\bigcup_{k\in \{0,1,\ldots,N\} } \text{supp}(U(t_k)) }
  \le c_2 \prt{1 + N \Delta x}
\end{equation}
Note that in the classical setting $\alpha \ge 1$,
the CFL condition~\eqref{eq:cfl} allows for $\Delta t = \tO{\Delta x}$.
This yields $N = \tO{\Delta x^{-1}}$, and assumption~\eqref{eq:finiteSupp}
becomes
\[
\text{Leb}\prt{\bigcup_{k\in \{0,1,\ldots,N\} } \text{supp}(U(t_k)) } = \tO{1},
\]
indicating finite speed of propagation. 
If $\alpha <1$, however, then the CFL condition imposes
the constraint 
$N \Delta x \ge C \|\dot{z}^m\|_\infty$.
So whenever $\lim_{m\to \infty} \|\dot{z}^m\|_\infty=\infty$, a numerical solution
generated by a scheme with artificial diffusion may attain 
infinite speed of propagation in the limit as $m\to \infty$ (although this is not 
an issue with the numerical examples presented later).

\begin{theorem}\label{thm:met1Compl}
Let $u\in C([0,T]; L^1(\bR))$ be the unique pathwise 
entropy solution of \eqref{eq:sscl}, for given 
$u_0 \in (L^1 \cap BV)(\bR)$ with $\Leb{ \supp{u_0}}<\infty$,
$f \in C^2(\bR)$ with $\|f'\|_\infty>0$, and $z \in C^{0,\alpha}_0([0,T])$ 
with $\alpha \in (0,1]$. For any $m \ge 2$, let $\{\tau_j\}_{j=0}^m\subset [0,T]$ 
denote the uniform mesh with step size $\Delta \tau = T/m$ and 
assume the computational cost of generating the interpolant
$z^m = \cI^m[z]$ is $\Theta(m^\beta)$ for some $\beta \ge 1$,
and that there exists an $\check m \ge2$ such that
\[
\|\dot{z}^m\|_{\infty} > m^{(1-\alpha)/3} \qquad \forall m \ge \check m.
\]
Let $U$ denote a numerical solution linked to 
the two-step algorithm in Section~\ref{sec:framework},
satisfying the CFL condition
\begin{equation}\label{eq:cfl2}
  \Delta t = \frac{\Delta \tau}{ \max\prt{ \left \lceil \frac{ \max\prt{\{|\Delta z^m_k|\}_{k=0}^{m-1}} \norm{f'}_\infty}{C_{\mathrm{CFL}} \Delta x}\right \rceil, \, 1 }}.
\end{equation}
Assume that the spatial support of $U([0,T])$ is
covered by an interval $[a_m,b_m] \subset \bR$ that satisfies
\[
c_1 \le b_m-a_m \le c_2 (1+ N\Delta x),
\]
for some $c_1,c_2>0$, cf.~\eqref{eq:finiteSupp}.
  
Then the optimal balance of resolution parameters
for minimizing computational cost versus accuracy 
is
\[
\Delta x =\tO{\frac{\Delta \tau^\alpha}{\|\dot{z}^m\|_\infty^2 }} \quad \text{and} \quad
\Delta t = \tO{\frac{\Delta \tau^\alpha}{\|\dot{z}^m\|_\infty^3  }},
\]
and
\begin{equation}\label{eq:accuracy1}
\|u(T) - U(T)\|_1 = \cO{m^{-\alpha/2}}
\end{equation}
is achieved at the computational cost
\begin{equation}\label{eq:cost1}
  \hat c_1 \prt{ \|\dot{z}^m\|_{\infty}^{5} m^{2\alpha}  + m^{\beta}} 
  \le \mathrm{Cost}(U) \le
  \hat c_2 \prt{ \|\dot{z}^m\|_{\infty}^{6} m^{2\alpha}  + m^{\beta}},
\end{equation}
for some $\hat c_1, \hat c_2>0$.
\end{theorem}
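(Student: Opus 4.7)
The plan is to start from the error bound implicit in the previous lemma and then choose the discretization parameters so that the three competing sources of error (path interpolation, time discretization, space discretization) are balanced against the cost. Combining \eqref{eq:LSstar} with the Kuznetsov-type estimate \eqref{eq:globErr1}, one has
\[
\|u(T) - U(T)\|_1 \leq \|u_0 - U(0)\|_1 + C\bigl(\sqrt{\|z-z^m\|_\infty} + \|\dot z^m\|_\infty(\sqrt{\Delta t} + \sqrt{\Delta x})\bigr),
\]
in which the path term is $\Theta(\Delta \tau^{\alpha/2}) = \Theta(m^{-\alpha/2})$ irrespective of the finite volume resolution. Since $\Delta t$ is essentially pinned to $\Delta x$ by \eqref{eq:cfl2} once $\Delta x$ is fixed, I would first minimize cost over $\Delta x$ subject to the spatial error being at most of order $m^{-\alpha/2}$. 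This forces $\|\dot z^m\|_\infty\sqrt{\Delta x} \lesssim m^{-\alpha/2}$, giving the optimal $\Delta x = \Theta(\Delta \tau^\alpha/\|\dot z^m\|_\infty^2)$; any finer refinement is wasted while any coarser one immediately degrades the accuracy.

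Next I would verify consistency with the CFL rule \eqref{eq:cfl2}. Its ceiling caps at $1$ (giving $\Delta t = \Delta \tau$) precisely when $\Delta x \gtrsim \Delta \tau \|\dot z^m\|_\infty \|f'\|_\infty/C_{\mathrm{CFL}}$, and otherwise $\Delta t \sim \Delta x/\|\dot z^m\|_\infty$. Inserting the chosen $\Delta x$, one sees that the CFL-binding regime is active exactly when $\|\dot z^m\|_\infty^3 \geq C m^{1-\alpha}$, which is essentially the standing hypothesis $\|\dot z^m\|_\infty > m^{(1-\alpha)/3}$. In that regime $\Delta t = \Theta(\Delta \tau^\alpha/\|\dot z^m\|_\infty^3)$; the temporal error $\|\dot z^m\|_\infty\sqrt{\Delta t}$ is then automatically at most of order $m^{-\alpha/2}$ (in fact smaller by a factor $\|\dot z^m\|_\infty^{-1/2}$), and together with the subleading $O(\Delta x)$ initial-data contribution from \eqref{eq:u0Approx} one obtains \eqref{eq:accuracy1}.

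The cost bound then reduces to counting. The number of time steps is $N = T/\Delta t = \Theta(\|\dot z^m\|_\infty^3 m^\alpha)$, and the number of active spatial cells $M_x = (b_m-a_m)/\Delta x$ satisfies, using \eqref{eq:finiteSupp} and the fact that the hypothesis forces $\|\dot z^m\|_\infty \geq 1$ for $m$ large,
\[
c_1/\Delta x \;\leq\; M_x \;\leq\; c_2\bigl(1/\Delta x + N\bigr),
\]
so that $M_x$ ranges between $\Theta(\|\dot z^m\|_\infty^2 m^\alpha)$ and $\Theta(\|\dot z^m\|_\infty^3 m^\alpha)$. Multiplying by $N$ gives a finite-volume work between $\Theta(\|\dot z^m\|_\infty^5 m^{2\alpha})$ and $\Theta(\|\dot z^m\|_\infty^6 m^{2\alpha})$, and adding the $\Theta(m^\beta)$ interpolant-generation cost yields \eqref{eq:cost1}. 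I expect the main subtlety to be the consistency check in the middle paragraph: the threshold $m^{(1-\alpha)/3}$ appearing in the hypothesis is precisely the condition under which the CFL rule is binding at the optimally balanced $\Delta x$, and the two-sided form of \eqref{eq:cost1} reflects only the spread in \eqref{eq:finiteSupp}, not any slack in the choice of resolution parameters.
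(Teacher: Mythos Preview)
Your proposal is correct and follows essentially the same approach as the paper's proof: both start from the error decomposition \eqref{eq:LSstar}--\eqref{eq:globErr1}, equilibrate the path and spatial error terms to obtain $\Delta x = \Theta(\Delta\tau^\alpha/\|\dot z^m\|_\infty^2)$, use the hypothesis $\|\dot z^m\|_\infty > m^{(1-\alpha)/3}$ to ensure the CFL rule is binding (so that $\Delta t = \Theta(\Delta x/\|\dot z^m\|_\infty)$ rather than $\Delta\tau$), and then count time steps and spatial cells via \eqref{eq:finiteSupp} to obtain the two-sided cost bound. Your exposition is in fact more explicit than the paper's in identifying the threshold $m^{(1-\alpha)/3}$ as exactly the crossover for CFL binding and in attributing the gap between $\|\dot z^m\|_\infty^5$ and $\|\dot z^m\|_\infty^6$ solely to the spread in the support estimate \eqref{eq:finiteSupp}.
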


\begin{proof}
Assume that $m \ge \check m$. Then the CFL condition~\eqref{eq:cfl2}
imposes the following constraint on
the timestep:
\[
\Delta t = \tO{\frac{\Delta x}{\|\dot{z}^m\|_\infty}}.
\]
Since $u^m(0) = u_0\in (L^1 \cap BV)(\bR)$, the approximation of the
initial data~\eqref{eq:u0Approx} yields that
\[
\|u^m_0 - U(0)\|_1 = \cO{\Delta x},
\]
and by~\eqref{eq:LSstar},
\[
\|u(T) - U(T)\|_1 
= \cO{\Delta \tau^{\alpha/2} 
+  \|\dot{z}^m\|_\infty \sqrt{\Delta x}}.
\]
The optimal balance of resolution parameters for minimizing the computational
cost versus accuracy is achieved through equilibration of error contributions:
\[
\Delta x = \tO{ \frac{\Delta \tau^{\alpha}}{ \|\dot{z}^m\|_\infty^{2}}}.
\]
Since $m \ge \check m$, 
\[
\frac{\Delta x}{\|\dot{z}^m\|_\infty} = \tO{\frac{\Delta \tau^\alpha}{\|\dot{z}^m\|_\infty^3}}
= \cO{\Delta \tau},
\]
and thus 
\[
\Delta t = \tO{\min\prt{\frac{\Delta x}{\|\dot{z}^m\|_\infty}, \Delta \tau}}
= \tO{\frac{\Delta \tau^\alpha}{\|\dot{z}^m\|_\infty^3}}.
\]
The computational cost of the numerical solution 
$U(T)$ is the sum of $\tO{m^\beta}$ for generating the
piecewise linear interpolant $z^m$, and
\[
\begin{split}
  \tO{ \frac{T}{\Delta t} \times \frac{b_m-a_m}{\Delta x}}  
\end{split}
\]
for solving $U$ over $[0,T]\times [a_m,b_m]$. 
\end{proof}

\begin{remark}
Theorem~\ref{thm:met1Compl} 
provides a surprisingly slow convergence rate. 
For instance, if $z \in C^{0,1/2}_0([0,T])$
and $\|\dot{z}^m\|_{\infty} = \tO{m^{1/2}}$,
then~\eqref{eq:accuracy1} implies that in order to achieve 
the accuracy $\|u(T) - U(T) \|_{L^1} = \cO{\epsilon}$,
one needs $m \ge c \epsilon^{-4}$ for some $c>0$.
By~\eqref{eq:cost1}, this results in the astounding 
$\tO{\epsilon^{-14} + \epsilon^{-4\beta}}$ lower bound
on the computational cost. 
In some numerical experiments, however, we observe a
better convergence rate than predicted by~\eqref{eq:accuracy1},
see Example~\ref{ex:fBM} in Section~\ref{subsec:met1NumEx}.
\end{remark}

\begin{remark}
In Theorem~\ref{thm:met1Compl}, we assume
the computational cost of 
generating/sampling the piecewise linear interpolant
$z^m$ is $\tO{m^\beta}$ for some $\beta \ge 1$. 
If $z$ is a realization of a Wiener process, for instance,
then $\beta=1$, but to cover the more
general H\"older continuous stochastic 
processes, we allow for $\beta \ge 1$.
\end{remark}

We next consider the use of an adaptive mesh 
$\{t_n\}_{n=0}^N \supset \{\tau_k\}_{k=0}^m$
that have uniform timesteps over each interpolation interval $[\tau_k,\tau_{k+1}]$
for $k=0,1,\ldots,m-1$.
That is, $t_0 = 0$ and given $t_n \in [\tau_k,\tau_{k+1})$, the next mesh point
  is set to
  \begin{equation}\label{eq:cflLoc}
    \begin{cases}
      t_{n+1} = t_n + \frac{\Delta \tau_k}{\overline n(k)}, \qquad  \text{where}\\
      \\
\overline n(k) \coloneq \max\seq{ \left \lceil \frac{|\Delta z^m_k|\norm{f'}_\infty}{C_{\mathrm{CFL}} \Delta x} \right \rceil, \, 1 }
&  \text{for } k=0,1,2, \ldots,m-1.
\end{cases}
\end{equation}
Here, the constant $C_{\mathrm{CFL}}>0$ depends on the scheme used.
We refer to~\eqref{eq:cflLoc} as the local CFL condition. 
The next theorem shows that adaptive timesteps can improve the efficiency 
of the numerical methods.

\begin{theorem}\label{thm:met1Compl2}
Let $u\in C([0,T]; L^1(\bR))$ denote the unique pathwise entropy
solution of \eqref{eq:sscl} for given $u_0 \in (L^1 \cap BV)(\bR)$
with $\Leb{\supp{u_0}}>0$, $f\in C^2(\bR)$
with $\|f'\|_\infty >0$, and $z \in C^{0,\alpha}_0([0,T])$ with $\alpha \in
(0,1]$.
For any $m \ge 2$, let $\{\tau_j\}_{j=0}^m\subset [0,T]$ denote 
the uniform mesh with step size $\Delta \tau = T/m$ and 
assume the computational cost of generating the interpolant
$z^m = \cI^m[z]$ is $\Theta(m^\beta)$ for some $\beta \ge 1$,
and that there exists an $\check m\ge 2$ and $c>0$ such that
\begin{equation}\label{eq:zBVAssmp}
\abs{z^m}_{\BV{[0,T]}} > c m^{(1-\alpha)/3} \qquad \forall m \ge \check m.
\end{equation}
Furthermore, let $U$ denote
a numerical solution of the method in Section~\ref{sec:framework}
satisfying the local CFL condition~\eqref{eq:cflLoc} and
assume that the spatial support of $U([0,T])$ is covered by
an interval $[a_m,b_m] \subset \bR$
that satisfies
\[
c_1 \le b_m-a_m \le c_2 (1+ N\Delta x),
\]
for some $c_1,c_2>0$, cf.~\eqref{eq:finiteSupp}.
Then, the optimal balance of the resolution parameters 
for minimizing computational cost versus accuracy is
\[
\Delta x = \tO{\frac{\Delta \tau^\alpha}{\abs{z^m}_{BV([0,T])}^2}}
\quad \text{and} \quad  
N = \sum_{k=0}^{m-1} \overline n(k) = \tO{ \frac{\abs{z^m}_{BV([0,T])}^3}{\Delta \tau^{\alpha}}},
\]
and
\[
\|u(T) - U(T)\|_1 = \cO{m^{-\alpha/2}},
\]
is achieved at the computational cost 
\[
\hat c_1 \prt{\abs{z^m}_{BV([0,T])}^{5}m^{2\alpha}  +m^\beta}
\le \mathrm{Cost}(U)
\le \hat c_2 \prt{\abs{z^m}_{BV([0,T])}^{6}m^{2\alpha}  +m^\beta},
\]
for some $\hat c_1, \hat c_2 >0$.
\end{theorem}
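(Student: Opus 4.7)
The plan is to transcribe the Kuznetsov-type argument behind~\eqref{eq:globErr1} to the piecewise setting where $\dot z^m$ is constant on each $[\tau_k,\tau_{k+1}]$ and the scheme uses the locally uniform timestep $\Delta t_k = \Delta\tau/\overline n(k)$ imposed by~\eqref{eq:cflLoc}. The key replacement is that whereas in Theorem~\ref{thm:met1Compl} the global Lipschitz constant $\norm{\dot z^m}_\infty$ governs the finite volume error, under the local CFL it is the total variation $\abs{z^m}_{\BV{[0,T]}}=\sum_k\abs{\Delta z^m_k}$ that should play this role.

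As in~\eqref{eq:LSstar}, the triangle inequality together with~\eqref{eq:stabilityPes} gives
\[
\|u(T) - U(T)\|_1 \le C\sqrt{\norm{z-z^m}_\infty} + \|u^m(T) - U(T)\|_1 \lesssim \Delta\tau^{\alpha/2} + \|u^m(T) - U(T)\|_1.
\]
I then apply the Kuznetsov estimate behind~\eqref{eq:globErr1} on each subinterval $[\tau_k,\tau_{k+1}]$, on which the flux is $\dot z^m_k f$ and the constant in the estimate scales linearly with the interval length $\Delta\tau$. Telescoping the per-interval error increments and using $\abs{\dot z^m_k}\Delta\tau = \abs{\Delta z^m_k}$ yields
\[
\|u^m(T)-U(T)\|_1 \le \|u_0-U(0)\|_1 + C\sum_{k=0}^{m-1}\abs{\Delta z^m_k}\bigl(\sqrt{\Delta t_k}+\sqrt{\Delta x}\bigr).
\]
The $\sqrt{\Delta x}$ sum is exactly $\abs{z^m}_{\BV{[0,T]}}\sqrt{\Delta x}$. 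For the $\sqrt{\Delta t_k}$ sum, the local CFL~\eqref{eq:cflLoc} provides $\Delta t_k \lesssim \Delta x/\abs{\dot z^m_k}$, so $\abs{\Delta z^m_k}\sqrt{\Delta t_k} \lesssim \sqrt{\abs{\Delta z^m_k}\Delta x\Delta\tau}$; Cauchy-Schwarz together with $\sum_k\abs{\Delta z^m_k}=\abs{z^m}_{\BV{[0,T]}}$ then gives the subdominant bound $\sqrt{T\abs{z^m}_{\BV{[0,T]}}\Delta x}$, which under~\eqref{eq:zBVAssmp} is absorbed into $\abs{z^m}_{\BV{[0,T]}}\sqrt{\Delta x}$. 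Adding the $\cO{\Delta x}$ initial-data error from~\eqref{eq:u0Approx} produces
\[
\|u(T)-U(T)\|_1 = \cO{\Delta\tau^{\alpha/2} + \abs{z^m}_{\BV{[0,T]}}\sqrt{\Delta x}}.
\]

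Balancing the two leading error contributions gives $\Delta x = \tO{\Delta\tau^\alpha/\abs{z^m}_{\BV{[0,T]}}^2}$ and accuracy $\cO{m^{-\alpha/2}}$ since $\Delta\tau = T/m$. By~\eqref{eq:cflLoc} the total number of timesteps is
\[
N = \sum_{k=0}^{m-1}\overline n(k) \sim \frac{\abs{z^m}_{\BV{[0,T]}}}{\Delta x} + m = \tO{\frac{\abs{z^m}_{\BV{[0,T]}}^3}{\Delta\tau^\alpha}},
\]
where assumption~\eqref{eq:zBVAssmp} is precisely what ensures the first summand dominates the $+m$ coming from the $\max(\cdot,1)$ in~\eqref{eq:cflLoc}. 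The total cost is then $m^\beta$ (for generating $z^m$) plus $N$ multiplied by the number of spatial cells; the latter lies between $c_1/\Delta x$ and $c_2(1+N\Delta x)/\Delta x$, which translates directly into the two-sided bound $\hat c_1(\abs{z^m}_{\BV{[0,T]}}^5 m^{2\alpha}+m^\beta)\le\mathrm{Cost}(U)\le\hat c_2(\abs{z^m}_{\BV{[0,T]}}^6 m^{2\alpha}+m^\beta)$.

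The main obstacle I expect is justifying the linear-in-$\Delta\tau$ scaling of the Kuznetsov constant on each subinterval, i.e., ensuring that the piecewise application produces a prefactor $\abs{\dot z^m_k}\Delta\tau=\abs{\Delta z^m_k}$ rather than something like $\abs{\dot z^m_k}\sqrt{\Delta\tau}$. This requires that, in the doubling-of-variables/Kuznetsov computation underlying~\eqref{eq:globErr1}, the terms depending on the interval length enter multiplicatively through $T$ and are uniformly controlled by $|u_0|_{\BV{\bR}}$ (which is preserved at each $\tau_k$ by Lemma~\ref{lem:entropyPes}). Once this scaling is in place, the Cauchy-Schwarz step and the subsequent balancing of $\Delta x$, $\Delta\tau$, $N$, and $\mathrm{Cost}(U)$ are essentially the same bookkeeping as in the proof of Theorem~\ref{thm:met1Compl}.
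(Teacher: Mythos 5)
Your proposal is correct and follows essentially the same route as the paper's proof: triangle inequality plus the pathwise stability estimate for the path-approximation error, a per-interval Kuznetsov estimate under the local CFL condition yielding the $\abs{z^m}_{\BV{[0,T]}}\sqrt{\Delta x}$ bound, then the same balancing of $\Delta x$, $N$ and the two-sided cost bookkeeping. The only cosmetic difference is that you control the $\sqrt{\Delta t_k}$ contribution by Cauchy--Schwarz across intervals, whereas the paper bounds it per interval by $C(\abs{\Delta z^m_k}+\Delta\tau)\sqrt{\Delta x}$ and absorbs the resulting $T\sqrt{\Delta x}$ term via~\eqref{eq:zBVAssmp}; both are equivalent, and the uniform Kuznetsov prefactor you flag is exactly what the paper invokes, since $|u^m(\tau_k)|_{\BV{\bR}}\le|u_0|_{\BV{\bR}}$ and the monotone scheme is TVD.
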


\begin{remark}
  If $\abs{z^m}_{BV([0,T])} =\tO{ \|\dot{z}^m\|_\infty}$,
  then the computational cost results in Theorems~\ref{thm:met1Compl} 
  and~\ref{thm:met1Compl2} are, up to constants, equivalent.
\end{remark}

\begin{proof}
  The local CFL condition~\eqref{eq:cflLoc} implies that
  all timesteps $\Delta t_n$ belonging to the same interpolation
  interval $[\tau_j, \tau_{j+1}]$ are of the equal size and 
  \begin{equation}\label{eq:dtNBound}
  \Delta t_n \le C_{\mathrm{CFL}} 
  \frac{\Delta \tau \Delta x}{|\Delta z_j^m| \|f'\|_\infty},
  \end{equation}
  for any $j \in \{0,1,\ldots,m-1\}$ and all $n \in \{0,1,\ldots,N\}$ such that
  $t_n \in [\tau_j, \tau_{j+1})$.
  By~\eqref{eq:dtNBound} and the proof of Kuznetsov's lemma
  (see~e.g.~\cite{HoldenRisebro2015}, with the flux $f$ replaced by
  $\dot{z}^m_j f$), the numerical error from one interpolation
  interval can be bounded by
\begin{align*}
    \|u^m(\tau_{j+1}) - U(\tau_{j+1})\|_1 &\leq \|u^m(\tau_j) -
    U(\tau_j)\|_1 \\
    &\qquad  + C|\dot{z}^m_j| \!\!\!\!\sum_{n \in \{ 0\le k \le N \mid t_k
      \in [\tau_j,\tau_{j+1}) \}}
      \!\!\!\!\!\Delta t_n (\sqrt{\Delta t_n} + \sqrt{\Delta x})\\
    & \leq \|u^m(\tau_j) - U(\tau_j)\|_1 
    + C\prt{|\Delta z^m_j|+\Delta \tau} \sqrt{\Delta x},
\end{align*}
for some $C>0$ that depends on $\|f'\|_\infty$, $|u_0|_{BV(\bR)}$.
Consequently, the error over $[0,T]$ is bounded by
\[
\|u^m(T) - U(T)\|_1 \leq \|u_0 - U(0)\|_1 + C(\abs{z^m}_{BV([0,T])}+T)\sqrt{\Delta x}. 
\]
By a similar argument as in the proof of the preceding theorem, 
we conclude that
\[
\|u(T) - U(T)\|_1 = \cO{\Delta \tau^{\alpha/2} + \abs{z^m}_{BV([0,T])} \sqrt{\Delta x}},
\]
where, by~\eqref{eq:zBVAssmp}, we have used that $\abs{z^m}_{BV([0,T])}+T = \cO{\abs{z^m}_{BV([0,T])}}$. 
The error contribution of the resolution parameters are balanced by
\[
\Delta x = \tO{\frac{\Delta \tau^\alpha}{\abs{z^m}_{BV([0,T])}^2}}.
\]
Assume that $m\ge \check m$. By~\eqref{eq:cflLoc}, 
\begin{equation}\label{eq:lineNBound}
\frac{\|f'\|_\infty}{C_{\mathrm{CFL}}} \frac{|\Delta z^m_k|}{\Delta x} \le  \overline n(k) \le \frac{\|f'\|_\infty}{C_{\mathrm{CFL}}} \frac{|\Delta z^m_k|}{\Delta x} + 1 \qquad \forall k \in \{0,1,\ldots,m-1\},
\end{equation}
and~\eqref{eq:zBVAssmp} implies that
\begin{equation}\label{eq:nBound2}
\begin{split}
N = \sum_{k=0}^{m-1} \overline n(k) &\le  \sum_{k=0}^{m-1} \prt{ \frac{\|f'\|_\infty}{C_{\mathrm{CFL}}} \frac{|\Delta z^m_k|}{\Delta x} + 1}\\
&\le \frac{\|f'\|_\infty}{C_{\mathrm{CFL}}}\frac{\abs{z^m}_{BV([0,T])}^3}{\Delta \tau^{\alpha}} + m\\
&= \cO{\frac{\abs{z^m}_{BV([0,T])}^3}{\Delta \tau^{\alpha}}}.
\end{split}
\end{equation}
From~\eqref{eq:lineNBound} and~\eqref{eq:nBound2} we conclude that
\[
N = \tO{\frac{\abs{z^m}_{BV([0,T])}^3}{\Delta \tau^{\alpha}}}.
\]
The computational cost of $U(T)$ is the sum of $\cO{m^\beta}$, for generating the
piecewise linear interpolant $z^m$, and
\[
\tO{ N \times \frac{b_m - a_m}{\Delta x}} 
\]
for solving $U$ over $[0,T]\times [a_m, b_m]$.
\end{proof}

\subsection{Numerical examples}\label{subsec:met1NumEx}
To simplify the spatial discretization in our numerical tests, we
consider the following version of~\eqref{eq:sscl} with 
periodic boundary conditions:
\begin{equation}\label{eq:ssclPer}
  \begin{split}
    & du + \partial_xf(u) \circ dz = 0 \quad \text{ in}\; (0,T] \times \bT, \\
    & u(0,\cdot) =u_0 \in (L^1\cap BV)(\bT).
    \end{split}
\end{equation}
Well-posedness and stability results for~\eqref{eq:ssclPer}
can be derived by a simple extension of~\cite{Lions:2013aa}.
Lemma~\ref{lem:entropyPes}, and the numerical framework of Section~\ref{sec:framework} extend trivially to the 
periodic setting using the solution operator
$$
\cS_\bT^{\cdot}(\cdot) \cdot: 
\bR \times [0,\infty) \times (L^1 \cap BV)(\bT) 
\to (L^1 \cap BV)(\bT),
$$
 where, for $c\in \bR$, $s \ge 0$ and 
$v \in (L^1 \cap BV)(\bT)$, $\cS_\bT^{c}(s)v$ denotes 
the solution at time $t=s$ of the conservation law
$$
 \partial_t u +  c \partial_x f(u) = 0 
 \quad \text{in}\; (0,\infty)\times \bT,
 \qquad u(0,\cdot) = v.
$$

All problems are solved with the adaptive 
timestep method of Theorem~\ref{thm:met1Compl2} 
with free/varying resolution parameter 
$m \ge 2$, linked parameters
\begin{equation}\label{eq:resolution}
\Delta \tau = m^{-1}, \qquad \Delta x 
= \frac{1}{ \left\lceil m^{\alpha}\abs{z^m}_{BV([0,T])}^2 \right\rceil},
\end{equation}
and $\Delta t_n$ determined by~\eqref{eq:cflLoc} 
with $C_{\mathrm{CFL}} =1$, for both the 
Lax--Friedrichs and the Engquist--Osher scheme.

\begin{example}\label{ex:zigzag}
We consider~\eqref{eq:ssclPer} with $T=1$, $f(u) = u^2/2$,
$u_0(x) = \1{[1/4,3/4]}(x)$, and the zigzag path $z \in C^{0,1}_0([0,1])$ 
generated by piecewise linear interpolation
of the points $\{(t_i,z(t_i))\}_{i=0}^8$ with $t_i =i/8$ and
$$
z(t_i) =
\begin{cases} 0 & i \text{ odd},\\
 \frac{(-1)^{i/2}}{4}& i \text{ even}.
\end{cases}
$$
Thanks to Lemma~\ref{lem:entropyPes}, the 
solution can be represented as
\begin{equation}\label{eq:uSolEx1}
  u(t, \cdot) = 
\begin{cases}
\1{[3/8,5/8]}(x) & \text{if } t = 0,\\
\cS^{2}_\bT (t) u_0 & \text{if } t \in (t_0, t_{1}], \\
\cS^{-2}_\bT (t-t_1) \cS_\bT^{2} (1/8) u_0 
& \text{if } t \in (t_1, t_{2}], \\
\vdots \\
\cS_\bT^{2}( t - t_{7}) \cS_\bT^{-2} (1/8)
\ldots \cS_\bT^{2} (1/8) u_0 & \text{if } t \in
(t_7, 1].
\end{cases}
\end{equation}
Moreover, using the method of characteristics
and the auxiliary function $\psi: A \subset\bT^4 \to \bR$ defined 
on the domain $A =\{ (a,b,c,d) \in \bT^4 \mid a \le b \le c \le d
\text{ and } a<d \}$ by 
\[
\psi(a,b,c,d) \coloneq
\begin{cases}
0 &  0\le x \le a,\\
(x-a)/(b-a) & a< x <b,\\
1 & b\le x \le c,\\
 (d-x)/(d-c) & c< x <d,\\
0 &  d \le x <1,
\end{cases}
\]
we obtain the exact solution
\begin{equation}\label{eq:exactSolEx1}
u(t,x) = \begin{cases}
\psi(3/8,3/8+2t,5/8+t,5/8+t) &  \!\! \! t \in [0, 1/8], \\
\psi(3/8, 5/8-2(t-1/8),6/8-2(t-1/8),6/8) & \!\! \! t \in (1/8, 1/4],\\
\psi(3/8-(t-1/4), 3/8-(t-1/4) ,4/8-2(t-1/4), 6/8) & \!\! \! t \in (1/4, 3/8],\\
\psi(2/8, 2/8+ 2(t-3/8) , 2/8+2(t-3/8),6/8 ) & \!\! \! t \in (3/8, 1/2],\\
\psi(2/8, 4/8+ 2(t-1/2) , 4/8+ 2(t-1/2),6/8 ) & \!\! \! t \in (1/2, 5/8],\\
\psi(2/8, 6/8-2(t-5/8) , 6/8-2(t-5/8),6/8) &  \!\! \! t \in (5/8, 3/4],\\
\psi(2/8, 4/8 -2(t-3/4) , 4/8- 2(t-3/4),6/8 ) & \!\! \! t \in (3/4, 7/8],\\
\psi(2/8, 2/8+2(t-7/8) , 2/8 + 2(t-7/8),6/8 ) &  \!\! \! t \in (7/8, 1].
\end{cases}
\end{equation}
Figure~\ref{fig:animBurgPos} shows snapshots of the exact solution of
$u$ for the above problem, and corresponding numerical solutions $U$
computed with the
Lax--Friedrichs and the Engquist--Osher scheme.  The free resolution
parameter is set to $m=2^3$ in first time series and $m=2^6$ in the second one.
Since $\alpha =1$, $\|f'\|_\infty =1$ and
$\abs{z^m}_{\BV{[0,1]}} = 2$, equations~\eqref{eq:cflLoc} and~\eqref{eq:resolution}
yield $\Delta x = 2^{-5}$, $N= 2^6$
when $m = 2^3$ and $\Delta x = 2^{-8}$, $N=2^9$ when $m= 2^6$.
As is to be expected from Theorem~\ref{thm:met1Compl2},
the numerical solutions converge towards the exact solution as $m$ increases,
and the Engquist--Osher approximations converges faster 
than the Lax--Friedrichs approximations.

Observe further from~\eqref{eq:exactSolEx1} and
Figure~\ref{fig:animBurgPos} that for any $s, t \in
[t_3, 1]$ such that $z(s) = z(t)$, it holds that $u(s
) = u(t)$. In the next section we will explain this property by
showing that certain ``oscillating cancellations" in $z$ lead to
corresponding cancellations in the solution $u$.

\begin{figure}[H]
\includegraphics[width=0.81\textwidth]{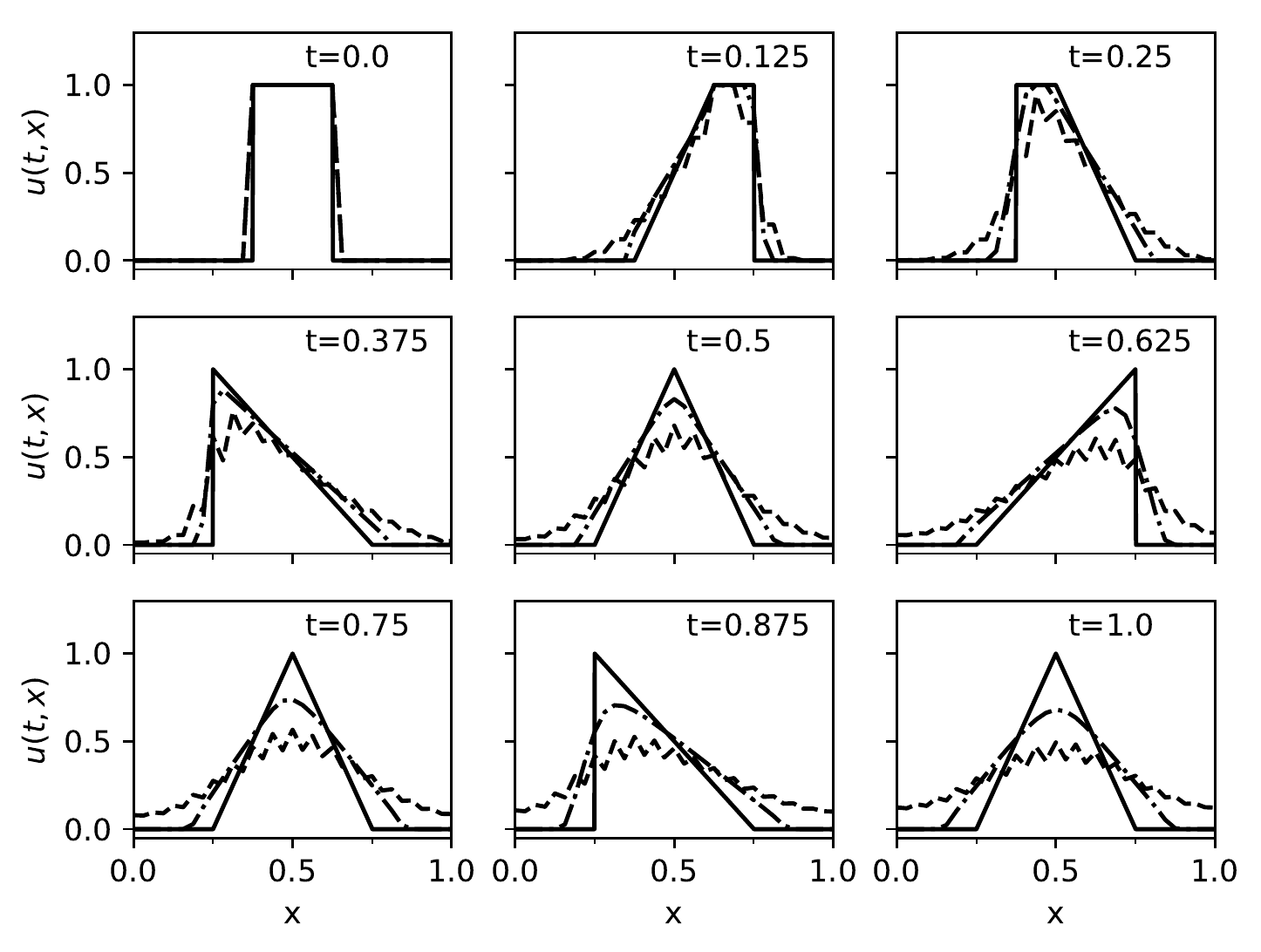}\\
\includegraphics[width=0.81\textwidth]{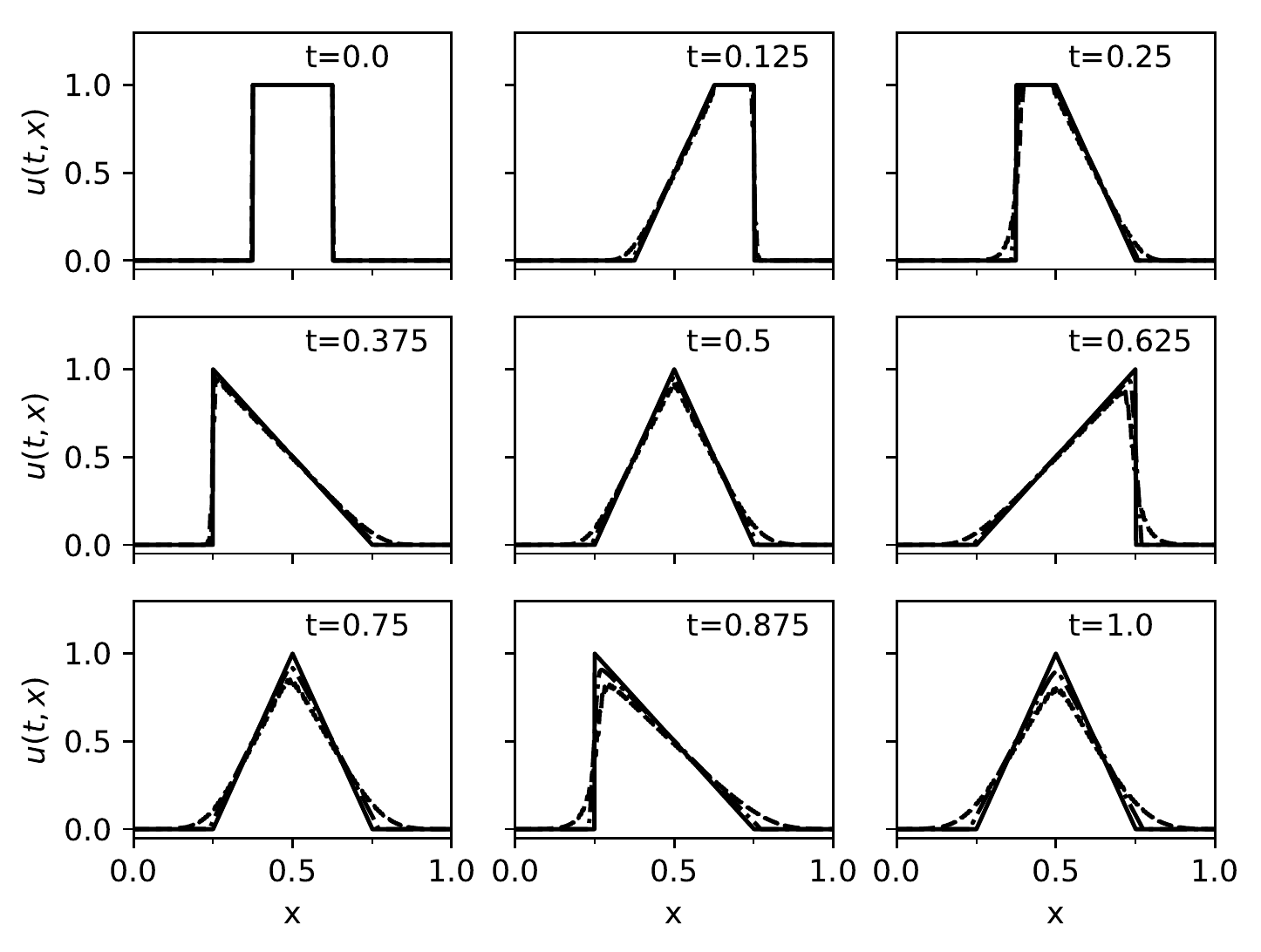}\\
\includegraphics[width=0.65\textwidth]{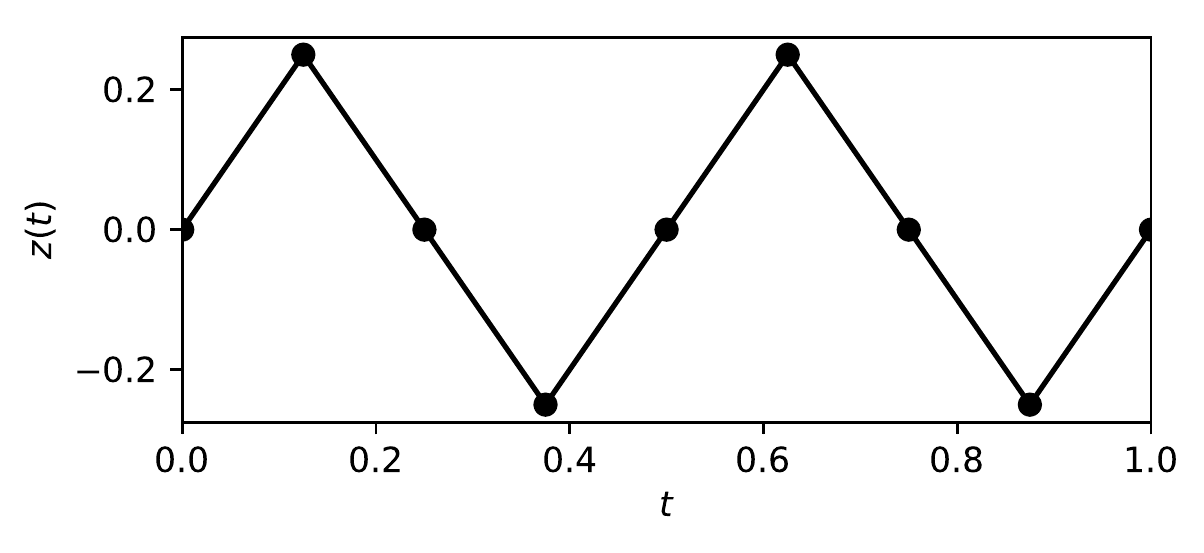}
\caption{Top and middle: Time series snapshots of 
the exact (solid line), the Lax--Friedrichs (dashed line)
and the Engquist--Osher (dash-dotted line)  solutions of
Example~\ref{ex:zigzag} with flux $f(u) = u^2/2$ 
and free resolution parameter $m=2^3$ (top $3\times3$ subfigure) 
and $m=2^6$ (middle $3\times3$ subfigure). 
Bottom: The zigzag driving path $z$. 
The dotted points correspond to the value of $z$ at the
respective time series snapshots.}
\label{fig:animBurgPos}
\end{figure}

Figure~\ref{fig:animCubicPos} shows snapshots of
numerical solutions of the above problem with the only difference being that
the flux function here is $f(u) =u^3/3$.
The free resolution parameter is set to $m=2^3$ for the
first time series and $m=2^6$ for the second one,
and an approximate reference solution is computed at resolution  
$m=2^{12}$ using the Engquist--Osher scheme. 
We observe that the numerical solutions
converge towards the reference solution, and a similar 
cancellation property as that for $f(u)=u^2/2$ 
seems to hold at the snapshot times $t_i, t_j\ge t_3$ displayed here as well.

\begin{figure}[H]
\includegraphics[width=0.81\textwidth]{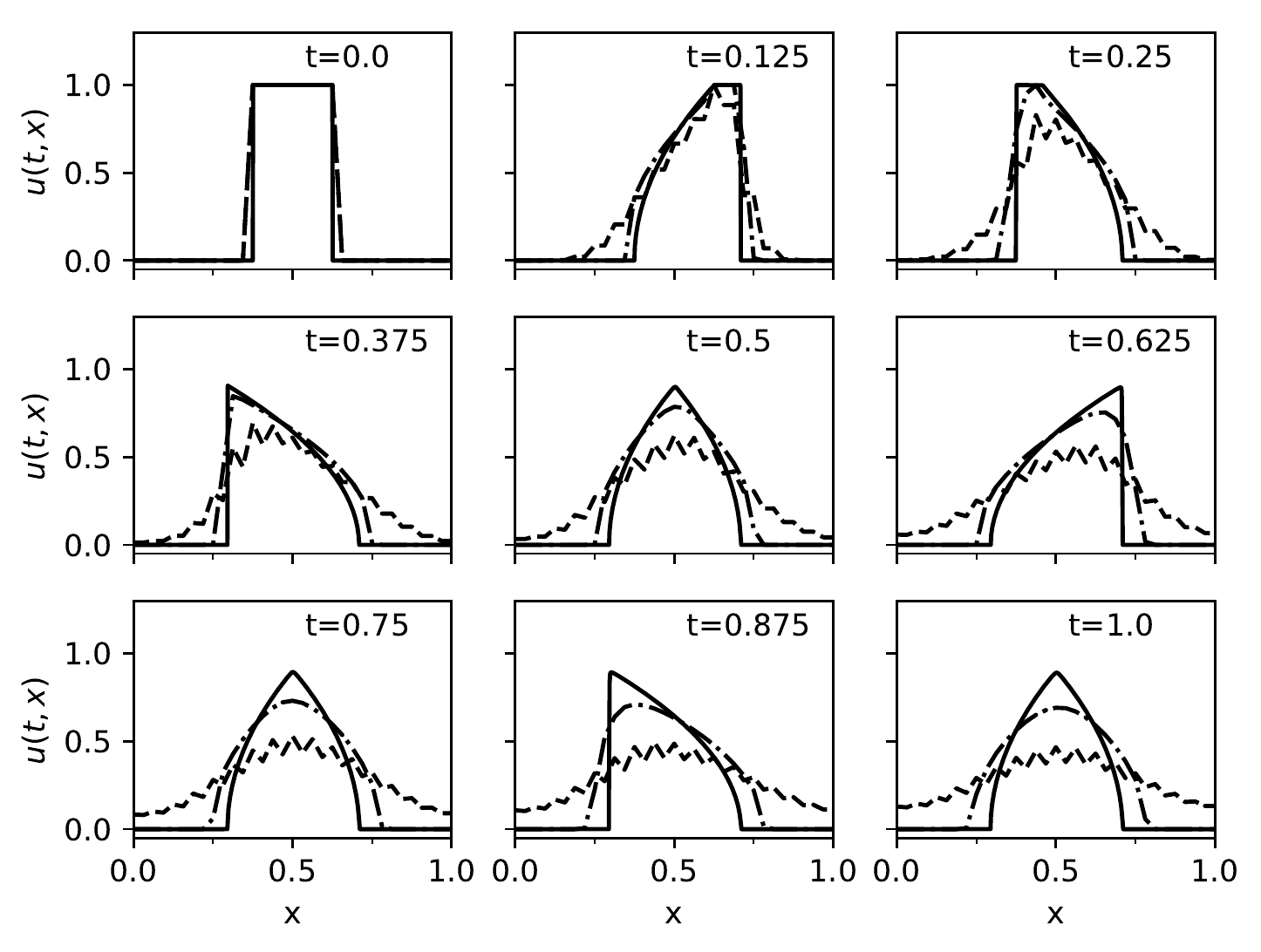}
\includegraphics[width=0.81\textwidth]{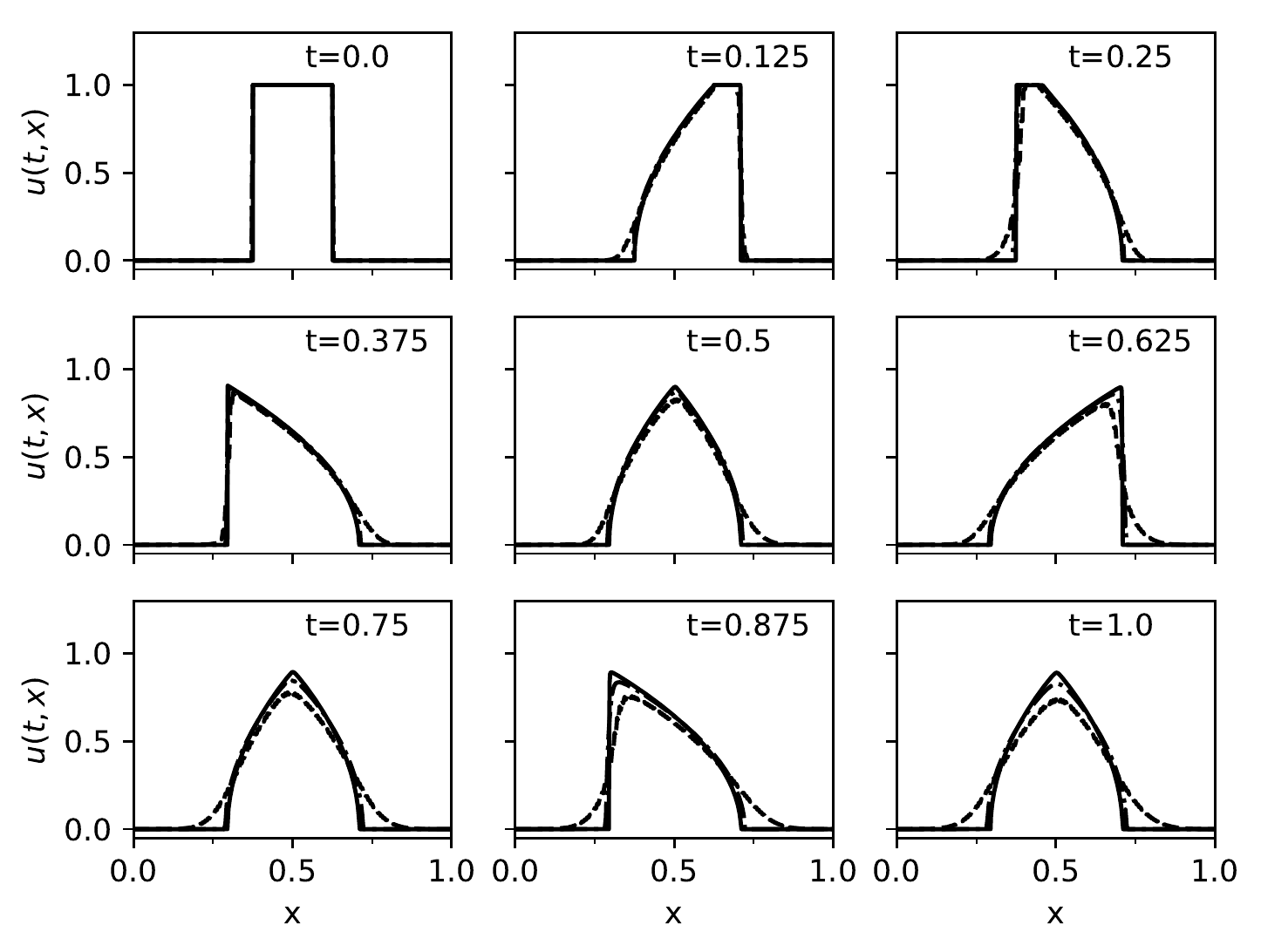}\\
\caption{Example~\ref{ex:zigzag} with flux $f(u) = u^3/3$. 
Approximated reference solution (solid), 
Lax--Friedrichs (dashed) and Engquist--Osher (dash-dotted).
The free resolution parameter is $m= 2^3$ in 
the top $3\times3$ subfigure, and $m=2^6$ in the bottom one.
}
\label{fig:animCubicPos}
\end{figure}

\end{example}

\begin{example}\label{ex:fBM}

We study the problem~\eqref{eq:ssclPer} with $T=1$, initial function 
$u_0(x) = \1{[3/8,5/8]}(x)$, and flux $f(u) = u^2/2$.
As driving path $z \in C^{0,\alpha}_0([0,1])$ we consider
realizations of a fractional Brownian motion (fBM) with Hurst
index\footnote{To be precise, a sample path of an fBM with Hurst index 
$\alpha \in (0,1)$ is almost surely $(\alpha-\delta)$-H\"older 
continuous for all $\delta>0$, but it is almost surely 
not $\alpha$-H\"older continuous.} $\alpha =
1/4, 1/2$ and $3/4$.  We refer to~\cite{Kroese15} for
details on fBM and the circulant embedding algorithm, which
we use here to generate realizations on a uniformly 
spaced mesh $\{\tau_k\}_{k=0}^m$. 
The cost of generating $z^m$ with this method 
is $\cO{m\log(m)}$.

\begin{figure}[H]
\includegraphics[width= 0.75\textwidth]{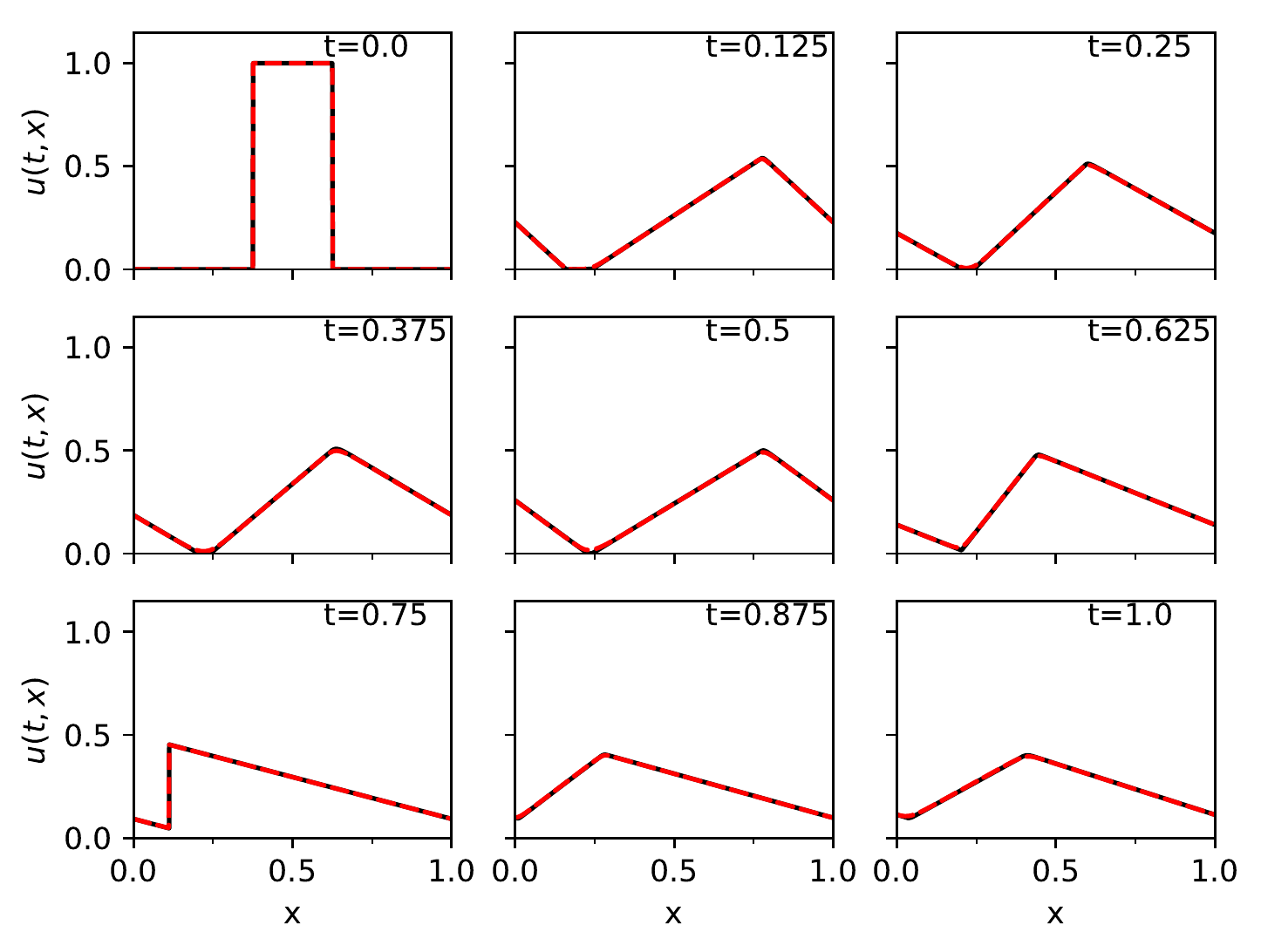}\\
\includegraphics[width= 0.67\textwidth]{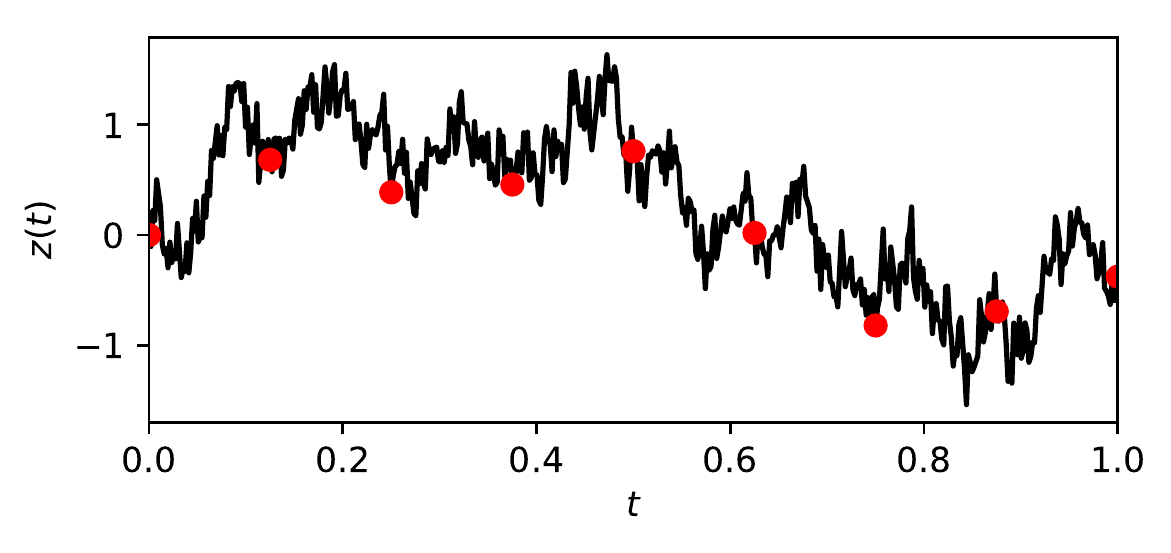}
\caption{Top:  Time series of 
numerical solutions with $\alpha = 0.25$ for Example~\ref{ex:fBM}.
Lax--Friedrichs (red dashed line) and Engquist--Osher (black line). 
Bottom: The rough path $z$. Red points correspond to the 
value of $z$ at the respective time series snapshots.}
\label{fig:ex2TSer25}
\end{figure}

Figures~\ref{fig:ex2TSer25}, ~\ref{fig:ex2TSer50},
and~\ref{fig:ex2TSer75} show time series of ``adaptive timestep"
numerical solutions for the respective Hurst indices $\alpha = 0.25,
0.5$ and $0.75$. The free resolution parameter is set to $m = 2^{8}$.
In Figure~\ref{fig:ex2FinalTime} we compare the final time numerical
solutions $U(1)$ computed with the respective resolutions $m=2^6$ and
$m=2^{8}$, and with an approximate reference solution computed with
resolution $m=2^{10}$ using the more accurate numerical method
developed in Section~\ref{sec:framework2}.  A link to the other
resolution parameters is obtained through~\eqref{eq:resolution} and
the following property for fBMs: $\E{\abs{z^m}_{\BV{[0,1]}}} =
\tO{m^{1-\alpha}}$. At resolution $m=2^{8}$, for instance, a typical
realization of an fBM sample path yields $\Delta x = 2^{-10}$ and
$N=2^{12}$ for $\alpha =3/4$; $\Delta x = 2^{-12}$ and $N=2^{16}$ for
$\alpha =1/2$; and $\Delta x = 2^{-14}$ and $N=2^{20}$ for $\alpha
=1/4$.  We observe that the Engquist--Osher scheme introduces less
artificial diffusion and therefore produces more accurate
approximations than the the Lax--Friedrichs scheme.  (Note that
solutions for different values of $\alpha$ are not directly comparable,
not least since they are generated from independent fBM sample paths.)

\begin{figure}[h]
\includegraphics[width= 0.75\textwidth]{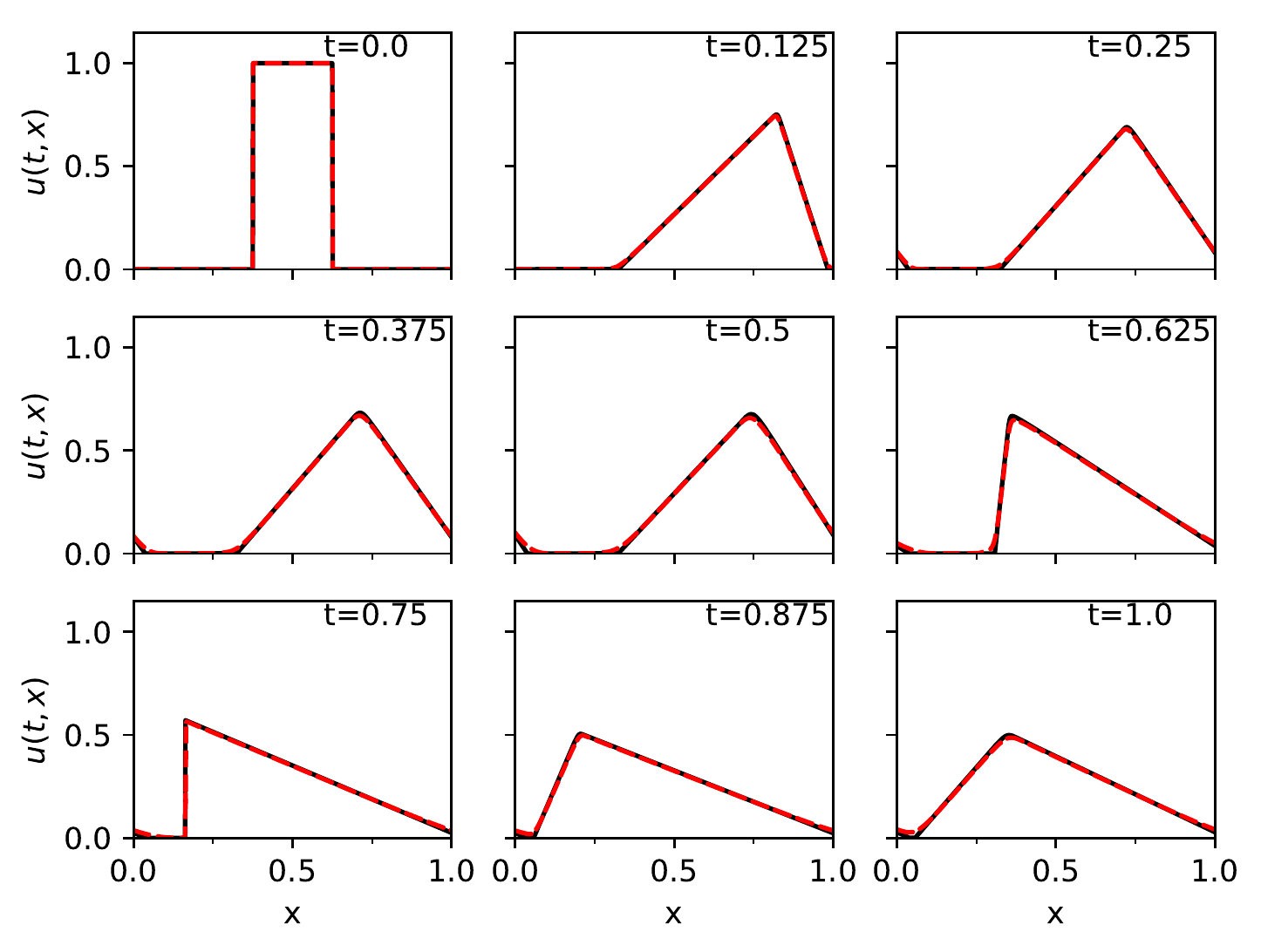}\\
\includegraphics[width= 0.67\textwidth]{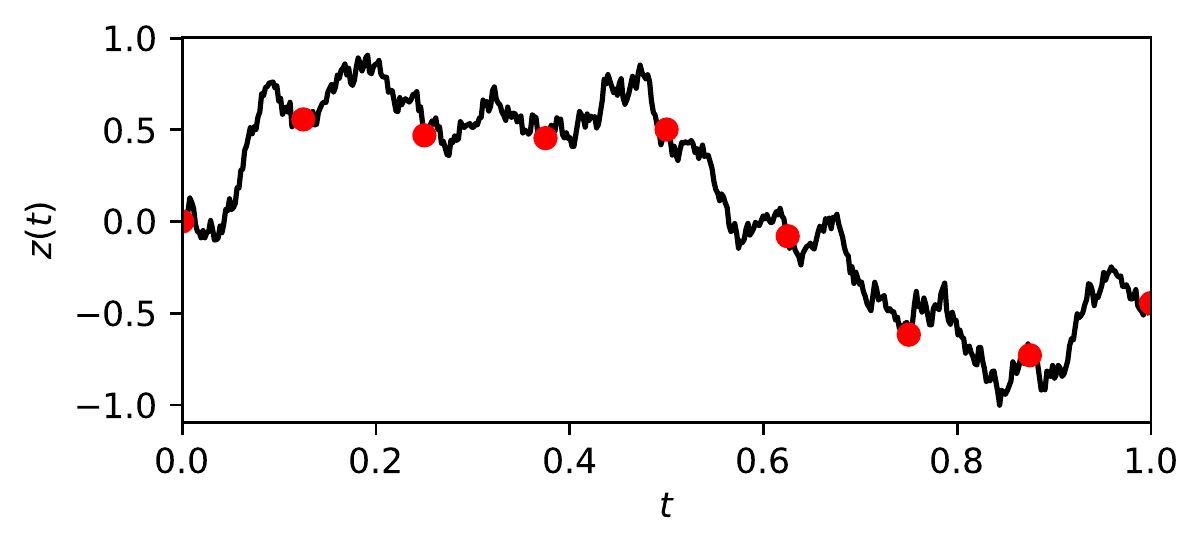}
\caption{Top: Time series of numerical solutions
with $\alpha = 0.5$ for Example~\ref{ex:fBM}. 
Lax--Friedrichs (red dashed line) and Engquist--Osher 
(black line). Bottom: The rough path $z$. 
Red points correspond to the value of $z$ at the
respective time series snapshots. }
\label{fig:ex2TSer50}
\end{figure}

\begin{figure}[h]
\includegraphics[width= 0.75\textwidth]{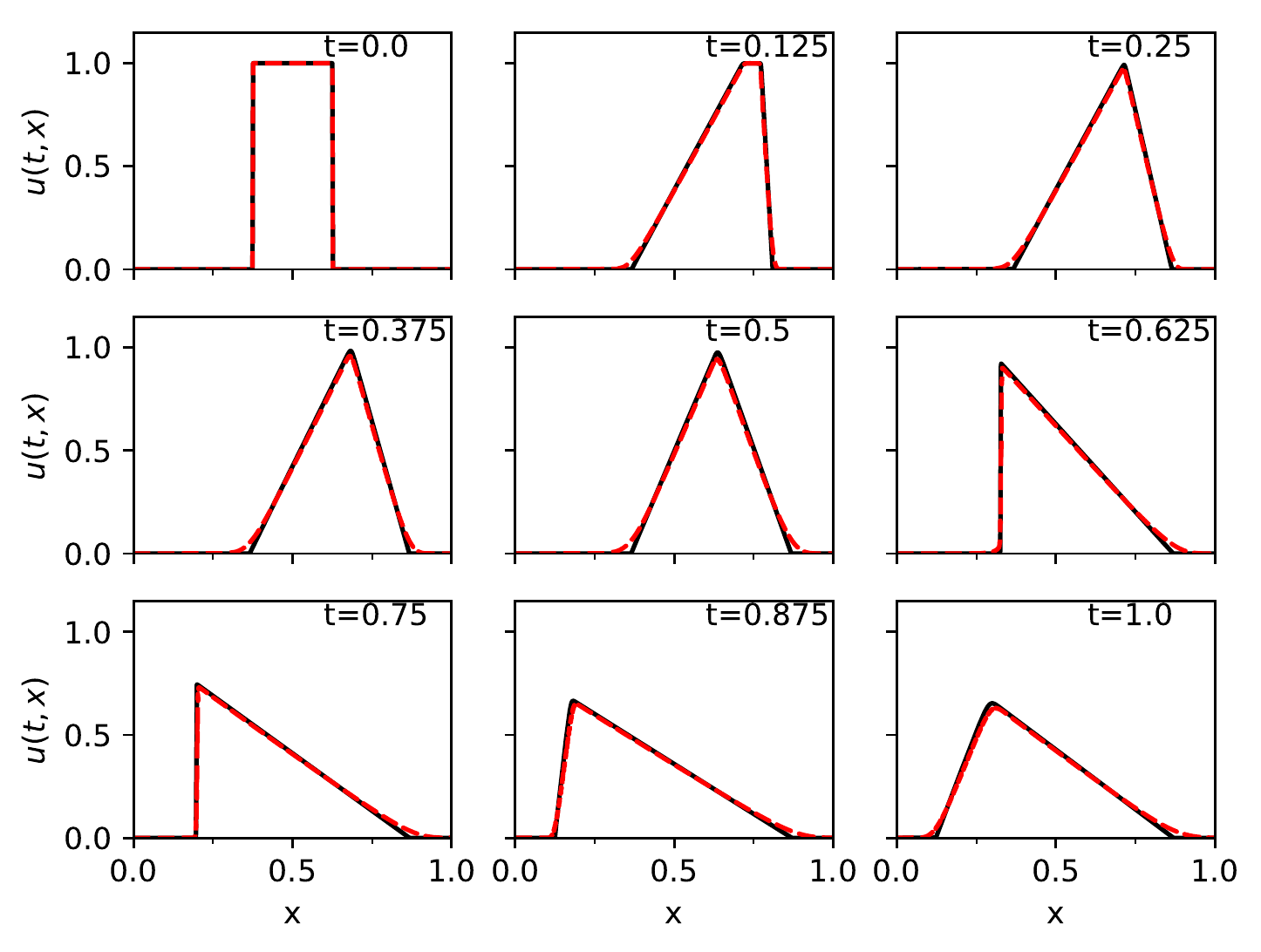}\\
\includegraphics[width= 0.67\textwidth]{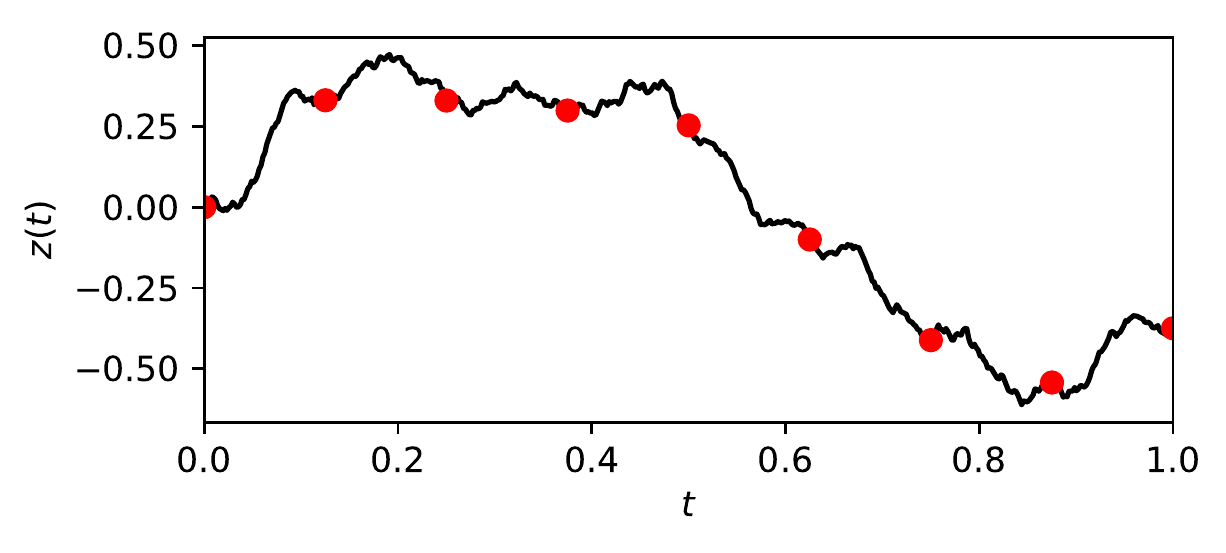}
\caption{Top: Time series of numerical solutions with $\alpha = 0.75$
  for Example~\ref{ex:fBM}.  Lax--Friedrichs (red dashed line) and
  Engquist--Osher (black line).  Bottom: The rough path $z$. Red
  points correspond to the value of $z$ at the respective time series
  snapshots.}
\label{fig:ex2TSer75}
\end{figure}

Figure~\ref{fig:ex2ConvRates} shows the final time approximation error
$\|U(1) -u(1)\|_1$ as a function of the resolution parameter $m$ for
both numerical schemes. The error is averaged over 10 fBM realizations
for each of the considered Hurst indices.  
The convergence rate decreases as $\alpha$ (and
thus the regularity of $z$) decreases, but it is consistently 
orders of magnitude faster than Theorem~\ref{thm:met1Compl2}'s 
possible worst case, $\cO{m^{-\alpha/2}}$.

\begin{figure}[!h]
\includegraphics[width= 0.65\textwidth]{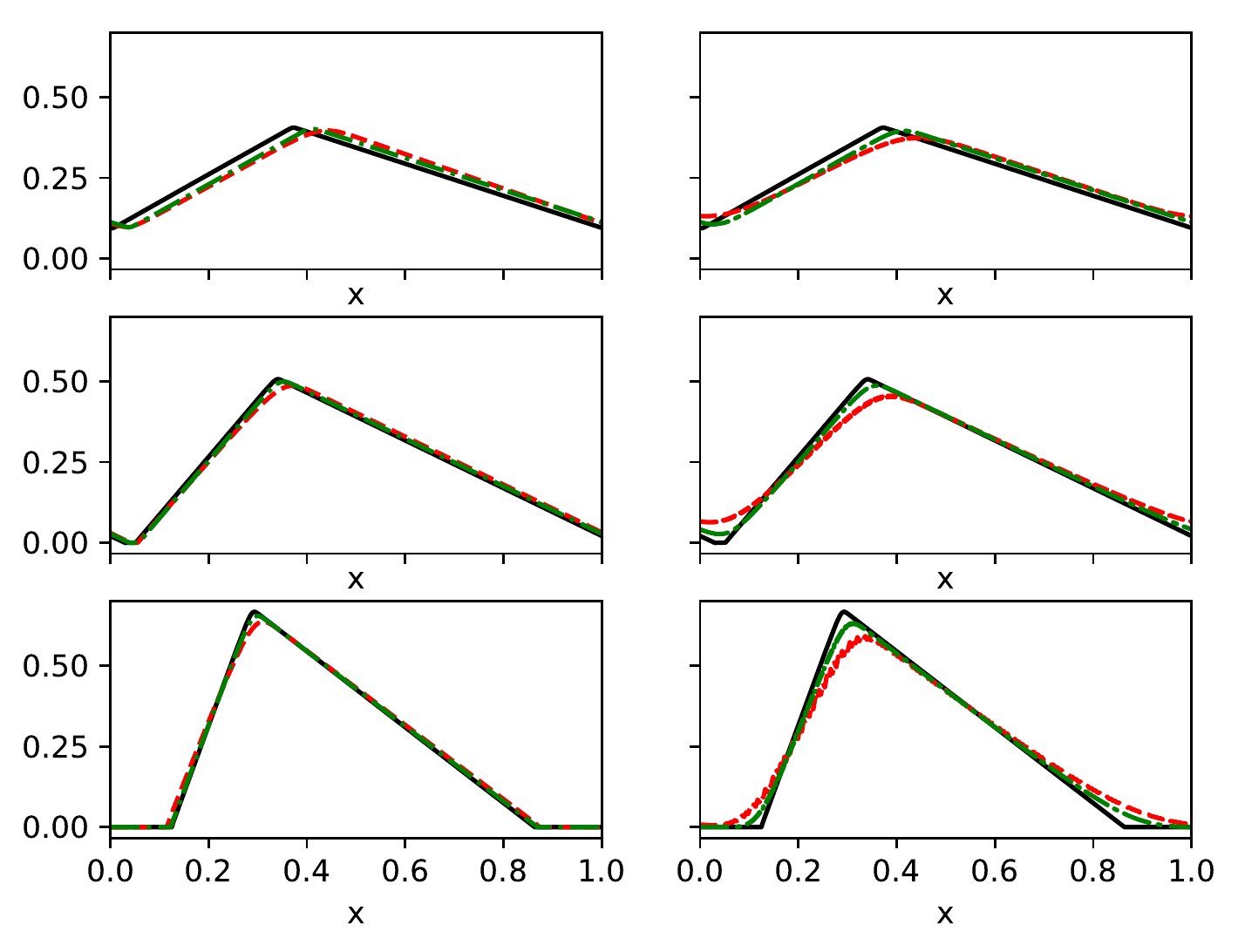}\\
\caption{Final time solutions for Example~\ref{ex:fBM}.
Reference solution (black line), numerical solutions 
at resolutions $m=2^6$ (red dashed line) and 
$m=2^{8}$ (green dash-dot line). 
Left column: Engquist--Osher scheme, Hurst index 
$\alpha =0.25, 0.5, 0.75$ from top to bottom, respectively.
Right column: Lax--Friedrichs scheme, Hurst index 
$\alpha =0.25, 0.5, 0.75$ from top to bottom, respectively.}
\label{fig:ex2FinalTime}
\end{figure}

\begin{figure}[!h]
\includegraphics[width= 0.94\textwidth]{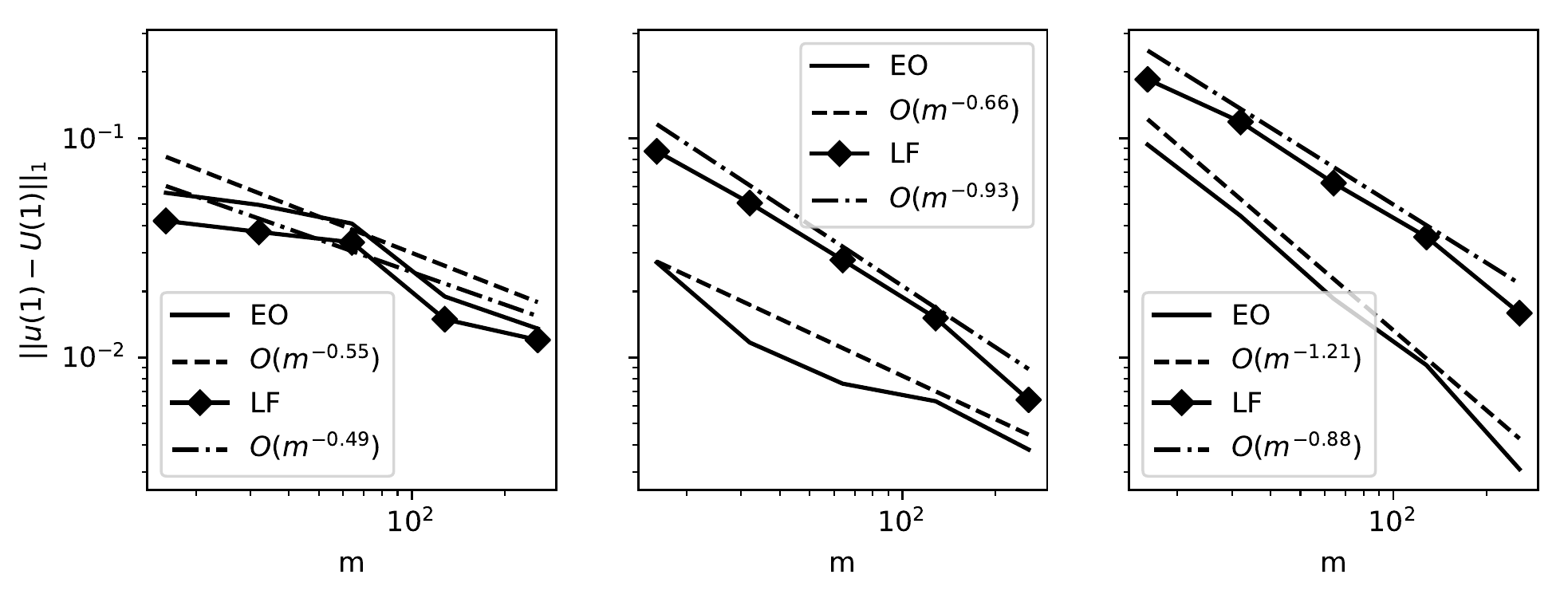}\\
\caption{Final time approximation error for Example~\ref{ex:fBM}.
Hurst indices $\alpha = 0.25, 0.5, 0.75$ from left to right, respectively. 
The abbreviations EO and LF denote respectively 
the Engquist--Osher and Lax--Friedrichs schemes.}
\label{fig:ex2ConvRates}
\end{figure}
\end{example}

\section{Cancellations and improved numerical methods}\label{sec:orm}

The numerical experiments in Example~\ref{ex:fBM} indicate that the
convergence rates obtained in Theorems~\ref{thm:met1Compl}
and~\ref{thm:met1Compl2} might not be sharp. In view of the more
precise adaptive timestep error analysis of the latter theorem, where
the factor $\abs{z^m}_{BV([0,T])}$ enters, one might suspect that the
error bounds could be improved if one were able to identify ``rough
path oscillations" resulting in ``cancellations" in the flux term
$\dot{z}^m \partial_xf(u)$.  In this section we identify such
oscillatory cancellations for strictly convex flux functions.  More
precisely, we show that if $f\in C^2(\bR)$ is strictly convex, then
the path $z^m \in I^m_0([0,T])$ can be replaced by a ``simpler" 
path $y^m \in I^m_0([0,T])$ (with smaller total variation) 
such that the solution of~\eqref{eq:consClass}
with path $y^m$ coincides with the corresponding solution
with path $z^m$ at final time $T$ (but not necessarily at earlier times).
An efficient numerical method that exploits this property
is developed in Section~\ref{sec:framework2}.

\subsection{Preliminaries}
Recall that for $\kappa \in \bR$, $s\ge 0$, and $v \in (L^1 \cap BV)(\bR)$,
$\cS^\kappa(s)v$ denotes the solution at time $t=s$ of
\[
\partial_tu +  \kappa \partial_xf(u) = 0 \quad
\text{in} \; (0,\infty)\times \bR,
\qquad u(0,\cdot) = v,
\]
so the entropy solution at time $t=T$ of~\eqref{eq:consClass}
with path $z^m \in I_0([0,T]; \{\tau_j\}_{j=0}^m)$ can be expressed by
\begin{equation}\label{eq:solRepTmp}
	u^m(T) = \cS^{\dot{z}_{m-1}^m}(\Delta \tau)
        \cS^{\dot{z}_{m-2}^m}(\Delta \tau)
	\ldots \cS^{\dot{z}_0^m}(\Delta \tau) u_0.
\end{equation}
By a change of variables, the solution mapping can be 
simplified to only depend on the path increments.
\begin{lemma}\label{lem:map2}
  For any $\Delta \tau > 0$, $\kappa \in \bR$ and $v \in (L^1 \cap BV)(\bR)$, 
  $\cS^{\kappa}(\Delta \tau)v$ coincides with 
  the entropy solution at time $t=|\kappa|\Delta \tau$ of
  \begin{equation}\label{eq:map2}
    \partial_t\tilde u + \sign{\kappa} \partial_xf(\tilde u) = 0
    \quad \text{in} \; (0,\infty)\times \bR,
    \qquad \tilde u(0) = v.
  \end{equation}
 \end{lemma}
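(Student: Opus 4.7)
The plan is to exhibit $\cS^\kappa(\Delta\tau)v$ as a simple time rescaling of the solution of \eqref{eq:map2}, and then invoke uniqueness of Kru\v{z}kov entropy solutions.

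First I dispose of the trivial case $\kappa=0$: both sides reduce to $v$, since $\cS^0(\Delta\tau)v$ solves $\partial_t u=0$ with $u(0)=v$, while \eqref{eq:map2} with $\sign{0}=0$ is likewise stationary (using whatever convention for $\sign{0}$, the right-hand side has zero coefficient). Assume henceforth $\kappa\neq 0$, set $\sigma=\sign{\kappa}\in\{-1,+1\}$, and let $u\in C([0,\infty);L^1(\bR))$ denote the Kru\v{z}kov entropy solution of \eqref{eq:map1} with initial datum $v$.

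Next, define the time-rescaled function $\tilde u(s,x)\coloneq u(s/|\kappa|,x)$ for $s\ge 0$. A formal computation gives $\partial_s \tilde u=\frac{1}{|\kappa|}\partial_t u=-\frac{\kappa}{|\kappa|}\partial_xf(u)=-\sigma\,\partial_xf(\tilde u)$, so at least formally $\tilde u$ solves \eqref{eq:map2}. To make this rigorous at the level of entropy solutions, I verify that for every Kru\v{z}kov entropy pair $(|u-c|,\sign{u-c}(f(u)-f(c)))$ with $c\in\bR$, the distributional inequality
\[
\partial_t|u-c|+\kappa\,\partial_x\bigl[\sign{u-c}(f(u)-f(c))\bigr]\le 0 \quad\text{in }\mathcal{D}'((0,\infty)\times\bR)
\]
transforms under the change of variables $t=s/|\kappa|$ (which maps test functions $\varphi(t,x)$ to $|\kappa|^{-1}\varphi(s/|\kappa|,x)$, with Jacobian $|\kappa|^{-1}$) into
\[
|\kappa|\,\partial_s|\tilde u-c|+\kappa\,\partial_x\bigl[\sign{\tilde u-c}(f(\tilde u)-f(c))\bigr]\le 0,
\]
and dividing by $|\kappa|>0$ yields exactly the Kru\v{z}kov entropy inequality for \eqref{eq:map2}. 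The initial trace is preserved since $\tilde u(0,\cdot)=u(0,\cdot)=v$ in $L^1$.

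Finally, by the uniqueness part of Kru\v{z}kov's theorem applied to \eqref{eq:map2}, $\tilde u$ coincides with its entropy solution. Evaluating at $s=|\kappa|\Delta\tau$ gives $\tilde u(|\kappa|\Delta\tau,\cdot)=u(\Delta\tau,\cdot)=\cS^\kappa(\Delta\tau)v$, which is the claim. No step is really a serious obstacle here; the only point to watch is the routine bookkeeping that the positive rescaling factor $|\kappa|$ preserves the sign of the entropy inequality, so that a weak entropy sub-solution on one side corresponds to a weak entropy sub-solution on the other.
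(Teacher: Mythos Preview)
Your proof is correct and follows essentially the same approach as the paper: both arguments reduce the lemma to a time rescaling $t\mapsto |\kappa|\,t$ and verify that the Kru\v{z}kov entropy inequality is preserved under this positive dilation, then invoke uniqueness. The only cosmetic difference is the direction of the rescaling: the paper starts from the solution $\tilde u$ of \eqref{eq:map2} and shows that $\bar u(t,x)=\tilde u(|\kappa|t,x)$ solves \eqref{eq:map1}, whereas you start from the solution $u$ of \eqref{eq:map1} and show that $\tilde u(s,x)=u(s/|\kappa|,x)$ solves \eqref{eq:map2}.
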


\begin{proof}
For $\kappa=0$ the result trivially holds as $\cS^{0}(\cdot) = I$.
Otherwise, when $|\kappa|>0$, we verify the result by showing that 
$$
\bar u (t,x) := \tilde u(t |\kappa|,x)
$$
is an entropy solution of
$$
\partial_t \bar u + \kappa  \partial_xf(\bar u) = 0 
\quad \text{in} \; (0, \infty) \times \bR,
\qquad \bar u(0) = v.
$$
Let $\phi \in C^\infty_0( \bR\times \bR)$ be an
arbitrary nonnegative test function and set
\begin{equation}\label{eq:testDef}
\bar \phi(t,x):= \phi(t|\kappa|,x ).
\end{equation}
By construction, for all $c \in \bR$, we have
\[
\begin{split}
&\int_0^{|\kappa|\Delta \tau} \int_\bR |\tilde{u}-c| \partial_t \phi + \sign{\tilde u-c} 
\sign{\kappa}(f(\tilde u)-f(c)) \partial_x \phi \dx \dt \\
& \quad + \int_\bR |\tilde u(0,x) -c| \phi(0, x) - |\tilde u(|\kappa| \Delta
\tau,x) -c| \phi(|\kappa|\Delta \tau, x) \dx \ge 0.
\end{split}
\]
Making the change of variables $\bar t = t/|\kappa|$, we arrive at
\[
\begin{split}
	&\int_0^{\Delta \tau}\int_\bR |\bar{u}-c| (\partial_t \phi) \left(\bar t |\kappa|,x\right)
	\\ & \qquad \qquad
	+\sign{\bar u-c} \kappa (f(\bar u)-f(c)) 
	(\partial_x \phi)\left(\bar t |\kappa|,x\right) \dx \, \mathrm{d}\bar t \\
	& \quad +  \int_\bR |\bar u(0,x) -c| \bar \phi(0, x) - |\bar u(\Delta \tau , x) -c| \bar \phi(\Delta \tau, x) \dx \ge 0.
\end{split}
\]
Noting that
\[
\partial_t \phi \left(\bar t |\kappa|,x\right) =\partial_{\bar t} \phi \left(\bar t |\kappa|,x\right)  |\kappa|^{-1},
\]
it follows that
\[
\begin{split}
	&\int_0^{\Delta \tau}\int_\bR |\bar{u}-c| \partial_{\bar t}\bar \phi
	+\sign{\bar u-c}\kappa (f(\bar{u})-f(c)) 
	\partial_x \bar \phi \dx \, \mathrm{d}\bar t \\
	& \quad +  \int_\bR |\bar u(0,x) -c| \bar \phi(0, x) 
	- |\bar u(\Delta \tau , x) -c| \bar \phi(\Delta \tau, x) \dx \ge 0.
\end{split}
\]
In view of~\eqref{eq:testDef} and the invertibility of
the mapping $t\mapsto t |\kappa|$,
the above inequality holds for arbitrary
nonnegative $\bar \phi \in C^\infty_0( \bR\times \bR)$. 
\end{proof}

Let $\bar \cS(\cdot ) : \bR\times (L^1 \cap BV)(\bR) \to (L^1 \cap BV)(\bR)$ 
be the solution operator linked to 
\begin{equation}\label{eq:map3}
  \partial_t\tilde u + \sign{\Delta z} \partial_xf(\tilde u) = 0
  \quad  \text{in} \; (0,\infty)\times \bR,
    \qquad \tilde u(0) = v,
\end{equation}
that is, for $\Delta z \in \bR$ and $v \in (L^1 \cap BV)(\bR)$, 
$\bar \cS(\Delta z) v$ denotes the Kru{\v{z}}kov entropy solution 
at time $t=|\Delta z|$ of \eqref{eq:map3}.

Lemma~\ref{lem:map2} implies that for any 
$z^m \in I_0([0,T]; \{\tau_j\}_{j=0}^m)$,
$k \in \{0,1,\ldots, m-1\}$, $s \in [0,\Delta \tau_k]$ and $v \in (L^1 \cap BV)(\bR)$,  
\begin{equation}\label{eq:cSEquiv}
\cS^{\dot{z}^m_{k}}(s)v = \bar \cS(z^m(\tau_k+s)-z^m(\tau_k)) v.
\end{equation}
In view of~\eqref{eq:cSEquiv} and~\eqref{eq:uMSol}, the solution of~\eqref{eq:consClass}, for given $u_0 \in (L^1 \cap BV)(\bR)$, $f \in C^2(\bR)$ 
and driving path $z^m \in I^m_0([0,T])$, can be represented by 
\begin{equation}\label{eq:solRep2}
	u^m(t) = \begin{cases} 
          \bar\cS(z^m(t)-z^m(0)) u_0 & \text{if } t \in [0,\tau_1],\\
          \bar\cS(z^m(t)-z^m(\tau_1)) \bar\cS(\Delta z_0^m)  u_0 &
          \text{if } t \in (\tau_1, \tau_2],\\
          \cdots \\
          \bar\cS(z^m(t)-z^m(\tau_{m-1})) \bar\cS(\Delta {z}_{k-2}^m) 
          \cdots \bar \cS(\Delta {z}_0^m) u_0 & \text{if }
          t \in (\tau_{m-1}, T].
          \end{cases}
\end{equation}

To study how an entropy solution depends on the driving path, 
we introduce the notion ``oscillatory cancellations''.
\begin{definition}
For given $u_0 \in (L^1 \cap BV)(\bR)$, $f\in C^2(\bR)$ and $z^m \in
I^m_0([0,T])$, we say there are ``oscillatory cancellations'' over an interval
$[\tau_k, \tau] \subset [0,T]$, with $k \in \{0,1,\ldots,m-2\}$ and $\tau \in
(\tau_\ell, \tau_{\ell+1}]$ for some $k<\ell\le m-1$,
if it holds that
\begin{equation}\label{eq:semigroup}
  \begin{split}
    u^m(\tau) &= \bar \cS(z^m(\tau) - z^m(\tau_\ell)) \cdots \bar \cS(\Delta
    z_k^m) u^m(\tau_k)\\
    &= \bar \cS\prt{ z^m(\tau) - z^m(\tau_\ell) + \sum_{j=k}^{\ell-1}
      \Delta z_{j}^m} u^m(\tau_k)\\
    & =\bar \cS\prt{ z^m(\tau) - z^m(\tau_k) } u^m(\tau_k).
    \end{split}
\end{equation}
\end{definition}
Recall from~\cite[Theorem 2.15]{HoldenRisebro2015} 
that the solution operator $\bar \cS$
fulfills the following properties 
for all $u,v \in (L^1 \cap BV) (\bbR)$ and $s,t \in \bR$:
\begin{align}
\|\bar S(s)u-\bar S(s) v \|_{L^{1}(\bbR)} & \le \|u -
                                            v\|_{L^1(\bbR)}, \label{eq:stabOp} \\
\|\bar S(s)u-\bar S(t) u \|_{L^{1}(\bbR)} &\le
  \|f'\|_{L^\infty} |u|_{\BV{\bR}}
                                            |t-s|, \label{eq:stabOp2}\\
|\bar S(s)u |_{\BV{\bbR}} & \le |u_0|_{\BV{\bR}}. \label{eq:stabOp3}
\end{align}

\begin{definition}\label{def:runMaxMin}
For any $z \in C_0([0,T])$, let 
\begin{equation}\label{eq:runMaxMin}
	M^+[z](t) \coloneq \max_{s \in [0,t]} z(s) \quad 
	M^-[z](t) \coloneq \min_{s \in [0,t]} z(s), 
	\quad t\in [0,T],
\end{equation}
denote the running max/min functions of $z$.
\end{definition}

See Figure~\ref{fig:aPlusMin} for an example illustrating the 
running max/min functions.

To identify intervals with oscillatory cancellations 
we make use of the following regularity result:
\begin{corollary}\label{cor:regularity}
Let $u^m \in C_0([0,T]; L^1(\bR))$
denote the unique entropy solution 
of~\eqref{eq:consClass}, for given initial data 
$u_0 \in (L^1 \cap BV)(\bR)$, strictly convex 
flux $f\in C^2(\bR)$ and driving path $z^m \in I^m_0([0,T]) $. 
If for some $0\le s_1 < s_2 \le T$, 
\[
z^m(t) \in (M^{-}[z^m](s_1) , M^{+}[z^m](s_1)) \quad \forall t \in
(s_1, s_2),
\]
then for all $t \in (s_1, s_2)$, it holds that
$u^m(t) \in \mathrm{Lip}(\bbR)$ and  
\[
\sup_{x\neq y} \abs{\frac{u^m(t,y) - u^m(t,x)}{y-x}} \le
\frac{\|f''\|_{L^\infty}^{-1} }{\min\seq{z^m(t)-M^-[z^m](s_1), 
 \, M^+[z^m](s_1)- z^m(t)}},
\]
where (as usual) the $L^\infty$ is restricted to the interval 
$\left[-\|u_0\|_\infty, \|u_0\|_\infty\right]$. 
\end{corollary}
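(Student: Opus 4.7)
The plan is to reduce the two-sided Lipschitz bound to two applications of the classical Oleinik one-sided Lipschitz estimate for scalar conservation laws with strictly convex (respectively concave) flux, after first collapsing the chain of elementary solution operators appearing in~\eqref{eq:solRep2} into a single operator with net argument.

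First, I would use continuity of $z^m$ and compactness of $[0,s_1]$ to choose times $\sigma^+,\sigma^- \in [0,s_1]$ attaining $z^m(\sigma^+) = M^+[z^m](s_1)$ and $z^m(\sigma^-) = M^-[z^m](s_1)$. Combining the definitions of $M^\pm[z^m]$ on $[0,s_1]$ with the hypothesis $z^m(r) \in (M^-[z^m](s_1), M^+[z^m](s_1))$ for $r \in (s_1, s_2)$ yields the one-sided excursion property $z^m(r) \le z^m(\sigma^+)$ for all $r \in [\sigma^+, t]$ and $z^m(r) \ge z^m(\sigma^-)$ for all $r \in [\sigma^-, t]$. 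In words, once the path has reached its extreme value at $\sigma^\pm$, it never re-attains that value on the relevant side before time $t$.

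Second, I would invoke the oscillatory cancellation principle for strictly convex fluxes---the central theme of Section~\ref{sec:orm}---to collapse the composition~\eqref{eq:solRep2} between $\sigma^\pm$ and $t$ into a single application of $\bar\cS$:
\begin{equation*}
u^m(t) = \bar\cS\bigl(z^m(t)-z^m(\sigma^+)\bigr)\,u^m(\sigma^+) = \bar\cS\bigl(z^m(t)-z^m(\sigma^-)\bigr)\,u^m(\sigma^-).
\end{equation*}
By Lemma~\ref{lem:map2} the first identity realises $u^m(t)$ as the entropy solution at effective time $M^+[z^m](s_1) - z^m(t)$ of the strictly concave equation $\partial_t \tilde u - \partial_x f(\tilde u) = 0$ with data $u^m(\sigma^+)$, and the second realises it as the entropy solution at effective time $z^m(t) - M^-[z^m](s_1)$ of the strictly convex equation $\partial_t \tilde u + \partial_x f(\tilde u) = 0$ with data $u^m(\sigma^-)$.

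Third, I would apply the classical Oleinik one-sided Lipschitz estimate to each representation. The convex/forward representation yields, for $y>x$, the upper bound $(u^m(t,y) - u^m(t,x))/(y-x) \le \|f''\|_\infty^{-1}/(z^m(t) - M^-[z^m](s_1))$; the concave/backward representation (treated by a reflection $x \mapsto -x$, or equivalently by Oleinik for concave fluxes) yields the matching lower bound $(u^m(t,y) - u^m(t,x))/(y-x) \ge -\|f''\|_\infty^{-1}/(M^+[z^m](s_1) - z^m(t))$. Taking absolute values and keeping whichever of the two denominators is smaller produces the claimed inequality, and in particular shows $u^m(t) \in \mathrm{Lip}(\bR)$. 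The main obstacle is the cancellation identity of step two: it is precisely where strict convexity of $f$ is essential, since $z^m$ may oscillate many times within $(M^-[z^m](s_1), M^+[z^m](s_1))$ between $\sigma^\pm$ and $t$, creating interacting shocks and rarefactions whose contributions must nevertheless cancel inside the semigroup so that only the net displacement survives. Once this identity is in hand, steps one and three are routine.
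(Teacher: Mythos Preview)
Your approach is circular within this paper's logical structure. The cancellation identity you invoke in step two---that the composition of solution operators between $\sigma^\pm$ and $t$ collapses to $\bar\cS(z^m(t)-z^m(\sigma^\pm))$ applied to $u^m(\sigma^\pm)$---is precisely case (iii) of Lemma~\ref{lem:cancel}. But the proof of Lemma~\ref{lem:cancel}(iii) explicitly relies on Corollary~\ref{cor:regularity}: one perturbs the path to $z^{m,\delta}$, uses the corollary to conclude that $u^{m,\delta}(s)\in\mathrm{Lip}(\bR)$ on $(\tau_k,\tau)$, hence that the solution is classical and time-reversible there, and this reversibility is what enables the collapse. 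So the cancellation principle is downstream of the statement you are trying to prove; you cannot take it as input. You correctly identify step two as ``the main obstacle,'' but the obstacle is not merely technical---in the paper's development it is logically posterior to the corollary.

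The paper's route is direct and does not pass through cancellation. Lemma~\ref{lem:regLem} in Appendix~\ref{app:regProof} establishes the two-sided Oleinik inequality
\[
-\frac{1}{M^+[z^m](t)-z^m(t)} \le \frac{f'(u^m(t,y\pm))-f'(u^m(t,x\pm))}{y-x} \le \frac{1}{z^m(t)-M^-[z^m](t)}
\]
for \emph{every} $t\in[0,T]$, by induction over the linear segments of $z^m$: Lemma~\ref{lem:regStep} shows that if the inequality holds at $\tau_k$ then it persists on $[\tau_k,\tau_{k+1}]$, by tracing backward generalized characteristics in the sense of Dafermos to relate the difference quotient at time $s$ to that at $\tau_k$. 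Under the hypothesis of the corollary the running extrema freeze, $M^\pm[z^m](t)=M^\pm[z^m](s_1)$ for $t\in(s_1,s_2)$, and the mean-value theorem converts the bound on $f'\circ u^m$ into the stated Lipschitz bound on $u^m$. Your Oleinik intuition in step three is exactly right; the point is that generalized characteristics deliver both one-sided bounds simultaneously at every $t$, without any prior knowledge of cancellation.
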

The corollary is a direct consequence of 
Lemma~\ref{lem:regLem}, $f\in C^2(\bbR)$, and the mean-value theorem. 
We refer to the companion work~\cite{Hoel:2017} for an 
in-depth theoretical treatment of regularity 
and cancellation properties for \eqref{eq:consClass}.

\begin{lemma}\label{lem:cancel}
For an $m\ge 2$, let $u^m \in C_0([0,T]; L^1(\bR))$
denote the unique entropy solution of~\eqref{eq:consClass} 
for some initial data $u_0 \in (L^1 \cap BV)(\bR)$, 
strictly convex $f\in C^2(\bR)$ and driving noise
$z^m \in I^m_0([0,T])$.
Then property~\eqref{eq:semigroup} holds
over an interval $[\tau_k,\tau] \subset [0,T]$
with $k\in \{0,1,\ldots,m-2\}$ and $\tau \in (\tau_\ell,
\tau_{\ell+1}]$ for some $k < \ell \le m-1$,
if at least one of the following conditions are met:
\begin{enumerate}
\item[(i)] $\dot{z}^m(s+) \coloneq \lim_{\delta \downarrow0} 
\dot{z}^m(s+\delta) \ge 0$ for all $s \in [\tau_k, \tau)$,

  \item[(ii)] $\dot{z}^m(s+) \le 0$ for all $s \in [\tau_{k}, \tau)$, 

  \item[(iii)] $z^m([\tau_k, \tau]) \subset [M^-[z^m](\tau_k),M^+[z^m](\tau_k)]$.
\end{enumerate}
\end{lemma}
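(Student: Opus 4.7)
The plan is to treat the three hypotheses in turn, with (i) and (ii) reducing to a direct semigroup argument and (iii) being the substantive case that exploits strict convexity of $f$. For case (i), since $\dot z^m(s+) \ge 0$ on $[\tau_k, \tau)$, every increment $\Delta z^m_j$ for $j \in \{k, \ldots, \ell-1\}$ together with the final partial increment $z^m(\tau) - z^m(\tau_\ell)$ is nonnegative. By the definition~\eqref{eq:map3} of $\bar\cS$, each factor appearing in the expansion of $u^m(\tau)$ via~\eqref{eq:solRep2} then corresponds to forward evolution of the single autonomous conservation law $\partial_t \tilde u + \partial_x f(\tilde u) = 0$ (with $\sign=+1$) over a time interval whose length equals the respective nonnegative increment. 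The standard semigroup property of Kružkov entropy solutions for this autonomous PDE telescopes the product immediately into $\bar\cS(z^m(\tau) - z^m(\tau_k))$, which is~\eqref{eq:semigroup}. Case (ii) is handled identically with the reverse-direction equation $\partial_t \tilde u - \partial_x f(\tilde u) = 0$ replacing the forward one.

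For case (iii), my approach is to split $[\tau_k, \tau]$ into its maximal monotone subintervals of $z^m$ and apply (i) and (ii) on each piece, reducing the problem to showing that an alternating composition $\bar\cS(\pm a_N)\cdots \bar\cS(\pm a_1) u^m(\tau_k)$ along a zig-zag confined to $[M^-[z^m](\tau_k), M^+[z^m](\tau_k)]$ collapses to $\bar\cS(z^m(\tau) - z^m(\tau_k)) u^m(\tau_k)$. The core sublemma I would prove is that a single forward-then-backward pair $\bar\cS(-a)\bar\cS(b)$ applied to a state $v$ equals $\bar\cS(b-a)v$ whenever $v$ is Lipschitz with slope small enough that no characteristic collision occurs during the round trip of length $\min(a,b)$. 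For strictly convex $f$, a Lipschitz state evolves via the Lax--Oleinik formula with characteristics that are straight lines and, below the shock-formation threshold, do not collide, so the forward and backward solution operators are locally mutual inverses. The required quantitative Lipschitz control on $u^m(\tau_k)$ and on all intermediate states generated during the pairwise reduction is exactly the content of Corollary~\ref{cor:regularity}: since the past history of $z^m$ forces $z^m(\tau_k)$ into an interval already visited from both sides, the solution has been regularized by the previous excursions up to $M^\pm[z^m](\tau_k)$, and any intermediate path increment produced by the reduction still lies inside the same interval, so the corollary keeps applying along the iteration.

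The hard part, I expect, will be handling the endpoint-boundary case: Corollary~\ref{cor:regularity} yields Lipschitz regularity only when $z^m(t)$ sits strictly inside the open interval $(M^-[z^m](\tau_k), M^+[z^m](\tau_k))$, whereas hypothesis (iii) allows $z^m$ to touch the endpoints (and indeed it typically does, since $z^m(\tau_k)$ itself could equal $M^\pm[z^m](\tau_k)$). I would get around this by approximating $z^m\!\restriction\![\tau_k,\tau]$ by piecewise linear paths $z^{m,\varepsilon}$ that stay strictly inside the open interval and converge uniformly to $z^m$, applying the pairwise cancellation identity to each approximant, and passing to the limit using the continuous dependence estimate~\eqref{eq:stabilityPes} on both sides of the identity. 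A second technical point is keeping the iteration well defined: the composed increments at each stage of the inductive reduction must remain within $[M^-[z^m](\tau_k), M^+[z^m](\tau_k)]$ so that Corollary~\ref{cor:regularity} remains available; this should follow from the monotone collapsing of nested excursions, but it is where I would be most careful in writing out the details.
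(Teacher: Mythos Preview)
Your proposal is correct and follows essentially the same route as the paper: cases (i)/(ii) via the semigroup property of the autonomous flow, and case (iii) via an interior $\delta$-perturbation of the path so that Corollary~\ref{cor:regularity} yields Lipschitz regularity throughout, then reversibility of classical (Lipschitz) solutions, then passage to the limit. The paper's execution is slightly more streamlined---it collapses the entire perturbed product in one step (rather than pairwise) and passes to the limit via the elementary operator estimates~\eqref{eq:stabOp}--\eqref{eq:stabOp3} (yielding an $O(\delta)$ bound) instead of~\eqref{eq:stabilityPes}---but the underlying idea is the same.
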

\begin{remark}
Figure~\ref{fig:aPlusMin} exemplifies the running max/min functions 
for a piecewise linear function $z^{10} \in I_0([0,T]; \{\tau_j\}_{j=0}^{10})$, 
where $[\tau_5,\tau_7]$ is of type (i), $[\tau_7,\tau_{10}=T]$ is of type
(ii), and $[\tau_2,\tau_{5}]$ and $[\tau_7, \tau]$ with 
$\tau =\tau_9+2\Delta \tau/3$ are of type (iii).
\end{remark}

\begin{proof}
  
{\bf (i) \& (ii):} The condition $\dot{z}^m(\cdot +)|_{[\tau_k,\tau)} \ge 0$ 
implies that $\Delta z^m_j \ge 0$ for $j=k,k+1,\ldots,\ell-1$.
By the definition of $\bar \cS$, cf.~\eqref{eq:map3},
\[
\bar \cS(z^m(\tau)- z^m(\tau_\ell) ) \ldots \bar
\cS(\Delta z_k^m)u^m(\tau_k)
\]
is equal to the entropy solution at time
$t=z^m(\tau)- z^m(\tau_\ell)  
+ \sum_{j=k}^{\ell-1} \Delta z^m_j$ of 
\[
\partial_t \tilde u + \sign{z^m(\tau) - z^m(\tau_j) } \partial_x f(\tilde u) = 0,
\quad
\tilde u(0)= u^m(\tau_k).
\]
But, by definition, we also have that
\[
\bar \cS\prt{ z^m(\tau) - z^m(\tau_k)}u^m(\tau_k) = \tilde u(z^m(\tau) - z^m(\tau_k)),
\]
and it is clear that~\eqref{eq:semigroup} holds.
Part (ii) follows by a similar argument.

{\bf (iii):} We assume $M^+[z^m](\tau_k) -M^-[z^m](\tau_k)>0$, as 
otherwise $\dot{z}^m(\cdot+)|_{[\tau_k,\tau)} = 0$ 
and the cancellation property follows by (i) or (ii).

For a $\delta \in \prt{0,  (M^+[z^m](\tau_k) -M^-[z^m](\tau_k))/2}$ we define the
approximation path $z^{m,\delta} \in I_0([0,T]; \{\tau_j\}_{j=0}^m)$ by 
interpolating the values
\[
z^{m,\delta}(\tau_j) \coloneq \begin{cases} 
z^m(\tau_j) -\delta & \text{if } j \in \{k+1,\ldots,\ell\} \text{ and } z^m(\tau_j) = M^+[z^m](\tau_k),\\
z^m(\tau_j) +\delta & \text{if } j \in \{k+1,\ldots,\ell\} \text{ and } z^m(\tau_j) = M^-[z^m](\tau_k),\\
z^m(\tau_j) & \text{otherwise,}
\end{cases}
\]
over the set of interpolation points $\{\tau_j\}_{j=0}^m$.
By construction, it then holds that 
\[
z^{m,\delta}(s) \in (M^-[z^m](\tau_k), M^+[z^m](\tau_k)), \qquad
\forall  s \in (\tau_k, \tau),
\]
\begin{equation}\label{eq:zDeltaBound}
\Big| \Delta z^{m,\delta}_j  - \Delta z^m_j\Big|\le 2\delta, \quad \text{for
  all } \quad j \in \{k, k+1, \ldots, \ell-1\},
\end{equation}
where $\Delta z^{m,\delta}_j \coloneq z^{m,\delta}(\tau_{j+1}) -
z^{m,\delta}(\tau_j)$,
and
\begin{equation}\label{eq:zDeltaBound2}
\abs{z^{m,\delta}(\tau) - z^{m,\delta}(\tau_\ell) - (z^m(\tau) -
  z^m(\tau_\ell)) } \le 2\abs{ z^{m,\delta}(\tau_\ell) -  z^m(\tau_\ell)}
\le 2\delta.
\end{equation}

Let $u^{m,\delta}$ denote the solution
of~\eqref{eq:consClass} with driving path $z^{m,\delta}$  
and the same initial data $u_0$ and flux $f$ as the solution $u^m$. 
Since $z^{m,\delta}|_{[0,\tau_k]} = z^{m}|_{[0,\tau_k]}$, 
\eqref{eq:map3} implies
\[
u^{m,\delta}(t) = \begin{cases} u^m(t) & \text{if } t \le \tau_k,\\
\bar \cS(z^{m,\delta}(t) - z^m(\tau_k) )u^m(\tau_k) & \text{if } t \in
(\tau_k, \tau_{k+1}],\\
\cdots\\
\bar \cS(z^{m,\delta}(t)- z^{m,\delta}(\tau_{\ell}))\cdots 
\bar\cS(\Delta z^{m,\delta}_k ) u^m(\tau_k), & \text{if } t \in
(\tau_{\ell}, \tau].
\end{cases}
\]
By Corollary~\ref{cor:regularity} it holds that $u^{m,\delta}(s) \in
\mathrm{Lip}(\bbR)$ for all $s \in (\tau_k, \tau)$.
Consequently, $u^{m,\delta}|_{(\tau_k, \tau) \times \bbR}$ is a
classical solution that is time invertible, as it
can be obtained by the method of characteristics.
By~\eqref{eq:stabOp2}, we further obtain that 
\small
\[
\begin{split}
&\bar \cS\prt{z^{m,\delta}(\tau) - z^{m,\delta}(\tau_\ell) }
\cdots \bar\cS(\Delta z^{m,\delta}_k ) u^m(\tau_k)
\\ & =\lim_{\epsilon \downarrow 0}\Big[
\bar \cS\prt{z^{m,\delta}(\tau) - z^{m,\delta}(\tau-\epsilon) }
\bar \cS\prt{z^{m,\delta}(\tau-\epsilon)) - z^{m,\delta}(\tau_\ell) }\bar\cS(\Delta z^{m,\delta}_{\ell-1} )\cdots \\
&\quad \quad \bar\cS(\Delta z^{m,\delta}_{k+1})
\bar \cS\prt{z^{m,\delta}(\tau_{k+1}) - z^{m,\delta}(\tau_k+\epsilon)}
\bar \cS\prt{z^{m,\delta}(\tau_{k}+\epsilon) - z^{m,\delta}(\tau_k)}
u^m(\tau_k)\Big]\\
&= \lim_{\epsilon \downarrow 0}\Big[
\bar \cS\prt{z^{m,\delta}(\tau) - z^{m,\delta}(\tau-\epsilon) }
\bar \cS\prt{z^{m,\delta}(\tau-\epsilon)) - z^{m,\delta}(\tau_k+\epsilon)} \\
& \qquad \qquad \bar \cS\prt{z^{m,\delta}(\tau_{k}+\epsilon) - z^{m,\delta}(\tau_k)}
u^m(\tau_k)\Big]\\
&= \bar \cS\prt{z^{m,\delta}(\tau)) - z^{m,\delta}(\tau_k)}u^m(\tau_k).
\end{split}
\]
\normalsize
By \eqref{eq:stabOp}, \eqref{eq:stabOp2}, \eqref{eq:stabOp3}, 
\eqref{eq:zDeltaBound} and \eqref{eq:zDeltaBound2},
\small
\[
\begin{split}
& \norm{\bar \cS\prt{z^{m}(\tau) -
z^{m}(\tau_\ell) }\cdots 
\bar \cS(\Delta z_k^m) u^m(\tau_{k}) - \bar \cS\prt{ z^{m,\delta}(\tau) -
  z^{m,\delta}(\tau_k) } u^{m}(\tau_{k})}_{L^1} \\
& =  \norm{ \left[ \bar \cS\prt{z^{m}(\tau) - z^{m}(\tau_\ell) }
\cdots \bar \cS(\Delta z_k^m) 
- \bar \cS\prt{z^{m,\delta}(\tau) -z^{m,\delta}(\tau_\ell) }\cdots
      \bar\cS(\Delta z^{m,\delta}_k ) \right] u^m(\tau_k)}_{L^1}\\
  & \le \norm{\Big[ \bar \cS\prt{z^{m}(\tau) -z^{m}(\tau_\ell)}
      - \bar \cS\prt{z^{m,\delta}(\tau) -z^{m,\delta}(\tau_\ell)} \Big] 
      \bar \cS(\Delta z_{\ell-1}^m)\cdots \bar \cS(\Delta z_k^m) u^m(\tau_{k})}_{L^1} \\
&+ \norm{\bar \cS\prt{z^{m,\delta}(\tau) -
z^{m,\delta}(\tau_\ell)} \left[ \bar \cS(\Delta z_{\ell-1}^m) - \bar \cS(\Delta
    z_{\ell-1}^{m,\delta}) \right] \bar \cS(\Delta z_{\ell-2}^m)
    \cdots \bar \cS(\Delta z_k^m) u^m(\tau_{k})}_{L^1}\\
& + \cdots + \norm{\bar \cS\prt{z^{m,\delta}(\tau) -z^{m,\delta}(\tau_\ell)} 
\cdots \bar \cS(\Delta z_{k+1}^{m,\delta})  \left[\bar \cS(\Delta  z_{k}^m) -
    \bar \cS(\Delta z_k^{m,\delta}) \right] u^m(\tau_{k})}_{L^1}\\
& \leq 2\delta (\ell+1-k) \|f'\|_{\infty} |u_0|_{\mathrm{BV}}.
\end{split}
\]
\normalsize
Taking the limit $\delta \downarrow 0$, shows that ~\eqref{eq:semigroup}
holds in $L^1(\bR)$-sense.
\end{proof}

\subsection{The oscillating running max and min functions}

Lemma~\ref{lem:cancel} identifies a class of intervals over which  
an entropy solution with driving path $z^m \in I^m_0([0,T])$ 
experience ``oscillatory cancellations''.  In this section we construct 
an alternative driving path from $z^m$ that is free of 
Lemma~\ref{lem:cancel}'s type (iii) ``oscillatory cancellations'',
has lower total variation than $z^m$, and (under some assumptions) 
produces the same entropy solution at the final time as $z^m$ does.
The further removal of type (i), (ii) ``oscillatory cancellations'' 
is postponed to Section~\ref{sec:framework2}.

\begin{definition}\label{def:ormLDef}
For any $z \in C_0([0,T])$, 
we define the monotonically increasing 
functions $A^+[z],A^-[z] :[0,T] \to [0,T]$ by
 \begin{align*}
  A^+[z](t) &:= \min\{s \in [0,t]\, |\, z(s) = M^+[z](t)\},\\
  A^-[z](t)  &:= \min\{s \in [0,t] \,|\, z(s) = M^-[z](t)\},
\end{align*}
and the monotonically increasing c\`{a}dl\`{a}g 
functions $\bar A^+[z], \bar A^-[z] :[0,T] \to [0,T]$ by
\begin{equation}\label{eq:barADef}
\begin{split}
\bar A^+[z](t) &:= \begin{cases} 
\lim\limits_{\delta \downarrow 0} A^+[z](t+\delta)
  & \text{if} \quad t \in [0,T), \\
A^+[z](T) & \text{if} \quad  t = T,
\end{cases}\\
\bar A^-[z](t) &:= \begin{cases} 
\lim\limits_{\delta \downarrow 0} A^-[z](t+\delta)
  & \text{if}\quad  t \in [0,T), \\
 A^-[z](T) & \text{if} \quad  t = T.
\end{cases}
\end{split}
\end{equation}
For a given set of points 
\[
0=\tau_0 < \tau_1 < \ldots< \tau_m=T,
\]
and $z^m = \cI[z](\cdot; \{\tau_k\}_{k=0}^m)$, define
\begin{equation*}
A^\pm[z^m] := \bar
A^+[z^m](\{\tau_k\}_{k=0}^{m}) \cup \bar
A^-[z^m](\{\tau_k\}_{k=0}^{m}) \subset \{\tau_k\}_{j=0}^{m}.
\end{equation*}
The set inclusion $A^\pm[z^m] \subset \{\tau_k\}_{j=0}^{m}$
is verified in Lemma~\ref{lem:aPmProps}.

Note that $0 \in A^\pm[z^m]$, and let
$\{\tau_{j(k)}\}_{k=0}^{\bar m}  =A^\pm[z^m]\cup \{T\}$, with 
$0=j(0) < \cdots < j(\bar m) =m$, represent the subsequence 
of interpolation points in ascending order. The operator 
$\overline{\orm}: C_0([0,T]) \to I^m_0([0,T])$ is defined by 
\begin{equation}\label{eq:yDef}
\begin{split}
&\overline{\orm}[z](t;\{\tau_{j(k)}\}_{k=0}^{\bar m})\\ &= 
\sum_{k=0}^{\bar m -1} \1{t \in (\tau_{j(k)},
  \tau_{j(k+1)}]} \prt{ z(\tau_{j(k)})  + \prt{z(\tau_{j(k+1)})-
z(\tau_{j(k)}) }\frac{t- \tau_{j(k)}}{\tau_{j(k+1)}
-\tau_{j(k)}}}.
\end{split}
\end{equation}
We refer to $\overline{\orm}[z]\prt{\cdot;\{\tau_{j(k)}\}_{k=0}^{\bar m}}$ 
as the piecewise linear oscillating running
max/min (orm) function of $z$, and frequently 
use the shorthand $\overline{\orm}_m[z]
=\overline{\orm}[z]\prt{\cdot; \{\tau_{j(k)}\}_{k=0}^{\bar m}}$ (if no 
confusion is possible).
\end{definition}

See Figure~\ref{fig:aPlusMin} for an example illustrating 
$\bar{A}^{+}[z^m]$, $\bar{A}^{-}[z^m]$, and $\overline{\orm}_m[z]$.

\begin{figure}[H]
\includegraphics[width=0.75\textwidth]{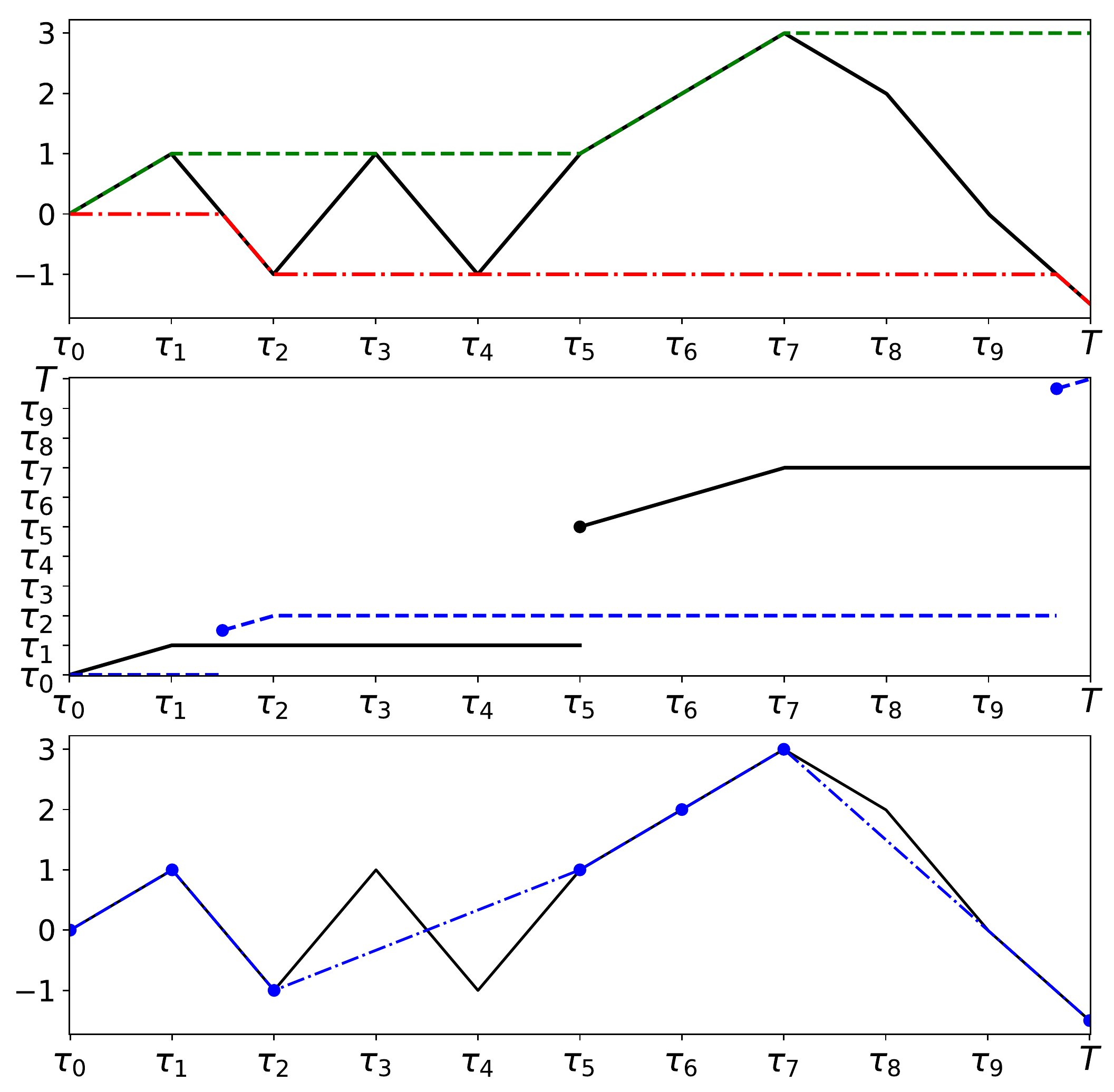}
\caption{Top: The piecewise linear path
  $z^{m} \in I_0([0,T]; \{\tau_j\}_{j=0}^{m})$ with $m=10$ (black line)
  and the associated running maximum $M^+[z^{m}]$ (green dashed line) and
  running minimum $M^-[z^m]$ (red dash-dotted line).  Middle: $\bar A^{+}[z^m]$
  (black line) and $\bar A^{-}[z^m]$ (blue dashed line).
  Black and blue dots illustrate that the respective functions
  are right-continuous at jump discontinuities. 
  Bottom: The piecewise linear path $z^m$ (black line) and the
  associated $\overline{\orm}_m[z]$ (blue dash-dotted line). Blue
  dots mark the value of $\overline{\orm}_m[z]$ at its interpolation points 
  $\{\tau_{j(k)}\}_{k=0}^m$.}
\label{fig:aPlusMin}
\end{figure}

For later reference we collect some properties of 
$\bar A^{\pm}[z]$ in a lemma.
\begin{lemma}\label{lem:aPmProps}
Assume that $z \in C_0([0,T])$. 
Then, for all $t \in [0,T]$,
\[
\max(\bar A^{+}[z], \bar A^-[z])(t) \le t,
\]
\begin{equation}\label{eq:MPmRelation}
\begin{split}
z(\bar A^+[z](t)) &=M^{+}[z](\bar A^+[z](t)) =M^{+}[z](t),   \\
z(\bar A^-[z](t)) &=M^{-}[z](\bar A^-[z](t)) = M^{-}[z](t), 
\end{split}
\end{equation}
\begin{equation}\label{eq:fixedPointsA}
\bar A^{+}[z](\bar A^+[z](t)) = \bar A^+[z](t),
\quad \bar A^{-}[z](\bar A^{-}[z](t)) = \bar A^{-}[z](t).
\end{equation}
Furthermore, for any set of points that satisfies
\[
0=\tau_0 < \tau_1 < \ldots< \tau_m=T, \qquad m \ge 2,
\]
and $z^m = \cI[z](\cdot; \{\tau_k\}_{k=0}^m)$,
it holds that
\begin{equation}\label{eq:setInclusion}
  A^{\pm}[z^m] \subset \{\tau_k\}_{k=0}^m
\end{equation}
and 
\begin{equation}\label{eq:fixedPointsAPm}
\tau = \max\seq{\bar A^{+}[z], \bar A^-[z]}(\tau) \quad \text{ for all
  $\tau \in A^{\pm}[z^m]$.}
\end{equation}
\end{lemma}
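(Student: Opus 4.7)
My plan is to establish the five assertions in the order listed, relying on continuity of $z$ and the definitions of $A^{\pm}$, $\bar A^{\pm}$; only the idempotence~\eqref{eq:fixedPointsA} requires real work, and the other items either follow immediately from the definitions or reduce to~\eqref{eq:fixedPointsA}.

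The bound $\max\seq{\bar A^{+}[z],\bar A^{-}[z]}(t)\le t$ is immediate: $A^{\pm}[z](t)\le t$ holds by definition, so for $t\in[0,T)$ the right-limit satisfies $\bar A^{\pm}[z](t)=\lim_{\delta\downarrow 0}A^{\pm}[z](t+\delta)\le\lim_{\delta\downarrow 0}(t+\delta)=t$, and $t=T$ is trivial. For~\eqref{eq:MPmRelation}, continuity of $z$ gives continuity of the envelopes $M^{\pm}[z]$; passing to the limit $\delta\downarrow 0$ in $z(A^{+}[z](t+\delta))=M^{+}[z](t+\delta)$ then yields $z(\bar A^{+}[z](t))=M^{+}[z](t)$, after which the equality $M^{+}[z](\bar A^{+}[z](t))=M^{+}[z](t)$ follows by squeezing between the trivial lower bound $M^{+}[z](\bar A^{+}[z](t))\ge z(\bar A^{+}[z](t))=M^{+}[z](t)$ and the monotonicity bound $M^{+}[z](\bar A^{+}[z](t))\le M^{+}[z](t)$ coming from the preceding item. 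The $\bar A^{-}$ version is symmetric.

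For~\eqref{eq:fixedPointsA}, set $s=\bar A^{+}[z](t)$ and split on the behaviour of $M^{+}[z]$ immediately to the right of $t$. In the first regime $M^{+}[z]$ is constant on some right neighbourhood $[t,t+\delta_{0}]$, so $A^{+}[z](t+\delta)=A^{+}[z](t)$ there and hence $s=A^{+}[z](t)$; the ``$\min$'' in the definition of $A^{+}$ forces $z(r)<M^{+}[z](t)=z(s)$ for $r\in[0,s)$, whence $A^{+}[z](s)=s$, and re-running the dichotomy at $s$ gives $A^{+}[z](s+\delta')\in[s,s+\delta']$, so $\bar A^{+}[z](s)=s$. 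In the second regime every right neighbourhood of $t$ contains a point where $z>M^{+}[z](t)$; then $A^{+}[z](t+\delta)\in(t,t+\delta]$, forcing $s=t$ and thus $\bar A^{+}[z](s)=s$ trivially. The case $t=T$ collapses to the first regime via $\bar A^{+}[z](T)=A^{+}[z](T)$. The identity for $\bar A^{-}$ is symmetric.

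For~\eqref{eq:setInclusion}, piecewise linearity of $z^{m}$ on each $[\tau_{k},\tau_{k+1}]$ forces the maximum on $[0,t]$ to be attained at a mesh point (at an endpoint on non-constant pieces; at the leftmost mesh point on constant pieces, selected by the ``$\min$''), so an induction on $k$ gives $A^{+}[z^{m}](\tau_{k})\in\{\tau_{j}\}_{j=0}^{k}$; applying the same two-case analysis as in~\eqref{eq:fixedPointsA} at $t=\tau_{k}$ carries this property from $A^{+}[z^{m}]$ to $\bar A^{+}[z^{m}]$, and the $\bar A^{-}$ case is symmetric. Finally,~\eqref{eq:fixedPointsAPm} is immediate from~\eqref{eq:fixedPointsA} applied to the continuous path $z^{m}$: every $\tau\in A^{\pm}[z^{m}]$ is by construction of the form $\bar A^{+}[z^{m}](\tau_{j})$ or $\bar A^{-}[z^{m}](\tau_{j})$, so one of $\bar A^{+}[z^{m}](\tau)=\tau$ or $\bar A^{-}[z^{m}](\tau)=\tau$ holds by~\eqref{eq:fixedPointsA}, and the other quantity is bounded above by $\tau$ thanks to the bound just proved. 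The principal obstacle is~\eqref{eq:fixedPointsA}, because $A^{+}[z]$ need not be right-continuous---it jumps precisely when $z$ attains a strict new maximum immediately to the right of $t$---so before iterating one must identify which of the two candidate values $A^{+}[z](t)$ and $t$ actually equals $\bar A^{+}[z](t)$; once this dichotomy is resolved, the remaining parts are bookkeeping.
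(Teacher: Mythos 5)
Your proposal is correct and follows essentially the same route as the paper: the same right-limit and monotonicity arguments for the bound and for \eqref{eq:MPmRelation}, a dichotomy at $t$ for the idempotence \eqref{eq:fixedPointsA} that matches the paper's split into right-continuity points $B_1^{+}[z]$ and jump points $B_2^{+}[z]$ of $A^{+}[z]$ (your local-constancy-of-$M^{+}[z]$ criterion and your direct first-attainment identity $A^{+}[z](s)=s$ play the role of the paper's composed identity $A^{+}[z](A^{+}[z](t))=A^{+}[z](t)$), and the same piecewise-linearity plus two-case argument for \eqref{eq:setInclusion} and \eqref{eq:fixedPointsAPm}. The only cosmetic deviation is your squeeze argument for $M^{+}[z](\bar A^{+}[z](t))=M^{+}[z](t)$ in place of the paper's continuity-of-composition computation, which changes nothing of substance.
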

\begin{proof}
  By Definition~\ref{def:ormLDef},
  \[
  \max(\bar A^{+}[z], \bar A^-[z])(t) = \lim_{\delta \downarrow 0} \max(A^{+}[z], A^-[z])(t+\delta)
  \le \lim_{\delta \downarrow 0} (t+\delta) = t.
  \]
  
  To verify~\eqref{eq:MPmRelation}, $z \in C_0([0,T])$ implies that $M^+[z]\in C_0([0,T])$,
  \[
  \begin{split}
    z(\bar A^+[z](t)) &= z( \lim_{\delta \downarrow 0} A^+[z](t+\delta))\\
    &= \lim_{\delta \downarrow 0} z( A^+[z](t+\delta))\\
    & = \lim_{\delta \downarrow 0} M^{+}[z](t+\delta)\\
    &= M^{+}[z](t),
    \end{split}
  \]
  and
  \[
  \begin{split}
  M^{+}[z](\bar A^+[z](t)) &= M^{+}[z](\lim_{\delta \downarrow 0} A^+[z](t+\delta))\\
  & = \lim_{\delta \downarrow 0} M^{+}[z](A^+[z](t+\delta))\\
  & = \lim_{\delta \downarrow 0} M^{+}[z](t+\delta)\\
  & =M^{+}[z](t).
  \end{split}
  \]
The second line of~\eqref{eq:MPmRelation} follows by a similar argument.

To verify~\eqref{eq:fixedPointsA}, we begin by noting that 
for all $t\in [0,T]$,
\begin{equation}\label{eq:fixedAPlain}
\begin{split}
A^+[z](A^+[z](t)) &= \min\{s \in [0,A^+[z](t)] \mid z(s) = M^+[z](A^+[z](t))\}\\
& = \min\{s \in [0,A^+[z](t)] \mid z(s) = M^+[z](t)\}\\
& = A^{+}[z](t).
\end{split}
\end{equation}
By writing 
\[
B_1^+[z] = \{t \in [0,T) \mid  \lim_{\delta \downarrow 0} 
A^{+}[z](t+\delta) - A^{+}[z](t) =0\} \cup \{T\}
\]
and
\[
B_2^+[z] = \{t \in [0,T) \mid  \lim_{\delta \downarrow 0} 
A^{+}[z](t+ \delta) - A^{+}[z](t) >0\},
\]
we see that for all $t\in B_1^+[z]$, $\bar A^{+}[z](t) = A^+[z](t)$.
Hence,
\[
\bar A^{+}[z](\bar A^{+}[z](t) )\le \bar A^{+}[z](t) =  A^+[z](t)
\]
and since $\bar A^{+}[z] \ge A^{+}[z]$,
\[
\bar A^{+}[z](\bar A^{+}[z](t) )\ge  A^{+}[z](A^{+}[z](t))= A^+[z](t).
\]
We conclude that
\[
\bar A^{+}[z](\bar A^{+}[z](t) ) = A^+[z](t) = \bar A^{+}[z](t) 
\quad \forall t \in B_1^+[z].
\]
We claim that for all $t \in B_2^+[z]$ and $\delta \in (0,T-t)$, it 
holds that $A^{+}[z](t+\delta)>t$; supposing 
otherwise, \eqref{eq:fixedAPlain} and the monotonicity 
(increasing) of $A^+[z]$ leads to the following contradiction
\[
A^{+}[z](t+\delta) - A^+[z](t) = A^{+}[z](A^{+}[z](t+\delta)) - A^+[z](t) \le 0.
\]
Hence, for all $t \in B_2^+[z]$,
it follows by the preceding observation and $\bar A^{+}[z](t) \le t$ that 
$\bar A^{+}[z](\bar A^{+}[z](t)) =  \bar A^{+}[z](t)$.
The equality $\bar A^{-}[z](\bar A^{-}[z](t)) =  \bar A^{-}[z](t)$
can be verified by a similar argument.

To verify~\eqref{eq:setInclusion},
observe that since $z^m$ is piecewise linear, it holds that
$$
A^+[z^m](\{\tau_k\}_{k=0}^m) 
\cup A^-[z^m](\{\tau_k\}_{k=0}^m) 
\subset \{\tau_k\}_{r=0}^m,
$$
and for all $k \in \{0,1,\ldots, m\}$,
\[
\bar A^+[z^m](\tau_k) =
\begin{cases}
  A^+[z^m](\tau_k) & \text{if } \tau_k \in B_1^+[z^m],\\
  \tau_k & \text{if } \tau_k \in B_2^+[z^m],
  \end{cases}
\]
and
\[
\bar A^-[z^m](\tau_k) =
\begin{cases}
  A^-[z^m](\tau_k) & \text{if } \tau_k \in B_1^-[z^m],\\
  \tau_k & \text{if } \tau_k \in B_2^-[z^m],
  \end{cases}
\]
where
\[
B_1^-[z^m] = \{t \in [0,T) \mid  \lim_{\delta \downarrow 0} A^{-}[z^m](t+\delta) - A^{-}[z^m](t) =0\} \cup \{T\}
\]
and
\[
B_2^-[z] = \{t \in [0,T) \mid  \lim_{\delta \downarrow 0} A^{-}[z^m](t+ \delta) - A^{-}[z^m](t) >0\}.
\]

To verify~\eqref{eq:fixedPointsAPm}, equation~\eqref{eq:fixedPointsA} implies that
for any $\tau \in \bar A^{+}[z^m](\{\tau_k\}_{k=0}^m)$,
\[
\bar A^+[z^m](\tau) =\tau \quad \text{and} \quad  \bar A^-[z^m](\tau) \le \tau,
\]
and for any $\tau \in \bar A^{-}[z^m](\{\tau_k\}_{k=0}^m)$,
\[
\bar A^-[z^m](\tau) =\tau \quad \text{and} \quad  \bar A^+[z^m](\tau) \le \tau.
\]
\end{proof}
 
To extend the solution representation~\eqref{eq:solRep2}
to paths with jump discontinuities and to 
study asymptotic properties of $\overline{\orm}_m$ 
for $m\to \infty$, we introduce 
\begin{definition}
Let $\cD([0,T])$ denote the space of c\`{a}dl\`{a}g 
functions $g:[0,T] \to \bR$. For some $m \ge 2$, let 
$\{ \tau_{k}\}_{k=0}^{m}$ be a set of points with
$0= \tau_0 <  \tau_1 <  \ldots < \tau_{m} =T$.
Given $f \in C^2(\bR)$, $u_0 \in (L^1 \cap
BV)(\bR)$ and $y \in \cD([0,T])$, we define 
\begin{equation}\label{eq:solRep3}
v( \tau_k; y, \{\tau_{k}\}_{k=0}^{ m} ) \coloneq 
\begin{cases}
u_0 & \text{if } k =0\\
\bar\cS(\Delta y_{ k-1})
\bar\cS(\Delta {y}_{ k-2}) \ldots \bar \cS(\Delta {y}_0) u_0 &
\text{if } k \in \{1,\ldots,  m\},
\end{cases}
\end{equation}
where $\Delta y_k = y(\tau_{k+1}) - y(\tau_k)$.
\end{definition}

Note that for any
$f \in C^2(\bR)$, $u_0 \in (L^1 \cap BV)(\bR)$,
set of points
\[
0= \tau_0 <   \tau_1 <  \ldots <  \tau_{m} =T, \qquad m\ge 2,
\]
and $y \in \cD([0,T])$, it holds
by~\eqref{eq:stabOp},~\eqref{eq:stabOp3} and induction that
\[
\|v(\tau_k ; y, \{\tau_{r}\}_{r=0}^{m} ) \|_{L^1} \le
\|u_0\|_{L^1} \qquad 
\abs{v(\tau_k ; y, \{\tau_{r}\}_{r=0}^{m} )}_{\mathrm{BV}} 
\le |u_0|_{\mathrm{BV}} 
\quad \forall k \le m.
\]
Moreover, cf.~\eqref{eq:solRep2},
\[
v(T; z, \{\tau_{k}\}_{k=0}^{ m} ) = v(T; \cI^m[z],
\{\tau_{k}\}_{k=0}^{ m} ) = u^m(T), 
\quad z \in C_0([0,T]).
\]

The next theorem shows that if the flux is strictly convex, then the driving
paths $\cI^m[z]$ and $\overline{\orm}_m[z]$ produce the same entropy 
solution at final time.

\begin{theorem}\label{thm:ormLinEquiv}
Assume that $f\in C^2(\bR)$ is strictly convex, $u_0 \in (L^1\cap
BV)(\bR)$ and $z \in C_0([0,T])$.
For some $m \ge 2$, let $\{ \tau_{k}\}_{k=0}^{m}$ denote a set of 
points satisfying
$0= \tau_0 <  \tau_1 <  \ldots < \tau_{m} =T$,
$z^m = \cI^m[z]$, and let 
\[
0=\tau_{j(0)}< \tau_{j(1)} <\ldots< \tau_{j(\bar m)} =T
\]
denote the associated interpolation points of~$\,\overline{\orm}[z]\prt{\cdot; \{\tau_{j(k)}\}_{j=0}^{\bar m}}$,
cf.~Definition~\ref{def:ormLDef}.
Then
\[
v(T; \overline{\orm}_m[z], \{\tau_{j(k)}\}_{k=0}^{\bar m} )
= v(T; \overline{\orm}_m[z], \{\tau_{k}\}_{k=0}^{ m} ) =
v(T; z^m, \{\tau_{k}\}_{k=0}^{ m} ).
\]
\end{theorem}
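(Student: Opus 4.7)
The plan is to prove the two equalities separately. The first equality—that $v(T;\overline{\orm}_m[z],\{\tau_{j(k)}\}_{k=0}^{\bar m}) = v(T;\overline{\orm}_m[z],\{\tau_k\}_{k=0}^m)$—is handled by noting that on each coarse sub-interval $[\tau_{j(k)},\tau_{j(k+1)}]$ the ORM path is a single linear segment, hence $\dot{\overline{\orm}}_m[z](\cdot+)$ has constant sign. Therefore Lemma~\ref{lem:cancel}(i) or~(ii), applied to the fine-mesh sub-family $\{\tau_r\}\cap(\tau_{j(k)},\tau_{j(k+1)}]$, collapses the product of $\bar\cS$ operators on that fine sub-family to the single operator $\bar\cS(\overline{\orm}_m[z](\tau_{j(k+1)})-\overline{\orm}_m[z](\tau_{j(k)}))$, which is precisely the coarse-mesh factor. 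Telescoping over $k=0,\ldots,\bar m-1$ gives the claim.

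For the second (main) equality, I would argue inductively that $v(\tau_{j(k)};z^m,\{\tau_r\}_{r=0}^m)=v(\tau_{j(k)};\overline{\orm}_m[z],\{\tau_r\}_{r=0}^m)$ for every $k=0,\dots,\bar m$. The inductive step reduces to showing that the $z^m$-advance operator from $\tau_{j(k)}$ to $\tau_{j(k+1)}$ equals $\bar\cS(z^m(\tau_{j(k+1)})-z^m(\tau_{j(k)}))$, since by Step~1 that is precisely the ORM-advance (using $\overline{\orm}_m[z]=z^m$ on $A^\pm[z^m]$). By the defining property of consecutive record times, $z^m$ stays in the (closed) envelope $[M^-[z^m](\tau_{j(k)}),M^+[z^m](\tau_{j(k)})]$ throughout $[\tau_{j(k)},\tau_{j(k+1)-1}]$, and the terminal linear piece $[\tau_{j(k+1)-1},\tau_{j(k+1)}]$ is monotone and exits the envelope exactly once at $\tau_{j(k+1)}$ to attain a new extremum. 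By the intermediate value theorem, pick $\tau^{\ast}\in[\tau_{j(k+1)-1},\tau_{j(k+1)}]$ with $z^m(\tau^{\ast})=M^{+}[z^m](\tau_{j(k)})$ if $\tau_{j(k+1)}$ is a new running maximum, and $z^m(\tau^{\ast})=M^{-}[z^m](\tau_{j(k)})$ if it is a new running minimum. Then $[\tau_{j(k)},\tau^{\ast}]$ satisfies the hypothesis of Lemma~\ref{lem:cancel}(iii), yielding $u^m(\tau^{\ast})=\bar\cS(z^m(\tau^{\ast})-z^m(\tau_{j(k)}))u^m(\tau_{j(k)})$, and the monotone segment $[\tau^{\ast},\tau_{j(k+1)}]$ contributes $\bar\cS(z^m(\tau_{j(k+1)})-z^m(\tau^{\ast}))$ via the entropy-solution representation on a linear piece of $z^m$.

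The decisive observation is that, since $z^m(\tau_{j(k)})\in\{M^{+},M^{-}\}[z^m](\tau_{j(k)})$ (from~\eqref{eq:MPmRelation}) and $\tau^{\ast}$ was chosen on the envelope boundary matching the direction toward the new extremum, the two increments $z^m(\tau^{\ast})-z^m(\tau_{j(k)})$ and $z^m(\tau_{j(k+1)})-z^m(\tau^{\ast})$ have the same sign in each of the four sub-cases (max$\to$max, max$\to$min, min$\to$min, min$\to$max). The sign-matched semigroup property of $\bar\cS$ (immediate from its definition~\eqref{eq:map3} as the entropy semigroup of $\partial_t\tilde u+\sign{\Delta z}\partial_xf(\tilde u)=0$) then composes the two factors into $\bar\cS(z^m(\tau_{j(k+1)})-z^m(\tau_{j(k)}))$, completing the inductive step.

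The main obstacle is precisely this sign bookkeeping: $\bar\cS$ is \emph{not} a group for nonlinear $f$, so the composition $\bar\cS(c_2)\bar\cS(c_1)$ collapses to $\bar\cS(c_1+c_2)$ only when $c_1,c_2$ share a sign; hence the carefully tailored choice of $\tau^{\ast}$ (either on the upper or lower envelope) is essential, and strict convexity of $f$ is used implicitly through Lemma~\ref{lem:cancel}(iii). A few degenerate configurations must also be noted: if $j(k+1)=j(k)+1$ the sub-interval is itself a single linear segment and no appeal to (iii) is needed; and if $T\notin A^\pm[z^m]$, then on the terminal interval $[\tau_{j(\bar m-1)},T]$ the path never leaves the envelope, so Lemma~\ref{lem:cancel}(iii) with $\tau=T$ applies directly and the second factor is vacuous.
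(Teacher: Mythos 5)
Your proposal is correct and follows essentially the same route as the paper's proof: the coarse/fine equality for the orm path via Lemma~\ref{lem:cancel}(i)--(ii), and then an induction whose key step applies Lemma~\ref{lem:cancel}(iii) up to an intermediate time on the old running max/min level (the paper's $\tau$, your $\tau^{\ast}$) followed by a same-sign composition of $\bar\cS$ increments, with the same treatment of the degenerate cases $j(k+1)=j(k)+1$ and $T\notin A^{\pm}[z^m]$.
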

\begin{proof}
Recall from the solution representations~\eqref{eq:solRep2}
and~\eqref{eq:solRep3} that
\[
v(T; z^m, \{\tau_{k}\}_{k=0}^{ m} ) = u^m(T) 
= \bar \cS(\Delta z^m_{m-1}) \cdots \bar \cS(\Delta z^m_0)u_0.
\]
By writing $y^m = \overline{\orm}_m[z]$ and 
introducing the shorthand 
\[
v^m(\tau_k) \coloneq v(\tau_k; y^m, \{\tau_{r}\}_{r=0}^{ m}), \quad
\forall k \in \{0,1,\ldots,m\},
\]
we have by~\eqref{eq:solRep3},
\[
v^m(T) = \bar \cS(\Delta y^m_{m-1}) \ldots \bar \cS(\Delta y^m_0)u_0. 
\]
For any $k \in\{0,\ldots,\bar m-1\}$,
$y^m|_{[\tau_{j(k)}, \tau_{j(k+1)}]}$ is a linear function.
Therefore, either
\[
\dot{y}^m(s+) \ge 0, \quad \forall s \in [\tau_{j(k)}, \tau_{j(k+1)})
\] 
or
\[
\dot{y}^m(s+) \le 0, \quad \forall s \in [\tau_{j(k)}, \tau_{j(k+1)}),
\]
for any $k \in\{0,\ldots,\bar m-1\}$,
and Lemma~\ref{lem:cancel} and $y^m|_{A_\tau^{\pm}[z^m] \cup \{T\}} 
=z^m|_{A_\tau^{\pm}[z^m]  \cup \{T\}}$, cf.~\eqref{eq:yDef}, yield
\[
\begin{split}
v^m(T) &= \bar \cS\prt{y^m(\tau_{j(\bar m)}) - y^m(\tau_{j(\bar
  m-1)}) } \ldots 
\bar \cS\prt{y^m(\tau_{j(1)}) - y^m(\tau_{j(0)})}u_0\\
&= \bar \cS\prt{z^m(\tau_{j(\bar m)}) - z^m(\tau_{j(\bar
  m-1)}) } \ldots 
\bar \cS\prt{z^m(\tau_{j(1)}) - z^m(\tau_{j(0)})} u_0.
\end{split}
\]

Assume $\bar m >1$ (otherwise $z^m|_{[0,\tau_{m-1}]} = 0$ and the
lemma trivially holds). We claim that
for all $k \in \{0,\ldots,\bar m-1\}$ such that $j(k+1) - j(k) \ge 2$,
\begin{equation}\label{eq:zMIter}
\begin{split}
\bar \cS\prt{z^m(\tau_{j(k+1)}) - z^m(\tau_{j(k)}))u^m(\tau_{j(k)}} &=
\bar \cS\prt{\Delta z^m_{j(k+1)-1}} \ldots \bar
\cS\prt{\Delta z^m_{j(k)}}  u^m(\tau_{j(k)})\\
& = u^m(\tau_{j(k+1)}).
\end{split} 
\end{equation}
Define
\[
 k_1 = \min \seq{k \in \{0,\ldots,\bar m-1\} \big| j(k+1) - j(k) \ge 2}
\cup \seq{\bar m} .
\] 
Then, since $j(k+1) - j(k)=1$  for all 
$k <  k_1$, it follows from the solution representations~\eqref{eq:solRep2}
and~\eqref{eq:solRep3} that $v^m(\tau_{j(k_1)}) = u^m(\tau_{j(k_1)})$. 
If $k_1 = \bar m$, we have $v^m(T) = u^m(T)$.
Otherwise, if $k_1 < \bar m$, assumption~\eqref{eq:zMIter} implies that 
$v^m(\tau_{j(k_1+1)}) = u^m(\tau_{j( k_1+1)})$.
Let
\[
k_2 = \min \seq{k \in \{ k_1+1,\ldots,\bar m-1\} | j(k+1) - j(k) \ge 2}
\cup \{\bar m\} ,
\]
and argue as above to conclude that if $k_2 = \bar m$, then 
$v^m(T) = u^m(T)$, and otherwise, $v^m(\tau_{j( k_2+1)}) =
u^m(\tau_{j(k_2+1)})$. The lemma follows by induction once we
have verified the claim~\eqref{eq:zMIter}.

Suppose $k \in \{0,\ldots,\bar m-1\}$ satisfies $j(k+1) - j(k) \ge 2$.
Consider two cases: $\tau_{j(k+1)} \notin
A^\pm[z^m]$ and $\tau_{j(k+1)} \in A^\pm[z^m]$.

The case $\tau_{j(k+1)} \notin A^\pm[z^m]$ is only
possible if $k=\bar m -1$, i.e., if $\tau_{j(k+1)} = T$. 
Then
\[
z^m(\tau_\ell) \in  [M^-[z^m](\tau_{j(k)}), M^+[z^m](\tau_{j(k)})],
\quad \forall \ell \in \{j(\bar m-1), j(\bar m-1)+1, \ldots, j(\bar m)\},
\]
which implies that
\[
z^m([\tau_{j(k)}, T]) \subset  [M^-[z^m](\tau_{j(k)}),
M^+[z^m](\tau_{j(k)})],
\]
and~\eqref{eq:zMIter} follows by Lemma~\ref{lem:cancel}. 

For the second case; $\tau_{j(k+1)} \in A^\pm[z^m]$, then
\[
z^m(\tau_\ell) \in  [M^-[z^m](\tau_{j(k)}), M^+[z^m](\tau_{j(k)})],
\quad \forall \ell \in \{j(k), j(k)+1,
\ldots, j(k+1)-1\},
\]
and there exists a unique $\tau \in
(\tau_{j(k+1)-1}, \tau_{j(k+1)}]$ such that 
\begin{equation}\label{eq:uniqueTau}
	\begin{split}
		& z^m([\tau_{j(k)}, \tau]) 
		\subset [M^-[z^m](\tau_{j(k)}), M^+[z^m](\tau_{j(k)})],
		\\ &
		\tau \in \{\bar A^+[z^m](\tau), \bar A^-[z^m](\tau)\}.
	\end{split}
\end{equation}
To show \eqref{eq:uniqueTau}, if $z^m(\tau_{j(k+1)}) \in  [M^-[z^m](\tau_{j(k)}), M^+[z^m](\tau_{j(k)})]$
then $\tau = \tau_{j(k+1)}$. Otherwise, if
$z^m(\tau_{j(k+1)}) \notin  [M^-[z^m](\tau_{j(k)}), M^+[z^m](\tau_{j(k)})]$,
then\\
$z^m(\tau_{j(k+1)-1}) \in  [M^-[z^m](\tau_{j(k)}), M^+[z^m](\tau_{j(k)})]$
implies that $|\Delta z^m_{j(k+1)-1}| >0$. This observation and 
$\tau_{j(k+1)-1} \notin A^{\pm}[z^m]$ verifies statement~\eqref{eq:uniqueTau}.

By~\eqref{eq:uniqueTau} and Lemma~\ref{lem:cancel},
\[
\begin{split}
& \bar \cS(\Delta z^m_{j(k+1)-1}) \ldots
 \bar\cS(\Delta z^m_{j(k)})  u^m(\tau_{j(k)}) \\
 &= \bar \cS\prt{\Delta z^m(\tau_{j(k+1)}) - z^m(\tau)}
 \bar \cS\prt{z^m(\tau) - z^m(\tau_{j(k+1)-1})}\ldots
 \bar\cS(\Delta z^m_{j(k)})  u^m(\tau_{j(k)})  \\
&= \bar \cS(\Delta z^m(\tau_{j(k+1)}) - z^m(\tau)) \bar \cS(z^m(\tau) -z^m(\tau_{j(k)}))  u^m(\tau_{j(k)}).
\end{split}
\]
If $\tau_{j(k+1)} = \bar A^+[z^m](\tau_{j(k+1)})$, then it must hold
that $\Delta z^m_{j(k+1)-1}\ge0$, hence, $z^m(\tau_{j(k+1)})-
z^m(\tau) \ge 0$. The former inequality and
$\tau \in \{\bar A^+[z^m](\tau), \bar A^-[z^m](\tau)\}$
imply that $z^m(\tau) = M^+[z^m](\tau)$, so that also 
$z^m(\tau) - z^m(\tau_{j(k)}) \ge 0$.
If, on the other hand, $\tau_{j(k+1)} = \bar A^-[z^m](\tau_{j(k+1)})$,
then a similar argument yields 
\[
z^m(\tau_{j(k+1)})- z^m(\tau)\le 0 \quad \text{and}\quad  z^m(\tau)-z^m(\tau_{j(k)}) \le 0.
\] 
We conclude from the above that if $\tau_{j(k+1)} \in  A^\pm[z^m]$, then
\[
(z^m(\tau_{j(k+1)})- z^m(\tau)) (z^m(\tau) -z^m(\tau_{j(k)})) \ge 0.
\]
Since both increments either are non-negative or non-positive, 
\[
\bar \cS\prt{z^m(\tau_{j(k+1)})- z^m(\tau)}
\bar \cS\prt{z^m(\tau) -z^m(\tau_{j(k)})} u^m(\tau_{j(k)})
\]
equals the unique entropy solution at time
\[
t = \abs{z^m(\tau_{j(k+1)} ) - z^m(\tau) + (z^m(\tau)
  -z^m(\tau_{j(k)}))} = \abs{z^m(\tau_{j(k+1)} )  -z^m(\tau_{j(k)})}
\]
 of 
\[
\partial_t \tilde u + \sign{z^m(\tau_{j(k+1)} ) -z^m(\tau_{j(k)})} \partial_x
f(\tilde u) = 0, \quad \tilde u(0) = u^m(\tau_{j(k)}),
\]
cf.~\eqref{eq:map3}.
However,
\[
\bar \cS(z^m(\tau_{j(k+1)})-z^m(\tau_{j(k)})) u^m(\tau_{j(k)})
\]
also equals $\tilde u(|z^m(\tau_{j(k+1)} ) -z^m(\tau_{j(k)})| )$.
Hence,
\[
\begin{split}
u^m(\tau_{j(k+1)})&=\bar \cS(\Delta z^m_{j(k+1)-1}) 
\cdots \bar\cS(\Delta z^m_{j(k)})  u^m(\tau_{j(k)}) \\
&= \bar \cS(\Delta z^m(\tau_{j(k+1)}) - z^m(\tau)) \bar \cS(z^m(\tau)-z^m(\tau_{j(k)}))  u^m(\tau_{j(k)})\\
& = \bar \cS(z^m(\tau_{j(k+1)})-z^m(\tau_{j(k)})) u^m(\tau_{j(k)}).
\end{split}
\]
\end{proof}

The operator $\overline{\orm}_m:C_0([0,T]) \to I_0([0,T]; \{\tau_j\}_{j=0}^m)$ introduced in
Definition~\ref{def:ormLDef} maps every driving path $z \in C_0([0,T])$ to a less 
oscillatory driving path $y^m = \overline{\orm}_m[z]$. In
Theorem~\ref{thm:ormLinEquiv} it is shown that,
provided $u_0 \in (L^1 \cap BV)(\bR)$ and $f \in C^2(\bR)$ is
strictly convex, the paths $z^m = \cI^m[z]$ and $\overline{\orm}_m[z]$ are
equivalent in the sense of preserving final time solutions: 
\[
 v(T; \overline{\orm}_m[z],\{\tau_{k}\}_{k=0}^{m}) = v(T; z^m, \{\tau_{k}\}_{k=0}^{m}) = u^m(T), 
\]
for all $z \in C_0([0,T])$ and meshes $\{\tau_k\}_{k=0}^m$ for $m \ge 2$.

We next introduce an operator $\orm: C_0([0,T]) : \to \cD([0,T])$ 
that may be viewed as the ``imit extension" of the $\overline{\orm}_m$ 
operators in the sense that
\[
v(T; \orm[z^m] , \{\tau_{k}\}_{k=0}^{m}) 
= v(T;\overline{\orm}_m[z], \{\tau_{k}\}_{k=0}^{m}), 
\]
cf.~Theorem~\ref{thm:ormEquiv}, and, under the more restrictive
assumptions of Theorem~\ref{thm:limOrmBound},
\[
\lim_{m \to \infty} v(T; \orm[z],\{\tau_k\}_{k=0}^{m}) = u(T).
\]

\begin{definition}\label{def:orm}
We define the operator $\orm[\cdot]: C_0([0,T]) \to \mathcal{D}([0,T])$ by
\begin{equation*}
	\orm[z](t) \coloneq\begin{cases} M^+[z](t) \1{\bar A^+[z](t)> \bar A^-[z](t)} 
	+ M^-[z](t) \1{ \bar A^-[z](t)>  \bar A^+[z](t)} &
        \text{if } t \in [0,T), \\
          z(T) & \text{if } t = T,
\end{cases}
\end{equation*}
and refer to $\orm[z]$ as the oscillating running 
max/min (orm) function of $z$. 
\end{definition}
See Figure~\ref{fig:orm} for examples 
illustrating the $\orm[z]$ functions. 

\begin{figure}[H]
\includegraphics[width=0.99\textwidth]{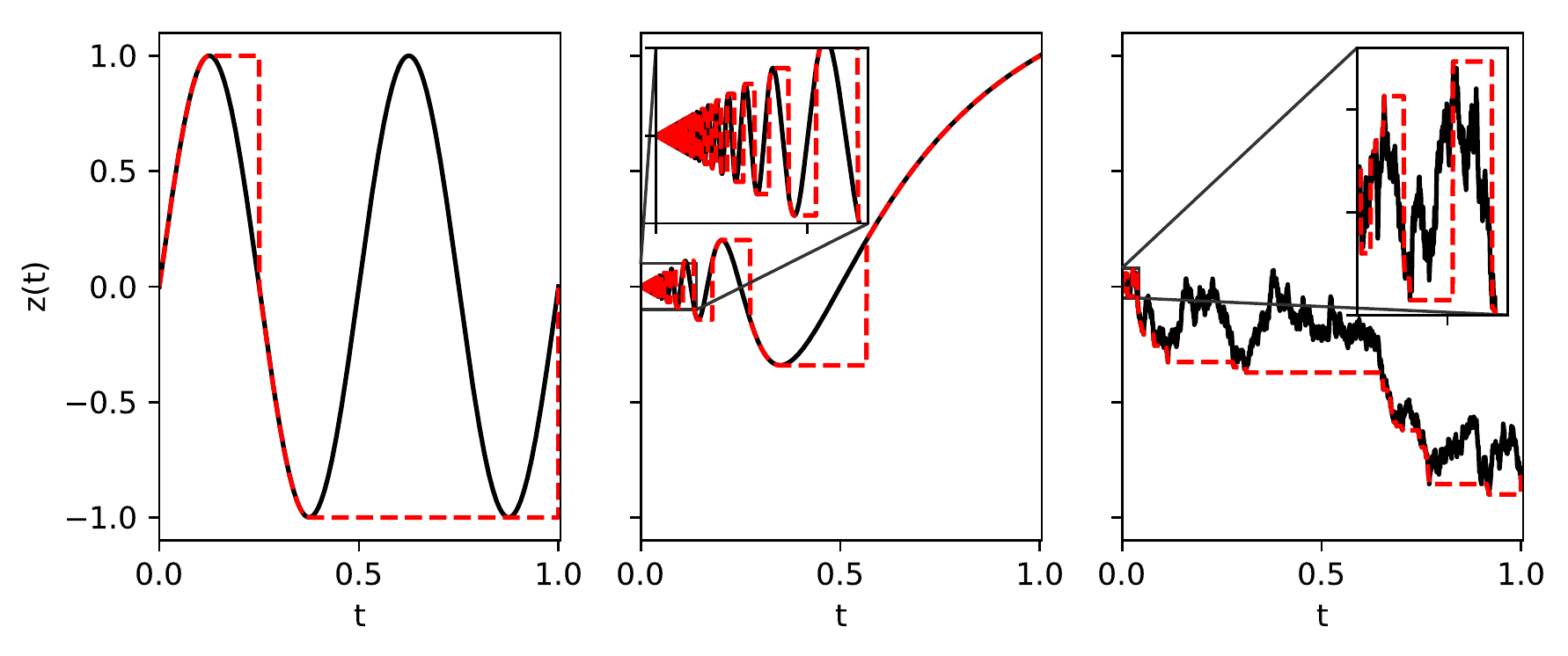}
\caption{Illustration of a rough path $z$ (black line) and
  $\orm[z]$ (red dashed line).
Left plot: $z(t) = \sin(4\pi t)$. 
Middle plot: $z(t) = \1{t>0} t\sin(\pi/(2t))$. 
Right plot: $z$ is a realization of a standard Wiener process.}
\label{fig:orm}
\end{figure}

It remains to verify that the orm functions are c\`{a}dl\`{a}g.
\begin{lemma}\label{lem:ormCadlag}
For all $z \in C_0([0,T])$,  $\orm[z] \in \cD([0,T])$.
\end{lemma}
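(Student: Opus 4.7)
The plan is to verify that $\orm[z]$ is right-continuous on $[0,T)$ and admits left limits on $(0,T]$; no condition beyond the convention $\orm[z](T) = z(T)$ is required at the right endpoint.

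First I would record basic regularity of the building blocks. Since $z \in C_0([0,T])$, the running maximum and minimum $M^{+}[z]$ and $M^{-}[z]$ are continuous on $[0,T]$ and satisfy $M^{+}[z] \ge z(0) = 0 \ge M^{-}[z]$. The arg-max and arg-min functions $A^{\pm}[z]$ are non-decreasing (a direct consequence of their definitions and the monotonicity of $M^{\pm}[z]$), hence the right-continuous modifications $\bar A^{\pm}[z]$ given by~\eqref{eq:barADef} are also non-decreasing; in particular both one-sided limits of $\bar A^{\pm}[z]$ exist at every interior point, and $\bar A^{\pm}[z]$ are right-continuous on $[0,T)$ by construction.

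For right-continuity at $t \in [0,T)$, I would case-split on whether $\bar A^+[z](t) = \bar A^-[z](t)$. In the nondegenerate case, say $\bar A^+[z](t) > \bar A^-[z](t)$ (the reverse being symmetric), right-continuity of the two monotone functions at $t$ propagates the strict inequality to a right-neighborhood $[t, t+\delta)$ on which $\orm[z] \equiv M^+[z]$; continuity of $M^+[z]$ then yields $\orm[z](s) \to M^+[z](t) = \orm[z](t)$ as $s \downarrow t$. In the degenerate case $\bar A^+[z](t) = \bar A^-[z](t)$, the identity~\eqref{eq:MPmRelation} combined with $M^+[z](t) \ge 0 \ge M^-[z](t)$ forces
\[
M^+[z](t) = z(\bar A^+[z](t)) = z(\bar A^-[z](t)) = M^-[z](t) = 0,
\]
and hence $\orm[z](t) = 0$; the pointwise bound $|\orm[z](s)| \le |M^+[z](s)| + |M^-[z](s)|$ together with continuity of $M^{\pm}[z]$ at $t$ then gives $\orm[z](s) \to 0$.

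The existence of left limits at $t \in (0,T]$ follows from the same split, with $\bar A^{\pm}[z](t)$ replaced by the left limits $a^{\pm} := \lim_{s \uparrow t} \bar A^{\pm}[z](s)$, which exist by the monotonicity of $\bar A^{\pm}[z]$. If $a^+ \ne a^-$, monotonicity again propagates the strict inequality to a left-neighborhood $(t-\delta, t)$, so $\orm[z]$ agrees there with one of the continuous functions $M^{\pm}[z]$, whose left limit is $M^{\pm}[z](t)$. If $a^+ = a^- =: a$, then continuity of $z$, continuity of $M^{\pm}[z]$, and the identity $z(\bar A^{\pm}[z](s)) = M^{\pm}[z](s)$ from~\eqref{eq:MPmRelation} give $M^+[z](t) = z(a) = M^-[z](t)$, whence both vanish and $\orm[z](s) \to 0$ by the same pointwise bound. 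The one subtle point is exactly the degenerate case, where the indicator functions in the definition of $\orm[z]$ fail to be one-sidedly continuous on their own; it is the identity~\eqref{eq:MPmRelation} that saves the day, ensuring that the continuous multipliers $M^{\pm}[z]$ vanish precisely where the indicators are ill-behaved.
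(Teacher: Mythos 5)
Your argument is correct: the case split on whether $\bar A^+[z]$ and $\bar A^-[z]$ coincide (at $t$, respectively in the left limit) covers all situations, the propagation of strict inequality to a one-sided neighbourhood is justified by monotonicity plus right-continuity of $\bar A^{\pm}[z]$, and the degenerate case is correctly disposed of via~\eqref{eq:MPmRelation} together with $M^+[z]\ge 0\ge M^-[z]$, which forces $M^{\pm}[z]$ to vanish exactly where the indicators misbehave. The route is, however, genuinely different from the paper's. The paper first upgrades Definition~\ref{def:orm} to the closed-form representation $\orm[z](t)=z\bigl(\max(\bar A^+[z],\bar A^-[z])(t)\bigr)$ for $t\in[0,T)$ (equation~\eqref{eq:ormMaxRelation}), using the same observation that on the set where $\bar A^+[z]=\bar A^-[z]$ everything vanishes, and then concludes in one line: $\orm[z]=z\circ g$ with $g$ a monotone c\`{a}dl\`{a}g time change, and a continuous function composed with a c\`{a}dl\`{a}g one is c\`{a}dl\`{a}g. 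Your direct verification of right-continuity and left limits is more elementary and self-contained, but it does not produce the representation~\eqref{eq:ormMaxRelation}, which the paper reuses later (e.g.\ in the proofs of Theorem~\ref{thm:ormEquiv} and Theorem~\ref{thm:limOrmBound}); if you wanted your proof to slot into the rest of the paper, you would still need to record that identity separately, at which point the composition argument gives the c\`{a}dl\`{a}g property for free.
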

\begin{proof}
Note first that if $t \in \bar A^+[z]([0,T]) \cap \bar A^-[z]([0,T])$,
then by~\eqref{eq:MPmRelation}, 
\[
z(t) = M^+[z](t) = M^-[z](t) \implies \text{$z(s) =0$ for all $s\in [0,t]$.}
\]
By~\eqref{eq:MPmRelation} and Definition~\ref{def:orm}, 
we have for all $t \in [0,T)$,
\[
\begin{split}
\orm[z](t) &=  M^+[z](t) \1{\bar A^+[z](t)> \bar A^-[z](t)} 
	+ M^-[z](t) \1{ \bar A^-[z](t)>  \bar A^+[z](t)} \\
&= \begin{cases} 
M^+[z](t) = z(\bar A^+[z](t)) & \text{if} \quad \bar A^+[z](t) \ge \bar
A^-[z](t)\\
M^-[z](t) = z(\bar A^-[z](t)) & \text{if} \quad \bar A^-[z](t) > \bar
A^+[z](t),
\end{cases}
\end{split}
\]
and $\orm[z](T) = z(T)$. 
Hence, 
\begin{equation}\label{eq:ormMaxRelation}
\orm[z](t) = \begin{cases} z(\max(\bar A^+[z], \bar A^-[z])(t)) &
  \text{if } t \in [0,T)\\
z(T) & \text{if } t = T,
\end{cases}
\end{equation}
and, since $z \in C_0([0,T])$ and 
\[
g(t) =  \1{t <T} \max(\bar A^+[z], \bar A^-[z])(t) + \1{t=T} T
\]
is c\`{a}dl\`{a}g whenever $\bar A^+[z], \bar A^-[z] \in \cD([0,T])$, 
it follows that also $\orm[z] = z\circ g$ belongs to $\cD([0,T])$.

\end{proof}

\begin{theorem}\label{thm:ormEquiv}
Assume that $f\in C^2(\bR)$ is strictly convex, 
$u_0 \in (L^1\cap BV)(\bR)$, and $z \in C_0([0,T])$. For some $m\ge 2$,
let $\{\tau_k\}_{k=0}^m$ be a set of points satisfying
\[
0=\tau_0< \tau_1 <\ldots< \tau_m =T,
\]
$z^m = \cI^m[z]$, and let
\[
0=\tau_{j(0)}< \tau_{j(1)} <\ldots< \tau_{j(\bar m)} =T
\]
be the associated interpolation points of 
$\overline{\orm}[z](\cdot; \{\tau_{j(k)}\}_{k=0}^{\bar m})$,
cf.~Definition~\ref{def:ormLDef}.
Then 
\begin{equation}\label{eq:ormSol}
  v(T;\orm[z^m], \{\tau_{j(k)}\}_{k=0}^{\bar m}) =
  v(T; \orm[z^m], \{\tau_{k}\}_{k=0}^m) =
   u^m(T).
\end{equation}
\end{theorem}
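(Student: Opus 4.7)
The plan is to reduce to the already proved Theorem~\ref{thm:ormLinEquiv} in two steps: first, show that the discrete solution map applied to the driving sequence $\orm[z^m]$ on the coarse mesh $\{\tau_{j(k)}\}_{k=0}^{\bar m}$ agrees with that produced by $\overline{\orm}_m[z]$ on the same mesh; second, show that the finer mesh $\{\tau_k\}_{k=0}^m$ produces the same final value because $\orm[z^m]$ is constant in between the points of $\{\tau_{j(k)}\}_{k=0}^{\bar m}$.

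For the first step, I would verify that $\orm[z^m](\tau_{j(k)}) = z^m(\tau_{j(k)}) = \overline{\orm}_m[z](\tau_{j(k)})$ for every $k\in\{0,\ldots,\bar m\}$. Indeed, $\overline{\orm}_m[z]$ interpolates $z^m$ at these nodes by~\eqref{eq:yDef}. For $\orm[z^m]$, every $\tau_{j(k)}\in A^{\pm}[z^m]$ satisfies $\tau_{j(k)}=\max(\bar A^+[z^m],\bar A^-[z^m])(\tau_{j(k)})$ by~\eqref{eq:fixedPointsAPm}, and formula~\eqref{eq:ormMaxRelation} then yields $\orm[z^m](\tau_{j(k)})=z^m(\tau_{j(k)})$; the terminal case $\tau_{j(\bar m)}=T$ is handled separately in Definition~\ref{def:orm}. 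Since the solution representation~\eqref{eq:solRep3} depends only on the values of the driving path at the mesh nodes, this equality of nodal values gives
\[
v(T;\orm[z^m],\{\tau_{j(k)}\}_{k=0}^{\bar m}) = v(T;\overline{\orm}_m[z],\{\tau_{j(k)}\}_{k=0}^{\bar m}),
\]
and the right-hand side equals $u^m(T)$ by Theorem~\ref{thm:ormLinEquiv}.

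For the second step, I would show that for each $\ell\in\{0,\ldots,\bar m-1\}$ and each $\tau_k$ with $\tau_{j(\ell)}\le\tau_k<\tau_{j(\ell+1)}$, one has $\orm[z^m](\tau_k)=z^m(\tau_{j(\ell)})$. By monotonicity of $\bar A^\pm[z^m]$ and~\eqref{eq:fixedPointsAPm},
\[
\max(\bar A^+[z^m],\bar A^-[z^m])(\tau_k)\ge \max(\bar A^+[z^m],\bar A^-[z^m])(\tau_{j(\ell)}) = \tau_{j(\ell)}.
\]
For the reverse inequality, $\bar A^+[z^m](\tau_k)$ and $\bar A^-[z^m](\tau_k)$ both belong to $A^{\pm}[z^m]\subset\{\tau_{j(r)}\}_{r=0}^{\bar m}$ by the very definition in~\eqref{eq:barADef}, and they are bounded above by $\tau_k<\tau_{j(\ell+1)}$, so both are at most $\tau_{j(\ell)}$. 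Combining and applying~\eqref{eq:ormMaxRelation} gives the claim. Consequently the increments $\Delta\orm[z^m]_k = \orm[z^m](\tau_{k+1})-\orm[z^m](\tau_k)$ vanish whenever $\tau_{k+1}\notin\{\tau_{j(r)}\}_{r=1}^{\bar m}$; since $\bar\cS(0)$ is the identity operator, the product in~\eqref{eq:solRep3} over the fine mesh collapses to the product over the coarse mesh, yielding the second equality in~\eqref{eq:ormSol}.

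The main technical obstacle is the membership claim $\bar A^\pm[z^m](\tau_k)\in A^\pm[z^m]$ used in the monotonicity argument: although this is essentially built into Definition~\ref{def:ormLDef}, one must combine it with the case split into $B^\pm_1[z^m]$ and $B^\pm_2[z^m]$ implicitly used in the proof of Lemma~\ref{lem:aPmProps} to rule out the possibility that $\bar A^\pm[z^m](\tau_k)$ lies strictly between $\tau_{j(\ell)}$ and $\tau_{j(\ell+1)}$; once this is settled, all remaining computations are routine telescoping of the solution operators $\bar\cS$.
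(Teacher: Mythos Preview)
Your proposal is correct and follows essentially the same two-step strategy as the paper: match nodal values of $\orm[z^m]$ with those of $\overline{\orm}_m[z]$ at the coarse mesh (invoking Theorem~\ref{thm:ormLinEquiv}), then show that $\orm[z^m]$ is constant on the fine-mesh points lying strictly between consecutive coarse nodes so that the extra $\bar\cS$ factors are the identity.

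The main difference is in the execution of the second step. The paper proceeds by a case split (whether $\tau_{j(k+1)}\in A^\pm[z^m]$ or $\tau_{j(k+1)}=T\notin A^\pm[z^m]$) and argues in each case that $M^\pm[z^m]$ and $\bar A^\pm[z^m]$ are constant on the relevant half-open interval, forcing $\orm[z^m]$ to be constant there. Your argument is more direct: you observe that $\bar A^\pm[z^m](\tau_k)\in A^\pm[z^m]$ (which is immediate from the definition $A^\pm[z^m]=\bar A^+[z^m](\{\tau_r\})\cup\bar A^-[z^m](\{\tau_r\})$), combine this with the bound $\bar A^\pm[z^m](\tau_k)\le\tau_k<\tau_{j(\ell+1)}$ from Lemma~\ref{lem:aPmProps}, and conclude that $\max(\bar A^+[z^m],\bar A^-[z^m])(\tau_k)=\tau_{j(\ell)}$ by squeezing. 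This is cleaner and avoids the case distinction entirely. The ``obstacle'' you flag at the end is in fact not an obstacle: the membership $\bar A^\pm[z^m](\tau_k)\in A^\pm[z^m]$ is tautological from Definition~\ref{def:ormLDef}, so no appeal to the $B_1^\pm/B_2^\pm$ decomposition is needed.
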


\begin{proof}
Set $y^m = \overline{\orm}_m[z]$ and $\hat y^m = \orm[z^m]$.
By~\eqref{eq:fixedPointsAPm},~\eqref{eq:ormMaxRelation}
and~\eqref{eq:yDef}, 
it holds for all $k \in \{0,1,\ldots, \bar m-1\}$ that
\begin{equation}\label{eq:pointEquality}
  \begin{split}
    \hat y^m(\tau_{j(k)})  &= z^m\prt{\max(\bar A^+[z^m], \bar A^-[z^m])(\tau_{j(k)})}
     = z^m(\tau_{j(k)})
    = y^m(\tau_{j(k)}),
    \end{split}
  \end{equation}
  and
  \[
  \hat y^m(\tau_{j(\bar m)}) = z^m(T) = y^m(\tau_{j(\bar m)}).
  \]
By Theorem~\ref{thm:ormLinEquiv},
\begin{multline}
\bar \cS(\Delta y_{\tau_{j(\bar m) -1}}^m ) 
\cdots \bar \cS( \Delta y_{\tau_{j(0)}}^m) u_0\\ 
= \bar \cS(y^m(\tau_{j(\bar m)}) - y^m(\tau_{j(\bar m-1)}) ) 
\cdots
\bar \cS(y^m(\tau_{j(1)}) - y^m(\tau_{j(0)}))u_0 =u^m(T);
\end{multline}
so to verify~\eqref{eq:ormSol},
it suffices to show that for all $k \in \{0,1,\ldots, \bar m-1\}$,
\begin{multline}\label{eq:equivGoal}
\bar \cS(\Delta \hat y_{j(k+1) -1}^m) \ldots \bar \cS(\Delta \hat y_{j(k)}^m)
u^m(\tau_{j(k)}) 
\\  =\bar \cS( y^m(\tau_{j(k+1)}) -  y^m(\tau_{j(k)})) u^m(\tau_{j(k)}).
\end{multline}

For $k \in \{0,1,\ldots,\bar m-1\}$ such that $j(k+1) -j(k) =1$, 
\eqref{eq:equivGoal} holds by~\eqref{eq:pointEquality}.

Assume next that $k \in \{0,1,\ldots,\bar m-1\}$ is such that $j(k+1) -
j(k) \ge  2$.
If $k = \bar m -1$ and $\tau_{j(k+1)} = T \notin A^\pm[z^m]$, then 
\[
z^m([\tau_{j(k)}, T]) \subset [M^{-}[z^m](\tau_{j(k)}),
M^{+}[z^m](\tau_{j(k)})].
\] 
It follows that $M^+[z^m], M^{-}[z^m], \bar A^{+}[z^m]$ and $\bar A^-[z^m]$
are constant over $[\tau_{j(\bar m -1)},T)$, which implies that $\hat y^m$ is constant
over $[\tau_{j(k)},T)$.
Hence  
$\hat y^m(\tau_{j(\bar m)-1}) = \hat y^m(\tau_{j(\bar m-1)}) $,
\begin{equation}\label{eq:constantOrm1}
\Delta \hat y_{j(\bar m) -\ell}^m = 0 \quad \forall \ell \in
\{2,\ldots, j(\bar m) - j(\bar m-1)\}, 
\end{equation}
and since $\bar \cS(0) = I$, 
\begin{align*}
&\bar \cS(\Delta \hat y_{j(\bar m) -1}^m) 
\cdots \bar \cS(\Delta \hat y_{j(\bar  m-1)}^m)
u^m(\tau_{j(\bar m -1)})
\\ & \qquad = \bar \cS(\Delta \hat y_{j(\bar m) -1}^m)
u^m(\tau_{j(\bar m-1)})
\\ & \qquad =  \bar \cS(y^m(\tau_{j(\bar m)}) - y^m(\tau_{j(\bar m-1)}))
u^m(\tau_{j(\bar m -1)}).
\end{align*}

Otherwise, if $k \in \{0,1,\ldots,\bar m-1\}$ is such that $j(k+1) -
j(k) \ge  2$ and $\tau_{j(k+1)} \in A^\pm[z]$, then 
we recall from the proof of Theorem~\ref{thm:ormLinEquiv}
that there exists a unique $\tau \in
(\tau_{j(k+1)-1}, \tau_{j(k+1)}]$ such that 
\[
z^m([\tau_{j(k)}, \tau]) \subset
[M^-[z^m](\tau_{j(k)}), M^+[z^m](\tau_{j(k)})], \quad 
\tau \in \{\bar A^+(\tau), \bar A^-(\tau)\}.
\]
This implies that $\hat y^m(\tau) 
= z^m(\tau)$, cf.~\eqref{eq:ormMaxRelation}, and that
$ M^+[z^m], M^{-}[z^m], \bar A^{+}[z^m]$, $\bar A^-[z^m]$ are constant over the
interval $[\tau_{j(k)},\tau)$. Consequently, $\hat y^m$ is constant
over $[\tau_{j(k)},\tau)$, and since $\tau_{j(k+1)-1}< \tau$,
$\hat y^m(\tau_{j(k+1)-1}) = \hat y^m(\tau_{j(k)})$ and
\begin{equation}\label{eq:constantOrm2}
\Delta \hat y_{j(k+1) -\ell}^m = 0 \quad \forall \ell \in
\{2,\ldots, j(k+1) - j(k)\}.
\end{equation}
Hence,
\[
\begin{split}
\bar \cS(\Delta \hat y_{j(k+1) -1}^m) \cdots \bar \cS(\Delta \hat y_{j(k)}^m)
u^m(\tau_{j(k)}) &= \bar \cS(\Delta \hat y_{j(k+1) -1}^m) u^m(\tau_{j(k)})\\
& =  \bar \cS(y^m(\tau_{j(k+1)}) - y^m(\tau_{j(k)})) u^m(\tau_{j(k)}).
\end{split}
\]
\end{proof}

A direct consequence of the preceding proof is that the total
variation of $\orm[\cI^m[z]]$ equals the total variation of 
$\overline{\orm}_m[z]$.

\begin{lemma}\label{lem:bvEquiv}
For any $z \in C_0([0,T])$ and points 
$0=\tau_0< \tau_1 < \ldots < \tau_m =T$, $m\ge 2$, 
it holds that
\[
\abs{\orm[\cI[z](\cdot; \{\tau_{k}\}_{k=0}^m) ]}_{\BV{[0,T]}} 
= \abs{\overline{\orm}[z](\cdot; \{\tau_{k}\}_{k=0}^m)}_{\BV{[0,T]}}.
\]
\end{lemma}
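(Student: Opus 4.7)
Set $y^m = \overline{\orm}_m[z]$ and $\hat y^m = \orm[z^m]$ where $z^m = \cI^m[z]$, with common interpolation points $\{\tau_{j(k)}\}_{k=0}^{\bar m}$ as in Definition~\ref{def:ormLDef}, and write $a_k := y^m(\tau_{j(k)})$. Equation~\eqref{eq:pointEquality} from the proof of Theorem~\ref{thm:ormEquiv} already provides $\hat y^m(\tau_{j(k)}) = a_k$ for every $k \in \{0,1,\ldots,\bar m\}$, and since $y^m$ is linear on each $[\tau_{j(k)},\tau_{j(k+1)}]$,
\[
|y^m|_{\BV{[0,T]}} = \sum_{k=0}^{\bar m - 1} |a_{k+1} - a_k|.
\]
By additivity of the total variation over a partition, the claim will reduce to showing that $|\hat y^m|_{\BV{[\tau_{j(k)},\tau_{j(k+1)}]}} = |a_{k+1} - a_k|$ on each $I_k := [\tau_{j(k)},\tau_{j(k+1)}]$, which I plan to prove by verifying that $\hat y^m\vert_{I_k}$ is monotone (in the c\`adl\`ag sense, with jumps included) and takes values from $a_k$ to $a_{k+1}$.

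To analyze $\hat y^m$ on $I_k$, I would reuse the decomposition from the proof of Theorem~\ref{thm:ormEquiv}: since $\tau_{j(k+1)}$ is the next element of $A^{\pm}[z^m]$ after $\tau_{j(k)}$, there exists a point $\tau^* \in [\tau_{j(k)},\tau_{j(k+1)}]$ lying in the final linear segment $[\tau_{j(k+1)-1},\tau_{j(k+1)}]$ (or equal to $T$ in the boundary case $k=\bar m-1$, $T \notin A^{\pm}[z^m]$) such that $z^m([\tau_{j(k)},\tau^*]) \subset [M^-[z^m](\tau_{j(k)}), M^+[z^m](\tau_{j(k)})]$, with $z^m$ first exiting this band at $\tau^*$ en route to the new extremum of value $a_{k+1}$. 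Because $M^\pm[z^m]$, and hence $\bar A^{\pm}[z^m]$, stay constant on $[\tau_{j(k)},\tau^*)$, one has $\hat y^m \equiv a_k$ there. On $(\tau^*,\tau_{j(k+1)}]$ the linearity of $z^m$ together with the definition of $\orm$ yields $\hat y^m = z^m$, which moves monotonically from $z^m(\tau^*) \in \{M^-[z^m](\tau_{j(k)}), M^+[z^m](\tau_{j(k)})\}$ to $a_{k+1}$.

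Two subcases then arise according to whether the extremum type (max vs.~min) updated at $\tau^*$ matches the one active at $\tau_{j(k)}$. In the matching case, $z^m(\tau^*) = a_k$ and $\hat y^m$ is continuous at $\tau^*$. In the non-matching case, $\hat y^m$ jumps at $\tau^*$ from $a_k$ to the opposite running extremum at $\tau_{j(k)}$, after which the linear motion of $z^m$ takes $\hat y^m$ to $a_{k+1}$; the monotonicity of $M^-[z^m]$ (non-increasing) and $M^+[z^m]$ (non-decreasing) then forces $a_{k+1}$ to lie further along in the same direction as the jump, so that $\hat y^m$ remains globally monotone on $I_k$. In the boundary case $k = \bar m - 1$ with $T \notin A^{\pm}[z^m]$, $\hat y^m \equiv a_{\bar m-1}$ on $[\tau_{j(\bar m-1)},T)$ with a single trailing jump to $z^m(T) = a_{\bar m}$ at $t=T$. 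Consequently $|\hat y^m|_{\BV{I_k}} = |a_{k+1} - a_k|$ in each case, and summation over $k$ yields the equality. The main delicate point will be verifying the direction of the jump at $\tau^*$ in the non-matching case, since there the alternation structure of $A^{\pm}[z^m]$ must be invoked beyond what was already used in Theorem~\ref{thm:ormEquiv}.
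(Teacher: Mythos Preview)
Your proposal is correct and follows essentially the same route as the paper: reduce to showing $\hat y^m$ is monotone on each $I_k=[\tau_{j(k)},\tau_{j(k+1)}]$, then use $\hat y^m(\tau_{j(k)})=y^m(\tau_{j(k)})$ from \eqref{eq:pointEquality} and sum. The paper's version is a bit more economical: instead of your matching/non-matching case split at $\tau^*$, it simply observes that $\orm[z^m]$ is monotone on every elementary interval $[\tau_k,\tau_{k+1}]$ (immediate from $z^m$ being linear there and Definition~\ref{def:orm}), and combines this with the constancy of $\hat y^m$ on $[\tau_{j(k)},\tau_{j(k+1)-1}]$ already established in \eqref{eq:constantOrm1}--\eqref{eq:constantOrm2}. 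That directly gives monotonicity on $I_k$ with endpoints $a_k,a_{k+1}$, so the ``delicate point'' you flag about the jump direction in the non-matching case dissolves---no alternation structure beyond Theorem~\ref{thm:ormEquiv} is needed.
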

\begin{proof}
Recall that $y^m = \overline{\orm}_m[z] \in I_0([0,T];
\{\tau_k\}_{k=0}^m)$ is piecewise linear, with interpolation points 
$0 = \tau_{j(0)} < \tau_{j(1)} < \ldots < \tau_{j(\bar m)} = T$.
Hence, $y^m$ is monotone over each interval $[\tau_{j(k)}, \tau_{j(k+1)}]$.
Since $z^m = \cI^m[z]$ is linear over every
interval $[\tau_k, \tau_{k+1}]$, it follows by
Definition~\ref{def:orm} that $\hat y^m = \orm[z^m]$ is 
monotone over every interval $[\tau_k, \tau_{k+1}]$,
and by the proof of Theorem~\ref{thm:ormEquiv}, for every 
interval $[\tau_{j(k)}, \tau_{j(k+1)}]$ with
$j(k+1)-j(k) \ge 2$, it holds that $\hat y^m$ is constant over
$[\tau_{j(k)}, \tau_{j(k+1)-1}]$. Consequently, $\hat y^m$ is monotone
over each interval $[\tau_{j(k)}, \tau_{j(k+1)}]$,
whereby
\[
\abs{\hat y^m}_{\BV{[\tau_{j(k)}, \tau_{j(k+1)}]}} 
= \abs{\hat y^m(\tau_{j(k+1)}) - \hat y^m(\tau_{j(k)})}
\quad \forall k \in \{0,1,\ldots, \bar m-1 \}.
\]
By~\eqref{eq:pointEquality}, 
\[
\begin{split}
\abs{\hat y^m}_{\BV{[0,T]}}
&= \sum_{k=0}^{\bar m-1} \abs{\hat y^m}_{\BV{[\tau_{j(k)}, \tau_{j(k+1)}]}} \\
&= \sum_{k=0}^{\bar m-1} \abs{\hat y^m(\tau_{j(k+1)}) - \hat y^m(\tau_{j(k)})}\\
&= \sum_{k=0}^{\bar m-1} \abs{ y^m(\tau_{j(k+1)}) - y^m(\tau_{j(k)})}
= \abs{y^m}_{\BV{[0,T]}}.
\end{split}
\]
\end{proof}

\begin{definition}
For any mesh $\{\tau_{k}\}_{k=0}^m$ such that 
\[
0=\tau_0 < \tau_1 < \cdots < \tau_m = T,
\]
and $g \in \cD([0,T])$ we define the total variation of $g$ restricted
to $\{\tau_{k}\}_{k=0}^m$ by
\[
TV(g; \{\tau_{k}\}_{k=0}^m) \coloneq \sum_{k=0}^{m-1} |g(\tau_{k+1}) - g(\tau_{k})|.
\]
\end{definition}

The next lemma shows that the approximation 
$v(T;\orm[z], \{\tau_j\}_{j=0}^m)$ converges to a limit in $L^1(\bR)$ 
as $m\to \infty$, and it provides 
an upper bound for the approximation error.
The upper bound depends on $\orm[z]$, the mesh $\{\tau_k\}_{k=0}^m$,
$|u_0|_{\BV{(\bR)}}$ and $\|f'\|_\infty$, hence it
differs from the stability result~\eqref{eq:stabilityPes}.

\begin{lemma}\label{lem:trueOrmConvBound}
Let $\{ \{ \tau^m_{k} \}_{k=0}^{\ell_m} \}_{m=0}^\infty$ denote a sequence
of nested meshes fulfilling $\ell_0\ge 2$, and for all $m\ge0$,
\[
0=\tau^m_0 < \tau^m_1 < \cdots < \tau^m_{\ell_m} = T,  \quad
\text{with} \quad \ell_m < \ell_{m+1} \le
2\ell _m,
\]
and the nested mesh property
\[
\{\tau^m_{k} \}_{k=0}^{\ell_m} \subset \{ \tau^{m+1}_{k}
\}_{k=0}^{\ell_{m+1}}
\]
with a strictly increasing mapping $j^m:\{0,1,\ldots,\ell_m\} \to \{0,1,\ldots,\ell_{m+1}\}$
such that
\[
\tau_{j^m(k) }^{m+1}  = \tau^m_k,  \quad \forall k \in \{0,1,\ldots,\ell_m\},
\]
and 
\[
1\le j^m(k+1) - j^m(k) \le 2, \quad \forall k \in \{0,1,\ldots, \ell_m-1\}.
\]
Assume $f\in C^2(\bR)$ is strictly convex, 
$u_0 \in (L^1\cap BV)(\bR)$, $z \in C_0([0,T])$. 
Then, for any $\widehat m, \overline m \in \bN \cup \{0\}$
with $\widehat m \ge \overline m$,
\begin{equation}\label{eq:firstTvVm}
\begin{split}
&\norm{v(T; \orm[z], \{\tau_k^{\widehat m}\}_{k=0}^{\ell_{\widehat m}})  -
v(T; \orm[z],\{\tau_k^{\overline m}\}_{k=0}^{\ell_{\overline m}})}_{L^1(\R)} \\
&\qquad \le 2 \|f'\|_{\infty} |u_0|_{\BV{\bR}} \prt{
  TV(\orm[z];\{\tau_k^{\widehat m}\}_{k=0}^{\ell_{\widehat m}})  - 
  TV(\orm[z];\{\tau_k^{\overline m}\}_{k=0}^{\ell_{\overline m}}) }.
\end{split}
\end{equation}
Moreover, if
\[
\abs{\orm[z]}_{\BV{[0,T]}}  < \infty,
\]
and
\begin{equation}\label{eq:bvAssumption}
\lim_{m \to \infty } \prt{ \abs{\orm[z]}_{\BV{[0,T]}} 
- TV(\orm[z]; \{\tau_k^m\}_{k=0}^{\ell_m})}= 0,
\end{equation}
then 
\[
v_{\orm}(T)  \coloneq \lim_{m \to \infty} v(T; \orm[z],
\{\tau_{k}^m\}_{k=0}^{\ell_m}) \in L^1(\bR)
\]
and
\begin{equation}
\begin{split}
&\|v_{\orm}(T)  - v(T; \orm[z],\{\tau_k^m\}_{k=0}^{\ell_m}) \|_1 \\
&\qquad \le 2 \|f'\|_{\infty} |u_0|_{\BV{\bR}} \prt{
  \abs{\orm[z]}_{\BV{[0,T]}} - TV(\orm[z]; \{\tau_k^m\}_{k=0}^{\ell_m})}.
\end{split}
\end{equation}
\end{lemma}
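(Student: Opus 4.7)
The plan is to derive \eqref{eq:firstTvVm} by a telescoping argument over one-step mesh refinements, and then deduce the limit statement from Cauchy completeness in $L^1(\bR)$.

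The core ingredient is a local operator-swap estimate: for all $a, b \in \bR$ and $u \in (L^1\cap BV)(\bR)$,
\[
\|\bar\cS(b)\bar\cS(a)u - \bar\cS(a+b)u\|_{L^1(\bR)} \le 2\|f'\|_{L^\infty} |u|_{\BV{\bR}} \min(|a|,|b|).
\]
When $a$ and $b$ have the same sign, the semigroup property of the entropy solution of \eqref{eq:map3} (fixed propagation direction) yields $\bar\cS(b)\bar\cS(a) = \bar\cS(a+b)$, so both sides vanish. When the signs differ, I insert the intermediate state $\bar\cS(b)u$ and apply the $L^1$-contraction \eqref{eq:stabOp} together with \eqref{eq:stabOp2} (at $t=0$) to bound $\|\bar\cS(b)\bar\cS(a)u - \bar\cS(b)u\|_{L^1} \le \|\bar\cS(a)u - u\|_{L^1} \le \|f'\|_\infty|u|_{\BV{\bR}}|a|$. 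For $\|\bar\cS(b)u - \bar\cS(a+b)u\|_{L^1}$, I split at $u = \bar\cS(0)u$ when necessary so that \eqref{eq:stabOp2} applied to same-signed pairs yields $\|\bar\cS(b)u - u\|_{L^1} + \|u - \bar\cS(a+b)u\|_{L^1} \le \|f'\|_\infty|u|_{\BV{\bR}}(|b|+|a+b|) = \|f'\|_\infty|u|_{\BV{\bR}}|a|$, using that $|b|+|a+b|=|a|$ in the opposite-sign case. Symmetrising in $(a,b)$ gives the $2\min(|a|,|b|)$ bound, which equals $|a|+|b|-|a+b|$ in the opposite-sign case (and trivially in the same-sign case).

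The nested-mesh assumption $1\le j^m(k+1) - j^m(k) \le 2$ ensures that $M_{m+1}$ is obtained from $M_m$ by inserting at most one point per interval. Representing each numerical solution as the composition of $\bar\cS$-operators from \eqref{eq:solRep3}, $v(T;\orm[z], M_{m+1})$ differs from $v(T;\orm[z], M_m)$ only by the replacement, in each refined interval, of a single $\bar\cS(\orm[z](\tau^m_{k+1}) - \orm[z](\tau^m_k))$ by $\bar\cS(b_k)\bar\cS(a_k)$ with $a_k+b_k$ equal to the original increment. Swapping one operator at a time, using the $L^1$-contraction \eqref{eq:stabOp} to transport each local discrepancy unscathed to $t=T$ and the BV-stability \eqref{eq:stabOp3} to keep intermediate states uniformly BV-bounded by $|u_0|_{\BV{\bR}}$, I obtain
\[
\|v(T;\orm[z], M_{m+1}) - v(T;\orm[z], M_m)\|_{L^1(\bR)} \le \|f'\|_\infty|u_0|_{\BV{\bR}}\bigl( TV(\orm[z];M_{m+1}) - TV(\orm[z];M_m)\bigr),
\]
since the sum of $|a_k|+|b_k|-|a_k+b_k|$ over refined intervals is exactly the TV increment (unchanged intervals contribute zero). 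Telescoping from $m=\overline m$ to $m=\widehat m-1$ yields \eqref{eq:firstTvVm}, the factor $2$ there being a safe weakening of the constant $1$ produced here.

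For the second part, assumption \eqref{eq:bvAssumption} together with $|\orm[z]|_{\BV{[0,T]}}<\infty$ implies that $\{TV(\orm[z]; M_m)\}_m$ is Cauchy, so \eqref{eq:firstTvVm} forces $\{v(T;\orm[z], M_m)\}_m$ to be Cauchy in $L^1(\bR)$; completeness delivers the limit $v_{\orm}(T) \in L^1(\bR)$, and passing $\widehat m \to \infty$ with $\overline m$ fixed in \eqref{eq:firstTvVm} gives the stated estimate. The main obstacle lies in the opposite-sign case of the operator-swap estimate, where the product $\bar\cS(b)\bar\cS(a)$ represents a forward-then-backward entropy evolution that does not collapse to a single semigroup element; extracting the sharp $2\min(|a|,|b|)$ constant purely from \eqref{eq:stabOp}--\eqref{eq:stabOp3}, with no appeal to any additional commutation structure, is the delicate point.
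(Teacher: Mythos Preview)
Your proposal is correct and takes a genuinely different route from the paper. The paper exploits the specific structure of $\orm[z]$: since $y(\tau^m_{k+1})\in\{M^+[z](\tau^m_{k+1}),M^-[z](\tau^m_{k+1})\}$, in the opposite-sign case the \emph{second} sub-increment $\Delta y^{m+1}_{j^m(k)+1}$ always shares the sign of the coarse increment $\Delta y^m_k$, which licenses the factorisation $\bar\cS(b)\bar\cS(a)=\bar\cS(a+b)\,\bar\cS(-a)\,\bar\cS(a)$ and reduces the estimate to $\|(\bar\cS(-a)\bar\cS(a)-I)v\|\le 2\|f'\|_\infty|v|_{BV}|a|$. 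You instead prove the symmetric commutator bound $\|\bar\cS(b)\bar\cS(a)u-\bar\cS(a+b)u\|\le 2\|f'\|_\infty|u|_{BV}\min(|a|,|b|)$ for \emph{arbitrary} $a,b$, using only \eqref{eq:stabOp}--\eqref{eq:stabOp3}. This is cleaner and more general: it would prove the same conclusion for any c\`adl\`ag driver in place of $\orm[z]$, and it shows that neither the $\orm$ structure nor, in fact, the strict-convexity hypothesis on $f$ is needed for this particular lemma. You also correctly note that the resulting constant is $1$ rather than $2$.

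One minor wrinkle in your write-up: the identity $|b|+|a+b|=|a|$ holds only in the sub-case where $b$ and $a+b$ have opposite signs (your phrase ``when necessary'' signals this, but the sentence reads as if it were claimed for all opposite-sign $a,b$). The cleanest fix is to skip the split entirely and apply \eqref{eq:stabOp2} directly with $s=b$, $t=a+b$, since \eqref{eq:stabOp2} is stated for all $s,t\in\bR$; this immediately gives $\|\bar\cS(b)u-\bar\cS(a+b)u\|\le\|f'\|_\infty|u|_{BV}|a|$. The symmetric bound in $|b|$ then follows by inserting $\bar\cS(a)u$ instead of $\bar\cS(b)u$ and invoking \eqref{eq:stabOp3}.
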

\begin{proof}
For a fixed $m \ge 0$, set $y = \orm[z]$, $\Delta y^m_k =
y(\tau^m_{k+1}) - y(\tau^m_k)$,  and introduce the shorthand 
\[
v^m(\tau_k^{m}) = v(\tau_{k}^m; y,\{\tau_{k}^{m+1}\}_{k=0}^{\ell_{m}}),
\quad \text{for all } k \in \{0,1,\ldots, \ell_m\}.
\]
We have that 
\begin{equation}\label{eq:vMError}
\begin{split}
& \|v^{m+1}(T)  - v^m(T) \|_{L^1(\R)}\\
 & \quad = \norm{\bar \cS(\Delta y^{m+1}_{\ell_{m+1} -1} ) \cdots
\bar \cS(\Delta y^{m+1}_{0} )u_0 - \bar \cS(\Delta y^{m}_{\ell_{m} -1} ) 
\cdots \bar \cS(\Delta y^{m}_{0} ) u_0}_{L^1(\R)}\\
& \quad \le \sum_{k \in J^m_2 } \|\Delta \bar \cS_k  u_0\|_{L^1(\R)},
\end{split}
\end{equation}
where $J^m_2 = \{k \in \{0,1,\ldots, \ell_m-1\}\mid j^m(k+1) - j^m(k) = 2\}$ and 
\[
\begin{split}
&\Delta \bar \cS_k u_0 \coloneq \\
&\begin{cases}
0 & \text{if }  k\notin J^m_2,\\
\!\begin{aligned} & \bar \cS(\Delta y^{m+1}_{\ell_{m+1} -1} ) \cdots \bar
  \cS(\Delta y^{m+1}_{j^m(k+1)}) \\
& \prt{\bar \cS(\Delta y^{m+1}_{j^m(k)+1}) \bar \cS(\Delta y^{m+1}_{j^m(k)} )
- \bar \cS(\Delta y^m_k) } v^m(\tau^m_{k}) 
\end{aligned} & \text{if }  k \in J^m_2 \cap [0, \ell_m-2],\\
\prt{\bar \cS(\Delta y^{m+1}_{\ell_{m+1} -1} ) \bar \cS(\Delta
y^{m+1}_{\ell_{m+1} -2} ) - \bar \cS(\Delta y^m_{\ell_m-1})}
v^m(\tau^m_{\ell_m-1}) & \text{if } k \in J^m_2 \cap \{ \ell_{m}-1\}.
\end{cases}
\end{split} 
\]

Assume that $k \in J^m_2$. Using~\eqref{eq:stabOp}, we obtain 
the following bound
\[
\begin{split}
\|\Delta \bar \cS_k u_0\|_{L^1(\R)} 
& \le 
\norm{ \prt{\bar \cS(\Delta y^{m+1}_{j^m(k)+1} ) \bar \cS(\Delta y^{m+1}_{j^m(k)} )
- \bar \cS(\Delta y^m_k) }  v^m(\tau^m_{k}) }_{L^1(\R)}.
\end{split}
\]
We recall from~\eqref{eq:map3} that 
$\bar \cS(\Delta y^{m+1}_{j^m(k)+1}) \bar \cS(\Delta y^{m+1}_{j^m(k)}) v^m(\tau^m_{k})$
is recursively defined by being the solution at time $t = |\Delta y^{m+1}_{j^m(k)+1}|$ of 
\[
\partial_t \tilde u + \sign{\Delta y^{m+1}_{j^m(k)+1}} \partial_x f(\tilde u) = 0,
\quad t >0, \quad \tilde u(0) = \bar \cS(\Delta y^{m+1}_{j^m(k)}) v^m(\tau^m_{k}).
\]
If $\Delta y^{m+1}_{j^m(k)+1} \Delta y^{m+1}_{j^m(k)} \ge 0$, then
\begin{equation}\label{eq:sameSignCancel}
\begin{split}
\bar \cS(\Delta y^{m+1}_{j^m(k)+1}) \bar \cS(\Delta y^{m+1}_{j^m(k)})
v^m(\tau^m_{k})
&= \bar \cS(\Delta y^{m+1}_{j^m(k)+1} + \Delta y^{m+1}_{j^m(k)})
v^m(\tau^m_{k})\\
&= \bar \cS(\Delta y^{m}_{k})v^m(\tau^m_{k}).
\end{split}
\end{equation}
This yields
\begin{equation}\label{eq:dsZero}
\|\Delta \bar \cS_k u_0\|_{L^1(\R)} =0,  
\quad   \forall k \in \{\bar k \in  J^m_2 \mid
\Delta y^{m+1}_{j^m(\bar k)+1} \Delta y^{m+1}_{j^m(\bar k)} \ge 0 \}.
\end{equation}
Consider next the case $\Delta y^{m+1}_{j^m(k)+1} \Delta y^{m+1}_{j^m(k)} < 0$.
In view of Definition~\ref{def:orm},
$y(\tau^{m}_{k+1}) \in \{ M^+[z](\tau^{m}_{k+1}),
M^-[z](\tau^{m}_{k+1})\}$.
If $y(\tau^{m}_{k+1}) = M^+[z](\tau^{m}_{k+1})$, then
\[
y(\tau^{m+1}_{j^m(k)+1)}) \le  y(\tau^{m}_{k+1}) = y(\tau^{m+1}_{j^m(k+1))}),
\]
while if $y(\tau^{m}_{k+1}) = M^-[z](\tau^{m}_{k+1})$, then
\[
y(\tau^{m+1}_{j^m(k)+1}) \ge y(\tau^{m}_{k+1}) = y(\tau^{m+1}_{j^m(k+1))}).
\]
We conclude that regardless of whether $y(\tau^{m}_{k+1}) =
M^+[z](\tau^{m}_{k+1})$ or $y(\tau^{m}_{k+1}) = M^-[z](\tau^{m}_{k+1})$,
it holds that
\begin{equation}\label{eq:dySign}
\Delta y^{m+1}_{j^m(k)+1} \Delta y^{m}_{k}  = \prt{y(\tau^m_{k+1}) -
y(\tau^{m+1}_{j^m(k)+1})} \prt{y(\tau^m_{k+1}) - y(\tau^m_{k})} \ge 0.
\end{equation}
And since 
\[
\sign{\Delta y^{m}_{k} \prt{-\Delta y^{m+1}_{j^m(k)}}} =
\sign{\Delta y^{m}_{k} \Delta y^{m+1}_{j^m(k+1)}} \ge 0,
\]
we also have that
\[
\Delta y^{m}_{k} \prt{-\Delta y^{m+1}_{j^m(k)}} \ge 0.
\]
Using the relationship
\[
\Delta y^{m+1}_{j^m(k)+1}  = \Delta y^{m}_{k} - \Delta y^{m+1}_{j^m(k)} 
\]
we obtain
\[
\begin{split}
  &\bar \cS(\Delta y^{m}_{k}) \bar \cS\prt{-\Delta y^{m+1}_{j^m(k)}}
  \bar \cS\prt{\Delta y^{m+1}_{j^m(k)}} v^m(\tau^m_{k})\\
  &\qquad \qquad = \bar \cS\prt{\Delta y^{m}_{k} - \Delta y^{m+1}_{j^m(k)} } \bar \cS\prt{\Delta y^{m+1}_{j^m(k)}} v^m(\tau^m_{k}) \\
&\qquad \qquad = \bar \cS\prt{\Delta y^{m+1}_{j^m(k)+1}} \bar \cS\prt{\Delta y^{m+1}_{j^m(k)}}
v^m(\tau^m_{k}).
\end{split}
\]
By~\eqref{eq:stabOp},~\eqref{eq:stabOp2} and~\eqref{eq:stabOp3}, we
derive the following bound for all $k \in J^m_2$ such that $\Delta y^{m+1}_{j^m(k)+1} \Delta
y^{m+1}_{j^m(k)} < 0$, 
\begin{equation}\label{eq:dsNonZero}
\begin{split}
\|\Delta \bar \cS_k u_0\|_{L^1(\R)} &\le 
\norm{ \bar \cS(\Delta y^{m}_{k}) \prt{ \bar \cS(-\Delta y^{m+1}_{j^m(k)}) \bar
  \cS(\Delta y^{m+1}_{j^m(k)}) - I } v^m(\tau^m_{k})}_{L^1(\R)}\\
& \le \norm{\prt{ \bar \cS(-\Delta y^{m+1}_{j^m(k)}) \bar
  \cS(\Delta y^{m+1}_{j^m(k)}) - I } v^m(\tau^m_{k})}_{L^1(\R)} \\
& \le \norm{\prt{ \bar \cS(-\Delta y^{m+1}_{j^m(k)}) -I} \bar
  \cS(\Delta y^{m+1}_{j^m(k)})  v^m(\tau^m_{k})}_{L^1(\R)} \\
& \qquad +  \norm{\prt{ \bar \cS(\Delta y^{m+1}_{j^m(k)})-I} v^m(\tau^m_{k})}_{L^1(\R)} \\
& \le 2\|f'\|_\infty |u_0|_{\BV{\bR}} |\Delta y^{m+1}_{j^m(k)}|.
\end{split}
\end{equation}
By~\eqref{eq:vMError},~\eqref{eq:dsZero} and~\eqref{eq:dsNonZero},
\begin{equation}\label{eq:vMBvBound}
\begin{split}
\|v^{m+1}(T)  - v^m(T) \|_{L^1(\R)} & 
\le \sum_{k \in J^m_2 } 2\|f'\|_\infty |u_0|_{\BV{\bR}} |\Delta y^{m+1}_{j^m(k)}|
\\ & \le 2\|f'\|_\infty |u_0|_{\BV{\bR}} \Bigl( \sum_{k =0}^{\ell_{m+1}}
  |\Delta y^{m+1}_{k}| - \sum_{k =0}^{\ell_{m}}
  |\Delta y^{m}_{k}|\Bigr).
\end{split}
\end{equation}
For $m_2>m_1$ we get
\begin{align*}
  \norm{v^{m_2}(T)-v^{m_1}(T)}_{L^1(\bR)} &\le \sum_{m=m_1}^{m_2-1}
  \norm{v^{m+1}(T)-v^{m}(T)}_{L^1(\bR)}\\
  &\le 2\|f'\|_\infty |u_0|_{\BV{\bR}} \sum_{m=m_1}^{m_2-1}\Bigl( \sum_{k =0}^{\ell_{m+1}}
  |\Delta y^{m+1}_{k}| - \sum_{k =0}^{\ell_{m}}
  |\Delta y^{m}_{k}|\Bigr)\\
  &=2\|f'\|_\infty |u_0|_{\BV{\bR}} \Bigr(\sum_{k =0}^{\ell_{m_2+1}}
  |\Delta y^{m_2+1}_{k}| - \sum_{k =0}^{\ell_{m_1}}
  |\Delta y^{m_1}_{k}|\Bigr)
\end{align*}
By assumption~\eqref{eq:bvAssumption}, both sums on the right converge
to $\abs{\orm[z]}_{\BV{[0,T]}}$, thus $\{v^m(T)\}_m \subset
L^1(\bR)$ is a Cauchy sequence, and,
\[
\lim_{m \to \infty} v^m(T) = v_{\orm}(T) \in L^1(\bR).
\]
Moreover, by~\eqref{eq:vMBvBound},~\eqref{eq:bvAssumption}
and a telescoping sum argument we obtain
\[
\|v_{\orm}(T) - v^m(T)\|_{L^1(\R)}\le 2\|f'\|_\infty |u_0|_{\BV{\bR}} \Bigl( \abs{\orm[z]}_{\BV{[0,T]}} 
- \sum_{k =0}^{\ell_{ m}} |\Delta y^{ m}_{k}| \Bigr).
\]
Inequality~\eqref{eq:firstTvVm} can be proved using a similar telescoping sum
argument.
\end{proof}

It remains to verify that $v_{\orm}(T)$, under some assumptions, is
equal to the unique pathwise entropy 
solution of~\eqref{eq:sscl} at time $t=T$. 
\begin{theorem}\label{thm:limOrmBound}
Let $u$ denote the unique pathwise entropy solution of~\eqref{eq:sscl} with
initial data $u_0 \in (L^1 \cap BV)(\bR)$, strictly convex flux $f \in
C^2(\bR)$ and driving path $z \in C_0([0,T])$. Assume that 
$\abs{\orm[z]}_{\BV{\bR}} < \infty$ and for 
some $m\ge2$, let $\{ \tau_{j=0} \}_{j=0}^m$ denote a mesh
satisfying
\[
0=\tau_0 < \tau_1 < \ldots < \tau_{m} = T.
\]
Then 
\[
\begin{split}
&\|u(T) - v(T;\orm[z], \{\tau_k\}_{k=0}^m)\|_{L^1(\R)}\\
& \qquad \qquad \le 
2\|f'\|_\infty |u_0|_{\BV{\bR}} \prt{ \abs{\orm[z]}_{\BV{[0,T]}} -
  TV(\orm[z]; \{\tau_{k}\}_{k=0}^m ) }.
\end{split}
\]
\end{theorem}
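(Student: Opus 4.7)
The plan is to represent $v(T;\orm[z],\{\tau_k\}_{k=0}^m)$ as one element of a Cauchy sequence of mesh approximations, apply Lemma~\ref{lem:trueOrmConvBound} to obtain the stated bound modulo a Cauchy limit $v_{\orm}(T)$, and then identify $v_{\orm}(T)=u(T)$ by combining Theorem~\ref{thm:ormEquiv} applied to piecewise linear interpolants with the $L^1$-stability of pathwise entropy solutions from Theorem~\ref{thm:wellPosednessPes}.

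\textbf{Construction.} I would build a sequence of meshes $\{\tau_k^{\bar m}\}_{k=0}^{\ell_{\bar m}}$, indexed by $\bar m\ge 0$, starting from $\{\tau_k^0\}_{k=0}^{\ell_0}=\{\tau_k\}_{k=0}^m$, which is nested and satisfies $\ell_{\bar m+1}\le 2\ell_{\bar m}$ (so the hypotheses of Lemma~\ref{lem:trueOrmConvBound} hold), with maximum mesh width tending to zero and $TV(\orm[z];\{\tau_k^{\bar m}\})\nearrow \abs{\orm[z]}_{\BV{[0,T]}}$ as $\bar m\to\infty$. The latter can be arranged using the hypothesis $\abs{\orm[z]}_{\BV{[0,T]}}<\infty$ by inserting at each step the nodes of an ever finer partition approaching the $BV$-supremum. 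Lemma~\ref{lem:trueOrmConvBound} then guarantees the existence of the Cauchy limit $v_{\orm}(T):=\lim_{\bar m\to\infty}v(T;\orm[z],\{\tau_k^{\bar m}\})\in L^1(\bR)$ together with the bound
\[
\|v_{\orm}(T)-v(T;\orm[z],\{\tau_k\}_{k=0}^m)\|_{L^1(\R)}\le 2\|f'\|_\infty|u_0|_{\BV{\bR}}\prt{\abs{\orm[z]}_{\BV{[0,T]}}-TV(\orm[z];\{\tau_k\}_{k=0}^m)}.
\]
It therefore suffices to prove $v_{\orm}(T)=u(T)$.

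\textbf{Identification.} Set $z^{\bar m}:=\cI[z](\cdot;\{\tau_k^{\bar m}\})$. Since $z\in C_0([0,T])$ is uniformly continuous and the mesh width vanishes, $\|z-z^{\bar m}\|_\infty\to 0$; Theorem~\ref{thm:wellPosednessPes} then yields $u^{\bar m}(T)\to u(T)$ in $L^1(\bR)$, while Theorem~\ref{thm:ormEquiv} identifies $u^{\bar m}(T)=v(T;\orm[z^{\bar m}],\{\tau_k^{\bar m}\})$. Consequently, $v_{\orm}(T)=u(T)$ is equivalent to
\[
\Delta_{\bar m}:=\|v(T;\orm[z^{\bar m}],\{\tau_k^{\bar m}\})-v(T;\orm[z],\{\tau_k^{\bar m}\})\|_{L^1(\R)}\to 0\quad\text{as }\bar m\to\infty.
\]
Iterating the contraction and Lipschitz estimates~\eqref{eq:stabOp}--\eqref{eq:stabOp3} for $\bar\cS$ produces the telescoping bound
\[
\Delta_{\bar m}\le \|f'\|_\infty|u_0|_{\BV{\bR}}\sum_{k=0}^{\ell_{\bar m}-1}\abs{h(\tau_{k+1}^{\bar m})-h(\tau_k^{\bar m})},\qquad h:=\orm[z]-\orm[z^{\bar m}].
\]

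\textbf{Main obstacle.} The delicate step is showing that this sampled total variation of $h$ tends to zero. At any node where $\orm[z]$ and $\orm[z^{\bar m}]$ agree on which running extremum is currently active, the nodal discrepancy is controlled by $\|M^\pm[z]-M^\pm[z^{\bar m}]\|_\infty\le\|z-z^{\bar m}\|_\infty\to 0$; the difficult case is the ``sign-disagreement'' nodes, where the two $\orm$'s select opposite extrema. Such disagreements can only occur inside temporal windows where $|M^+[z]-M^-[z]|$ is of order $\omega_z(\text{mesh width})$, with $\omega_z$ the modulus of continuity of $z$, while the finite-$BV$ hypothesis $\abs{\orm[z]}_{\BV{[0,T]}}<\infty$ bounds the number of significant switches of $\orm[z]$. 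A case analysis at the mesh nodes, combined with arranging the refinement to avoid the countable jump set of $\orm[z]$, shows that the sampled total variation of $h$ vanishes in the limit, so $\Delta_{\bar m}\to 0$. This identifies $v_{\orm}(T)$ with $u(T)$ and completes the proof.
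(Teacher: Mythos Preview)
Your overall architecture---use Lemma~\ref{lem:trueOrmConvBound} to reduce to identifying the Cauchy limit $v_{\orm}(T)$, then show $v_{\orm}(T)=u(T)$ via the piecewise linear approximations---matches the paper. The gap is in your identification step, specifically the claim that
\[
\sum_{k=0}^{\ell_{\bar m}-1}\bigl|h(\tau_{k+1}^{\bar m})-h(\tau_k^{\bar m})\bigr|\to 0,\qquad h=\orm[z]-\orm[z^{\bar m}].
\]
Your argument for this is a pointwise bound $|h(\tau_k^{\bar m})|\le \omega_z(\text{mesh width})$ at ``agreement'' nodes plus a count of ``disagreement'' nodes. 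But the quantity you need is a \emph{sampled total variation}, not a sup-norm: the number of terms $\ell_{\bar m}$ diverges, and a uniform bound $|h|\le\varepsilon$ only gives $\sum_k|\Delta_k h|\le 2\varepsilon\,\ell_{\bar m}$, which need not vanish. Even on stretches where both $\orm$'s select the running maximum, the excess $e_k:=M^+[z](\tau_k^{\bar m})-M^+[z^{\bar m}](\tau_k^{\bar m})$ can oscillate within $[0,\omega_z(\Delta\tau)]$ many times (think of a Brownian path), and you have not controlled the accumulated variation of this oscillation. The phrase ``a case analysis\ldots shows that the sampled total variation of $h$ vanishes'' is exactly the missing content.

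The paper sidesteps this difficulty entirely. Rather than compare $\orm[z]$ with $\orm[z^{\bar m}]$, it augments the refined meshes with the running-argmax/argmin points $A^\pm_r[z]=\bar A^+[z](\{\hat\tau_k^r\})\cup\bar A^-[z](\{\hat\tau_k^r\})$. On such a mesh the identity $\orm[z](\tau)=z\bigl(\max(\bar A^+[z],\bar A^-[z])(\tau)\bigr)$ from~\eqref{eq:ormMaxRelation} collapses $v(T;\orm[z],\{\tau_k^r\})$ to $v(T;z,A^\pm_r[z]\cup\{T\})$. This is then compared with $u^{\bar m}(T)=v(T;z^r,\{\tau_k^r\})$ via the cancellation Lemma~\ref{lem:cancel}, yielding an \emph{exact} equality $\mathrm{II}_{r,2}=0$ rather than an estimate that must be driven to zero. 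The only limiting term is the stability error $\mathrm{II}_{r,1}$ from Theorem~\ref{thm:wellPosednessPes}, which is controlled by $\|z-z^r\|_\infty$. If you want to salvage your route, you would need to replace the hand-wave with a genuine BV estimate for $h$ on the mesh; the paper's structural argument is both cleaner and avoids the issue.
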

\begin{proof}
Since $\abs{\orm[z]}_{BV([0,T])}< \infty$, there is a sequence of
meshes $\{\{\tau^r_k\}_{k=1}^{\ell_r}\}_{r=0}^\infty$ such that
\begin{equation*}
  \lim_{r\to \infty} TV(\orm[z]; \{\tau_{k}^r\}_{k=0}^{\ell_r}) =  \abs{\orm[z]}_{\BV{[0,T]}}.
\end{equation*}
If necessary, we can add meshpoints to get \emph{nested} meshes
$\{\{\hat \tau_{k}^{r}\}_{k=0}^{\hat \ell_{r}}\}_{r=0}^{\infty}$ fulfilling 
that $\{\hat \tau_{k}^0\}_{k=0}^{\hat \ell_0} = \{ \tau_k\}_{k=0}^m$, 
$\hat \ell_{r} < \hat \ell_{r+1}$ for all $r\ge 0$, 
\[
\lim_{r\to \infty} \max_{k \in \{0,1,\ldots, \ell_{
    m}-1\}} \hat \tau^{r}_{k+1} -\hat \tau_{k}^{r} = 0,
\]
and 
\[
\lim_{r\to \infty} TV(\orm[z]; \{\hat \tau_{k}^{r}\}_{k=0}^{\hat \ell_{r}} ) 
= \abs{\orm[z]}_{BV([0,T])}.
\]
Let 
\begin{equation}\label{eq:aPmOrmDef}
A^{\pm}_{r}[z]  \coloneq \bar A^+[z]( \{\hat \tau_{k}^{r}\}_{k=0}^{\hat \ell_r} ) 
\cup \bar  A^-[z]( \{\hat \tau_{k}^{r}\}_{k=0}^{\hat \ell_{r}} ),
 \quad r \ge 0.
\end{equation}
As $A^{\pm}_{ r}[z] \subset A^{\pm}_{r+1}[z]$ for
all $r\ge 0$, we may construct a new sequence of nested meshes
$\{\{ \tau_{k}^{r}\}_{k=0}^{\ell_{r}}\}_{r=0}^{\infty}$ defined by
\begin{equation}\label{eq:tauMeshDef}
\{ \tau_{k}^{r}\}_{k=0}^{\ell_{r}} =
\begin{cases}
\{ \hat \tau_{k}^{0}\}_{k=0}^{ \hat \ell_{0}}  = \{\tau_k\}_{k=0}^m, & \text{if
} r
=0,\\
\{ \hat \tau_{k}^{r-1}\}_{k=0}^{ \hat \ell_{r-1}} \cup
A^{\pm}_{r-1}[z], & \text{if } r \ge 1.
\end{cases}
\end{equation}
Since $\{ \tau_{k}^{r}\}_{k=0}^{\ell_{r}}
\supset \{\hat \tau_{k}^{r}\}_{k=0}^{\hat \ell_{r}}$  for all
$r\ge 0$, it also holds that 
\begin{equation}\label{eq:dTauLim}
\lim_{r\to \infty} 
\max_{k \in \{0,1,\ldots, \ell_{r}-1\}} \tau^{r}_{k+1} -\tau_{k}^{r} = 0,
\end{equation}
and 
\[
\lim_{r\to \infty} TV(\orm[z]; \{\tau_{k}^{r}\}_{k=0}^{\ell_{r}}) 
= \abs{\orm[z]}_{BV([0,T])}.
\]

For any $r \ge 1$, we have that
\begin{equation}\label{eq:vRIAndIIBound}
\begin{split}
& \|v(T;\orm[z], \{\tau_k\}_{k=0}^m)- u(T) \|_{L^1(\R)} 
\\ & \qquad 
\le \|v(T;\orm[z], \{\tau_k\}_{k=0}^m) 
- v(T;\orm[z], \{ \tau_k^{r}\}_{k=0}^{\ell_{r}}) \|_{L^1(\R)}
\\ & \qquad \qquad
+ \| v(T;\orm[z], \{ \tau_k^{r}\}_{k=0}^{\ell_{r}}) - u(T) \|_{L^1(\R)}
\\ & \qquad \eqcolon \mathrm{I}_{r} + \mathrm{II}_{r}.
\end{split}
\end{equation}
Lemma~\ref{lem:trueOrmConvBound} implies that for any 
pair of nested meshes $\{\tau_k\}_{k=0}^m \subset \{ \tau_k^{r} \}_{k=0}^{\ell_{r}}$,
\begin{equation}\label{eq:vRIBound}
\begin{split}
\mathrm{I}_{r} &\le   2 \|f'\|_{\infty} |u_0|_{\BV{\bR}} 
\prt{TV(\orm[z];\{\tau_k^{r}\}_{k=0}^{\ell_{r}})  
- TV(\orm[z];\{\tau_k\}_{k=0}^{m}) }\\ & 
\le  2 \|f'\|_{\infty} |u_0|_{\BV{\bR}} 
\prt{ \abs{\orm[z]}_{BV([0,T])} -  TV(\orm[z];\{\tau_k\}_{k=0}^{m}) }.
\end{split}
\end{equation}

To bound $\mathrm{II}_{r}$, note
first that by~\eqref{eq:ormMaxRelation},
\[
\orm[z](\tau_k^{r}) = \begin{cases}
 z(\max(A^{+}[z], A^{-}[z])(\tau_k^{r}) ) & \text{if} \quad k
 \in\{0,1,\ldots, \ell_{r}-1\},\\
z(T) =  z(\tau_{ \ell_{r}}^r)& \text{if} \quad k = \ell_{r}.
\end{cases}
\]
Recalling that $\hat \cS(0) = I$ and that $\max(\bar A^{+}[z],
\bar A^{-}[z])(\cdot)$ is a monotonically increasing function, 
\begin{equation}\label{eq:ormVSol1}
\begin{split}
&v(T; \orm[z], \{\tau_{k}^{r}\}_{k=0}^{\ell_{r}}) \\
& = \bar \cS(\orm[z](\tau^{r}_{\ell_{r}}) - \orm[z](\tau^{r}_{ \ell_{r}-1}) ) \cdots \bar
\cS(\orm[z](\tau^{r}_1) - \orm[z](\tau^{r}_{0}) ) u_0 \\
& = v(T; z, \max(A^{+}[z], A^{-}[z])(\{ \tau_{k}^{r}\}_{k=0}^{\ell_{r}}) \cup \{T\}).
\end{split}
\end{equation}
By~\eqref{eq:fixedPointsA}, 
\[
\max\seq{\bar A^{+}[z], \bar A^{-}[z]}( A^{\pm}_{r}[z] ) = A^{\pm}_{r}[z],
\]
and~\eqref{eq:tauMeshDef} implies that 
\[
\begin{split}
  & \max(\bar A^{+}[z], \bar A^{-}[z])(\{\tau_{k}^{{r}}\}_{k=0}^{\ell_{r}} )  =
  \max(\bar A^{+}[z], \bar A^{-}[z])( A^\pm_{r}[z]  \cup \{\hat \tau_{k}^{{r}}\}_{k=0}^{\hat \ell_{r}} ) \\
&\qquad \qquad \qquad = \max(\bar A^{+}[z], \bar A^{-}[z])( A^\pm_{r}[z] ) \, \cup \,
\max(\bar A^{+}[z], \bar A^{-}[z])(\{\hat \tau_{k}^{r}\}_{k=0}^{\hat \ell_{r}}) \\
&\qquad \qquad  \qquad =A^\pm_{r}[z].
\end{split}
\]
Consequently, 
\[
\begin{split}
  v(T; z, \max(A^{+}[z], A^{-}[z])(\{\tau_{k}^{r}\}_{k=0}^{ \ell_{r}}) \cup \{T\})
  &= v(T; z, A^{\pm}_{r}[z] \cup \{T\})\\
  &=v(T; \tilde z^r, A^{\pm}_{r}[z] \cup \{T\}),
  \end{split}
\]
where $\tilde z^r \coloneq \cI[z](\cdot; A^{\pm}_{r}[z] \cup \{T\})$.
Introducing the function $z^{r} = \cI[z](\cdot; \{\tau_{k}^{r}\}_{k=0}^{\ell_{r}})$,
we may bound the second term as follows
\begin{equation}\label{eq:vRII1AndII2Bound}
\begin{split}
\mathrm{II}_{r} &\le \norm{u(T) - v(T;  z^{r},\{ \tau_{k}^{r}\}_{k=0}^{\ell_{r}}) }_1
+ \norm{v(T; z^{r},\{ \tau_{k}^{r}\}_{k=0}^{\ell_{r}})
  - v(T; \tilde z^{r}, A^{\pm}_{r}[z] \cup \{T\})}_1\\
  & \eqcolon \mathrm{II}_{r,1} + \mathrm{II}_{r,2}.
\end{split}
\end{equation}
Theorem~\ref{thm:wellPosednessPes} and the property $z^{r}(T) = z(T)$ for all $r\ge1$
imply that there exists a constant $C(\|u_0\|_2, \|f''\|_\infty) > 0$
such that
\[
\mathrm{II}_{r,1}  \le C \sqrt{\max_{s \in [0,T]} \abs{z - z^{r}}(s) }, 
\quad \forall r \ge 1.
\]
Since $z$ is uniformly continuous on $[0,T]$, 
and it follows from \eqref{eq:dTauLim} that
\begin{equation}\label{eq:vRII1Bound}
\lim_{r \to \infty} \mathrm{II}_{r,1} = 0.
\end{equation}

The term $\mathrm{II}_{r, 2}$ is bounded by verifying that
\begin{equation}\label{eq:vEquality}
v(T;  z^{r},\{ \tau_{k}^{r}\}_{k=0}^{\ell_{r}})
= v(T; \tilde z^r, A^\pm_r[z] \cup \{T\} ),
\end{equation}
which actually means that $\mathrm{II}_{r,2}=0$.
Note first that since $A^{\pm}_{r}[z] \cup \{T\} \subset \{ \tau_{k}^{r}\}_{k=0}^{\ell_{r}}$,
we may introduce the monotonically increasing function
$h: \{0,1,\ldots, \ell_{r}\} \to \{0,1,\ldots,  \ell_{r}\}$
defined by
\[
h(k)= \begin{cases}
  \{s \in \{0,1,\ldots,k\} \mid \tau_s^{r}
  = \max(\bar A^+[z], \bar A^-[z])(\tau^{r}_k)\} 
  & \text{if } k <  \ell_{r}, \\
  \ell_{r} & \text{if } k = \ell_{r}.
  \end{cases}
\]
and write $\{\tau_{h(k)}\}_{k=0}^{\ell_{r}} =A^{\pm}_{r}[z] \cup \{T\}$.
Using this representation and that
\[
\tilde z^{r}|_{A^{\pm}_{r}[z] \cup \{T\}} = z|_{A^{\pm}_{r}[z] \cup \{T\}} 
=  z^{r}|_{A^{\pm}_{r}[z] \cup \{T\}},
\]
we obtain
\[
v(T; \tilde z^{r}, \{\tau_{h(k)}\}_{k=0}^{\ell_{r}}) =
\bar \cS\prt{ z^{r}( \tau_{h( \ell_{r})}^r) -  z^{r}( \tau_{h( \ell_{r}-1)}^r) } \cdots
\bar \cS\prt{z^{r}( \tau_{h(1)}^r) - z^{r}( \tau_{ h(0)}^r)}u_0.
\]
Recalling that
\[
\begin{split}
v(T; z^{r}, \{\tau_{k}^r\}_{k=0}^{\ell_{r}}) &=
\bar \cS\prt{ z^{r}( \tau_{\ell_{r}}) - z^{r}( \tau_{\ell_{r}-1}) } \cdots
\bar \cS\prt{ z^{r}( \tau_{1}) -  z^{r}( \tau_{0})}u_0,
\end{split}
\] 
equality~\eqref{eq:vEquality} follows by a straightforward induction argument
if the following equality holds
for all $k \in \{0,1,\ldots, \ell_r-1\}$ such that $h(k+1)-h(k) \ge 2$:
\small
\begin{equation}\label{eq:zRCancel}
\begin{split}
&\bar \cS\prt{ z^{r}( \tau_{h(k+1)}^r) -  z^{r}( \tau_{h(k+1)-1}^r) } \cdots
\bar \cS\prt{ z^{r}( \tau_{h(k)+1}^r) -  
z^{r}( \tau_{h(k)}^r) }v(\tau^r_{h(k)};z^r,\{\tau_{h(k)}^r\}_{k=0}^{\ell_r})\\
&= \bar \cS\prt{ z^{r}( \tau_{h(k+1)}^r) - 
z^{r}( \tau_{h(k)}^r) } v(\tau^r_{h(k)};z^r,\{\tau_{h(k)}^r\}_{k=0}^{\ell_r}).
\end{split}
\end{equation}
\normalsize

Assume $k \in \{0,1,\ldots, \ell_r-1\}$ is such that
$h(k+1)-h(k) \ge 2$. Then
\begin{align*}
& \{\bar A^-[z](\tau_{h(k+1)-1}^r), \bar A^+[z](\tau_{h(k+1)-1}^r) \} \subset A^\pm_r[z],
\\ & \max(\bar A^-[z], \bar A^+[z])(\tau_{h(k+1)-1}^r) = \tau_{h(k)}^r,
\end{align*}
which implies that for all $s \in \{h(k), \ldots, h(k+1)-1\}$,
\begin{align*}
& z^r(\tau_{s}^r) \ge M^-[z^r](\tau^r_{h(k+1)-1}) \ge M^-[z](\tau^r_{h(k+1)-1}) 
\\ & \qquad = z(\bar A^-[z](\tau_{h(k+1)-1}^r)) \ge M^-[z^r](\tau_{h(k)}^r)
\end{align*}
and
\begin{align*}
& z^r(\tau_{s}^r) \le M^+[z^r](\tau^r_{h(k+1)-1}) \le M^+[z](\tau^r_{h(k+1)-1}) 
\\ & \qquad 
= z(\bar A^+[z](\tau^r_{h(k+1)-1})) \le M^+[z^r](\tau_{h(k)}^r).
\end{align*}
Consequently,
\begin{equation}\label{eq:zRContained}
z^r([\tau_{h(k)}, \tau_{h(k+1)-1}]) \subset [M^{-}[z^r](\tau_{h(k)}^r), M^{+}[z^r](\tau_{h(k)}^r)].
\end{equation}
Consider the following three cases: 
$z^r(\tau_{h(k+1)}^r) - z^r(\tau_{h(k+1)-1}^r)=0$,
$z^r(\tau_{h(k+1)}^r) - z^r(\tau_{h(k+1)-1}^r)>0$, 
and $z^r(\tau_{h(k+1)}^r) - z^r(\tau_{h(k+1)-1}^r)<0$.

If $z^r(\tau_{h(k+1)}^r) - z^r(\tau_{h(k+1)-1}^r)=0$, then 
\begin{equation}\label{eq:zRContained2}
z^r([\tau_{h(k)}, \tau_{h(k+1)}]) \subset [M^{-}[z^r](\tau_{h(k)}^r), M^{+}[z](\tau_{h(k)}^r)],
\end{equation}
and~\ref{eq:zRCancel} follows from Lemma~\ref{lem:cancel}.

If $z^r(\tau_{h(k+1)}^r) - z^r(\tau_{h(k+1)-1}^r)>0$,
then, since $\tau_{h(k+1)} \in A^{\pm}_r[z] \cup \{T\}$,
either $\tau_{h(k+1)} =T \notin A^{\pm}_r[z]$ or
$\tau_{h(k+1)} \in A^{\pm}_r[z]$. If $\tau_{h(k+1)} =T \notin A^{\pm}_r[z]$,
then $\max(\bar A^{+}[z], \bar A^{-}[z])(T) = \tau_{h(\ell_r -1)}^r$,
and 
\begin{align*}
	& z^r(\tau_{h(\ell_r)}) 
	\ge z(\bar A^-[z](\tau_{h(\ell_r)}^r)) \ge M^-[z^r](\tau_{h(\ell_r-1)}^r),
	\\ &
	z^r(\tau_{h(\ell_r)}) \le 
	z(\bar A^+[z](\tau_{h(\ell_r)}^r)) \le M^+[z^r](\tau_{h(\ell_r-1)}^r).
\end{align*}
Hence~\eqref{eq:zRContained2} holds and~\eqref{eq:zRCancel} follows.
If $z^r(\tau_{h(k+1)}^r) - z^r(\tau_{h(k+1)-1}^r)>0$
and $\tau_{h(k+1)}^r \in A^{\pm}_r[z]$, then for all $t \in (\tau_{h(k+1)-1}^r,\tau_{h(k+1)}^r]$,
\[
\begin{split}
  z(t)&\ge z(\bar A^-(\tau_{h(k+1)-1}^r)) + z^r(t) - z^r(\tau_{h(k+1)-1}^r)\\
  &\ge M^-[z](\tau_{h(k)}^r) + z^r(t) - z^r(\tau_{h(k+1)-1}^r)\\
  & > M^-[z](\tau_{h(k)}^r).
\end{split}
\]
This implies that $\tau_{h(k+1)}^r = \bar A^+[z](\tau_{h(k+1)}^r)$ and
we conclude from
\begin{equation*}
  \begin{split}
  M^+[z^r](\tau_{h(k+1)}^r) &\le M^+[z](\tau_{h(k+1)}^r)\\
  &= z(\bar A^+[z](\tau_{h(k+1)}^r))\\
  &= z(\tau_{h(k+1)}^r)\\
  &\le M^+[z^r](\tau_{h(k+1)}^r),
  \end{split}
\end{equation*}
that $M^+[z^r](\tau_{h(k+1)}^r) = z^r(\tau_{h(k+1)}^r)$. 
Moreover, by~\eqref{eq:zRContained} and $\dot z^r(\tau_{h(k+1)-1}^r+)>0$,
there exists a unique
$\tau \in [\tau_{h(k+1)-1}^r, \tau_{h(k+1)}^r]$ such that
\[
z^r(\tau) = M^+[z^r](\tau_{h(k)}^r) \quad \text{ and } \bar A^+[z](\tau) = \tau.
\]
Consequently,
\[
z^r([\tau_{h(k)}, \tau]) \subset [M^{-}[z^r](\tau_{h(k)}^r), M^{+}[z^r](\tau_{h(k)}^r)],
\]
and Lemma~\ref{lem:cancel} yields
\small
\[
\begin{split}
&\bar \cS\prt{ z^{r}( \tau_{h(k+1)}^r) -  z^{r}( \tau_{h(k+1)-1}^r) } \cdots
\bar \cS\prt{ z^{r}( \tau_{h(k)+1}^r) -  z^{r}( \tau_{h(k)}^r) }
v(\tau^r_{h(k)};z^r,\{\tau_{h(k)}^r\}_{k=0}^{\ell_r})\\
 &= \bar \cS\prt{ z^{r}( \tau_{h(k+1)}^r) - z^r(\tau)} 
 \cS\prt{z^{r}( \tau) - z^r(\tau_{h(k)}^r)}v(\tau^r_{h(k)};z^r,\{\tau_{h(k)}^r\}_{k=0}^{\ell_r})\\
 &= \bar \cS\prt{ z^{r}( \tau_{h(k+1)}^r) -  z^{r}( \tau_{h(k)}^r) } 
 v(\tau^r_{h(k)};z^r,\{\tau_{h(k)}^r\}_{k=0}^{\ell_r}), 
\end{split}
\]
\normalsize
where the last equality follows from 
\begin{multline*}
\prt{z^r(\tau_{h(k+1)}^r) - z^r(\tau)}\prt{z^r(\tau) - z^r(\tau_{h(k)}^r)}\\
= \prt{M^+[z^r](\tau_{h(k+1)}^r) 
- M^+[z^r](\tau_{h(k)}^r)} \prt{M^+[z^r](\tau_{h(k)}^r) - z^r(\tau_{h(k)}^r)} \ge 0
\end{multline*}
and the argument preceding~\eqref{eq:sameSignCancel}.

Verifying~\eqref{eq:zRCancel} for the case
$z^r(\tau_{h(k+1)}^r) - z^r(\tau_{h(k+1)-1}^r)<0$ may be done similarly.
This yields
\[
\mathrm{II}_{r,2} =0 \qquad \forall r \ge 1,
\]
and by~\eqref{eq:vRIAndIIBound},~\eqref{eq:vRIBound},~\eqref{eq:vRII1AndII2Bound}
and~\eqref{eq:vRII1Bound},
\[
\begin{split}
&\|v(T;\orm[z], \{\tau_k\}_{k=0}^m)- u(T) \|_1 
\\ & \quad \le \lim_{r\to \infty} \prt{\mathrm{I}_r + \mathrm{II}_{r,2}}
\\ & \quad \le  2 \|f'\|_{\infty} |u_0|_{\BV{\bR}} 
\prt{ \abs{\orm[z]}_{BV(\bR)} -  TV(\orm[z];\{\tau_k\}_{k=0}^{m}) }.
\end{split}
\]
\end{proof}

For strictly convex fluxes $f \in C^2(\bR)$ and paths $z \in
C_0([0,T])$ with $\orm[z] \in BV([0,T])$,
Theorem~\ref{thm:limOrmBound} provides an error bound for
approximations of pathwise entropy solutions, which is 
different from~\eqref{eq:stabilityPes} and shows a link
between the driving path $z$ and $\orm[z]$. If one were to consider
an extension of \eqref{eq:sscl} with
$\orm[z]$ given as input, then Theorem~\ref{thm:limOrmBound} could be used as basis for a
numerical method that (numerically) solves
\[
v(T;\orm[z], \{\tau_k\}_{k=0}^m).
\]
This would differ from the methods we propose herein, i.e.,
either numerically finding an approximation to 
\[
v(T;z^m, \{\tau_k\}_{k=0}^m) \quad \text{or} 
\quad v(T;\overline{\orm}_m[z], \{\tau_k\}_{k=0}^m),
\]
see the next section for a description of the latter approach.

\subsection{Improved numerical method}\label{sec:framework2}
Theorem~\ref{thm:ormLinEquiv} shows that for strictly convex $f$ 
and $z\in C_0([0,T])$
the entropy solution of~\eqref{eq:consClass} at time $t=T$ with driving path
$z^m =\cI^m[z]$ can be computed by replacing $z^m$ with $y^m = \overline{\orm}_m[z]$
and solving $v(T;y^m, \{\tau_{j(k)}\}_{k=0}^{\bar m})$, cf.~\eqref{eq:solRep3}
(or, equivalently, using $\orm[z^m]$, see Theorem~\ref{thm:ormEquiv}).
One may view $\overline{\orm}_m[z]$ as a version 
of $z^m$ with type (iii) ``oscillatory 
cancellations'' (cf.~ Lemma~\ref{lem:cancel}) removed. 
A further implication of the lemma is that we may also 
remove type (i) and (ii) ``oscillatory cancellations''
from $\overline{\orm}_m[z]$ and still preserve the entropy solution at final time $t=T$.
The following algorithm describes the removal procedure:
\begin{algorithm}[H]
 \caption{Removal of  Lemma~\ref{lem:cancel} type (i) and (ii) ``oscillatory cancellations''}
  \begin{algorithmic}\label{alg:removeAlg}
  \STATE{{\bf Input:} $y^m = \overline{\orm}_m[z]$ and mesh $\{\tau_{j(k)}\}_{k=0}^{\bar m}$.}
  \STATE{{\bf Output:} Reduced mesh $\{ \tilde \tau_{k}\}_{k=0}^{\tilde L(m)} \subset \{\tau_{j(k)}\}_{k=0}^{\bar m}$
    and $\tilde y^m = \cI[y^m]\prt{\cdot; \{ \tilde \tau_{k}\}_{k=0}^{\tilde L(m)}}$.}
  \medskip
  
  \STATE{Set $k=0$ and $\tilde \tau_k = \tau_{j(k)} = 0$.}
  \WHILE{$\tilde \tau_k<T$}
  \STATE{Compute 
    \begin{equation}\label{eq:tauPM}
      \begin{split}
    \tau^+_k &= \max\{t \in \{\tau_{j(i)}\}_{i=0}^{\bar m} \cap [\tilde \tau_k,T] \mid
    \dot y^m(s+) \ge 0 \quad \forall s \in [\tilde \tau_k,t) \},\\
    \tau^-_k &= \max\{t \in \{\tau_{j(i)}\}_{i=0}^{\bar m} \cap [\tilde \tau_k,T] \mid \dot y^m(s+) \le 0 \quad \forall s \in [\tilde \tau_k,t) \},
      \end{split}
    \end{equation}
    
    set 
    \[
    \tilde \tau_{k+1} = \max(\tau^+_k, \tau^-_k),
    \]
    and $k=k+1$.
    }
  \ENDWHILE
  \STATE{Set $\tilde L(m) = k$.}

  \RETURN{$\{ \tilde \tau_{j}\}_{j=0}^{\tilde L(m)}$
    and $\tilde y^m = \cI[y^m](\cdot; \{ \tilde \tau_{j}\}_{j=0}^{\tilde L(m)})$.}
  \end{algorithmic}
\end{algorithm}
Figure~\ref{fig:ormFullCancel} illustrates the 
transition from $y^m = \overline{\orm}_m[z]$
to $\tilde y^m$ computed by Algorithm~\ref{alg:removeAlg}.
\begin{figure}[h!]
\includegraphics[width=0.68\textwidth]{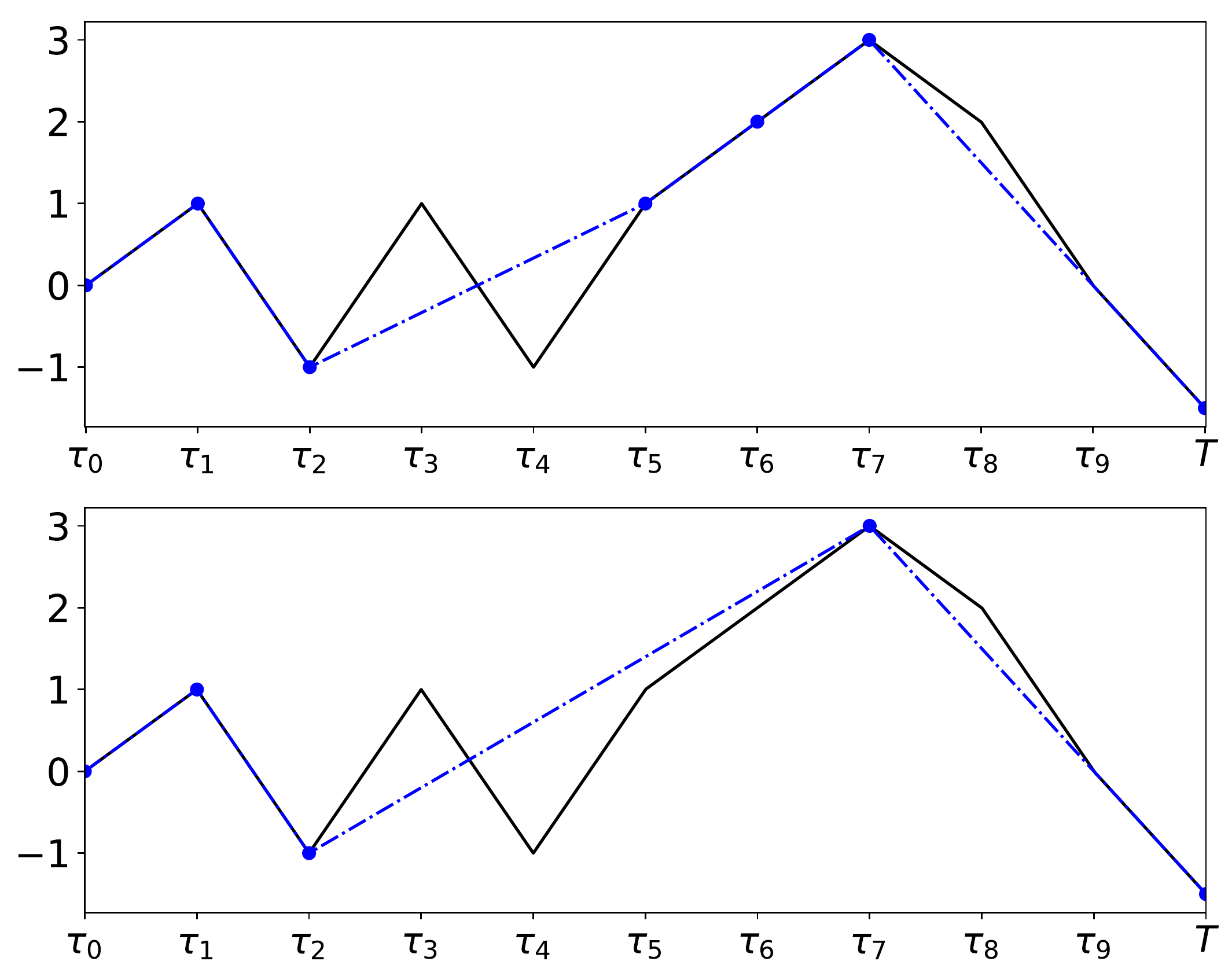}
\caption{Top: The piecewise linear path $z^m$ with $m=10$ (black line) and the
  associated $y^m = \overline{\orm}_m[z]$ (blue dash-dotted line). Blue
  dots mark the value of $\overline{\orm}_m[z]$ at its interpolation points 
  $\{\tau_{j(k)}\}_{k=0}^{\bar m}$.
  Bottom: The piecewise linear path $z^m$ (black line) and the
  associated $\tilde y^m$ computed by 
  Algorithm~\ref{alg:removeAlg} (blue dash-dotted line). Blue
  dots mark the value of $\tilde y^m$ at its interpolation points 
  $\{\tilde \tau_{k}\}_{k=0}^{\tilde L(m)}$.
}
\label{fig:ormFullCancel}
\end{figure}

For later reference, note that the output 
mesh of Algorithm~\ref{alg:removeAlg} satisfies
$\{\tilde \tau_k\}_{k=0}^{\tilde L(m)-1} \subset A^\pm[z^m]$,
and, using that for all $k < \tilde L(m)$,
\[
y^m(\bar A^+[z^m](\tilde \tau_k)) = M^{+}[z^m](\tilde \tau_k)
\quad \text{and} \quad 
y^m(\bar A^-[z^m](\tilde \tau_k)) = M^{-}[z^m](\tilde \tau_k),
\]
cf.~Definition~\eqref{eq:yDef} and~\eqref{eq:MPmRelation},
it follows that 
$\{ y^m(\bar A^+[z^m](\tilde \tau_k))\}_{k=0}^{\tilde L(m)-1}$
and\\
$\{y^m(\bar A^-[z^m](\tilde \tau_k))\}_{k=0}^{\tilde L(m)-1}$ respectively
are monotonically increasing and decreasing sequences.
For any $1 \le k \le \tilde L(m)-2$
such that $\tilde \tau_k = \bar A^+[z^m](\tilde \tau_k)$, it must hold that
$\dot y^m(\tilde \tau_k+) <0$. Consequently, $\tilde \tau_{k+1} = \tau^-_k$, and
since
\[
y^m(\tilde \tau_{k+1}) < y^m(\tilde \tau_k) \le y^m(\bar A^+[z^m](\tilde \tau_{k+1}))  
\]
implies that $\tilde \tau_{k+1} \neq \bar A^+[z^m](\tilde \tau_{k+1})$,
it must hold that $\tilde \tau_{k+1} = \bar A^{-}[z^m](\tilde \tau_{k+1})$.
By similar reasoning, if $1 \le k \le \tilde L(m)-2$ is such that
$\tilde \tau_k = \bar A^-[z^m](\tilde \tau_k)$, then
$\tilde \tau_{k+1} = \bar A^+[z^m](\tilde \tau_k)$.
We obtain that $\tilde y^m_{k} \tilde y^m_{k+1} \le 0$ 
for all $k \le \tilde L(m)-2$,
\[
\tilde y^m_{k} \tilde y^m_{k+2}\ge 0 \quad \text{and} \quad
|\tilde y^m_{k}| \le |\tilde y^m_{k+2}| \qquad \forall k \le \tilde L(m)-3,
\]
and as $\tilde y^m_1 \neq 0$, if $\tilde L(m)>1$,
\begin{equation}\label{eq:dyTilde}
  \Delta \tilde y^m_{k}\Delta \tilde y^m_{k+1}  < 0 \quad \text{and} \quad 
|\Delta \tilde y^m_{k}| \le |\Delta \tilde y^m_{k+1}| \quad \forall k \le \tilde L(m)-3.
\end{equation}
(For the last index, $k=L(m)-2$, the properties~\eqref{eq:dyTilde} hold if
$T \in A^{\pm}[z^m]$, but may not hold if $T \notin A^{\pm}[z^m]$.)

\begin{corollary}\label{cor:bvEquiv2}
Assume $f \in C^2(\bR)$ is 
strictly convex, $u_0 \in (L^1 \cap BV)(\bR)$ and $z\in C_0([0,T])$.
For any mesh 
$0=\tau_0< \tau_1 < \ldots < \tau_m =T$, $m\ge 2$, let
 $y^m = \overline{\orm}_m[z]$ and
$\tilde y^m = \cI[y^m](\cdot; \{ \tilde \tau_{k}\}_{k=0}^{\tilde L(m)})$,
cf.~Algorithm~\ref{alg:removeAlg}. 

Then
\begin{equation}\label{eq:equivOrmAllRemoved}
  v(T;y^m, \{\tau_{k}\}_{k=0}^m )
  = v(T; \tilde y^m, \{ \tilde \tau_{k}\}_{k=0}^{\tilde L(m)})
\end{equation}
and 
\begin{equation}\label{eq:bvEquivOrmAllR}
\abs{y^m}_{\BV{[0,T]}} = \abs{\tilde y^m}_{\BV{[0,T]}}.
\end{equation}
\end{corollary}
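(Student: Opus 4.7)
The plan is to establish both assertions by working interval by interval on the mesh $\{\tilde\tau_k\}_{k=0}^{\tilde L(m)}$ produced by Algorithm~\ref{alg:removeAlg}, exploiting the defining property of that mesh: over each $[\tilde\tau_k,\tilde\tau_{k+1}]$, $y^m$ is monotone (either $\dot y^m(s+)\ge 0$ or $\dot y^m(s+)\le 0$ throughout, by construction via $\tau_k^{\pm}$ in~\eqref{eq:tauPM}).

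First, I will reduce~\eqref{eq:equivOrmAllRemoved} to the comparison between two products of $\bar\cS$ operators. Since $\{\tilde\tau_k\}_{k=0}^{\tilde L(m)}\subset\{\tau_{j(k)}\}_{k=0}^{\bar m}\subset\{\tau_k\}_{k=0}^m$, Theorem~\ref{thm:ormLinEquiv} gives $v(T;y^m,\{\tau_k\}_{k=0}^m)=v(T;y^m,\{\tau_{j(k)}\}_{k=0}^{\bar m})$, so the nontrivial step is
\[
v(T;y^m,\{\tau_{j(k)}\}_{k=0}^{\bar m}) = v(T;\tilde y^m,\{\tilde\tau_k\}_{k=0}^{\tilde L(m)}).
\]
Grouping the factors of the left-hand product according to which $[\tilde\tau_k,\tilde\tau_{k+1}]$ they belong to, it suffices to show for each $k$ that the composition of $\bar\cS(\Delta y_\ell^m)$ over those $\tau_{j(\ell)}\in[\tilde\tau_k,\tilde\tau_{k+1}]$ equals $\bar\cS(y^m(\tilde\tau_{k+1})-y^m(\tilde\tau_k))=\bar\cS(\Delta\tilde y_k^m)$. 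But monotonicity of $y^m$ on $[\tilde\tau_k,\tilde\tau_{k+1}]$ makes all increments $\Delta y_\ell^m$ of the same sign, and this is precisely the situation of Lemma~\ref{lem:cancel}(i)--(ii); the short argument in cases (i)/(ii) of that proof (pure concatenation of solutions of~\eqref{eq:map3} with a fixed sign) gives the collapse.

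For~\eqref{eq:bvEquivOrmAllR}, I use the same interval decomposition plus the fact that a piecewise linear monotone function has $BV$-seminorm equal to the absolute increment between endpoints. Since $y^m$ is piecewise linear and monotone on $[\tilde\tau_k,\tilde\tau_{k+1}]$,
\[
|y^m|_{\BV{[\tilde\tau_k,\tilde\tau_{k+1}]}}=|y^m(\tilde\tau_{k+1})-y^m(\tilde\tau_k)|=|\Delta\tilde y_k^m|=|\tilde y^m|_{\BV{[\tilde\tau_k,\tilde\tau_{k+1}]}},
\]
where the last equality uses that $\tilde y^m$ is linear on each $[\tilde\tau_k,\tilde\tau_{k+1}]$ with the same endpoint values as $y^m$. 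Summing over $k$ yields $|y^m|_{\BV{[0,T]}}=|\tilde y^m|_{\BV{[0,T]}}$.

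The only step requiring care is to verify the monotonicity claim for $y^m$ on $[\tilde\tau_k,\tilde\tau_{k+1}]$, since Algorithm~\ref{alg:removeAlg} defines $\tilde\tau_{k+1}=\max(\tau_k^+,\tau_k^-)$ and a priori one must argue that the chosen maximum corresponds to a truly monotone stretch. This is immediate from~\eqref{eq:tauPM}: whichever of $\tau_k^+$, $\tau_k^-$ is larger automatically certifies monotonicity of the designated sign on the whole interval $[\tilde\tau_k,\tilde\tau_{k+1})$, which is precisely the hypothesis needed to invoke Lemma~\ref{lem:cancel}(i) or (ii). Modulo that observation, the proof is essentially a bookkeeping exercise on products of solution operators.
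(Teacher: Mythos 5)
Your proof is correct and follows essentially the same route as the paper: the identity \eqref{eq:equivOrmAllRemoved} is obtained by applying the type (i)/(ii) cancellations of Lemma~\ref{lem:cancel} on each interval $[\tilde\tau_k,\tilde\tau_{k+1}]$, where Algorithm~\ref{alg:removeAlg} guarantees a sign-definite $\dot y^m(\cdot+)$, and \eqref{eq:bvEquivOrmAllR} follows from monotonicity of $y^m$ on those intervals together with $\tilde y^m(\tilde\tau_k)=y^m(\tilde\tau_k)$. The paper compresses the first part to a one-line citation of Lemma~\ref{lem:cancel}; your grouping of the $\bar\cS$ factors is simply that argument written out in detail.
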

\begin{proof}
Equation~\eqref{eq:equivOrmAllRemoved} 
follows directly by Lemma~\ref{lem:cancel}.

To verify~\eqref{eq:bvEquivOrmAllR}, note by Algorithm~\ref{alg:removeAlg} that
for any $k \in \{0,1,\ldots,\tilde L(m)-1\}$, it either holds that
\[
\dot y^m(s+) \ge 0 \quad \text{and} \quad \dot{\tilde y}^m(s+) 
\ge 0 \quad \forall s \in [\tilde \tau_k, \tilde \tau_{k+1}),
\]
or
\[
\dot y^m(s+) \le 0 \quad \text{and} \quad \dot{\tilde y}^m(s+) 
\le 0 \quad \forall s \in [\tilde \tau_k, \tilde \tau_{k+1}),
\]
and that
\[
\tilde{y}^m(\tilde \tau_k) = y^m(\tilde \tau_k) \quad \forall k \in \{0,1,\ldots, \tilde L(m)\}.
\]
Consequently,
\[
\begin{split}
  \abs{y^m}_{\BV{[0,T]}} & = 
  \sum_{k=0}^{\bar m-1} \abs{y^m(\tau_{j(k+1)}) - y^m(\tau_{j(k)})}\\
  & = \sum_{k=0}^{\tilde L(m)-1} \abs{y^m(\tilde \tau_{k+1}) - y^m(\tilde \tau_{k})}
  =\abs{\tilde y^m}_{\BV{[0,T]}}.
  \end{split}
\]

\end{proof}

We now propose a numerical method that makes use of
the piecewise linear orm function of $z$ and $\tilde y^m$
to compute entropy solutions at final time:
\begin{enumerate}

\item[(i)] Approximate the rough path $z \in C_0([0,T])$ by the 
  piecewise linear interpolant $z^m = \cI^m[z]$ on a uniform
  mesh $\{\tau_k\}_{k=0}^m$ with step size $T/m$.

\item[(ii)] Compute $y^m = \overline{\orm}_m[z]$ and its interpolation points
  $\{\tau_{j(k)}\}_{k=0}^{\bar m}$, cf.~Definition~\ref{def:ormLDef}.

\item[(iii)] Compute $\tilde y^m$ and its interpolation points
  $\{\hat \tau_{k=0}\}_{k=0}^{\hat L(m)}$ by Algorithm~\ref{alg:removeAlg}
  with $y^m$ and $\{\tau_{j(k)}\}_{k=0}^{\bar m}$ as input.

\item[(iv)]  Compute a numerical solution of
  $v(T;\tilde y^m, \{\tilde \tau_{k}\}_{k=0}^{\tilde L(m)})$, cf.~\eqref{eq:solRep3}
  using a consistent, conservative and monotone finite volume method.
  
\end{enumerate}  
The numerical solution of $v\prt{T;\tilde y^m, \{\tau_{k}\}_{k=0}^{\tilde L(m)}}$,
which we denote $U(T)$, 
is obtained through initializing $U(0)$ by~\eqref{eq:u0Approx}, 
and iteratively, for $k=0,1,\ldots, \tilde L(m)-1$,
computing the numerical solution of 
\begin{equation}\label{eq:numSolProblemLast}
  \begin{split}
\partial_t \tilde u + \sign{\Delta \tilde y^m_{k}} \partial_x f(\tilde u) &= 0 \quad \text{in} \quad 
(0, |\Delta \tilde y^m_k| ] \times \bR,  \\
\quad \tilde u(0) &= U(\tilde \tau_k),
  \end{split}
\end{equation}
and setting
$U(\tilde \tau_{k+1}) = \hat{\tilde{u}}(|\Delta \tilde y^m_k|; U(\tilde \tau_k))$, where 
$\hat{\tilde{u}}(s; U(\tilde \tau_k))$ denotes the numerical solution
of~\eqref{eq:numSolProblemLast} with $\hat{\tilde u}(0) = U(\tilde \tau_k)$.
We let $\tilde{n}(k) \ge1$ denote the number of uniform
timesteps used in the numerical solution of~\eqref{eq:numSolProblemLast}
over $[0, |\Delta \tilde y^m_k| ]$, and
\[
N = \sum_{k=0}^{\tilde L(m)-1} \tilde{n}(k),
\]
denotes the total number of timesteps the numerical method uses
to obtain the final time solution $U(T)$.
The size of the uniform timesteps used in the numerical
solution of the $k$-th problem~\eqref{eq:numSolProblemLast},
for $k \in \{0,1,\ldots, \tilde L(m)-1\}$,
is determined through the following CFL condition:
\begin{equation}\label{eq:cfl3}
  \begin{cases}
    \widetilde \Delta t_k = \frac{\abs{\Delta \tilde y^m_k}}{\tilde n(k)}, \quad \text{where}\\
    \\
    \tilde{n}(k) =
    \max\prt{\left \lceil \frac{\abs{\Delta \tilde y^m_k} 
    \|f'\|_{\infty}}{C_{\mathrm{CFL}}\Delta x  }\right \rceil, \, 1}.
  \end{cases}
\end{equation}
In other words, 
the numerical solution of the
$k$-th problem~\label{eq:numSolProblem}
is computed on the temporal mesh discretization
\begin{equation}\label{eq:tMeshFinal}
0=t_{k,0} \le t_{k,1} \le \ldots \le t_{k,\tilde{n}(k)} = |\Delta \tilde y^m_k|, 
\end{equation}
where $t_{k,r} = r \widetilde \Delta t_k$ for $0 \le r\le \tilde{n}(k)$.

For a given $z \in C_0^{0,\alpha}(\bR)$, $\alpha \in (0,1]$,  
Theorem~\ref{thm:met1Compl2} shows that 
the factors $\abs{z^m}_{BV([0,T])}^{5}$ and $\abs{z^m}_{BV([0,T])}^{6}$
respectively enter in lower and upper bounds of the computational cost
of solving $v(T;z^m, \{\tau_{k}\}_{k=0}^m)$ by the
adaptive time stepping method in Section~\ref{sec:met1}.
In comparison, the method considered here solves
$v(T; \tilde y^m, \{\tilde \tau_{k}\}_{k=0}^{\tilde L(m)})$,
and since 
\[
|\tilde y^m|_{BV([0,T])} = |\overline{\orm}_m[z^m]|_{BV{([0,T])}} \le
\abs{z^m}_{BV([0,T])} \quad \forall m \ge 2,
\]
cf.~Corollary~\ref{cor:bvEquiv2}, it is to be expected that
the latter numerical method can be more efficient than the former.
The following theorem states conditions under which
efficiency gains are achieved.

\begin{theorem}\label{thm:met2Compl}
Let $u\in C([0,T]; L^1(\bR))$ denote the unique pathwise entropy
solution of \eqref{eq:sscl} for given $u_0 \in (L^1 \cap BV)(\bR)$
with $\Leb{\supp{u_0}}>0$, strictly convex $f\in C^2(\bR)$ and $z \in
C^{0,\alpha}_0([0,T])$ with $\alpha \in (0,1]$.  For any $m \ge 2$,
let $\{\tau_j\}_{j=0}^m\subset [0,T]$ denote the uniform mesh with
step size $\Delta \tau = T/m$ and assume the computational cost of
generating the interpolant $z^m = \cI^m[z]$ is $\Theta(m^\beta)$ for
some $\beta \ge 1$.  Set $y^m = \overline{\orm}_m[z]$, and let $\tilde
y^m = \cI[y^m]\prt{\cdot; \{\tilde \tau_k\}_{k=0}^{\tilde L(m)}}$
denote the function generated by Algorithm~\ref{alg:removeAlg}.
Let $U$ denote the
solution of the numerical method in Section~\ref{sec:framework2}
satisfying the local CFL condition~\eqref{eq:cfl3} and
with the following constraint imposed on the spatial resolution
\begin{equation}\label{eq:dxConstraint}
\Delta x = \tO{\frac{\Delta \tau^\alpha}{\max\prt{|y^m|_{BV([0,T])}^2, \, 1}}}.
\end{equation}
Assume that the spatial support of $U([0,T])$
is covered by an interval $[a_m,b_m] \subset \bR$ that
satisfies
\[
c_1 \le b_m-a_m \le c_2 (1+ N\Delta x),
\]
for some $c_1,c_2>0$, cf.~\eqref{eq:finiteSupp},
and that at least one of the following two conditions hold:
\begin{itemize}
\item[(a)] 
\begin{equation*}
\tilde L(m) = \cO{ m^\alpha \max\prt{|y^m|_{BV([0,T])}^3,1}},
\end{equation*}   

\item[(b)]
  there exists an $\tilde m \ge 2$ and $\tilde c >0$ such that
\begin{equation*}
  \max\prt{M^+[z^m](\tau_{\ceil{m^\alpha}}), \abs{M^-[z^m](\tau_{\ceil{m^\alpha}})}} \ge \tilde c m^{-\alpha}
  \qquad \forall m \ge \tilde m.
\end{equation*}

\end{itemize}
Then
\begin{equation}\label{eq:NLastMethod}
N = \sum_{k=0}^{\tilde L(m)-1} \widetilde n(k) =
\cO{\frac{\max\prt{|y^m|_{BV([0,T])}^3, \, 1}}{\Delta \tau^{\alpha}}},
\end{equation}
and
\[
\|u(T) - U(T)\|_1 = \cO{m^{-\alpha/2}}
\]
is achieved at the computational cost 
\begin{multline*}
\hat c_1 \prt{ \max\prt{ |y^m|_{BV([0,T])}^{5},\, 1} m^{2\alpha}  +m^\beta}\\
\le \mathrm{Cost}(U) \le \\
 \hat c_2 \prt{\max\prt{ |y^m|_{BV([0,T])}^{6},\, 1} m^{2\alpha}  +m^\beta},
\end{multline*}
for some $\hat c_1, \hat c_2 >0$.

\end{theorem}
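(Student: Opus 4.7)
The plan is to mirror the proof of Theorem~\ref{thm:met1Compl2}, now exploiting that by Theorem~\ref{thm:ormLinEquiv} and Corollary~\ref{cor:bvEquiv2} one has $u^m(T) = v(T;\tilde y^m,\{\tilde\tau_k\}_{k=0}^{\tilde L(m)})$ and $|\tilde y^m|_{\BV{[0,T]}}=|y^m|_{\BV{[0,T]}}$. First I would decompose $\|u(T)-U(T)\|_1 \le \|u(T)-u^m(T)\|_1 + \|u^m(T)-U(T)\|_1$. By Theorem~\ref{thm:wellPosednessPes} and the $\alpha$-H\"older continuity of $z$, the first term is bounded by $\cO{\sqrt{\|z-z^m\|_\infty}}=\cO{\Delta\tau^{\alpha/2}}$.

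For the second term I apply Kuznetsov's lemma on each subinterval $[0,|\Delta\tilde y^m_k|]$ over which \eqref{eq:numSolProblemLast} is solved, using the CFL~\eqref{eq:cfl3} to bound $\sqrt{\widetilde\Delta t_k}\le C\sqrt{\Delta x}$. Telescoping over $k=0,\ldots,\tilde L(m)-1$ yields $\|u^m(T)-U(T)\|_1 \le \|u_0-U(0)\|_1 + C|y^m|_{\BV{[0,T]}}\sqrt{\Delta x}$. Combining $\|u_0-U(0)\|_1=\cO{\Delta x}$ from~\eqref{eq:u0Approx} with the resolution constraint~\eqref{eq:dxConstraint}, which yields $|y^m|_{\BV{[0,T]}}\sqrt{\Delta x}=\cO{\Delta\tau^{\alpha/2}}$, gives total error $\cO{\Delta\tau^{\alpha/2}}=\cO{m^{-\alpha/2}}$.

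To count $N$, the CFL~\eqref{eq:cfl3} gives $\tilde n(k)\le|\Delta\tilde y^m_k|\|f'\|_\infty/(C_{\mathrm{CFL}}\Delta x)+1$, hence $N\le C|y^m|_{\BV{[0,T]}}/\Delta x + \tilde L(m)$, and by~\eqref{eq:dxConstraint} the first summand is $\cO{\max(|y^m|_{\BV{[0,T]}}^3,1)/\Delta\tau^\alpha}$. Hypothesis~(a) gives the matching bound for $\tilde L(m)$ directly. The main obstacle is deriving this bound from~(b): for this I use the inclusion $\{\tilde\tau_k\}_{k=0}^{\tilde L(m)-1}\subset A^\pm[z^m]\subset\{\tau_j\}_{j=0}^m$, which forces $\tilde\tau_k\ge k\Delta\tau$, so that the first index $k^{\star}$ with $\tilde\tau_{k^{\star}}\ge\tau_{\lceil m^\alpha\rceil}$ satisfies $k^{\star}\le\lceil m^\alpha\rceil+1$. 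Because $\tilde y^m$ alternates between values at running-max and running-min points of $z^m$ (a consequence of~\eqref{eq:MPmRelation} and the discussion following~\eqref{eq:dyTilde}), I would argue that $\max(|\Delta\tilde y^m_{k^{\star}-1}|,|\Delta\tilde y^m_{k^{\star}}|)\ge\max(M^+[z^m](\tau_{\lceil m^\alpha\rceil}),|M^-[z^m](\tau_{\lceil m^\alpha\rceil})|)\ge\tilde c m^{-\alpha}$ by~(b). The monotonicity~\eqref{eq:dyTilde} then propagates this lower bound to all later increments, giving $|y^m|_{\BV{[0,T]}}\ge \tilde c(\tilde L(m)-k^{\star}-2)m^{-\alpha}$ and therefore $\tilde L(m)=\cO{m^\alpha(|y^m|_{\BV{[0,T]}}+1)}$, which is within the~(a) bound. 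Thus $N$ satisfies~\eqref{eq:NLastMethod}.

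Finally, the total cost is the sum of the path-generation cost $\cO{m^\beta}$ and the finite-volume cost $\Theta(N(b_m-a_m)/\Delta x)$. The upper bound $b_m-a_m\le c_2(1+N\Delta x)$ together with the bounds on $N$ and $\Delta x$ gives FV cost $\le c_2(N/\Delta x+N^2) = \cO{\max(|y^m|_{\BV{[0,T]}}^6,1)m^{2\alpha}}$, while the lower bound $b_m-a_m\ge c_1$ combined with the CFL lower bound $\tilde n(k)\ge|\Delta\tilde y^m_k|\|f'\|_\infty/(C_{\mathrm{CFL}}\Delta x)$ gives FV cost $\ge c_1N/\Delta x = \Omega(\max(|y^m|_{\BV{[0,T]}}^5,1)m^{2\alpha})$, yielding the claimed two-sided cost bounds.
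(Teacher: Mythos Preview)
Your proposal is correct and follows essentially the same route as the paper's proof: the error decomposition via Theorem~\ref{thm:wellPosednessPes} and a telescoped Kuznetsov estimate over the $\tilde y^m$-subintervals, the bound $N\le C|y^m|_{\BV{[0,T]}}/\Delta x+\tilde L(m)$ from~\eqref{eq:cfl3}, and the cost computation are all identical. Your treatment of hypothesis~(b) is a minor rephrasing of the paper's argument---where the paper defines $r(m)=\min\{k\le\lceil m^\alpha\rceil:|\Delta\tilde y^m_k|>\tilde c m^{-\alpha}\}$ and shows it is well-defined via the alternation and monotonicity in~\eqref{eq:dyTilde}, you locate the first index $k^{\star}$ with $\tilde\tau_{k^{\star}}\ge\tau_{\lceil m^\alpha\rceil}$ and bound $|\Delta\tilde y^m_{k^{\star}}|$ from below using the same alternation; both lead to $\tilde L(m)=\cO{m^\alpha\max(|y^m|_{\BV{[0,T]}},1)}$.
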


\begin{proof}

By the CFL condition~\eqref{eq:cfl3}, it holds for all $m\ge 2$
that
\[
\max_{k \in \{0,1,\ldots,\tilde{L}(m)-1\}} \widetilde \Delta t_k 
\le \frac{C_{\mathrm{CFL}}}{\|f'\|_\infty} \Delta x.
\]
Introducing the shorthand
\[
\tilde v^m(\tilde \tau_k) \coloneq v\prt{\tilde \tau_{k}; 
\tilde y^m, \{\tilde \tau_k\}_{k=0}^{\tilde L(m)}}
\qquad \forall k \in \{0,1,\ldots, \tilde L(m)\},
\]
and using Kuznetsov's lemma, cf.~\cite[Example 3.15]{HoldenRisebro2015}, 
the error of the numerical method at time $\tilde \tau_{k+1}$
can be bounded by
\begin{equation}\label{eq:ormErrBound}
	\begin{split}
\|\tilde v^m(\tilde \tau_{k+1}) - U(\tilde \tau_{k+1})\|_1 &
		\leq \|\tilde v^m(\tilde \tau_{k}) - U(\tilde \tau_k)\|_1 
		+ C |\Delta \tilde y^m_k| (\sqrt{\widetilde \Delta t_k} + \sqrt{\Delta x})\\ & 
		\leq  \|\tilde v^m(\tilde \tau_{k}) - U(\tilde \tau_k)\|_1 
		+ C|\Delta \tilde y^m_i| \sqrt{\Delta x},
	\end{split}
\end{equation}
for some $C>0$ that depends on $\|f'\|_{L^\infty}$, $|u_0|_{BV(\bR)}$ and the numerical scheme.
Using that $\tilde v^m(0) = u_0$,
$\tilde v^m(T) = u^m(T)$ and $|\tilde y^m|_{BV([0,T])} = |y^m|_{BV([0,T])}$,
cf.~Corollary~\ref{cor:bvEquiv2} and Theorem~\ref{thm:ormLinEquiv},
we obtain 
\[
\| u^m(T)- U(T)\|_1 \leq   
\|u_0 - U(0)\|_1 + C|y^m|_{BV([0,T])} \sqrt{\Delta x}.
\]
By Theorem~\ref{thm:wellPosednessPes}, \eqref{eq:ormErrBound},
and~\eqref{eq:dxConstraint}, 
\begin{equation}\label{eq:errBLastMethod}
    \begin{split}
      \|u(T) - U(T)\|_1 &\leq \|u(T) - u^m(T)\|_1 + \|u^m(T) - U(T)\|_1  
      \\ & = \cO{\Delta \tau^{\alpha/2} + |y^m|_{BV([0,T])} \sqrt{\Delta x}}\\
  & = \cO{\Delta \tau^{\alpha/2}}.
  \end{split}
\end{equation}
And by~\eqref{eq:cfl3} and~\eqref{eq:dxConstraint}, 
\[
\begin{split}
  N &= \sum_{k=0}^{\tilde L(m)-1} \tilde n(k)
  \le \sum_{k=0}^{\tilde L(m)-1}
  \prt{\frac{ \abs{\Delta \tilde y^m_k} \|f'\|_{L^\infty}}{C_{\mathrm{CFL}}\Delta x  } +1}\\
  &\le \frac{ \|f'\|_{L^\infty}}{C_{\mathrm{CFL}}\Delta x} \abs{y^m}_{BV([0,T])} 
  + \tilde L(m)\\ & = \cO{m^{\alpha} \max\prt{\abs{y^m}_{BV([0,T])}^3 , \, 1} + \tilde L(m) }.
\end{split}
\]
In order to obtain~\eqref{eq:NLastMethod}, it remains to verify that
\begin{equation}\label{eq:tildeLDominated}
\tilde L(m) = \cO{m^{\alpha} \max\prt{|y^m|_{BV([0,T])}^3 , \, 1}}.
\end{equation}
Assume condition (b) holds and that $m\ge \tilde m$.
Since $\{\tilde \tau_k\}_{k=0}^{\tilde L(m)-1} \subset A^{\pm}[z^m]$
and $\{|\Delta \tilde y^m_k|\}_{k=0}^{\tilde L(m)-2}$
is a monotonically increasing sequence,
cf.~\eqref{eq:dyTilde},
it holds for any $\tau_r \in \{\tilde \tau_k\}_{k=0}^{\tilde L(m)-2} \cup [\tau_{\ceil{m^\alpha}},T)$
that
\[
|\Delta \tilde y^m_r| \ge \max\prt{M^+[z^m](\tilde \tau_{r}), \abs{ M^-[z^m](\tilde \tau_{r})}}
\ge \tilde c m^{-\alpha}.
\]
Moreover, the following function 
is well-defined for all $m \ge \tilde m$:
\[
r(m) \coloneq \min \Big\{ k \in \{0,1,\ldots, \ceil{m^{\alpha}} \} \mid
|\Delta \tilde y^m_k| > \tilde c m^{-\alpha} \Big\}.
\]
It is clear that $r(m) = \cO{m^{\alpha}}$ and
\[
\begin{split}
|\tilde y^m|_{BV([0,T])} & = \sum_{k=0}^{\tilde L(m)-1} |\Delta \tilde y^m_k|
\ge \sum_{k=r(m)}^{\tilde L(m)-2} m^{-\alpha}
= (\tilde L(m) - (r(m)+2) )m^{-\alpha}.
\end{split}
\]
We conclude that
\[
\tilde L(m) = \cO{m^{\alpha} \max\prt{|y^m|_{BV([0,T])}, \,1}}
=\cO{m^{\alpha} \max\prt{|y^m|_{BV([0,T])}^3, \,1}},
\]
and thus condition (b) is stronger than condition (a) 
(condition (b) is included in the theorem as it might be easier to verify
than condition (a)).

The computational cost of the numerical method
is equal to the sum of $\tO{m^\beta}$ for generating the
piecewise linear interpolant $z^m$, and
\[
\begin{split}
  \tO{ N \times \frac{b_m - a_m}{\Delta x}}
\end{split}
\]
for solving $U$ over $[0,T]\times [a_m,b_m]$.
\end{proof}

\subsection{Efficiency gains}
By comparing the computational cost versus accuracy results in
Theorems~\ref{thm:met1Compl2} and~\ref{thm:met2Compl}, we see that
if the assumptions of both theorems hold, 
\[
\limsup_{m\to \infty} \frac{m^\beta}{m^{2\alpha}|z^m|^5_{BV([0,T])}} = 0,
\]
and
\begin{equation}\label{eq:finiteLOrm}
\limsup_{m\to \infty} \frac{|\overline{\orm}_m[z]\, |_{BV([0,T])}^6}{\abs{z^m}_{BV([0,T])}^5} =0,
\end{equation}
then it is guaranteed that
the orm based numerical method will 
asymptotically be more efficient than
the adaptive timestep method.

The next two lemmas, Lemmas~\ref{lem:ormBV} and~\ref{lem:maxPropWiener},
verify that 
$\abs{\overline{\orm}_m[z]}_{BV([0,T])} < \infty$, for all $m\ge 2$,
and assumption (b) in Theorem~\ref{thm:met2Compl} hold
for almost all sample paths of a standard Wiener process.

\begin{lemma}\label{lem:ormBV}
Let $(\Omega,\cF, \bP)$ denote a probability space on which the
standard Wiener process $W: [0,\infty) \times \Omega \to \bbR$ with
$W(0) =0$, $\bP$-a.s.~is defined. For every $\omega \in \Omega$, let $z \coloneq
W(\cdot, \omega)$ denote a sample path of the Wiener process.
Then, for every $\alpha \in (0,1/2)$ and $\widehat T>0$ 
\begin{equation}\label{eq:wienerAsCont}
z \in C_0^{0,\alpha}([0,\widehat T]), \qquad \bP\mathrm{-a.s.}
\end{equation}
Furthermore, for a fixed $T>0$, let
\begin{equation}\label{eq:tildeOmega}
\widetilde \Omega = \{ \omega \in \Omega \mid W(\cdot, \omega) \in C_0([0,T]) \},
\end{equation}
and for any $m \ge 2$, let
$\{\tau_k\}_{k=0}^m\subset [0,T]$ denote the uniform mesh with
step size $\Delta \tau = T/m$.
We define
\begin{equation}\label{eq:zMDefWiener}
  z^m \coloneq \begin{cases}
    \cI[z](\cdot; \{\tau_k\}_{k=0}^m) & \text{if} \quad \omega \in \widetilde{\Omega},\\
    0 & \text{if} \quad \omega \in \Omega\setminus \widetilde \Omega,
    \end{cases}
\end{equation}
and
\begin{equation}\label{eq:ormMDefWiener}
  \overline{\orm}_m[z] \coloneq \begin{cases}
    \overline{\orm}[z](\cdot; \{\tau_k\}_{k=0}^m)  & \text{if} \quad \omega \in \widetilde{\Omega},\\
    0 & \text{if} \quad \omega \in \Omega\setminus \widetilde \Omega.
    \end{cases}
\end{equation}
Then
\begin{equation}\label{eq:modCont2}
  \limsup_{m\to\infty}  \sup_{t \in [0,T]} |z - z^m|(t) \sqrt{\frac{m}{\log(m)}}  
  \le \sqrt{2T}, \qquad \bP\mathrm{-a.s.,}
\end{equation}
\begin{equation}\label{eq:wienerBV}
\limsup_{m \to  \infty} \abs{z^m}_{BV([0,T])}
\sqrt{\frac{\log(m)}{m}} 
\ge \sqrt{\frac{T}{2}}, \quad \bP\mathrm{-a.s.,}
\end{equation}
and for $\bP$-almost all paths, there is a 
constant $C(\omega) >0$ such that
\begin{equation}\label{eq:wienerOrmBV1}
\limsup_{m \to  \infty} 
\abs{\overline{\orm}_m[z]}_{BV([0,T])}
 < C(\omega).
\end{equation}
Moreover,
\begin{equation}\label{eq:wienerOrmBV2}
\E{ \sup_{m\ge 2} \abs{\overline{\orm}_m[z]}_{BV([0,T])}} < \infty,
\quad \E{\abs{\orm[z]}_{BV([0,T])}} < \infty.
\end{equation}
\end{lemma}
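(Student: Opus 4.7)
The plan is to handle the five claims largely independently: the first three follow from standard Brownian-motion results, while the last two require a more delicate structural argument.

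Statement~\eqref{eq:wienerAsCont} is the Kolmogorov-Chentsov continuity criterion: from $\E{|W(t)-W(s)|^{2n}} = c_n |t-s|^n$ for every $n\in\bN$ one obtains an $\alpha$-H\"older continuous modification with $\alpha < 1/2 - 1/(2n)$, and letting $n\to\infty$ covers every $\alpha \in (0,1/2)$. For~\eqref{eq:modCont2}, the piecewise linearity of $z^m$ on each cell $[\tau_k,\tau_{k+1}]$ gives the deterministic bound $|z(t)-z^m(t)| \le \sup_{s,u\in[\tau_k,\tau_{k+1}]}|W(s)-W(u)|$, so the uniform error is dominated by the Brownian modulus of continuity $\omega_W(T/m)$. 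L\'evy's theorem $\limsup_{h\downarrow 0}\omega_W(h)/\sqrt{2h\log(1/h)}=1$ a.s., applied with $h=T/m$ and using $\log(m/T)/\log(m)\to 1$, delivers~\eqref{eq:modCont2}. For~\eqref{eq:wienerBV}, I write $|z^m|_{\BV{[0,T]}}$ as a sum of $m$ i.i.d.\ $\sqrt{T/m}\,|N(0,1)|$ random variables, whose mean is $\sqrt{2mT/\pi}$ and whose variance $T(1-2/\pi)$ is independent of $m$. Chebyshev combined with Borel-Cantelli along a sparse subsequence $m_k = 2^k$ upgrades convergence in probability to almost sure convergence: $|z^{m_k}|_{\BV{[0,T]}}/\sqrt{m_k}\to\sqrt{2T/\pi}$ a.s., and hence $\limsup_m |z^m|_{\BV{[0,T]}}\sqrt{\log(m)/m}=+\infty$, trivially exceeding $\sqrt{T/2}$.

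Claims~\eqref{eq:wienerOrmBV1} and~\eqref{eq:wienerOrmBV2} are the core of the lemma and require an $m$-uniform pathwise integrable majorant for $|\overline{\orm}_m[z]|_{\BV{[0,T]}}$. Partition the vertices $\{\tau_{j(k)}\}_{k=0}^{\bar m}$ into maximal runs of consecutive ``max'' visits (those $\tau_{j(k)}\in A^\pm[z^m]$ with $\bar A^+[z^m](\tau_{j(k)})=\tau_{j(k)}$) and ``min'' visits. Within each max run the signed increments of $\overline{\orm}_m[z]$ telescope to a non-negative increase of $M^+[z^m]$, and similarly within each min run. Summing over all runs, the intra-run contribution is at most $M^+[z^m](T)+|M^-[z^m](T)|\le R_W \coloneq M^+(T)-M^-(T)$. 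Each run-to-run transition contributes at most the \emph{local} range at its transition time, not the global $R_W$.

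The principal obstacle is bounding the accumulated transition contributions uniformly in $m$. A naive bound by (number of transitions)$\,\cdot R_W$ fails because finer meshes can detect arbitrarily many small oscillations near extremum times of $W$. The plan is to exploit Brownian self-similarity: near any given extremum time of $W$, the nested orm switches occur on geometrically decaying scales, so their contributions form an a.s.\ convergent series. A careful accounting---most cleanly formulated via the excursion-theoretic decomposition of Brownian extrema---yields $|\overline{\orm}_m[z]|_{\BV{[0,T]}}\le C(W)\,R_W$ for an a.s.\ finite random constant $C(W)$ with $\E{C(W)^q}<\infty$ for every $q\ge 1$. Combined with the Gaussian moment bounds $\E{R_W^p}<\infty$ for every $p\ge 1$, the Cauchy-Schwarz inequality yields~\eqref{eq:wienerOrmBV2}, and~\eqref{eq:wienerOrmBV1} follows a fortiori. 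The geometric-decay estimate for nested switches is the technical heart of the argument, and if a self-contained proof proves cumbersome it may be extracted from the companion paper~\cite{Hoel:2017}.
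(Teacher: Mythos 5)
Your handling of \eqref{eq:wienerAsCont}, \eqref{eq:modCont2} and \eqref{eq:wienerBV} is correct. For \eqref{eq:wienerBV} you take a genuinely different route from the paper: you exploit that $\abs{z^m}_{BV([0,T])}$ is a sum of $m$ i.i.d.\ half-normal variables and use Chebyshev plus Borel--Cantelli along $m_k=2^k$, obtaining in fact $\limsup_m \abs{z^m}_{BV([0,T])}\sqrt{\log(m)/m}=\infty$, which is stronger than the stated bound; the paper instead combines the a.s.\ convergence of the discrete quadratic variation to $T$ with the L\'evy-modulus bound \eqref{eq:modCont2}. Both arguments are valid, and yours is arguably more elementary.

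The genuine gap is in \eqref{eq:wienerOrmBV1}--\eqref{eq:wienerOrmBV2}, which you rightly call the core of the lemma but do not actually prove. Everything is reduced to the asserted majorant $\abs{\overline{\orm}_m[z]}_{BV([0,T])}\le C(W)\,R_W$ with a random constant $C(W)$ possessing all moments, justified only by appeals to self-similarity, ``geometrically decaying scales near extremum times'' and an excursion-theoretic decomposition, with the key geometric-decay estimate explicitly deferred (possibly to the companion paper). That estimate is precisely what has to be established, and your heuristic frame is also slightly off target: for a fixed path the newly detected run-to-run transitions of $\overline{\orm}_m[z]$ as $m\to\infty$ accumulate where the running range $M^+[z]-M^-[z]$ is small, i.e.\ near $t=0$, not near the extremum times of $W$, since each max/min alternation must traverse the full current range (which straddles $0$). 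The paper's Appendix~\ref{app:bvWiener} supplies the missing quantitative ingredient by anchoring the analysis at the level $0$: every jump of $y^{\dagger}=\max(\orm[z],0)$ of size in $[2^{-j},2^{-j+1})$ can occur only in the window $[\check\xi_{j},\check\xi_{j-1}]$ between the first passages of $z$ to the levels $2^{-j}$ and $2^{-j+1}$, and is injectively associated with a downcrossing of $[0,2^{-j}]$ in that window; the number of such downcrossings is $\mathrm{Geo}(1/2)$ by the first-passage probabilities of Theorem~\ref{thm:hitting}, the levels exceeding the running maximum are controlled by its Gaussian tail, and summing $2^{-j+2}$ times these counts over $j\in\bZ$, plus $2M^{+}[z](T)$, gives an integrable majorant that is uniform in $m$ (the uniformity in $m$ being exactly the point where the same injection argument is reused). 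Without an argument of this type---an explicit injection of orm jumps into a countable family of crossings together with distributional control of the crossing counts---neither the factorized bound $C(W)R_W$ nor the moment bounds for $C(W)$ are established, so \eqref{eq:wienerOrmBV1} and \eqref{eq:wienerOrmBV2} remain unproven in your proposal.
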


\begin{proof}
See~\cite{Evans2012} for a proof of~\eqref{eq:wienerAsCont}.

Since $\Prob{\widetilde \Omega}=1$, we restrict ourselves 
to $\omega \in \widetilde \Omega$ in what follows.
By L\'{e}vy's global modulus of continuity~\cite[Theorem 9.25]{Karatzas91}
\footnote{Theorem 9.25 in~\cite{Karatzas91} is formulated for
  standard Wiener processes over the time interval $[0,1]$. 
  However, for any $T>0$, the transform $\widetilde W(t) = \sqrt{T} W(t/T)$
  yields a standard Wiener process $\widetilde W:[0,T]\times \Omega \to \bR$
  and the result extends straightforwardly to the time interval $[0,T]$.},
\[
\limsup_{\delta \downarrow 0} 
\sup_{{\scriptsize \begin{split} 0&\le s\le t\le T \\ &t-s\le \delta \end{split}}}
\frac{\abs{z(t) - z(s)}}{\sqrt{2\delta \log(1/\delta)}}  = T,
\qquad \bP\mathrm{-a.s.,}
\]
and as 
\[
\begin{split}
\sup_{t \in [0,T]} \abs{z(t) - z^m(t)} &\le
\max_{k\in \{0,1,\ldots, m-1\}} \sup_{t \in [\tau_{k},\tau_{k+1}]} \max\prt{\abs{z(t) - z(\tau_k)},\abs{z(t) - z(\tau_{k+1)}}}\\
&\le \sup_{{\scriptsize \begin{split} 0&\le s\le t\le T \\ &t-s\le T/m \end{split}}}
\abs{z(t) - z(s)},
\end{split}
\]
inequality~\eqref{eq:modCont2} follows and so does
\[
\limsup_{m \to \infty} \prt{\max_{k \in \{0,1,\ldots, m-1\}} \abs{\Delta z^m_k} \sqrt{ \frac{m}{ \log(m)}}}
\le \sqrt{2T} , \qquad \bP\mathrm{-a.s.}
\]
We further recall from~\cite{Dudley73} that 
\[
\limsup_{m \to \infty} \sum_{k=1}^{m-1} \abs{\Delta z^m_k}^2 =
\lim_{m \to \infty} \sum_{k=1}^{m-1} \abs{\Delta z^m_k}^2 = T, \qquad \bP\mathrm{-a.s.}
\]
Hence, 
\[
\begin{split}
T &= \limsup_{m \to \infty} \sum_{k=1}^{m-1} \abs{\Delta z^m_k}^2\\
&\le \limsup_{m \to \infty} \prt{ \max_{\ell \in \{0,1,\ldots, m-1\}} \abs{\Delta z^m_\ell}  \sum_{k=1}^{m-1} \abs{\Delta z^m_k}}\\
  & \le \limsup_{m \to \infty} \prt{ \max\prt{\max_{k \in \{0,1,\ldots, m-1\}}\abs{\Delta z^m_k} \sqrt{ \frac{m}{ \log(m)}}, \, \sqrt{2T} }} \\
& \qquad   \times \limsup_{m \to \infty} \prt{\abs{z^m}_{BV([0,T])} \sqrt{ \frac{ \log(m)}{m}}} \\
& = \sqrt{2T}  \limsup_{m \to \infty} \prt{\abs{z^m}_{BV([0,T])} \sqrt{ \frac{\log(m)}{m}}}, \qquad  \qquad \bP\mathrm{-a.s.},
\end{split}
\]
and~\eqref{eq:wienerBV} follows.

Equations~\eqref{eq:wienerOrmBV1} and~\eqref{eq:wienerOrmBV2} are proved in
Appendix~\ref{app:bvWiener}.
\end{proof}  

The next lemma shows that assumption (b) in Theorem~\ref{thm:met2Compl} holds
for almost all sample paths of a standard Wiener process.
\begin{lemma}\label{lem:maxPropWiener}
For any $m \ge 2$, let $\{\tau_k\}_{k=0}^m\subset [0,T]$ denote the
uniform mesh with with step size $\Delta \tau = T/m$.
Let $z = W(\cdot, \omega) \in C_0([0,T])$ denote a path realization of
the standard Wiener process with
$\omega \in \widetilde \Omega$, cf.~Lemma~\ref{lem:ormBV} and~\eqref{eq:tildeOmega},
and let $z^m$ be defined by~\eqref{eq:zMDefWiener}.
Then, for any $\alpha \in (2/5,1]$,
there exists an $\tilde m(\omega) \ge 2$ for almost all
$\omega \in \widetilde \Omega$ 
such that\footnote{By a slight modification of the proof, one may 
  show that for almost all~$\omega \in \widetilde \Omega$, inequality~\eqref{eq:maxPropWiener}
  holds for any $\alpha \in (1/4,1]$.}
\begin{equation}\label{eq:maxPropWiener}
  \max\prt{M^+[z^m](\tau_{\ceil{m^{\alpha}}}), \abs{M^-[z^m](\tau_{\ceil{m^{\alpha}}})}}
  \ge m^{-\alpha}
  \qquad \forall m \ge \tilde m(\omega).
  \end{equation}
\end{lemma}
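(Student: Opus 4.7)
The plan is to apply the first Borel--Cantelli lemma to the failure event
\[
F_m \coloneq \Big\{\max\prt{M^+[z^m](\tau_{\ceil{m^\alpha}}),\,\abs{M^-[z^m](\tau_{\ceil{m^\alpha}})}} < m^{-\alpha}\Big\},
\]
by showing that $\sum_{m \ge 2} \Prob{F_m \cap \widetilde\Omega} < \infty$ whenever $\alpha > 2/5$. It then follows that $\bP(\limsup_m F_m \cap \widetilde\Omega)=0$, so for almost every $\omega \in \widetilde\Omega$ there exists $\tilde m(\omega)$ such that $F_m$ fails for all $m \ge \tilde m(\omega)$, which is exactly \eqref{eq:maxPropWiener}.

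To identify $F_m$ with a Wiener event, I would first note that on $\widetilde\Omega$ the interpolant $z^m$ is piecewise linear with nodal values $z^m(\tau_k) = W(\tau_k,\omega)$, so its running extrema are attained at mesh points:
\[
M^+[z^m](\tau_k) = \max_{0 \le j \le k} W(\tau_j),
\qquad
M^-[z^m](\tau_k) = \min_{0 \le j \le k} W(\tau_j).
\]
Writing $n_m \coloneq \ceil{m^\alpha}$ and $a_m \coloneq m^{-\alpha}$, the failure event therefore equals
\[
F_m \cap \widetilde\Omega = \big\{\abs{W(\tau_j)} < a_m \text{ for all } j = 0,1,\ldots,n_m\big\} \cap \widetilde\Omega.
\]

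The main work is a tail bound on $\Prob{F_m}$. I would fix an integer $r \ge 2$ (the key value will be $r=10$) and, for $m$ large enough that $n_m \ge r$, select equally spaced indices $j_i = \lfloor i\, n_m /r\rfloor$, $i = 0,1,\ldots,r$. On $F_m$, the triangle inequality yields
\[
\abs{W(\tau_{j_i}) - W(\tau_{j_{i-1}})} < 2 a_m, \qquad i = 1,\ldots,r,
\]
and these $r$ increments are independent Gaussians with variance $(j_i - j_{i-1}) T/m \ge c\, T m^{\alpha-1}/r$ for some $c>0$. Using the elementary density bound $\Prob{\abs{N(0,\sigma^2)} < b} \le 2b/(\sigma \sqrt{2\pi})$ and independence,
\[
\Prob{F_m} \le \prod_{i=1}^{r} \Prob{\abs{W(\tau_{j_i}) - W(\tau_{j_{i-1}})} < 2 a_m}
\le \prt{\frac{4 a_m}{\sqrt{2\pi c T m^{\alpha-1}/r}}}^{r}
= C_r \, m^{r(1-3\alpha)/2},
\]
for a constant $C_r$ depending only on $r$, $c$ and $T$.

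Choosing $r=10$ gives $\Prob{F_m} \le C_{10}\, m^{5-15\alpha}$, which is summable over $m$ precisely when $15\alpha - 5 > 1$, i.e.\ $\alpha > 2/5$. Borel--Cantelli then closes the argument. I expect the only mildly technical point to be handling the discretization errors in $j_i - j_{i-1}$ and ensuring the chosen $r$ satisfies $n_m \ge r$, both of which are harmless for $m$ sufficiently large. The footnoted refinement to $\alpha > 1/4$ would be obtained by allowing $r = r(m)$ to grow with $m$ (optimizing yields $r \sim m^{3\alpha-1}$), turning the polynomial bound into the stretched-exponential bound $\Prob{F_m} \le \exp(-c' m^{3\alpha-1})$; this remains summable whenever $\alpha > 1/3$, and a slightly sharper Gaussian estimate pushes the threshold down to $1/4$.
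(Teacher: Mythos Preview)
Your argument is correct and follows the same approach as the paper: bound the failure probability by the probability that a collection of independent Wiener increments between selected mesh points are all small, then apply Borel--Cantelli. The only difference is parametric: the paper samples $\lfloor m^{\alpha/2}\rfloor$ points spaced $\lfloor m^{\alpha/2}\rfloor$ mesh steps apart, obtaining per-increment probability $\le C m^{1/2-5\alpha/4}$ (so the product decays super-polynomially once $\alpha>2/5$), whereas you fix $r=10$ points spaced $\approx m^{\alpha}/r$ apart and obtain the polynomial bound $C_{10}\,m^{5-15\alpha}$, which is summable at exactly the same threshold $\alpha>2/5$.
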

\begin{proof}
  For $y \in \bR$, let $\floor{y}$ denote the largest $n \in \bZ$ such that
  $y\ge n$ and let $r^m:\bN \to \bN$ be defined by $r^m(k) = \floor{m^{\alpha/2}}k$.
  For any natural number $m \ge \hat m = \ceil{3^{2/\alpha}}$, we introduce
  the set
  \small
  \[
  \begin{split}
  D_m = \left\{ \omega \in \tilde{\Omega}\, \Big{\vert} \,\,
  \abs{z(\tau_{r^m(k+1)}) - z(\tau_{r^m(k)})} \le \frac{2}{m^\alpha} \quad \text{for} \quad  k =0,1,\ldots, \floor{m^{\alpha/2}}-1 \right\}.
  \end{split}
  \]
  \normalsize
 
  We claim that $\omega \in \widetilde \Omega$ for which~\eqref{eq:maxPropWiener} does not
  hold is contained in
  \begin{equation}\label{eq:capcup}
  \bigcap_{m \ge \hat m} \bigcup_{\ell \ge m} D_\ell.
  \end{equation}
  To verify this, observe that if $\omega \notin \bigcap_{m \ge \hat m} \bigcup_{\ell \ge m} D_\ell$,
  then there exists an $\tilde m(\omega)$ such
  that $\omega \notin \bigcup_{\ell \ge \tilde m} D_\ell$.
  Since $\omega \notin D_m$ for every $m\ge \tilde m$,
  there exists a $k_m \in \{0,1,\ldots, \floor{m^{\alpha/2}}-1\}$ such that
  $\abs{z(\tau_{r^m(k_m+1)}) - z(\tau_{r^m(k_m)})} \ge \frac{2}{m^\alpha}$.
  This implies that $\max(|z(\tau_{r^m(k_m+1)})|, |z(\tau_{r^m(k_m)})|)> m^{-\alpha}$,
  and, since $r^m(k_m+1)\le\ceil{m^\alpha}$ by construction,
  we conclude that 
  \[
  \max\prt{M^+[z^m](\tau_{\ceil{m^{\alpha}}}), \abs{M^-[z^m](\tau_{\ceil{m^{\alpha}}})}}
  \ge m^{-\alpha}, \qquad \forall m \ge \tilde m.
  \]

  We will show that~\eqref{eq:capcup} is a zero-measure set.
  Since the increments
  $\{z(\tau_{r^m(k+1)}) - z(\tau_{r^m(k)})\}_k$ are independent
  $N\prt{0, \tau_{\floor{m^{\alpha/2}}}}$ distributed random variables,
  \[
  \Prob{D_m } \le
   \prt{\Prob{\abs{z(\tau_{r^m(1)}) - z(0)}\le \frac{2}{m^\alpha}}}^{\floor{m^{\alpha/2}}},
  \]
  and since $\tau_{\floor{r^m(1)}} = \tO{m^{\alpha/2-1}}$,
  there exists a $C>0$ independent of $m$ such that
  for all $m \ge \hat m$,
  \[
  \begin{split}
    \Prob{\abs{z(\tau_{r^m(1)}) - z(0)}\le \frac{2}{m^\alpha}}&
    = \frac{1}{\sqrt{2 \pi \tau_{r^m(1)}}}
    \int_{-2m^{-\alpha}}^{2m^{-\alpha}} \exp\prt{\frac{-x^2}{2\tau_{r^m(1)}}} \dx\\
    & = \frac{1}{\sqrt{2 \pi}}
    \int_{\frac{-2m^{-\alpha}}{\sqrt{\tau_{r^m(1)}}}}^{\frac{2m^{-\alpha}}{\sqrt{\tau_{r^m(1)}}}}
    e^{-y^2/2} \dy\\
    & \le C m^{1/2 - 5\alpha/4}.
     \end{split}
  \]
  Since $1/2 - 5\alpha/4 < 0$, there exists a $\check m \ge \hat m$,
  such that 
  \[
  \Prob{D_m } \le m^{-2} \quad \text{for all} \quad m \ge \check m,
  \]
  and
  \[
  \Prob{\bigcap_{m \ge \hat m} \bigcup_{\ell \ge m} D_\ell}
  \le \liminf_{m \to \infty} \Prob{\bigcup_{\ell \ge m} D_\ell}
  \le \liminf_{m \to \infty} \sum_{\ell=m}^\infty \Prob{ D_\ell}
  \le \lim_{m \to \infty} \frac{1}{m}
   = 0.
  \]
  Equations~\eqref{eq:wienerAsCont} and~\eqref{eq:tildeOmega} ensures that
  \[
  \Prob{\widetilde \Omega \setminus \bigcap_{m \ge \hat m} \bigcup_{\ell \ge m} D_\ell} =1,
  \]
  and the proof follows.
\end{proof}

By the ``H\"older continuity''~\eqref{eq:modCont2},
equations~\eqref{eq:wienerBV} and~\eqref{eq:wienerOrmBV1},
and the fact that $\beta =1$ for sampling a path of a
standard Wiener processes, we conclude that under the
shared assumptions on $u_0$ in
Theorems~\ref{thm:met1Compl2} and~\ref{thm:met2Compl}
and if $f \in C^2(\bR)$ is strictly convex, then
the computational cost of achieving 
\[
\|u(T) - U(T) \|_{L^1} = \cO{ \prt{\frac{\log(m)}{m}}^{1/4}},
\]
for a sample path $z:[0,T] \to \bR$ of the standard Wiener process 
admits for some $c_1(\omega)>0$ the following lower bound
for the adaptive time stepping method (cf.~Theorem~\ref{thm:met1Compl2}):
\[
 \mathrm{Cost}(U(T)) \ge c_1 \prt{\frac{ m}{\log(m)}}^{7/2} 
 \qquad \bP\mathrm{-a.s.,}
\]
and, for some $c_2(\omega)>0$, the following upper bound
for the orm based method (cf.~Theorem~\ref{thm:met2Compl}):
\[
\mathrm{Cost}(U(T)) \le c_2 m \qquad \bP\mathrm{-a.s.}
\]

\begin{remark}
In many cases (e.g.~Brownian paths), 
$\abs{\orm[z]}_{BV([0,T])} < \abs{z}_{BV([0,T])}$.
That is however not always the case.  
The second example in Figure~\ref{fig:orm} considers
the locally rough path $z(t) = \1{t>0} t \sin(\pi/(2t))$, which is a
member of $C^{0,1/2}_0([0,T])$ for which $\abs{\orm[z]}_{BV([0,T])} 
=\abs{z}_{BV([0,T])}=\infty$.  This shows that even if $f \in C^2(\bR)$
is strictly convex, the orm based method will
not always solve~\eqref{eq:sscl} more efficiently
than the adaptive timestepping method.
\end{remark}

\subsection{Numerical tests}
\begin{example}\label{ex:BVGrowth}
To investigate if Lemma~\ref{lem:ormBV} holds
more generally, we 
approximate
$\E{\abs{z^m}_{BV([0,1])}}$ and $\E{\abs{\overline{\orm}_m[z]}_{BV([0,1])}}$
for four different fBMs with respective
Hurst indices $\alpha = 1/8,1/4,1/2$ and $3/4$ 
on uniform meshes of $[0,1]$ with respective step sizes
$\Delta \tau = 1/m$ for $m = 2^{5}, 2^{6}, \ldots, 2^{16}$.
The expectations are approximated by the
Monte Carlo method using $10^6$ sample realizations of 
$\abs{z^m}_{BV([0,1])}$ and $\abs{\overline{\orm}_m[z]}_{BV([0,1])}$.
\begin{figure}[!h]
\includegraphics[width=0.87\textwidth]{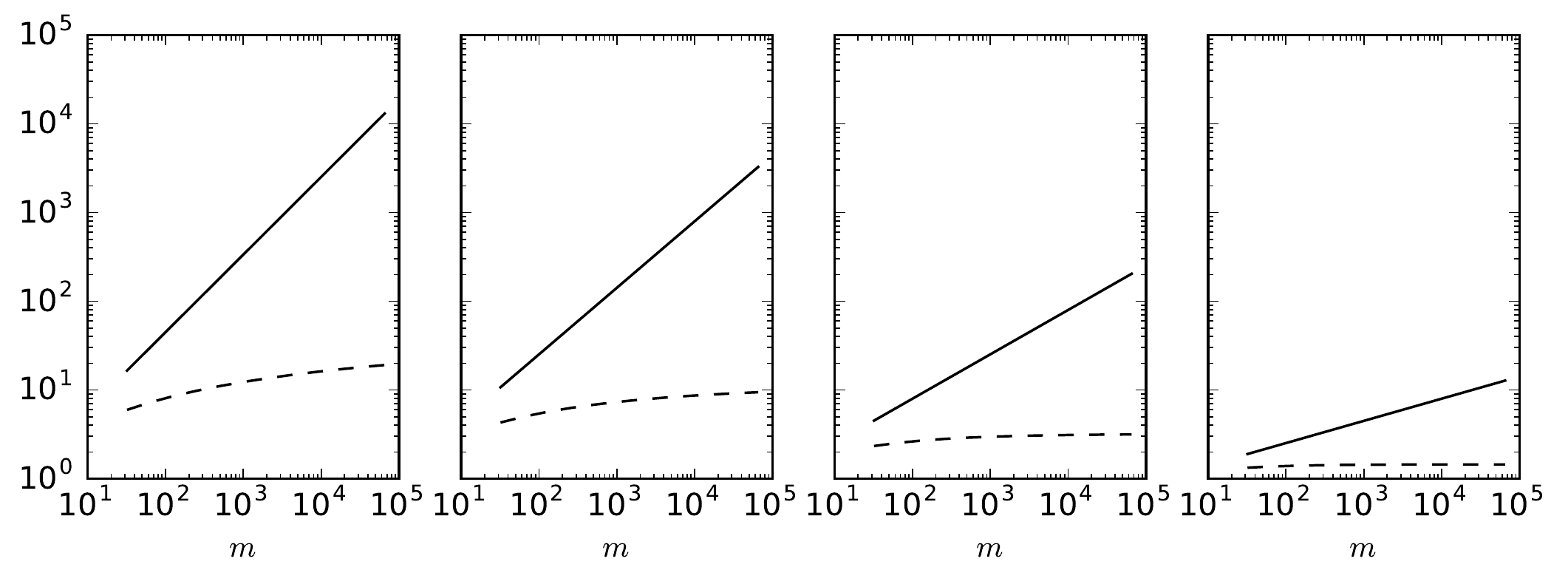}\\
\includegraphics[width=0.87\textwidth]{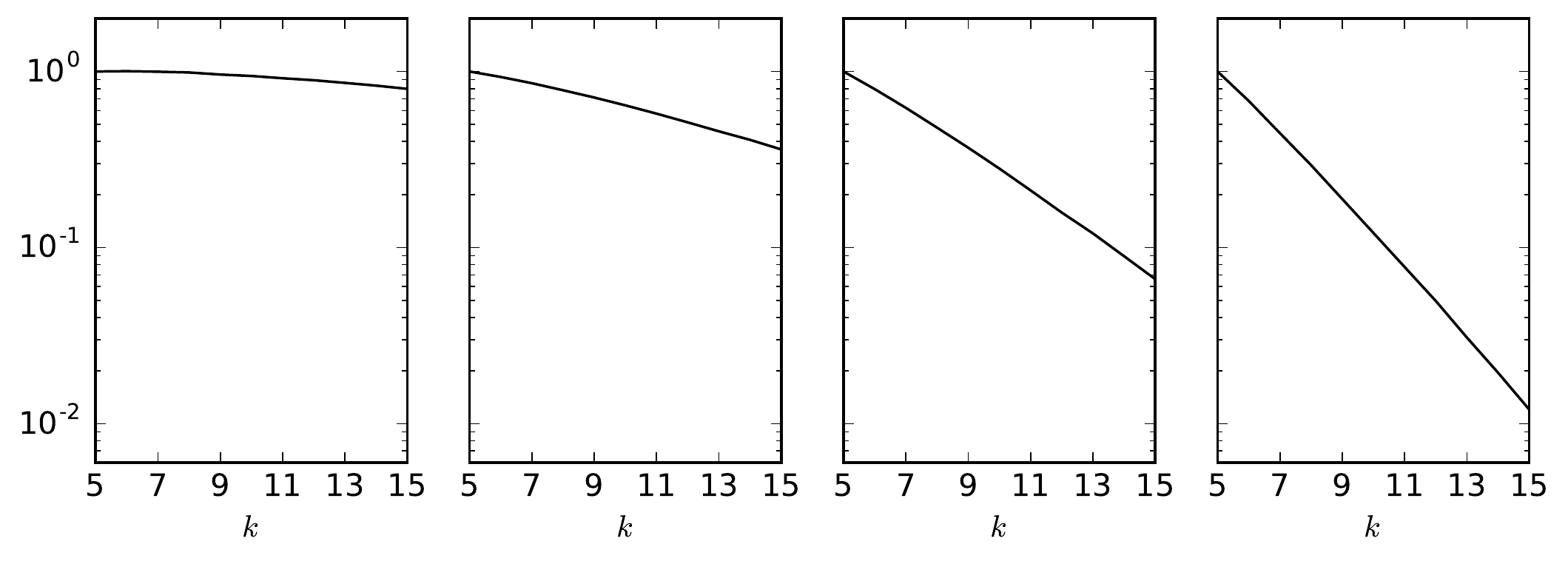}
\caption{Top row: Example~\ref{ex:BVGrowth}, 
Monte Carlo computations of $\E{\abs{z^m}_{BV([0,1])}}$ (solid line) 
and $\E{\abs{\overline{\orm}_m[z]}_{BV([0,1])}}$ (dashed line), 
Hurst indices $\alpha = 0.125$ (left), $\alpha =0.25$ (second left), 
$\alpha = 0.5$ (third left), and $\alpha =0.75$ (right).
Bottom row: plots of the BV increment ratio $G(k)$, 
cf.~\eqref{eq:theRealG}, corresponding to 
the respective top row test cases.}
\label{fig:ex4BV}
\end{figure}
Our Monte Carlo estimates of the expectations are presented in
Figure~\ref{fig:ex4BV}. We observe that while
$\E{\abs{z^m}_{BV([0,1])}} = \tO{ m^{1-\alpha}}$, 
$\E{|\overline{\orm}_{m}[z]|_{BV([0,1])}}$ seems to
stay bounded as $m$ increases for all of the considered fBMs. 
In the bottom row of Figure~\ref{fig:ex4BV}, we have computed the 
``BV increment ratio"
\begin{equation}\label{eq:theRealG}
	G(k) : = \frac{\E{|\overline{\orm}_{^{2^{k+1}}}[z]|_{BV([0,1])}
	-|\overline{\orm}_{2^k}[z]|_{BV([0,1])}}}{\E{|\overline{\orm}_{2^6}[z]|_{BV([0,1])}
	-|\overline{\orm}_{2^5}[z]|_{BV([0,1])}}},
\end{equation}
for $k\ge5$.  The increment ratio seems 
to be geometric of the form $G(k) = \cO{ \rho_\alpha^{k-5}}$ 
with $\rho_\alpha \approx 2^{-\alpha}<1$.
We interpret this as numerical support for 
\[
\E{\lim_{k\to \infty} |\overline{\orm}_{2^k}[z]|_{BV([0,1])}} < \infty, \quad \forall
\alpha \in \{1/8,1/4, 1/2, 3/4\},
\]
since if $G(k) <1$, 
\[
\begin{split}
  & \E{\lim_{k\to \infty} |\overline{\orm}_{2^k}[z]|_{BV([0,1])}}\\
  &= \E{|\overline{\orm}_{2^5}[z]|_{BV([0,1])}}
  + \sum_{k=5}^\infty \E{|\overline{\orm}_{2^{k+1}}[z]|_{BV([0,1])}
    -|\overline{\orm}_{2^k}[z]|_{BV([0,1])}}\\
  &\leq C \sum_{k=0}^\infty \rho_\alpha^k < \infty.
\end{split}
\]
\end{example}


\begin{example}\label{ex:ormNumerics}
  We consider problem~\eqref{eq:ssclPer} over the time interval
  $[0,1]$ with periodic boundary conditions,
  initial data
  \[
  u_0(x) = \sign{x-1/2} \1{ 1/6<5/6} \qquad x \in [0,1],
  \]
  strictly convex flux $f(u) = u/2 + u^2/4$ and
  $z = W(\cdot, \omega)$, where $W:[0,1]\times \widetilde \Omega \to \bR$
  denotes the standard Wiener process and $\widetilde \Omega$ is defined
  in~\eqref{eq:tildeOmega}. In
  Figure~\ref{fig:ex5OrmNum} we compare the performance of the
  adaptive timestep method and the orm based method
  (cf.~Section~\ref{sec:framework2}), both using the
  Lax--Friedrichs scheme.
  The driving path $z$ is piecewise linearly interpolated
  on two uniform mesh resolutions, $m=2^8$ and $m=2^{10}$,
  and the computational cost of the
  respective algorithms are equilibrated through
  \[
  \Delta x_{\text{orm}} = \frac{\Delta x_{\mathrm{adaptive}}}{
  \ceil{\sqrt{\frac{\abs{z^m}_{BV([0,T])}}{|\orm[z^m]|_{BV([0,T])}}}\,\,}},
  \]
  (equilibrating either lower or upper bound costs in both of
  Theorems~\ref{thm:met1Compl2} and~\ref{thm:ormEquiv}).  The
  approximate reference solution is computed by the orm based method
  using the Lax--Friedrichs scheme with piecewise
  linear interpolation of $z$ on
  a uniform mesh with much higher resolution ($m=2^{14}$). We observe that
  at comparable computational budget, the orm
  based method approximates the reference solution with better accuracy
  and produces less artificial diffusion than the adaptive timestep
  method.
\begin{figure}[h!]
\includegraphics[width=0.7\textwidth]{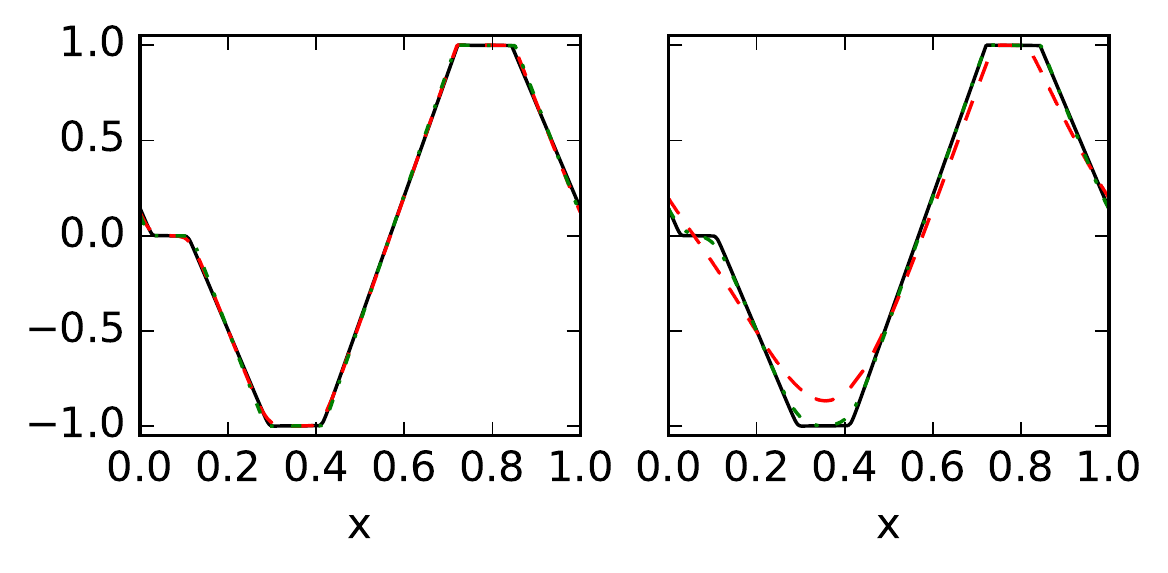}\\
\includegraphics[width=0.68\textwidth]{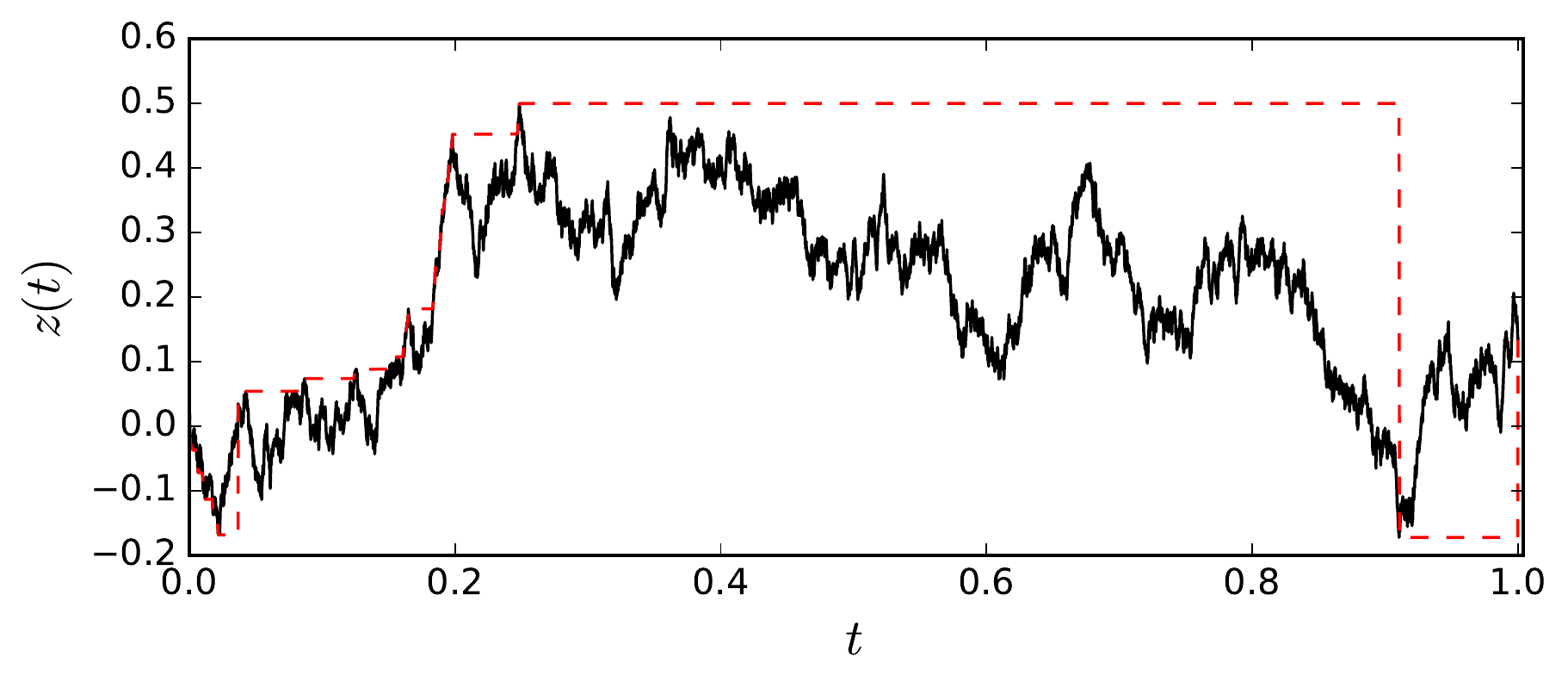}\\
\caption{ Top row: Final time solutions for
  Example~\ref{ex:ormNumerics} computed with the orm based method
  (left figure) and adaptive timestep method (right figure).  The
  black solid line is the approximate reference solution $u(1)$, while
  the red dashed and the green dash-dotted lines are the numerical
  solutions at the rough path resolutions $m=2^8$ and $m=2^{10}$,
  respectively.  Bottom: The rough path (black solid line) and the
  corresponding orm (red dashed line).}
\label{fig:ex5OrmNum}
\end{figure}
\end{example}

\section{Conclusion}\label{sec:conclude}
In this work we have developed fully discrete and thus computable
numerical methods for solving conservation laws with rough paths. 
For strictly convex flux functions, we have identified a
class of ``oscillatory cancellations'' that can be removed from the
rough path to produce numerical methods with improved efficiency. If
the rough path is a realization of a Wiener process, for instance,
the asymptotic efficiency gain can be of orders of magnitude.
An in-depth study of the rough path cancellation property
is found in the theoretical companion article \cite{Hoel:2017}.

\appendix

\section{Regularity of solutions}\label{app:regProof}

\begin{lemma}\label{lem:regLem}
For a given set of points 
\[
0=\tau_0 < \tau_1 < \ldots< \tau_m=T, \qquad m\ge 2,
\]
let $u^m \in C_0([0,T]; L^1(\bR))$ 
denote the c\`{a}dl\`{a}g version of the entropy solution of~\eqref{eq:consClass} 
with initial data $u_0 \in (L^1 \cap BV)(\bR)$, driving noise
$z^m \in I_0([0,T]; \{\tau_j\}_{j=0}^m)$ and strictly convex $f\in C^2(\bR)$.
For any $x \in \bbR$ and $t \in [0,T]$, let  
\[
u^m(t,x+) \coloneq \lim_{\delta \downarrow 0} u^m(t,x+\delta)\, ( = u(t,x) )\quad \text{and} \quad 
u^m(t,x-) \coloneq \lim_{\delta \downarrow 0} u^m(t,x-\delta),
\]
Then, using the convention $0^{-1} = \infty$,  it holds for all 
$x < y$ and $t \in [0,T]$ that
\begin{equation}\label{eq:lipReg1}
  \begin{split}
    -\frac{1}{M^+[z^m](t)-z^m(t)} &\leq \frac{f'(u^m(t,y\pm))-f'(u^m(t,x\pm))}{y-x} \leq \frac{1}{z^m(t)-M^-[z^m](t)},
  \end{split}
\end{equation}
where the functions $M^-[z^m]$ and $M^+[z^m]$ are defined in~\eqref{eq:runMaxMin}.
\end{lemma}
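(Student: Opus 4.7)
The plan is to prove the two bounds by induction on the mesh nodes $\{\tau_j\}$, tracking Oleinik-type one-sided Lipschitz constants $L_k^\pm \ge 0$ for $f'\circ u^m(\tau_k,\cdot)$ (the bound being $1/L_k^+$ above and $-1/L_k^-$ below, under the convention $1/0 = \infty$) and identifying them at every node with $z^m(\tau_k) - M^-[z^m](\tau_k)$ and $M^+[z^m](\tau_k) - z^m(\tau_k)$ respectively.

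By Lemma~\ref{lem:map2}, on each interval $[\tau_k, \tau_{k+1}]$ the propagator $u^m(\tau_k, \cdot) \mapsto u^m(\tau_{k+1}, \cdot)$ is the time-$|\Delta z_k^m|$ entropy semigroup of $\partial_t \tilde u + \sign{\Delta z_k^m} \partial_x f(\tilde u) = 0$, whose flux is strictly convex when $\Delta z_k^m > 0$ and strictly concave when $\Delta z_k^m < 0$. The central technical step I would prove is the following single-segment update rule: starting from the bounds $(1/L_k^+, -1/L_k^-)$ at $\tau_k$, the analogous bounds hold at $\tau_{k+1}$ with
\[
\begin{aligned}
(L_{k+1}^+, L_{k+1}^-) &= \bigl(L_k^+ + \Delta z_k^m,\; \max(L_k^- - \Delta z_k^m,\, 0)\bigr) && \text{if } \Delta z_k^m \ge 0,\\
(L_{k+1}^+, L_{k+1}^-) &= \bigl(\max(L_k^+ + \Delta z_k^m,\, 0),\; L_k^- - \Delta z_k^m\bigr) && \text{if } \Delta z_k^m < 0,
\end{aligned}
\]
initialised at $L_0^\pm = 0$, since no Oleinik bound on $u_0 \in \BV{\bR}$ is a priori available.

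To establish this rule I would first work with smooth $u_0$ and use the characteristic formula for the classical solution to $\partial_t \tilde u + \sigma \partial_x f(\tilde u) = 0$ with $\sigma \in \{-1, +1\}$: along the characteristic rooted at $x_0$ one has $(f'\circ \tilde u)_x(t, x) = \lambda(x_0)/(1 + \sigma \lambda(x_0) t)$ where $\lambda(x_0) = (f'\circ u_0)'(x_0)$, and extremising this expression over $\lambda \in [-1/L_k^-,\, 1/L_k^+]$ delivers the claimed update. The ``$+\Delta z_k^m$'' gain in the matching direction is the classical Oleinik semigroup improvement, while the ``$\max(\,\cdot\, + \Delta z_k^m, 0)$'' cap in the opposite direction records the time at which the first characteristic crossing (hence shock) occurs, after which that bound becomes trivial. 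Extension to general $u_0 \in (L^1 \cap \BV{\bR})$ would follow by smooth approximation, $L^1$-contraction and passage to the entropy limit, with care that the cadlag one-sided limits appearing in \eqref{eq:lipReg1} pass to the limit correctly across jump discontinuities.

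The conclusion is then an elementary induction, because $z^m(\tau_k) - M^-[z^m](\tau_k)$ and $M^+[z^m](\tau_k) - z^m(\tau_k)$ obey exactly the same update rule: each grows by $|\Delta z|$ while $z^m$ moves away from the corresponding extremum, shrinks by $|\Delta z|$ while $z^m$ moves toward it, and is pinned to zero at the instant $z^m$ reaches and then updates that extremum. Since $z^m$ is linear on every $[\tau_k, \tau_{k+1}]$, the same argument applied to a partial segment extends \eqref{eq:lipReg1} to arbitrary $t \in (\tau_k, \tau_{k+1}]$. The main obstacle will be the degrading side of the single-segment update rule: under opposite-sign evolution a shock may in principle destroy the inherited one-sided bound, so the characteristic analysis must cap the degraded constant at exactly the shock time, and the passage from smooth to BV data has to be performed so as to preserve these one-sided bounds across the cadlag discontinuities that appear in the statement.
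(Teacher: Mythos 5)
Your bookkeeping is exactly the paper's: the quantities you call $L_k^+$ and $L_k^-$ are $z^m(\tau_k)-M^-[z^m](\tau_k)$ and $M^+[z^m](\tau_k)-z^m(\tau_k)$, your single-segment update rule is precisely the content of the paper's intermediate Lemma~\ref{lem:regStep}, and your outer induction over the mesh nodes, initialised with the vacuous bound $L_0^\pm=0$, is the paper's induction over the statements $P_j$; the extension to intermediate times $t\in(\tau_k,\tau_{k+1}]$ by treating a partial (linear) segment is also how the paper concludes. So the architecture is sound and coincides with the paper's.

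The gap is in how you justify the single-segment update. The formula $(f'\circ\tilde u)_x(t,\cdot)=\lambda(x_0)/(1+\sigma\lambda(x_0)t)$ along characteristics is valid only while the solution is classical, and because the induction starts from $L_0^\pm=0$ there is no pre-shock window to exploit: already on the first segment (and on any segment where the opposite-side cap has reached zero) the data may carry jumps of the unfavourable sign, so shocks can be present from time $0^+$. The improving half of your rule ($L^+\mapsto L^++\Delta z^m_k$ when $\Delta z^m_k\ge 0$, and symmetrically) must therefore be established for genuinely discontinuous entropy solutions in the presence of shocks; this is the Oleinik-type one-sided estimate with an inherited one-sided bound, and it is not delivered by extremising the classical characteristic formula over $\lambda$. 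The paper obtains it by applying Dafermos's generalized backward characteristics directly to the entropy solution (Theorems 11.1.1 and 11.1.3 in Dafermos's book) inside Lemma~\ref{lem:regStep}; the Lax--Oleinik formula or the discrete one-sided estimates of Brenier--Osher would serve as alternatives, but some such ingredient has to be invoked explicitly. Your fallback of ``smooth approximation plus $L^1$-contraction'' also needs repair as stated: mollifying $u^m(\tau_k,\cdot)$ does not preserve one-sided Lipschitz bounds on $f'\circ u$ because $f'$ is nonlinear (one should instead mollify $f'(u^m(\tau_k,\cdot))$ and invert $f'$, which strict convexity permits), and even then the smoothed solutions generically form shocks before time $|\Delta z^m_k|$, so the classical computation still does not carry you to the end of the segment without an entropy-solution version of the estimate.
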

To prove Corollary~\ref{cor:regularity} we will need the following intermediate result,
which is an adaptation of~\cite[Lemma~3.3]{Hoel:2017}.
\begin{lemma}\label{lem:regStep}
For a given set of points 
\[
0=\tau_0 < \tau_1 < \ldots< \tau_m=T, \qquad m\ge 2,
\]
let $u^m \in C_0([0,T]; L^1(\bR))$
denote the c\`{a}dl\`{a}g version of the entropy solution of~\eqref{eq:consClass} 
with initial data $u_0 \in (L^1 \cap BV)(\bR)$, driving noise
$z^m \in I_0([0,T]; \{\tau_j\}_{j=0}^m)$ and strictly convex $f\in C^2(\bR)$.
Assume that \eqref{eq:lipReg1} holds at some time $\tau_k \in \{\tau_j\}_{j=0}^{m-1}$.
Then, for all $x<y$ and $s \in [\tau_k, \tau_{k+1}]$, 
\begin{equation}\label{eq:lipReg2}
  \begin{split}
   -\frac{1}{ M^+[z^m](s)- z^m(s)} &\leq \frac{f'(u^m(s ,y\pm))-f'(u(s,x\pm))}{y-x} \leq \frac{1}{z^m(s) - M^-[z^m](s)}.
   \end{split}
 \end{equation}
\end{lemma}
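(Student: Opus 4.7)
The plan is to use Lemma~\ref{lem:map2} to convert the bound on $[\tau_k,\tau_{k+1}]$ into a two-sided Oleinik-type estimate for a standard convex scalar conservation law, and then propagate the hypothesis at $\tau_k$ by a generalized backward characteristic argument. I would split into the cases $\dot z^m_k \ge 0$ and $\dot z^m_k < 0$; the latter reduces to the former via the spatial reflection $x\mapsto -x$, which maps $\partial_t u - \partial_x f(u) = 0$ to $\partial_t u + \partial_x f(u) = 0$ and interchanges the roles of $M^+[z^m]$ and $M^-[z^m]$ (consistent with the fact that on a decreasing segment only $M^-$ can be updated, and vice versa). I would therefore concentrate on $\dot z^m_k \ge 0$.

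Applying Lemma~\ref{lem:map2} one has $u^m(s,\cdot) = \tilde u(z^m(s)-z^m(\tau_k),\cdot)$, where $\tilde u$ solves the standard convex law $\partial_\sigma \tilde u + \partial_x f(\tilde u)=0$ with initial datum $u^m(\tau_k,\cdot)$. Monotonicity of $z^m$ on $[\tau_k,\tau_{k+1}]$ gives $M^-[z^m](s) = M^-[z^m](\tau_k)$ and $M^+[z^m](s) = \max(M^+[z^m](\tau_k), z^m(s))$. For the upper bound in~\eqref{eq:lipReg2} I would invoke the sharp form of Oleinik's estimate: if the initial data satisfies $(f'(\tilde u(0,y\pm))-f'(\tilde u(0,x\pm)))/(y-x) \le K_0$ for all $x<y$, then after time $t$ the same quotient is bounded by $1/(K_0^{-1}+t)$. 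This follows by taking the rightmost (resp.\ leftmost) backward characteristic from $(t,y\pm)$ and $(t,x\pm)$ landing at $\xi_y^\pm \ge \xi_x^\pm$ on the initial line, exploiting the characteristic identities $y-x = (\xi_y^\pm - \xi_x^\pm) + t(f'(\tilde u(t,y\pm))-f'(\tilde u(t,x\pm)))$ and $f'(\tilde u(t,y\pm)) = f'(u^m(\tau_k,\xi_y^\pm\pm))$, and substituting the initial bound. Setting $K_0 = 1/(z^m(\tau_k)-M^-[z^m](\tau_k))$ from the hypothesis and $t = z^m(s)-z^m(\tau_k)$ yields precisely $1/(z^m(s)-M^-[z^m](s))$.

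For the lower bound an analogous characteristic computation is required. Write $\Delta f = f'(\tilde u(t,y\pm))-f'(\tilde u(t,x\pm))$, $B = \xi_y^\pm - \xi_x^\pm \ge 0$, and $L = M^+[z^m](\tau_k)-z^m(\tau_k)$. The hypothesis at $\tau_k$ gives $\Delta f \ge -B/L$, and substituting $B=(y-x)-t\Delta f$ produces $(L-t)\Delta f \ge -(y-x)$. When $L > t$, equivalent to $z^m(s) < M^+[z^m](\tau_k)$, one rearranges to $\Delta f/(y-x) \ge -1/(M^+[z^m](s)-z^m(s))$; when $L \le t$, one has $M^+[z^m](s) = z^m(s)$ and the claimed lower bound degenerates to $-\infty$, so it holds trivially.

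The main technical obstacle is the careful pairing of the $\pm$ traces so that $f'(\tilde u)$ is genuinely conserved along the chosen extremal backward characteristics; this is the content of the theory of generalized characteristics for strictly convex scalar conservation laws and already underlies the proof of \cite[Lemma~3.3]{Hoel:2017} cited in the statement. Once this trace bookkeeping is in place, the remainder of the argument is the elementary algebra above.
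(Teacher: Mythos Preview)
Your approach is essentially the paper's: both proofs run backward generalized characteristics from $(s,x\pm)$ and $(s,y\pm)$ down to time $\tau_k$, use the linear relation $y-x=(\zeta_\pm-\xi_\pm)+(z^m(s)-z^m(\tau_k))\,\Delta f$, and feed in the hypothesis at $\tau_k$ after splitting on the signs of $\dot z^m_k$ and $\Delta f$ (your spatial-reflection reduction for $\dot z^m_k<0$ is a clean variant of the paper's ``similarly''). One refinement worth flagging: the equality $f'(\tilde u(t,y\pm))=f'(u^m(\tau_k,\xi_y^\pm\pm))$ you invoke can fail when the foot $\xi_y^\pm$ is an anti-entropy jump of $u^m(\tau_k,\cdot)$ (a rarefaction center for the new segment); the paper therefore works only with the Dafermos inequalities $u^m(\tau_k,\xi_\pm-)\le u^m(s,x\pm)\le u^m(\tau_k,\xi_\pm+)$ and closes via a short limit argument approximating the $+$ trace by $-$ traces from the right, and this is precisely what your deferred ``trace bookkeeping'' must supply.
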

\begin{proof}
For some $s \in (\tau_k, \tau_{k+1}]$, let $\zeta_{\pm}$ be the maximal/minimal backward
generalized characteristic emanating from $(s, y)$ and $\xi_{\pm}$ be the maximal/minimal backward
generalized characteristic emanating from $(s, x)$, cf.~\cite[\S~X]{Dafermos:2010fk}.
The solution representation
\[
u^m(s) =\bar \cS(z^m(s) - z^m(\tau_k)) u^m(\tau_k) 
= \cS^{\dot z^m_k}(s-\tau_k) u^m(\tau_k),
\]
cf.~\eqref{eq:map1} and~\eqref{eq:map3}, 
and~\cite[Theorem 11.1.1]{Dafermos:2010fk}
implies that the generalized characteristics 
satisfies the following equations
\small
 \begin{align*}
  x &= \xi_{\pm}(\tau_k) + (s-\tau_k)\dot{z}^m_k f'(u^m(s,x\pm))
  = \xi_\pm(\tau_k) + (z^m(s)-z^m(\tau_k)) f'(u^m(s,x\pm)), \\
  y &= \zeta_{\pm}(\tau_k) + (s-\tau_k) \dot{z}^m_k f'(u^m(s,y\pm)) =
  \zeta_\pm(\tau_k) + (z^m(s)-z^m(\tau_k)) f'(u^m(s,y\pm)),
 \end{align*}
\normalsize
 where $\xi_{\pm}(\tau_k) \le \zeta_{\pm}(\tau_k)$ and each equation holds
 using either the limit sign $+$ or $-$ consistently in all terms with
 the appendage $\pm$ (i.e., $x+$ and $\xi_+(\tau_k)$ etc.); and we recall
 that $\dot{z}^m_k = \dot{z}^m(\tau_k+)$.
  We will treat the cases $\dot{z}^m_k >0$ and $\dot{z}^m_k <0$ separately
 (the case $\dot{z}^m_k = 0$ is trivial since then
  $u^m(t) = \cS^{0}(t-\tau_k) u^m(\tau_k) = u^m(\tau_k)$ for all $t \in [\tau_k,\tau_{k+1}]$).
 
Assume $\dot{z}^m_k >0$. By~\cite[Theorem 11.1.3]{Dafermos:2010fk},
 \begin{equation}\label{eq:dafProp}
   \begin{split}
 u^m(\tau_k, \xi_\pm(\tau_k)-) &\le u^m(s, x\pm) \le u^m(\tau_k, \xi_\pm(\tau_k)+), \\
 u^m(\tau_k, \zeta_\pm(\tau_k)-)& \le u^m(s, y\pm) \le u^m(\tau_k, \zeta_\pm(\tau_k)+).
 \end{split}
 \end{equation}
  Under the assumption that $f'(u^m(s,y\pm))-f'(u^m(s,x\pm)) \neq 0$
  (otherwise~\eqref{eq:lipReg2} holds trivially), we have
  \begin{equation}\label{eq:regAux}
   \begin{split}
    \frac{f'(u^m(s,y\pm))-f'(u(s,x\pm))}{y-x}=   \frac{1}{\frac{\zeta_{\pm}(\tau_k) - \xi_{\pm}(\tau_k)}{f'(u^m(s,y\pm))-f'(u(s,x\pm))}
       +(z^m(s)-z^m(\tau_k))}.
   \end{split}
  \end{equation}
  If $f'(u^m(s,y\pm))-f'(u^m(s,x\pm))< 0$, then the upper bound of~\eqref{eq:lipReg2} holds
  trivially, and inequalities~\eqref{eq:dafProp} and $f'>0$ implies that 
 \[
 0>f'(u^m(s,y\pm))-f'(u^m(s,x\pm))\ge f'(u^m(\tau_k,\zeta_\pm(\tau_k)-))- f'(u^m(s,\xi_\pm(\tau_k)+)).
 \]
 Hence, either $\zeta_\pm(\tau_k) = \xi_\pm(\tau_k)$ or $\zeta_\pm(\tau_k) > \xi_\pm(\tau_k)$. In the former case,
 the lower bound of~\eqref{eq:lipReg2} holds since
 \[
 \frac{\zeta_{\pm}(\tau_k) - \xi_{\pm}(\tau_k)}{f'(u^m(s,y\pm))-f'(u(s,x\pm))} + (z^m(s)-z^m(\tau_k))
   = (z^m(s)-z^m(\tau_k)) > 0,
 \]
 (as we assume $\dot{z}^m_k>0$). In the latter case, since $u^m(\tau_k)$ is c\`{a}dl\`{a}g, there exists a
 sequence 
 $\{\xi_{\pm,j} (\tau_k)\}_{j \in \bbN} \subset [\xi_{\pm}(\tau_k), \zeta_{\pm}(\tau_k))$ such that
   $\xi_{\pm,j} (\tau_k) \downarrow \xi_{\pm}(\tau_k)$,
   \[
   f'(u^m(\tau_k,\xi_{\pm,j}(\tau_k)-) > f'(u^m(\tau_k,\zeta_{\pm}(\tau_k)-) \quad \forall j \in \bbN,
   \]
   and
   \[
   \lim_{j\to \infty} f'(u^m(\tau_k,\xi_{\pm,j}(\tau_k)-) = f'(u^m(\tau_k,\xi_{\pm}(\tau_k)+). 
   \]
   By~\eqref{eq:lipReg1},
   \[
   \begin{split}
     \frac{\zeta_{\pm}(\tau_k) - \xi_{\pm}(\tau_k)}{f'(u^m(s,y\pm))-f'(u(s,x\pm))} & \le
     \lim_{j\to \infty}
     \frac{\zeta_{\pm}(\tau_k) 
     - \xi_{\pm,j}(\tau_k)}{f'(u^m(\tau_k, \zeta_{\pm}(\tau_k)-))-f'(u(\tau_k,\xi_{\pm,j}(\tau_k)-))}\\
     &\le z^m(\tau_k) - M^+[z^m](\tau_k),
   \end{split}
   \]
   which in combination with~\eqref{eq:regAux} shows that the 
   lower bound of~\eqref{eq:lipReg2} holds.

   So far, we have verified the lemma in the following situations: 
   \begin{enumerate}
   \item[(i)] $\dot{z}^m_k =0$,
   \item[(ii)] $f'(u^m(s,y\pm))-f'(u(s,x\pm)) =0$,
   \item[(iii)] $f'(u^m(s,y\pm))-f'(u(s,x\pm)) <0$ and $\dot{z}^m_k >0$.
   \end{enumerate}
   To complete the proof it remains to verify the lemma for
   \begin{enumerate}
   \item[(iv)] $f'(u^m(s,y\pm))-f'(u(s,x\pm)) <0$ and $\dot{z}^m_k <0$,
   \item[(v)]  $f'(u^m(s,y\pm))-f'(u(s,x\pm)) >0$ and $\dot{z}^m_k \neq 0$.
   \end{enumerate}
   These cases can be proved in a similar fashion as (iii). 
   We refer to~~\cite[Lemma~3.3]{Hoel:2017} for further details.
\end{proof}

\begin{proof}[Proof of Corollary~\ref{cor:regularity}]
For an arbitrary $t \in [0,T]$, let us assume that $t \in [\tau_{k},\tau_{k +1}]$ for some $0 \leq k \leq m-1$. 
For any $0 \leq j \le m$, let $P_j$ be the statement: for all $ -\infty
< x<y< \infty$,
\begin{multline}\tag{$P_j$}
 -\frac{1}{M^+[z^m](\tau_j)-z(\tau_j)} \leq \\
 \frac{f'(u^m(\tau_j,y\pm))-f'(u^m(\tau_j,x\pm))}{y-x} \\
 \leq \frac{1}{z(\tau_j)-M^-[z^m](\tau_j)}.
\end{multline}
The statement $P_0$,
\begin{equation*}
 -\infty \leq \frac{f'(u_0(y\pm))-f'(u_0(x\pm))}{y-x} \leq \infty,
 \quad \text{for all} \quad -\infty < x<y< \infty,
\end{equation*}
is obviously true. Furthermore, if $P_j$ is true, Lemma~\ref{lem:regStep}
implies that also $P_{j+1}$ is true.
By induction, we conclude that $P_{k}$ is true and, using
Lemma~\ref{lem:regStep} once more,
we conclude that~\eqref{eq:lipReg1} holds for the considered $t \in [\tau_{k}, \tau_{k+1}]$.
\end{proof}

\section{Orm, Wiener paths and bounded total variation}\label{app:bvWiener}
Before proceeding with the proof of equations~\eqref{eq:wienerOrmBV1}
and~\eqref{eq:wienerOrmBV2} of Lemma~\ref{lem:ormBV}, we recall a few
useful properties on downcrossings for standard Wiener processes.

\begin{theorem}\label{thm:hitting}
  Let $a<m<b$ and consider a standard Wiener process
  $W: [0, \infty) \times \Omega \to \bR$
  with $W(0) = m$, $\bP$-a.s.
  Set $t^* = \min\{t\ge 0 \mid W(t) \in \{a, b\} \}$. Then
\[
\Prob{W(t^*) = a} = \frac{b-m}{b-a}, \qquad
\Prob{W(t^*) = b} = \frac{m-a}{b-a},
\]
and 
\[
\E{t^*} = (m-a)(b-m).
\]
\end{theorem}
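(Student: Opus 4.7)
The plan is to prove both claims via optional stopping applied to two classical martingales associated to the shifted Wiener process.

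First, I would argue that $t^{*}<\infty$ almost surely. Since $W$ is a Brownian motion started at $m$, by the law of iterated logarithm (or by a comparison with exit probabilities from $[a,b]$ on successive unit intervals) the process almost surely leaves any bounded interval in finite time, so $\Prob{t^{*}<\infty}=1$ and in particular the exit location $W(t^{*})$ is well-defined.

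Next, for the exit distribution I would apply the optional stopping theorem to the martingale $W(\cdot)$ stopped at $t^{*}$. The stopped process $\{W(t\wedge t^{*})\}_{t\ge0}$ takes values in $[a,b]$, hence is uniformly bounded, so $\E{W(t\wedge t^{*})}=m$ for every $t\ge0$, and passing to the limit $t\to\infty$ via bounded convergence gives $\E{W(t^{*})}=m$. Setting $p=\Prob{W(t^{*})=b}$ and using $\Prob{W(t^{*})=a}=1-p$, this identity reads $pb+(1-p)a=m$, which solves to $p=(m-a)/(b-a)$ and $1-p=(b-m)/(b-a)$, yielding the first two formulas.

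For $\E{t^{*}}$, I would apply optional stopping to the martingale $M(t):=(W(t)-m)^{2}-t$ (equivalently, work with $W(t)^{2}-t$ after a bookkeeping shift). Since $W(t\wedge t^{*})^{2}$ is bounded by $\max(a^{2},b^{2})$, Doob's optional stopping at the bounded time $t$ gives
\begin{equation*}
\E{W(t\wedge t^{*})^{2}}-\E{t\wedge t^{*}}=m^{2}.
\end{equation*}
By bounded convergence on the first term and monotone convergence on the second, one may take $t\to\infty$; this also establishes $\E{t^{*}}<\infty$ a posteriori. The first limit equals $\E{W(t^{*})^{2}}=\tfrac{m-a}{b-a}\,b^{2}+\tfrac{b-m}{b-a}\,a^{2}$, which simplifies (routine algebra) to $m(a+b)-ab$. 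Substituting back yields
\begin{equation*}
\E{t^{*}}=m(a+b)-ab-m^{2}=-(m-a)(m-b)=(m-a)(b-m),
\end{equation*}
as claimed.

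The main obstacle is purely technical: one must justify the limit exchange when sending $t\to\infty$ in the optional-stopping identity for $M$. The boundedness of $W(t\wedge t^{*})$ handles the first term, while for $\E{t\wedge t^{*}}$ monotone convergence gives $\E{t^{*}}$, which is then shown to be finite as a consequence of the identity itself; no independent tail bound on $t^{*}$ is needed beyond $t^{*}<\infty$ a.s.
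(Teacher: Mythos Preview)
Your proof is correct and is the standard optional-stopping argument for this classical result. The paper itself does not supply a proof but simply refers the reader to \cite[Theorem~2.49]{Morters2010}; the argument found there is essentially the one you outline, so there is no substantive difference to discuss.
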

For a proof of Theorem~\ref{thm:hitting}, see \cite[Theorem 2.49]{Morters2010}.

\begin{definition}[Downcrossing function]\label{def:downcross}
Let $m>a$ and consider a standard Wiener process $W: [0, \infty) \times \Omega \to \bR$
with $W(0) = m$. Introduce the stopping times $\hat \nu_0 = 0$ and
for $j \ge 1$,
\[
\check \nu_j = \inf \set{t > \hat \nu_{j-1}}{W(t) = a}, 
\qquad  
\hat \nu_j = \inf\set{t > \check \nu_{j}}{W(t) = m}.
\]

The function $W^{(j)}_\downarrow(\cdot,\omega) :[0, \check \nu_j -\hat \nu_{j-1}] \to \bR$ defined by
\[
W^{(j)}_{\downarrow} (s) =W(\hat \nu_{j-1}(\omega) +s, \omega),
\]
thus represents the $j$th downcrossing of $[a,m]$ for the Wiener path
$W(\cdot,\omega)$\footnote{The time $\hat \nu_0$ is the first time
$W(\cdot,\omega)$ equals $m$ and $\check \nu_1$
is the first time after $\hat \nu_0$ that $W(\cdot, \omega)$
equals $a$. Thus $W([\hat \nu_0, \check \nu_1],\omega)$ represents
the first downcrossing of $[a,m]$.
The time $\hat \nu_1$ is the first time after $\check \nu_1$
that $W(\cdot,\omega)$ equals $m$ and $\check \nu_2$ is
the first time after $\hat \nu_1$ that $W(\cdot, \omega)$
equals $a$. Thus $W([\hat \nu_1, \check \nu_2],\omega)$ represents
the second downcrossing of $[a,m]$ \ldots}
For $t>0$, we denote the number of 
downcrossings of $[a,m]$ completed before time $t$ by
\[
D(a,m,t) \coloneq \max\set{j \in \bN}{ \check \nu_j \leq t}.
\]
\end{definition}


See~\cite[Section 6]{Morters2010} for details
on downcrossings for standard Wiener processes.

For $a<m<b$, a standard Wiener process $W: [0, \infty) \times \Omega \to \bR$
  with $W(0) = m$, $\bP$-a.s.~and the stopping time
  \[
  \mathfrak{t}^b \coloneq \inf \set{t\ge 0}{W(t) = b},
  \]
  it follows from Theorem~\ref{thm:hitting} and
  Definition~\ref{def:downcross} that
\begin{equation}\label{eq:geoDist}
  D(a,m, \mathfrak{t}^b) \sim \mathrm{Geo}\prt{\frac{m-a}{b-a}}.
\end{equation}
Here, $\mathrm{Geo}(p)$ denotes the geometric distribution with
parameter $p \in (0,1]$, which for $X\sim \mathrm{Geo}(p)$
has probability mass function
\[
P(X = k ) = p (1-p)^{k}, \quad k = 0,1, \ldots,
\]
and
\begin{equation}\label{eq:geoMoments}
\E{X} = (1-p)/p, \qquad \E{X^2} = (1-p)(2-p)/p^2,
\end{equation}
cf.~\cite{Gut:2013}.

\begin{proof}[Proof of equations~(\ref{eq:wienerOrmBV1})
  and~(\ref{eq:wienerOrmBV2})]
Recalling \eqref{eq:tildeOmega}, it suffices to consider 
Wiener paths $z = W(\cdot, \omega)$ for $\omega \in \widetilde \Omega$. 
Moreover, since
\[
\abs{\orm[z]}_{BV([0,T])} \le \abs{\max(\orm[z],0)}_{BV([0,T])} 
+ \abs{\min(\orm[z],0)}_{BV([0,T])},
\]
and, by symmetry, since the sample path $z$ has the same probability as the sample path $-z$ and
$\orm[z]= -\orm[-z]$, cf.~\eqref{eq:ormMaxRelation},
\[
\E{|\max(\orm[z],0)|_{BV([0,T])}} = \E{|\min(\orm[z],0)|_{BV([0,T])}},
\]
it suffices to verify that
\[
\E{  \abs{y^\dagger}_{BV([0,T])}} < \infty,
\]
where $y^\dagger \coloneq \1{\omega \in \widetilde \Omega} \max(\orm[z],0)$.
Introduce the stopping times
\[
\check \xi_j = \inf \set{t>0}{z(t) = 2^{-j}}, \qquad j \in \bZ
\]
and note that $D(0, 2^{-j}, \check \xi_{j-1} -\check \xi_{j})$ 
equals the number of $z$-downcrossings of $[0, 2^{-{j}}]$
completed in the time interval $[\check \xi_{j}, \check \xi_{j-1}]$
(i.e., between the first time $z$ equals
$2^{-j}$ and the first time it equals $2^{-j+1}$).
By~\eqref{eq:geoDist}, it follows that
\[
D(0, 2^{-j}, \check \xi_{j-1} -\check \xi_{j}) \sim \mathrm{Geo}\prt{\frac{1}{2}}.
\]

Suppose that 
\begin{equation}\label{eq:jumpDDown}
\bar t \in \{t \in [0,T) \mid y^\dagger (t-)>y^\dagger( t) \}.
\end{equation}
Then, since $z \in C_0([0,T])$, there must hold that 
\[
\bar A^+[z](\bar t-)> \bar A^-[z](\bar t-), \qquad
\bar A^+[z](\bar t-) \le \bar A^+[z](\bar t) < \bar A^-[z](\bar t),
\]
and 
\[
y^\dagger (\bar t-)-y^\dagger(\bar t) = M^+[z](\bar t),
\]
cf.~Definition~\ref{def:orm}.
Hence,
\[
y^\dagger(\bar t-)-y^\dagger(\bar t) < 2^{\floor{\log_2(M^+[z](\bar t))}+1},
\]
and
\[
\orm[z](\bar t-) = z(\bar A^+[z](\bar t-))>0  \quad \text{and} \quad
\orm[z](\bar t) = z(\bar A^-[z](\bar t))\le 0.
\]
Consequently, any jump-discontinuity of the form~\eqref{eq:jumpDDown}
with $j = \floor{\log_2(M^+[z](t))}$ must be 
preceded by a $z$-downcrossing of $[0, 2^{-j}]$
within the time interval $[\check \xi_{j}, \check \xi_{j-1}]$
and $\bar t \in [\check \xi_{j}, \check \xi_{j-1}]$.
(For $t<\check \xi_{j}$,
jump-discontinuities in $y^\dagger$ of magnitude greater or equal to
$2^{-j}$ cannot happen,
and at later times, $t > \check \xi_{j-1}$,
all jump-discontinuities of $y^\dagger$ will have
magnitude greater than $2^{-j+1}>M^+[z](t)$.)
Consequently, the jump-discontinuity $\bar t$
may be associated uniquely to the latest $z$-downcrossing
of $[0, 2^{-j}]$ in the time interval $[\check \xi_{j}, \check \xi_{j-1}]$
that precedes $t=\bar t$, and the mapping constituting this
association, from the set~\eqref{eq:jumpDDown} to
the set
\begin{equation}\label{eq:zDownCSet}
\bigcup_{k \in \bZ} \{z\text{-downcrossings of } [0,2^{-k}] \text{ in time interval }
[\check \xi_k, \check \xi_{k-1}] \}
\end{equation}
is thus injective. 

Suppose next that 
\begin{equation}\label{eq:jumpDUp}
\bar t \in \{t \in [0,T) \mid y^\dagger (t-)<y^\dagger( t) \}.
\end{equation}
Then there must hold that
\[
\bar A^+[z](\bar t-)< \bar A^-[z](\bar t-), \qquad  \bar A^+[z](\bar t) > \bar A^-[z](\bar t) \ge \bar A^-[z](\bar t).
\]
and
\[
y^\dagger (\bar t)-y^\dagger(\bar t-) = M^+[z](\bar t).
\]
Hence,
\[
|y^\dagger(\bar t-)-y^\dagger(\bar t)| < 2^{\floor{\log_2(M^+[z](\bar t))}+1},
\]
and
\[
\orm[z](\bar t-) = z(\bar A^-[z](\bar t-))\le0,
\qquad \orm[z](\bar t) = z(\bar A^+[z](\bar t))> 0.
\]
Since $z \in C_0([0,T])$ it follows by~\eqref{eq:MPmRelation}  
\[
M^{+}[z](\bar t-) = z(\bar A^+[z](\bar t-)) = M^+[z](\bar t),
\]
so there exists a time $s=\bar A^+[z](\bar t-) < \bar A^-[z](\bar t-)$ such that
$z(s) = M^+[z](\bar t)$. By the same reasoning as above,
this implies that any jump-discontinuity of the
form~\eqref{eq:jumpDUp} with $j = \floor{\log_2(M^+[z](\bar t))}$
is preceded by a $z$-downcrossing of $[0, 2^{-j}]$
in the time interval $[\check \xi_{j}, \check \xi_{j-1}]$,
and, in fact, $\bar t \in [\check \xi_{j}, \check \xi_{j-1}]$.
Consequently, the jump-discontinuity $\bar t$ may be 
associated uniquely to the latest $z$-downcrossing
of $[0, 2^{-j}]$ in the time interval $[\check \xi_{j}, \check \xi_{j-1}]$
that precedes $t=\bar t$, and the mapping constituting this
association, from the set~\eqref{eq:jumpDUp} to the
set~\eqref{eq:zDownCSet} is thus injective.

For any $j \in \bZ$, let
\[
\mathfrak{B}^-_j
\coloneq \{t \in [0,T) \mid  y^\dagger( t-) - y^\dagger( t) \in[2^{-j}, 2^{-j+1}) \}
\]
and
\[
\mathfrak{B}^+_j
\coloneq \{t \in [0,T) \mid   y^\dagger( t)- y^\dagger( t-) \in[2^{-j}, 2^{-j+1}) \}.
\]  
It then follows that for any $ j \in \bZ$, 
\[
\sum_{ \bar t \in \mathfrak{B}_j^-} 
  |y^\dagger(\bar t-) - y^\dagger(\bar t) |
  \le 2^{-j+1} D(0,2^{-j},  \check \xi_{j-1}-\check \xi_{j})\1{\check \xi_{j}< T}
\]
and
\[
\sum_{ \bar t \in \mathfrak{B}_j^+} |y^\dagger(\bar t-) - y^\dagger(\bar t) |
  \le 2^{-j+1} D(0,2^{-j},  \check \xi_{j-1}-\check \xi_{j})\1{\check \xi_{j}< T}
\]
so that for $\mathfrak{B}_j = \mathfrak{B}_j^- \cup \mathfrak{B}_j^+$,
\[
\sum_{ \bar t \in \mathfrak{B}_j} |y^\dagger(\bar t-) - y^\dagger(\bar t) |
  \le 2^{-j+2} D(0,2^{-j},  \check \xi_{j-1}-\check \xi_{j})\1{\check \xi_{j}< T}.
\]
Including the possible jump-discontinuity of $y^\dagger$ at $\bar t =T$,
and the contribution to the total variation of $y^\dagger$ from
$[0,T] \setminus \cup_{j \in \bZ} \mathfrak{B}_j$,  
we obtain
\[
\begin{split}
  \abs{y^\dagger}_{BV([0,T])} &\le
  \sum_{j \in \bZ} \Big\{ 2^{-j+2} D(0,2^{-j},  \check \xi_{j-1}-\check \xi_{j}) \1{\check \xi_{j} <T}\Big\} \\
  &  +   y^\dagger(T-) + |y^\dagger(T-) - y^\dagger(T)| \\
  &\le \sum_{j \in \bZ} \Big\{ 2^{-j+2} D(0,2^{-j},  \check \xi_{j-1}-\check \xi_{j}) \1{\check \xi_{-j} <T}\Big\} + 2M^+[z](T).
\end{split}
\]
Observe that for all $j \in \bZ$,
\[
Z_j = D(0,2^{-j},  \check \xi_{j-1}-\check \xi_{j}) \sim \mathrm{Geo}\prt{\frac{1}{2}},
\]
$\E{Z_j} = 1$ and $\E{Z_j^2 }=3$, cf.~\eqref{eq:geoMoments}.
By~\cite[eq.~(8.3)]{Karatzas91},
\[
\Prob{M^+[z](T) \in \dx } = \sqrt{\frac{2}{\pi T}} e^{-x^2/(2T)} \dx, \quad x >0.
\]
It therefore holds for all $j \ge j^* \coloneq \ceil{\log_2(T)}+1$
that 
\[
\begin{split}
  \Prob{\hat \xi_{-j} <T} &\le \Prob{M^+[z](T) \ge 2^j}
   = \sqrt{\frac{2}{\pi T}} \int_{2^j}^\infty  e^{-x^2/(2T)} \dx 
   = \sqrt{\frac{2}{\pi T}} e^{-2^j}.
\end{split}
\]
and
\begin{equation}\label{eq:runMaxExp}
\E{M^+[z](T)} = \sqrt{\frac{2}{\pi T}} \int_0^\infty x e^{-x^2/(2T)} \dx = \sqrt{\frac{2T}{\pi}}.
\end{equation}
By H\"older's inequality, 
\[
\begin{split}
&  \sum_{j \in \bZ}^\infty \E{2^{-j+2} D(0,2^{-j},  \check \xi_{j-1}-\check \xi_{j}) \1{\check \xi_{j} <T} } \\
&\le \mspace{-5mu}\sum_{j > -j^*} \E{2^{-j+2} D(0,2^{-j},  \check \xi_{j-1}-\check \xi_{j}) } 
   + \mspace{-5mu}\sum_{j\le -j^*} \E{2^{-j+2} D(0,2^{-j},  \check \xi_{j-1}-\check \xi_{j}) \1{\check \xi_{j} <T} } \\
  & = \sum_{j > -j^*} 2^{-j+2} + \sum_{j\le -j^*} 2^{-j+2}
  \sqrt{\E{\abs{D(0,2^{-j},  \check \xi_{j-1}-\check \xi_{j})}^2}}\sqrt{\Prob{\hat \xi_{j} <T}}\\
  & \le 8T +  \sqrt{3} \prt{\frac{2}{\pi T}}^{1/4} \sum_{j\ge j^*}2^{j+2} e^{-2^{j-1}}
   \le 8T +  \prt{\frac{18}{\pi T}}^{1/4} \sum_{j\ge j^*}2^{-j+5} \\
   &\le 8T +  \prt{\frac{18}{\pi T}}^{1/4} 2^{-j^* + 6} \le
   8T +  \frac{64}{T^{5/4}} .
\end{split}
\]
By~\eqref{eq:runMaxExp} and the preceding inequality, 
\[
\begin{split}
\E{\abs{y^\dagger}_{BV([0,T])}} & \le 
\E{ \sum_{j \in \bZ}^\infty \Big\{ 2^{-j+2} D(0,2^{-j},  
\check \xi_{j-1}-\check \xi_{j}) \1{\check \xi_{-j} <T}\Big\} + 2M^+[z](T)}  \\
& = \sum_{j \in \bZ}^\infty \E{ 2^{-j+2} D(0,2^{-j},  \check \xi_{j-1}-\check \xi_{j}) \1{\check \xi_{-j} <T}} + \sqrt{\frac{8T}{\pi}} \\
& \le 8T + \frac{64}{T^{5/4}}   + \sqrt{\frac{8T}{\pi}}.
\end{split}
\]
This verifies that
\[
\E{\abs{\orm[z]}_{BV([0,T])} } \le 2 \E{\abs{y^\dagger}_{BV([0,T])}} < \infty,
\]
and thus, $\abs{\orm[z]}_{BV([0,T])} < \infty$, $\bP$-a.s.

A similar argument may be employed to verify that
$\E{\sup_{m\ge2} |\orm[z^m]|_{BV([0,T])}} < \infty$.
A short sketch of such an argument follows.

By symmetry, it suffices to verify that
$y^{\dagger, m} = \1{\omega \in \widetilde \Omega}\max(\orm[z^m],0)$ has bounded
total variation, uniformly in $m \ge 2$, $\bP$-a.s.
The only differing
technicality from the preceding argument is that
for fixed $m\ge 2$, any positive/negative jump-discontinuity of $y^{\dagger,m}$ at
time $\bar t$ may be associated uniquely
to a $z$-downcrossing of
$[0, 2^{k}]$ for some $k \ge j = \floor{\log_2(M^+[z^m](\bar t))}$, 
in the time interval $[\check \xi_k, \check \xi_{k-1}]$
(i.e., through an injective mapping from the set of
positive/negative jump-discontinuities of $y^{\dagger,m}$
to the set~\eqref{eq:zDownCSet}).
Moreover $|y^{\dagger,m}(\bar t-) - y^{\dagger,m}(\bar t)| < 2^{j+1} \le 2^{k+1}$.
(The reason for $k\ge j$ in the association is that $z$-downcrossings 
may be more frequent than $z^m$-downcrossings, and they may also
happen at other times.)
It consequently holds that
\[
\sup_{m \ge 2} \abs{y^{\dagger,m}}_{BV([0,T])} \le
  \sum_{j \in \bZ} \Big\{ 2^{-j+2} D(0,2^{-j},  \check \xi_{j-1}-\check \xi_{j}) \1{\check \xi_{j} <T}\Big\}+ 2M^+[z](T),
\]
and the result follows.
\end{proof}

%
%


\end{document}